\documentclass[12pt,a4paper]{amsart}
\usepackage{amssymb,graphicx}
\usepackage[all,cmtip]{xy}
\usepackage{hyperref}

\pagestyle{plain}
\raggedbottom

\emergencystretch=2em

\textwidth=36pc
\calclayout

\hyphenation{con-tra-de-rived code-rived co-acyclic con-tra-acyclic}

\DeclareMathOperator{\Hom}{Hom}
\DeclareMathOperator{\Tor}{Tor}

\DeclareMathOperator{\Ext}{Ext}
\DeclareMathOperator{\Br}{Bar}
\DeclareMathOperator{\Cb}{Cob}
\DeclareMathOperator{\ovCb}{\overline{Cob}}
\DeclareMathOperator{\Tot}{Tot}
\DeclareMathOperator{\coker}{coker}
\DeclareMathOperator{\cone}{cone}

\newcommand{\id}{\mathrm{id}}
\newcommand{\gr}{\mathrm{gr}}
\renewcommand{\ss}{\mathrm{ss}}
\newcommand{\rop}{{\mathrm{op}}}

\newcommand{\+}{\protect\nobreakdash-}
\renewcommand{\:}{\colon}
\newcommand{\ot}{\otimes}
\renewcommand{\d}{\partial}
\newcommand{\ocn}{\odot}

\newcommand{\rarrow}{\longrightarrow}
\newcommand{\larrow}{\longleftarrow}

\DeclareFontFamily{U}{mathb}{\hyphenchar\font45}
\DeclareFontShape{U}{mathb}{m}{n}{
      <5> <6> <7> <8> <9> <10> gen * mathb
      <10.95> mathb10 <12> <14.4> <17.28> <20.74> <24.88> mathb12
      }{}
\DeclareSymbolFont{mathb}{U}{mathb}{m}{n}
\DeclareFontSubstitution{U}{mathb}{m}{n}
\DeclareMathSymbol{\blackdiamond}{0}{mathb}{"0C}

\newcommand{\bu}{{\text{\smaller\smaller$\scriptstyle\bullet$}}}
\newcommand{\cu}{{\text{\smaller$\scriptstyle\blackdiamond$}}}
\newcommand{\lrarrow}{\mskip.5\thinmuskip\relbar\joinrel\relbar\joinrel
 \rightarrow\mskip.5\thinmuskip\relax}
\newcommand{\llarrow}{\mskip.5\thinmuskip\leftarrow\joinrel\relbar
 \joinrel\relbar\mskip.5\thinmuskip\relax}

\newcommand{\vect}{{\operatorname{\mathsf{--vect}}}}
\newcommand{\modl}{{\operatorname{\mathsf{--mod}}}}
\newcommand{\comodl}{{\operatorname{\mathsf{--comod}}}}
\newcommand{\contra}{{\operatorname{\mathsf{--contra}}}}
\newcommand{\alg}{{\operatorname{\mathsf{--alg}}}}
\newcommand{\coalg}{{\operatorname{\mathsf{--coalg}}}}

\newcommand{\co}{{\mathsf{co}}}
\newcommand{\ctr}{{\mathsf{ctr}}}
\newcommand{\abs}{{\mathsf{abs}}}
\newcommand{\dg}{{\mathsf{dg}}}
\newcommand{\cdg}{{\mathsf{cdg}}}
\newcommand{\aug}{{\mathsf{aug}}}
\newcommand{\coaug}{{\mathsf{coaug}}}
\newcommand{\conilp}{{\mathsf{conilp}}}
\newcommand{\inj}{{\mathsf{inj}}}
\newcommand{\proj}{{\mathsf{proj}}}
\newcommand{\bco}{{\mathsf{bco}}}
\newcommand{\bctr}{{\mathsf{bctr}}}

\newcommand{\sA}{\mathsf A}
\newcommand{\sB}{\mathsf B}
\newcommand{\sC}{\mathsf C}
\newcommand{\sD}{\mathsf D}
\newcommand{\sS}{\mathsf S}

\newcommand{\Quis}{\mathsf{Quis}}
\newcommand{\FQuis}{\mathsf{FQuis}}
\newcommand{\Hot}{\mathsf{Hot}}
\newcommand{\Ac}{\mathsf{Ac}}

\newcommand{\boZ}{\mathbb Z}
\newcommand{\boR}{\mathbb R}
\newcommand{\boL}{\mathbb L}

\newcommand{\g}{\mathfrak g}

\newcommand{\udT}{\rotatebox[origin=c]{180}{$T$}}

\theoremstyle{plain}
\newtheorem{thm}{Theorem}[section]
\newtheorem{prop}[thm]{Proposition}
\newtheorem{lem}[thm]{Lemma}

\theoremstyle{definition}

\newtheorem{rem}[thm]{Remark}
\newtheorem{rems}[thm]{Remarks}
\newtheorem{ex}[thm]{Example}
\newtheorem{exs}[thm]{Examples}

\theoremstyle{definition}
\newtheorem{defn}[thm]{Definition}

\newcommand{\Section}[1]{\bigskip\section{#1}\medskip}

\begin{document}

\title{Differential graded Koszul duality: \\ an introductory survey}

\author{Leonid Positselski}

\address{Institute of Mathematics of the Czech Academy of Sciences \\
\v Zitn\'a~25, 115~67 Praha~1 (Czech Republic)}

\email{positselski@math.cas.cz}

\begin{abstract}
 This is an overview on derived nonhomogeneous Koszul duality over
a field, mostly based on the author's memoir~\cite{Pkoszul}.
 The paper is intended to serve as a pedagogical introduction and
a summary of the covariant duality between DG\+al\-gebras and curved
DG\+coalgebras, as well as the triality between DG\+mod\-ules,
CDG\+comodules, and CDG\+contramodules.
 Some personal reminiscences are included as a part of historical
discussion.
\end{abstract}

\maketitle

\tableofcontents

\section*{Introduction}
\medskip

 Koszul duality is a fundamental phenomenon in mathematics, including
such fields as algebraic topology, algebraic and differential geometry,
and representation theory.
 This phenomenon is so general that it does not seem to admit
a ``maximal natural'' generality.
 Whatever formulation one comes up with, it can be likely further
extended by building something on top or underneath.

 Koszul duality is a synthetic subject.
 Depending on context, its formulation involves such concepts as
differential graded structures, curved diffferential graded structures,
comodules and contramodules, conilpotency, coderived and contraderived
categories, operads and properads, and whatnot.

 Koszul duality has to be distinguished from
the \emph{comodule-contramodule correspondence}.
 The co-contra correspondence is a less familiar though no less 
fundamental, but simpler phenomenon, often accompanying
the Koszul duality.
 The following rule of thumb for distinguishing the two phenomena
may be helpful.
 Viewed as a covariant category equivalence, the co-contra
correspondence takes injective objects to projective objects or
vice versa.
 Koszul duality takes projective objects to irreducible objects and/or
irreducible objects to injective objects etc.~\cite[Prologue]{Prel}.

 The aim of this paper is to introduce the reader to the subject of
derived Koszul duality in the context of differential graded algebras
and modules, as well as coalgebras, comodules, and contramodules;
and survey some of its main results.
 We also discuss curved differential structures, but \emph{no}
$\mathrm{A}_\infty$\+structures are considered in this paper
(the reader can find discussions of these in the Koszul duality
context in the dissertation~\cite{Lef} and
the memoirs~\cite{Pkoszul,Pweak}).
 \emph{No} relative Koszul duality settings (as in~\cite[Sections~0.4
and~11]{Psemi} or~\cite{Prel}) are discussed in this survey, either;
so everything in this paper happens over a field.
 We also do not consider operads and their generalizations, referring
instead to the book~\cite{LV} and the papers~\cite{HM,Val}.

 This introductory survey is intended for an audience versed in
homological algebra generally, but largely unfamiliar with the subject
of derived Koszul duality.
 It is almost entirely based on the author's memoir~\cite{Pkoszul}.
 The more elementary topic of \emph{underived} quadratic
duality~\cite{Prid,Pcurv,BGS,PP} is only briefly touched in this paper.
 \emph{Homogeneous} versions of derived Koszul duality~\cite{BGS},
\cite[Appendix~A]{Pkoszul} (involing graded modules over positively
or negatively graded algebras, with the differentials preserving
the grading) are not elaborated upon in this paper, either.
 Model structures and compact generators are also only briefly touched.

 We start with posing the problem in a basic particular case,
formulating the main results in an approximate form in the simplest
settings, and then proceed to introduce further ingredients and make
more precise and general assertions.
 Some sketches of proofs are included in this survey, but most
results receive a brief outline of an idea of the proof and
a reference to a more detailed treatment.

 All the Koszul duality functors, triangulated equivalences etc.\ in
this paper are \emph{covariant} functors or category equivalences.
 A discussion of \emph{contravariant} version of \emph{homogeneous}
Koszul duality can be found in~\cite[Section~A.2]{Pkoszul}.

 One disclaimer is in order.
 As usual in differential graded homological algebra, sign rules
(plus or minus) present a tedious problem.
 In this survey, we skip many descriptions of sign rules, referring
the reader to the original publications such as~\cite{Pkoszul}
for the details.
 So our formulations may be imprecise in that not all the signs
are properly spelled out.

\subsection*{Acknowledgements}
 I~am grateful to Andrey Lazarev for suggesting the idea of writing
this survey to me.
 I~wish to thank Jan \v St\!'ov\'\i\v cek for asking me questions
about derived Koszul duality and subsequently offering me to present
the material at his seminar on $\infty$\+categories.
 This stimulated me to come up with a pedagogical introductory
exposition.
 An acknowledgement is also due to Darya Krinitsina for asking me
a question about the proofs in~\cite[Section~6]{Pkoszul} which
is answered by Lemmas~\ref{freely-generated-cdg-module-lemma}\+-%
\ref{acyclic-filtration-lemma} in the last section of this survey.
 Last but not least, I~want to thank the anonymous referee for
careful reading of the manuscript and several helpful suggestions.
 The author is supported by the GA\v CR project 20-13778S and
research plan RVO:~67985840.

\Section{Algebras and Modules} \label{algebras-and-modules-secn}

 Throughout this paper, we work over a fixed ground field~$k$.
 Unless otherwise mentioned, all \emph{algebras} in this paper are
associative and unital, all \emph{modules} are unital, and all 
\emph{homomorphisms of algebras} are presumed to take the unit to
the unit.

 In this section we start posing the problem of derived nonhomogeneous
Koszul duality in the simplest particular case of complexes of modules
over an augmented $k$\+algebra~$A$.

\subsection{Augmented algebras} \label{augmented-algebras-subsecn}
 Let $A$ be an (associative, unital) algebra over a field~$k$.
 An \emph{augmentation}~$\alpha$ on $A$ is a (unital) $k$\+algebra
homomorphism $\alpha\:A\rarrow k$.
 So $\alpha(1)=1$, and the existence of an augmentation implies that
$1\ne0$ in~$A$; hence $\alpha$~is a surjective map.
 We denote by $A^+=\ker(\alpha)\subset A$ the augmentation ideal;
so $A^+$ is a two-sided ideal in $A$, and $A=k\oplus A^+$ as
a $k$\+vector space.
 The $k$\+algebra homomorphism $\alpha\:A\rarrow k$ endows
the one-dimensional $k$\+vector space~$k$ with left and right
$A$\+module structures.

 The following definition goes back to
the papers~\cite[Chapter~II]{EML} and~\cite{Ad}.
 The \emph{bar-construction} $\Br^\bu_\alpha(A)$ of an augmented
$k$\+algebra $A=(A,\alpha)$ is defined as the complex
$$
 k\overset0\llarrow A^+\overset\d\llarrow A^+\ot_k A^+
 \overset\d\llarrow A^+\ot_k A^+\ot_k A^+\llarrow\dotsb
$$
with the diffential given by the formulas $\d(a\ot b)=ab$, \
$\d(a\ot b\ot c)=ab\ot c-a\ot bc$,~\dots,
\begin{multline*}
 \d(a_1\ot\dotsb\ot a_n)=a_1a_2\ot a_3\ot\dotsb\ot a_n-\dotsb \\
 +(-1)^{i+1}a_1\ot\dotsb\ot a_{i-1}\ot a_ia_{i+1}\ot a_{i+2}\ot
 \dotsb\ot a_n+\dotsb \\
 +(-1)^n a_1\ot\dotsb\ot a_{n-2}\ot a_{n-1}a_n,
\end{multline*}
etc., for all $a$, $b$, $c$, $a_i\in A^+$, \ $n\ge1$, and
$1\le i\le n-1$.
 The leftmost differential $\d\:A^+\rarrow k$ is the zero map.

 The complex $\Br^\bu_\alpha(A)$ computes the $k$\+vector spaces
$\Tor^A_*(k,k)$ (where $k$~is endowed with left and right
$A$\+module structures via~$\alpha$).
 In other words, there are natural isomorphisms of $k$\+vector
spaces
$$
 H^{-n}\Br^\bu_\alpha(A)\simeq\Tor^A_n(k,k)
 \qquad\text{for all $n\ge0$}.
$$

 Let $(A,\alpha)$ and $(B,\beta)$ be two augmented $k$\+algebras,
and let $f\:A\rarrow B$ be a $k$\+algebra homomorphism compatible
with the augumentations (i.~e., satisfying the equation $\alpha=
\beta f$).
 Then there is the induced map of augmentation ideals $f^+\:
A^+\rarrow B^+$ and the induced map of the bar-constructions
$\Br^\bu(f)\:\Br^\bu_\alpha(A)\rarrow\Br^\bu_\beta(B)$.
 The latter map is a morphism of complexes of $k$\+vector spaces.

\begin{ex} \label{bar-of-algebras-quasi-isomorphism}
 The morphism of complexes $\Br^\bu(f)$ may well be a quasi-isomorphism
even when a morphism of augmented $k$\+algebras $f\:A\rarrow B$
is \emph{not} an isomorphism.

 For example, the complex $0\larrow B^+\larrow B^+\ot_k B^+\larrow
B^+\ot_k B^+\ot_k B^+\larrow\dotsb$ is acyclic whenever
the augmentation ideal $B^+$, viewed as an associative $k$\+algebra,
has a unit of its own.
 The sequence of $k$\+linear maps $h\:B^+{}^{\ot n}\rarrow
B^+{}^{\ot n+1}$ given by the rules $h(b_1\ot\dotsb\ot b_n)=
e\ot b_1\ot\dotsb\ot b_n$ for all $b_i\in B^+$, \ $1\le i\le n$,
\ $n\ge1$, where $e$~is the unit in $B^+$, provides
a contracting homotopy.
 
 Choose any nonzero associative, unital $k$\+algebra, denote it by
$B^+$, and adjoin a new unit to it formally, producing
the $k$\+algebra $B=k\oplus B^+$.
 Then the natural homomorphisms of augmented $k$\+algebras $k\rarrow B
\rarrow k$ induce quasi-isomorphisms of the bar-constructions.

 More generally, let $A$ and $B$ be two associative, unital
$k$\+algebras.
 Consider the direct sum $A\oplus B$, and endow it with a $k$\+algebra
structure as the product of $A$ and $B$ in the category of
$k$\+algebras.
 Choose an augmentation $\alpha\:A\rarrow k$, and let the augmentation
of $A\oplus B$ be constructed as the composition $A\oplus B\rarrow A
\overset\alpha\rarrow k$.
 Then the natural homomorphism of augmented $k$\+algebras $A\oplus B
\rarrow A$ (the direct summand projection) induces a quasi-isomorphism
of the bar-constructions.
\end{ex}

\subsection{Bar-constructions of modules} \label{bar-of-modules-subsecn}
 Let $A$ be an associative $k$\+algebra with an augmentation
$\alpha\:A\rarrow k$.
 Let $M$ be a left $A$\+module.

 The \emph{bar-construction} $\Br^\bu_\alpha(A,M)$ of $A$ with
the coefficients in $M$ is the complex
$$
 M \overset\d\llarrow A^+\ot_k M \overset\d\llarrow
 A^+\ot_k A^+\ot_k M\llarrow\dotsb
$$
with the diffferential given by the formulas $\d(a\ot m)=am$, \
$\d(a\ot b\ot m)=ab\ot\nobreak m-a\ot bm$,~\dots,
$$
 \d(a_1\ot\dotsb\ot a_n\ot m)=a_1a_2\ot a_3\ot\dotsb\ot a_n\ot m
 - \dotsb +(-1)^{n+1}a_1\ot\dotsb\ot a_{n-1}\ot a_nm,
$$
etc., for all $a$, $b$, $a_i\in A^+$, \ $m\in M$, and $n\ge1$.

 The complex $\Br^\bu_\alpha(A,M)$ computes the $k$\+vector spaces
$\Tor^A_*(k,M)$ (where $k$~is endowed with a right $A$\+module
structure via~$\alpha$).
 In other words, there are natural isomorphisms of $k$\+vector
spaces
$$
 H^{-n}\Br^\bu_\alpha(A,M)\simeq\Tor^A_n(k,M)
 \qquad\text{for all $n\ge0$}.
$$

\begin{exs} \label{bar-of-modules-acyclic}
 The complex $\Br^\bu_\alpha(A,M)$ may well be acyclic even when $M\ne0$.

\smallskip
 (1)~For example, the complex $\Br^\bu_\alpha(A,M)$ is acyclic whenever
the augmentation ideal $A^+$, viewed as an associative $k$\+algebra,
has a unit of its own.
 The sequence of $k$\+linear maps $h\:A^+{}^{\ot n}\ot_kM\rarrow
A^+{}^{\ot n+1}\ot_kM$ given by the rules $h(a_1\ot\dotsb\ot a_n\ot m)=
e\ot a_1\ot\dotsb\ot a_n\ot m$ for all $a_i\in A^+$, \ $m\in $M, \ 
$1\le i\le n$, \ $n\ge0$, where $e$~is the unit in $A^+$, provides
a contracting homotopy.

 More generally, let $A$ and $B$ be two associative, unital
$k$\+algebras, and let $\alpha\:A\rarrow k$ be an augmentation
of~$A$.
 Consider the direct sum $A\oplus B$, and endow it with an augmented
$k$\+algebra structure $(\alpha,0)\:A\oplus B\rarrow k$ as in
Example~\ref{bar-of-algebras-quasi-isomorphism}.
 Pick a left $B$\+module $M$, and endow it with an $(A\oplus B)$\+module
structure via the natural $k$\+algebra homomorphism (the direct
summand projection) $A\oplus B\rarrow B$.
 Then the complex $\Br^\bu_{(\alpha,0)}(A\oplus B,\>M)$ is acyclic.

\smallskip
 (2)~To give a couple of other examples, consider the algebra of
polynomials in one variable $A=k[x]$ over the field~$k$, endowed with
the augmentation $\alpha\:A\rarrow k$ given by the rule $\alpha(x)=0$.
 For any element $a\in k$, denote by $k_a=A/(x-a)$ the one-dimensional
$A$\+module in which the generator $x\in A$ acts by the operator
of multiplication with~$a$.
 In particular, in the $A$\+module $k=k_0$ the algebra $A$ acts via
the augmentation~$\alpha$.
 Put $M=k_a$, where $a\ne0$.
 Then $\Tor^A_n(k,M)=0$ for all $n\ge0$, hence the complex
$\Br^\bu_\alpha(A,M)$ is acyclic.

 Alternatively, consider the $A$\+module $M=k[x,x^{-1}]$ of
Laurent polynomials in~$x$ (or the $A$\+module $M=k(x)$ of rational
functions in~$x$).
 Once again, in these cases $\Tor^A_n(k,M)=0$ for all $n\ge0$,
and the complex $\Br^\bu_\alpha(A,M)$ is acyclic.
\end{exs}

\subsection{Posing the problem}
\label{algebras-modules-posing-the-problem-subsecn}
 Let $A=(A,\alpha)$ be an augmented $k$\+algebra, and let $M^\bu$ be
a complex of left $A$\+modules.
 Then the bar-construction $\Br^\bu_\alpha(A,M^\bu)$ is a bicomplex of
$k$\+vector spaces.
 Let us totalize this bicomplex by taking infinite direct sums along
the diagonals.

 The problem of derived (nonhomogeneous) Koszul duality can be
formulated as follows.
 We would like to endow the complex $\Br^\bu_\alpha(A,M^\bu)$ with
some natural structure, and define an equivalence relation on complexes
with such structures, in a suitable way so that the assignment
$$
 M^\bu\longmapsto\Br^\bu_\alpha(A,M^\bu)
$$
would be a triangulated equivalence between the unbounded derived
category $\sD(A\modl)$ of complexes of $A$\+modules and the triangulated
category of complexes with the said structure up to the said
equivalence relation.

 What structure should it be, and what should be the equivalence
relation?
 The first question is easier to answer: the bar-construction
$\Br^\bu_\alpha(A)$ has a natural \emph{DG\+coalgebra} structure,
and the complexes $\Br^\bu(A,M^\bu)$ are \emph{DG\+comodules} over
$\Br^\bu_\alpha(A)$, as will be explained below in
Sections~\ref{coalgebra-structure-cobar-subsecn}\+-%
\ref{duality-formulated-for-complexes-of-modules-subsecn}.

 The second question is harder, because the conventional notion of
quasi-iso\-mor\-phism is \emph{not} up to the task, as
Examples~\ref{bar-of-modules-acyclic} illustrate.
 The complex $\Br^\bu_\alpha(A,M)$ can be quasi-isomorphic to zero
for a quite nonzero one-term complex of $A$\+modules $M=M^\bu$.
 The relevant definition of the \emph{coderived category} of
DG\+comodules will be spelled out in
Section~\ref{coderived-cdg-comodules-subsecn}.

\Section{Coalgebras and Comodules}

 Unless otherwise mentioned, all \emph{coalgebras} in this paper are
coassociative, counital coalgebras over the field~$k$, all
\emph{comodules} are counital, and all \emph{homomorphisms of
coalgebras} are compatible with the counits.

 In this section we present the simplest initial formulations of
derived nonhomogeneous Koszul duality for complexes of modules over
an augmented algebra $A$ and for complexes of comodules over
a conilpotent coalgebra~$C$.

\subsection{Coalgebras and comodules}
\label{coalgebras-and-comodules-subsecn}
 The standard reference sources on coalgebras and comodules over
a field are the books~\cite{Swe,Mon}.
 The present author's overview~\cite{Prev} can be used as an additional
source.

 A (\emph{coassociative, counital}) \emph{coalgebra} $C$ over
a field~$k$ is a $k$\+vector space endowed with $k$\+linear maps of
\emph{comultiplication} and \emph{counit}
$$
 \mu\:C\rarrow C\ot_k C
 \qquad \text{and} \qquad \epsilon\:C\rarrow k
$$
satisfying the following \emph{coassociativity} and \emph{counitality}
axioms.
 Firstly, the two compositions
$$
 C\rarrow C\ot_k C\rightrightarrows C\ot_k C\ot_k C
$$
must be equal to each other, $(\mu\ot\id_C)\circ\mu=
(\id_C\ot\mu)\circ\mu$.
 Secondly, both the compositions
$$
 C\rarrow C\ot_k C\rightrightarrows C
$$
must be equal to the identity map, $(\epsilon\ot\id_C)\circ\mu=\id_C=
(\id_C\ot\epsilon)\circ\mu$.

 A \emph{left comodule} $M$ over a coalgebra $C$ is a $k$\+vector space
endowed with a $k$\+linear \emph{left coaction} map
$$
 \nu\:M\rarrow C\ot_k M
$$
satisfying the following coassociativity and counitality axioms.
 Firstly, the two compositions
$$
 M\rarrow C\ot_k M\rightrightarrows C\ot_k C\ot_k M
$$
must be equal to each other, $(\mu\ot\id_M)\circ\nu=
(\id_C\ot\nu)\circ\nu$.
 Secondly, the composition
$$
 M\rarrow C\ot_k M\rarrow M
$$
must be equal to the identity map, $(\epsilon\ot\id_M)\circ\nu=\id_M$.

 A \emph{right comodule} $N$ over $C$ is a $k$\+vector space endowed
with a \emph{right coaction} map
$$
 \nu\:N\rarrow N\ot_k C
$$
satisfying the similar coassociativity and counitality axioms.

 Let $V$ be a $k$\+vector space.
 Then the comultiplication map~$\mu$ on a coalgebra $C$ induces a left
coaction map $C\ot_k V\rarrow C\ot_k C\ot_k V$ on the $k$\+vector space
$C\ot_k V$ and a right coaction map $V\ot_k C\rarrow V\ot_k C\ot_k C$
on the $k$\+vector space $V\ot_k C$.
 The left $C$\+comodule $C\ot_k V$ and the right $C$\+comodule $V\ot_k C$
are called the \emph{cofree $C$\+comodules} cogenerated by the vector
space~$V$.

 For any left $C$\+comodule $L$, the $k$\+vector space of all left
$C$\+comodule maps $L\rarrow C\ot_k V$ is naturally isomorphic to
the $k$\+vector space of all $k$\+linear maps $L\rarrow V$,
$$
 \Hom_C(L,\>C\ot_kV)\simeq\Hom_k(L,V).
$$
 Hence the ``cofree comodule'' terminology.

\subsection{DG-coalgebras and DG-comodules}
\label{dg-coalgebras-subsecn}
 The following definitions, going back at least to the paper~\cite{EM2},
can be also found in~\cite[Sections~2.1 and~2.3]{Pkoszul}.

 A \emph{graded coalgebra} over~$k$ is a graded $k$\+vector space
$C=\bigoplus_{i\in\boZ} C^i$ endowed with a coalgebra structure
such that both the comultiplication map $\mu\:C\rarrow C\ot_k C$ and
the counit map $\epsilon\:C\rarrow k$ are morphisms of graded vector
spaces (i.~e., homogeneous linear maps of degree~$0$).
 Here the standard induced grading is presumed on the tensor product
$C\ot_k C$, while the one-dimensional $k$\+vector space~$k$ is
endowed with the grading where it is placed in the degree $i=0$.

 A \emph{graded left comodule} over a graded coalgebra $C$ is
a graded $k$\+vector space $M=\bigoplus_{i\in\boZ} M^i$ endowed with
a left $C$\+comodule structure such that the coaction map
$\nu\:M\rarrow C\ot_k M$ is a morphism of graded vector spaces.
 \emph{Graded right $C$\+comodules} are defined similarly.

 In particular, for any graded $k$\+vector space $V$, the graded
$k$\+vector space $C\ot_k V$ has a natural structure of a cofree graded
left $C$\+comodule, while the tensor product $V\ot_k C$ is a cofree
graded right $C$\+comodule.

 A \emph{DG\+coalgebra} $C^\bu=(C,d)$ over~$k$ is a complex of
$k$\+vector spaces endowed with a coalgebra structure such that both
the comultiplication map $\mu\:C^\bu\rarrow C^\bu\ot_k C^\bu$ and
the counit map $\epsilon\:C^\bu\rarrow k$ are morphisms of complexes
of vector spaces (i.~e., homogeneous linear maps of degree~$0$
commuting with the differentials).
 Here the standard induced differential $d(c'\ot c'')=d(c')\ot c''
+(-1)^i c'\ot d(c'')$ for all $c'\in C^i$ and $c''\in C^j$ is
presumed on the tensor product of complexes $C^\bu\ot_k C^\bu$,
while the differential on the $k$\+vector space~$k$ is zero.

 A \emph{left DG\+comodule} $M^\bu=(M,d_M)$ over a DG\+coalgebra $C^\bu$
is a complex of $k$\+vector spaces endowed with a left $C$\+comodule
structure such that the coaction map $\nu\:M^\bu\rarrow
C^\bu\ot_k M^\bu$ is a morphism of complexes of $k$\+vector spaces.
 \emph{Right DG\+comodules} over $C^\bu$ are defined similarly.

\subsection{Coalgebra structure on the bar-construction}
\label{coalgebra-structure-cobar-subsecn}
 Let $V$ be a $k$\+vector space.
 Then the direct sum of the tensor powers of~$V$,
$$
 k\oplus V\oplus (V\ot_k V)\oplus (V\ot_k V\ot_k V)\oplus\dotsb
 \,\simeq\,\bigoplus\nolimits_{n=0}^\infty V^{\ot n}
$$
can be naturally endowed with a structure of graded associative
algebra over~$k$.
 The multiplication in this algebra, denoted by
$T(V)=\bigoplus_{n=0}^\infty V^{\ot n}$, is given by the rule
$$
 (v_1\ot\dotsb\ot v_p)(w_1\ot\dotsb\ot w_q)=
 v_1\ot\dotsb\ot v_p\ot w_1\ot\dotsb\ot w_q
$$
for all $v_i$, $w_j\in V$, \ $1\le i\le p$, \ $1\le j\le q$, \
$p$, $q\ge0$.
 The unit element in $T(V)$ is $1\in k=V^{\ot 0}$.
 The algebra $T(V)$ is the free associative, unital algebra spanned
by the vector space~$V$.

 The same graded $k$\+vector space $\bigoplus_{n=0}^\infty V^{\ot n}$
also has a natural structure of graded coassociative coalgebra
over~$k$.
 The comultiplication in this coalgebra, denoted by
$\udT(V)=\bigoplus_{n=0}^\infty V^{\ot n}$, is given by the rule
$$
 \mu(v_1\ot\dotsb\ot v_n)=\sum\nolimits_{p+q=n}^{p,q\ge0}
 (v_1\ot\dotsb\ot v_p)\ot(v_{p+1}\ot\dotsb\ot v_{p+q})
$$
for all $v_i\in V$, \ $1\le i\le n$, \ $n\ge0$.
 The counit on $\udT(V)$ is the direct summand projection map
$\udT(V)\rarrow V^{\ot0}=k$.
 We refer to Remark~\ref{tensor-coalgebra-cofree-conilpotent-remark}
for a discussion of a cofreeness property of the tensor coalgebra
$\udT(V)$.

 Let $(A,\alpha)$ be an augmented algebra over~$k$.
 We observe that the underlying graded vector space $\Br_\alpha(A)$
of the bar-construction $\Br^\bu_\alpha(A)$, as defined in
Section~\ref{augmented-algebras-subsecn}, coincides (up to a grading
sign change) with the graded $k$\+vector space
$\bigoplus_{n=0}^\infty A^+{}^{\ot n}$.
 Consequently, the graded vector space $\Br_\alpha(A)$ has natural
structures of an associative algebra and a coassociative coalgebra.

 It turns out that the bar differential~$\d$ on $\Br^\bu_\alpha(A)$
does \emph{not} respect the multiplication on $\Br_\alpha(A)=T(A^+)$,
i.~e., $\d$~does \emph{not} satisfy any kind of Leibniz rule with
respect to the multiplication on $T(A^+)$.
 However, the differential~$\d$ is compatible with the comultiplication
on $\Br_\alpha(A)=\udT(A^+)$.
 In other words, the complex $\Br^\bu_\alpha(A)$ endowed with the graded
coalgebra structure of $\udT(A^+)$ is a \emph{DG\+coalgebra}
in the sense of the definition in Section~\ref{dg-coalgebras-subsecn}.

\subsection{Derived Koszul duality formulated for complexes of modules}
\label{duality-formulated-for-complexes-of-modules-subsecn}
 Let $(A,\alpha)$ be an augmented $k$\+algebra, and let $M^\bu$ be
a complex of left $A$\+modules.
 We will denote simply by $M$ the underlying graded $A$\+module
of~$M^\bu$.

 Then the underlying graded vector space $\Br_\alpha(A,M)$ of
the bar-construction $\Br^\bu_\alpha(A,M^\bu)$, as defined in
Sections~\ref{bar-of-modules-subsecn}\+-%
\ref{algebras-modules-posing-the-problem-subsecn}, can be viewed as
a cofree graded left comodule $\Br_\alpha(A,M)=\udT(A^+)\ot_k M$
over the tensor coalgebra $\udT(A^+)$ (as explained in
Sections~\ref{coalgebras-and-comodules-subsecn}\+-%
\ref{dg-coalgebras-subsecn}).
 It turns out that the total differential on $\Br^\bu_\alpha(A,M^\bu)$
is compatible with the bar differential~$\d$ on the DG\+coalgebra
$\Br^\bu_\alpha(A)$ and the left coaction of $\Br_\alpha(A)$ in
$\Br_\alpha(A,M)$.
 In other words, the complex $\Br^\bu_\alpha(A,M^\bu)$ endowed with
the cofree graded comodule structure of $\udT(A^+)\ot_k M$ is
a \emph{left DG\+comodule} over the DG\+coalgebra $\Br^\bu_\alpha(A)$.

 The following theorem is our first formulation of the derived Koszul
duality.
 At this point in our exposition, it is more of an advertisement than
a precise claim, in that it contains details which have not been
defined yet but will be defined below.

\begin{thm} \label{complexes-of-modules-augmented-bar-construction-thm}
 Let $(A,\alpha)$ be an augmented associative algebra over a field~$k$.
 Then the assignment $M^\bu\longmapsto\Br^\bu_\alpha(A,M^\bu)$ induces
a triangulated equivalence between the derived category of left
$A$\+modules\/ $\sD(A\modl)$ and the \emph{coderived} category of
left DG\+comodules over the DG\+coalgebra\/ $\Br^\bu_\alpha(A)$,
$$
 \sD(A\modl)\simeq\sD^\co(\Br^\bu_\alpha(A)\comodl).
$$
\end{thm}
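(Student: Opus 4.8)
The plan is to realize the Koszul duality functor as one half of an adjoint pair of DG\+functors associated with a canonical twisting cochain, and then to show that this pair induces mutually inverse triangulated equivalences after passing to the derived and coderived categories. Write $C^\bu=\Br^\bu_\alpha(A)$ for the bar DG\+coalgebra, and let $\tau\:C^\bu\rarrow A$ be the degree~$1$ map obtained as the composition of the projection of $\udT(A^+)$ onto its degree~$-1$ component $A^+$ with the inclusion $A^+\hookrightarrow A$, extended by zero on all the other graded components. A direct computation shows that~$\tau$ is a twisting cochain, the quadratic term of the Maurer--Cartan equation reproducing precisely the bar differential~$\d$. Using~$\tau$, I would form the \emph{Koszul} (bar) functor $\Phi\:A\modl\rarrow C^\bu\comodl$ taking a complex $M^\bu$ to the cofree graded comodule $C\ot_k M$ with the $\tau$\+twisted total differential --- this is exactly the DG\+comodule $\Br^\bu_\alpha(A,M^\bu)$ --- together with the \emph{cobar} functor $\Psi\:C^\bu\comodl\rarrow A\modl$ taking $N^\bu$ to the free graded module $A\ot_k N$ with the $\tau$\+twisted differential. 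On underlying graded objects the free-module and cofree-comodule adjunctions both identify the relevant morphism spaces with $\Hom_k$, and one checks that the twisted differentials match, so that $\Psi$ is left adjoint to~$\Phi$ as DG\+functors.

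Next I would verify that both functors descend to the localized categories. For~$\Phi$, filter $\Phi(M^\bu)$ by the bar degree (the number of tensor factors from~$A^+$); this is an exhaustive filtration by sub-DG-comodules, since the coaction does not raise the bar degree and the twisting part of the differential lowers it. The associated graded DG\+comodules carry the trivial $C^\bu$\+coaction and have underlying complexes of the form $A^+{}^{\ot n}\ot_k M^\bu$; when $M^\bu$ is acyclic these are acyclic complexes of $k$\+vector spaces, hence contractible, hence coacyclic. As the class of coacyclic comodules is closed under the relevant (convergent) filtered extensions, $\Phi$ carries acyclic complexes to coacyclic comodules and thus factors through $\sD(A\modl)\rarrow\sD^\co(C^\bu\comodl)$. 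For~$\Psi$, the key point is that over a field the functor $A\ot_k(-)$ is exact and commutes with infinite direct sums; since $\Psi$ is a DG\+functor preserving cones and direct sums, and the coacyclic class is generated from totalizations of short exact sequences of DG\+comodules under these operations, it suffices to note that $\Psi$ sends such a totalization to the totalization of a short exact sequence of complexes of $A$\+modules, which is acyclic. Hence $\Psi$ factors through $\sD^\co(C^\bu\comodl)\rarrow\sD(A\modl)$.

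It then remains to show that the two adjunction morphisms become isomorphisms. The counit $\Psi\Phi(M^\bu)\rarrow M^\bu$ identifies its source with the reduced two-sided bar construction $A\ot_k C\ot_k M$ carrying the twisted differential, i.~e., the standard resolution of $M^\bu$ by free $A$\+modules; its being a quasi-isomorphism onto $M^\bu$ is the classical fact, already recorded above, that the bar complex computes $\Tor^A_*(k,M)$. So the counit is invertible in $\sD(A\modl)$ for every~$M^\bu$. The unit $N^\bu\rarrow\Phi\Psi(N^\bu)$ is the remaining, and I expect the hardest, ingredient.

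The difficulty in the unit direction is that one must prove the cone of $N^\bu\rarrow\Phi\Psi(N^\bu)=C\ot_k A\ot_k N$ is \emph{coacyclic} --- not merely acyclic --- for \emph{every} DG\+comodule $N^\bu$, and this is exactly the place where the choice of the coderived category (rather than the ordinary derived category of comodules) is essential. The strategy I would follow exploits the \emph{conilpotency} of $C=\udT(A^+)$: the tensor coalgebra is cofree conilpotent, so every $C^\bu$\+comodule carries a canonical exhaustive filtration (the one induced by the coradical filtration on~$C$). For cofree comodules $C\ot_k V$ the composite $\Phi\Psi$ is computed directly and the unit is shown to be a coderived isomorphism, which reduces to the acyclicity of the bar resolution of~$k$; this disposes of the associated graded pieces. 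One then passes to an arbitrary $N^\bu$ by a d\'evissage along this filtration that is compatible with the defining closure properties of the coderived category. Conilpotency is precisely what makes the filtration argument converge; without it the unit need not be a coderived isomorphism. Granting both adjunction isomorphisms, $\Phi$ and $\Psi$ are mutually inverse triangulated equivalences, which is the assertion of the theorem.
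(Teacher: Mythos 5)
Your overall architecture coincides with that of the paper (which here only cites \cite{Pkoszul}, but whose strategy is laid out in Remarks~\ref{convergent-spectral-sequence-remark} and~\ref{motivation-coacyclics-to-contractibles-remark} and Section~\ref{comments-on-proof-subsecn}): the twisting-cochain adjunction $A\ot^\tau{-}\dashv C^\bu\ot^\tau{-}$, the descent of both functors via filtration arguments, and the identification of the counit with the standard bar resolution are all correct and agree with the intended proof. The one place where your argument does not close is the unit, which you have rightly singled out as the crux.

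The gap is concrete. The exhaustive filtration of a $C^\bu$\+comodule $N^\bu$ induced by the conilpotency (coradical) filtration of $C=\udT(A^+)$ has associated graded pieces that are \emph{trivial} comodules (coaction factoring through $\gamma(k)\ot_k N$), not cofree comodules $C\ot_kV$; so the base case you establish is not the one your d\'evissage produces. For a trivial piece $V^\bu$ the unit becomes $V^\bu\rarrow\Br^\bu_\alpha(A,A)\ot_kV^\bu$, and what must be shown is that its cone --- an \emph{acyclic} complex of $C$\+comodules --- is \emph{coacyclic}. The acyclicity of the bar resolution of~$k$ alone cannot deliver this: an acyclic complex of comodules need not be coacyclic (that is the entire point of Examples~\ref{bar-of-modules-acyclic}), and here, unlike in the cofree case, the cone is not of the form $C^\bu\ot^\tau(\text{acyclic complex of $A$\+modules})$, so your descent argument for $C^\bu\ot^\tau{-}$ does not apply. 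What is needed is the specific structure of the bar complex: its increasing filtration $F$ by bar degree satisfies $d(F_i)\subset F_{i+1}$ with an exact complex of subquotients, so the modified filtration $G_i=F_i+d(F_i)$ has freely cogenerated, hence contractible, subquotients --- this is precisely the content of Lemmas~\ref{freely-generated-cdg-module-lemma} and~\ref{acyclic-filtration-lemma} in their comodule incarnation. Your cofree base case is essentially correct (it does reduce to the already-established fact that $C^\bu\ot^\tau{-}$ takes acyclics to coacyclics), and one could alternatively d\'evisser $N^\bu$ into cofree pieces using its cobar coresolution; but proving that the direct sum totalization of that exact, bounded-below coresolution is coacyclic requires the very same lemma, so the freely-cogenerated-filtration argument cannot be bypassed. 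With that lemma supplied, your proof is complete.
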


\begin{proof}
 This is a particular case of~\cite[Theorem~6.3(a)]{Pkoszul};
see~\cite[Th\'eor\`eme~2.2.2.2]{Lef} and~\cite[Section~4]{Kel} for
an earlier approach.
 The inverse functor, $\sD^\co(\Br^\bu_\alpha(A)\comodl)\allowbreak
\rarrow\sD(A\modl)$, assigns to a left DG\+co\-mod\-ule $N^\bu$ over
the DG\+coalgebra $C^\bu=\Br^\bu_\alpha(A)$ the graded left $A$\+module
$A\ot_k N$ endowed with a natural differential whose construction
will be explained in
Sections~\ref{twisted-differential-on-tensor-product-subsecn}\+-%
\ref{augmented-duality-comodule-side-subsecn}.
 The definition of the coderived category will be given in
Section~\ref{coderived-cdg-comodules-subsecn}.
\end{proof}

 The triangulated equivalence of
Theorem~\ref{complexes-of-modules-augmented-bar-construction-thm}
takes the irreducible left $A$\+module~$k$ (with the $A$\+module
structure defined in terms of the augmentation~$\alpha$) to
the cofree left DG\+comodule $C^\bu$ over the DG\+coalgebra
$C^\bu=\Br^\bu_\alpha(A)$ (cf.\ the discussion of cofree comodules
in Section~\ref{coalgebras-and-comodules-subsecn}).
 The same equivalence takes the left DG\+comodule~$k$ over~$C^\bu$
(with the $C$\+comodule structure on~$k$ defined in terms of the unique
coaugmentation of~$C$; see Sections~\ref{cobar-construction-subsecn}
and~\ref{duality-dg-algebras-dg-coalgebras-subsecn} below) to
the free left $A$\+module~$A$.

 For generalizations of
Theorem~\ref{complexes-of-modules-augmented-bar-construction-thm},
see Theorems~\ref{complexes-of-modules-quadratic-linear-thm}
and~\ref{augmented-acyclic-twisting-cochain-duality-thm} below.

\subsection{Nonhomogeneous quadratic dual DG-coalgebra}
\label{nonhomogeneos-quadratic-dual-to-augmented-subsecn}
 The DG\+coalgebra $\Br^\bu_\alpha(A)$ produced by the bar-construction
is rather big.
 In particular, it is essentially \emph{never} finite-dimensional
(unless $A=k$).
 The concept of \emph{nonhomogeneous quadratic duality}, going back
to~\cite{Kos}, \cite[Section~3]{Prid}, \cite[Section~2.3]{FT},
and~\cite{Pcurv}, sometimes allows to produce a smaller DG\+coalgebra
that can be used in lieu of $\Br^\bu_\alpha(A)$ in Koszul duality
theorems such as
Theorem~\ref{complexes-of-modules-augmented-bar-construction-thm}.
{\hbadness=2400\par}

 Let $A=\bigoplus_{n=0}^\infty A_n$ be a nonnegatively graded
$k$\+algebra with $A_0=k$.
 Then the direct summand inclusion $A_1\rarrow A$ can be uniquely
extended to a morphism of graded algebras $\pi\:T(A_1)\rarrow A$.
 The algebra $A$ is said to be \emph{quadratic} if
the homomorphism~$\pi$ is surjective and its kernel is generated by
elements of degree~$2$ (simply speaking, this means that the algebra $A$
is generated by elements of degree~$1$ with relations in degree~$2$).

 The graded algebra $A$ is called \emph{Koszul} if $\Tor^A_{ij}(k,k)=0$
for all $i\ne j$.
 Here the first grading~$i$ on the $\Tor$ spaces is the usual
homological grading, while the second grading~$j$, called
the \emph{internal} grading, is induced by the grading of~$A$.
 This condition for $i=1$ means precisely that the algebra $A$ is
generated by $A_1$; assuming this is the case, the same condition
for $i=2$ means precisely that $A$ is defined by quadratic relations.
 So any Koszul graded algebra is quadratic~\cite[Corollary~5.3 in
Chapter~1]{PP}.

 Let $A=\bigoplus_{n=0}^\infty A_n$ be a quadratic algebra.
 Put $V=A_1$, and denote by $I\subset V\ot_k V$ the kernel of
the (surjective) multiplication map $A_1\ot_k A_1\rarrow A_2$.
 So $V$ is the space of generators of the quadratic algebra $A$,
and $I$ is the space of defining quadratic relations of~$A$.
 The grading components of $A$ can be expressed in terms of $V$ and $I$
by the formula
$$
 A_n=V^{\ot n}\Big/\sum\nolimits_{i=1}^{n-1}
 V^{\ot i-1}\ot_k I\ot_k V^{\ot n-i-1}, \qquad n\ge 1.
$$

 Consider the tensor coalgebra $\udT(V)$, and denote by $C=A^?$
the graded subcoalgebra in $\udT(V)$ with the components $C^0=k$, \
$C^1=V$, \ $C^2=I$, and generally
$$
 C^n=\bigcap\nolimits_{i=1}^{n-1} V^{\ot i-1}\ot_k I\ot_k V^{\ot n-i-1}
 \,\subset\,V^{\ot n}, \qquad n\ge1.
$$
 The coalgebra $C$ is called the \emph{quadratic dual coalgebra} to
a quadratic algebra~$A$ \,\cite[Section~2.1]{PV}.

\begin{rem}
 In most expositions, including the present author's~\cite{Pcurv}
and~\cite{PP}, the quadratic duality is viewed as a contravariant
functor constructed using the passage to the dual vector space.
 The contravariant duality assigns to a quadratic algebra $A$
the quadratic algebra~$A^!$, which can be obtained as the graded dual
$k$\+vector space to the coalgebra $C=A^?$, that is, $A^!_n=(C^n)^*$
for all $n\ge0$.
 The construction of the quadratic algebra $A^!$ works well for
quadratic algebras $A$ with finite-dimensional components (see,
e.~g., \cite[Section~3 in Chapter~1]{PP}), and it can be made to work
without the assumption of locally finite dimension by considering
linearly compact topological vector spaces.
 Our general preference is to use coalgebras instead, and consider
the covariant quadratic duality between algebras and coalgebras as
constructed above.
 In particular, the intended applications in~\cite{PV} were quadratic
algebras with infinite-dimensional components; that is one reason
why the covariant algebra-coalgebra quadratic duality was introduced
in~\cite{PV}.
 In the context of derived nonhomogeneous Koszul duality over a field,
which is the subject of this survey, the language of covariant duality
is more illuminating, in our view.
\end{rem}

 Let $0=F_{-1}A\subset k=F_0A\subset F_1A\subset F_2A\subset\dotsb$ be
a $k$\+algebra endowed with an increasing filtration $F$ such that
$A=\bigcup_{n\ge0} F_nA$, \ $1\in F_0A$, and $F_nA\cdot F_mA\subset
F_{n+m}A$ for all $n$, $m\ge0$.
 Then the associated graded vector space $\gr^FA=\bigoplus_{n=0}^\infty
F_nA/F_{n-1}A$ carries a naturally induced associative algebra
structure.
 A \emph{nonhomogeneous quadratic algebra} is a filtered algebra
$(A,F)$ such that the graded algebra $\gr^FA$ is quadratic.
 A \emph{nonhomogeneous Koszul algebra} is a filtered algebra
$(A,F)$ such that the graded algebra $\gr^FA$ is
Koszul~\cite[Section~2]{Prid}, \cite[Chapter~5]{PP}.

\begin{ex} \label{trivial-filtration-example}
 (1)~Let $V$ be a $k$\+vector space.
 Consider the graded algebra $A=\bigoplus_{n=0}^\infty A_n$ with
the components $A_0=k$, \ $A_1=V$, and $A_n=0$ for all $n\ge2$.
 The multiplication on the graded algebra $A$ is trivial:
the multiplication map $A_n\otimes A_m\rarrow A_{n+m}$ vanishes
for all $n$, $m>0$ (in the cases when $n=0$ or $m=0$
the multiplication map is determined by the condition that
$1\in k=A_0$ is the unit element of~$A$).
 The graded algebra $A$ is well-known to be Koszul (see, e.~g.,
\cite[Corollary~2.4 or~4.3 in Chapter~2]{PP} for much more
general results).
 In this example, the quadratic dual coalgebra $C=A^?$ is the whole
tensor coalgebra, that is, $C=\udT(V)$.

\smallskip
 (2)~Any nonzero associative algebra $A$ can be endowed with
the \emph{trivial} filtration, $F_0A=k\cdot 1$ and $F_1A=A$.
 Then the associated graded algebra $\gr^FA$ has grading components
$\gr^F_0A=k$, \ $\gr^F_1A=A/(k\cdot\nobreak1)$, and $\gr^F_nA=0$
for $n\ge2$.
 So the algebra $\gr^FA$ has the form of example~(1) above (with
$V=A/(k\cdot\nobreak1)$), and the multiplication on it is trivial in
the sense we have explained.
 Thus the passage to the associated graded algebra of an algebra $A$
with respect to the trivial filtration $F$ destroys all the information
about the multiplication in~$A$.

 It is clear from this discussion that \emph{any} nonzero associative
algebra $A$ endowed with the trivial filtration $F$ is a nonhomogeneous
Koszul algebra.
 Some associative algebras $A$ admit nontrivial filtrations making them
nonhomogeneous Koszul algebras with a more interesting associated graded
algebra $\gr^FA$ and a smaller quadratic dual coalgebra $C=(\gr^FA)^?$
(cf.\ Example~\ref{chevalley-eilenberg} below).
\end{ex}

 Let $(A,F)$ be a nonhomogeneous quadratic algebra and $C=(\gr^FA)^?$
be the quadratic dual coalgebra to the quadratic algebra $\gr^FA$.
 Assume additionally that the algebra $A$ is endowed with
an augmentation $\alpha\:A\rarrow k$ with the augmentation ideal
$A^+\subset A$.
 Then there is a natural isomorphism $\gr^F_1A=F_1A/F_0A\simeq
A^+\cap F_1A\subset A$.
 So the vector space $V=\gr^F_1A$ can be viewed as a subspace in $A^+$,
and the graded coalgebra $C$ can be viewed as a subcoalgebra in
$\udT(A^+)$,
$$
 C\subset\udT(V)\subset\udT(A^+)=\Br_\alpha(A).
$$

 One observes that the subcoalgebra $C\subset\Br_\alpha(A)$ is
in fact a DG\+subcoalgebra $C^\bu\subset\Br^\bu_\alpha(A)$, that is
$\d(C)\subset C$.
 Essentially, the first reason for that is because the multiplication
map $A^+\ot_k A^+\rarrow A^+$ restricted to the subspace
$$
 I\subset\gr^F_1A\ot_k\gr^F_1A\simeq (A^+\cap F_1A)\ot_k(A^+\cap F_1A)
 \subset A^+\ot_k A^+
$$
lands within the subspace $A^+\cap F_1 A \subset A^+$ (by the definition
of $I$ as the kernel of the multiplication map $\gr^F_1A\ot_k\gr^F_1A
\rarrow\gr^F_2A=F_2A/F_1A$ in the graded algebra $\gr^FA$).
 Further details need to be checked; we refer
to~\cite[Section~3]{Prid}, \cite[Proposition~2.2]{Pcurv}
or~\cite[Proposition~4.1 in Chapter~5]{PP}.
 A generalization to nonaugmented algebras $A$ will be discussed in
Example~\ref{curved-nonhomog-quadratic-twisting-cochain} below.

 Let $(A,F,\alpha)$ be an augmented nonhomogeneous quadratic algebra
and $M^\bu$ be a complex of left $A$\+modules.
 Then one can easily see that $C\ot_kM\subset\udT(A^+)\ot_kM=
\Br_\alpha(A,M)$ is a subcomplex in $\Br^\bu_\alpha(A,M)$, that is
$\d(C\ot_k M)\subset C\ot_k M$.
 In anticipation of the discussion in
Section~\ref{twisting-cochains-secn}, we denote the complex $C\ot_kM$
with the differential induced by the differential~$\d$ on
$\Br^\bu_\alpha(A,M)$ by $C^\bu\ot^\tau M^\bu$.
 This notation is indented to emphasize the fact that the differential
on $C^\bu\ot^\tau M^\bu$ is \emph{not} simply the tensor product
differential on the tensor product of two complexes $C^\bu\ot_k M^\bu$,
but rather some twisted version of it taking the action of $A$ in $M$
into account.

 Let $(A,F,\alpha)$ be a nonhomogeneous Koszul algebra.
 Then the DG\+coalgebra $C^\bu$ is quasi-isomorphic to the ambient
DG\+coalgebra $\Br^\bu_\alpha(A)$, and the complex $C^\bu\ot^\tau
M^\bu$ is quasi-isomorphic to the ambient bar-complex
$\Br^\bu_\alpha(A,M)$.
 Accordingly, one has
$$
 H^{-n}(C^\bu)\simeq\Tor_n^A(k,k)
 \quad\text{and}\quad
 H^{-n}(C^\bu\ot^\tau M)\simeq\Tor_n^A(k,M)
$$
for any left $A$\+module $M$ and all $n\ge0$.

\begin{thm} \label{complexes-of-modules-quadratic-linear-thm}
 Let $(A,F,\alpha)$ be an augmented nonhomogeneous Koszul algebra
over a field~$k$.
 Then the assignment $M^\bu\longmapsto C^\bu\ot^\tau M^\bu$ induces
a triangulated equivalence between the derived category of left
$A$\+modules\/ $\sD(A\modl)$ and the \emph{coderived} category of left
DG\+comodules over the DG\+coalgebra~$C^\bu$,
$$
 \sD(A\modl)\simeq\sD^\co(C^\bu\comodl).
$$
\end{thm}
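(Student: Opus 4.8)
The plan is to deduce the theorem from Theorem~\ref{complexes-of-modules-augmented-bar-construction-thm} by replacing the enormous bar-coalgebra $\Br^\bu_\alpha(A)$ with its small quasi-isomorphic DG\+subcoalgebra $C^\bu$. As observed above, $C^\bu$ sits inside $\Br_\alpha(A)=\udT(A^+)$ as a DG\+subcoalgebra, so the inclusion $\iota\:C^\bu\hookrightarrow\Br^\bu_\alpha(A)$ is a morphism of coaugmented DG\+coalgebras; both coalgebras are \emph{conilpotent}, being subcoalgebras of the cofree conilpotent coalgebra $\udT(A^+)$. The nonhomogeneous Koszul hypothesis on $(A,F,\alpha)$ is exactly what makes $\iota$ a quasi-isomorphism: both $C^\bu$ and $\Br^\bu_\alpha(A)$ compute $\Tor^A_*(k,k)$, and $\iota$ induces the identity on these groups.

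Recall next that the homomorphism $\iota$ induces a corestriction-of-scalars functor $\iota_*$ on DG\+comodules, carrying a left $C^\bu$\+DG\+comodule $N$ to the same underlying complex equipped with the left $\Br^\bu_\alpha(A)$\+coaction $N\rarrow C^\bu\ot_k N\rarrow\Br^\bu_\alpha(A)\ot_k N$ obtained by composing the $C^\bu$\+coaction with~$\iota$. Applied to $C^\bu\ot^\tau M^\bu$, the inclusion of the subcomplex $C\ot_k M\subset\udT(A^+)\ot_k M=\Br_\alpha(A,M)$ becomes a morphism of $\Br^\bu_\alpha(A)$\+DG\+comodules
$$
 \iota_*(C^\bu\ot^\tau M^\bu)\hookrightarrow\Br^\bu_\alpha(A,M^\bu),
$$
which is a quasi-isomorphism by Koszulity. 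The heart of the argument is the assertion that $\iota_*$ descends to a \emph{triangulated equivalence} of coderived categories $\sD^\co(C^\bu\comodl)\simeq\sD^\co(\Br^\bu_\alpha(A)\comodl)$.

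I expect this invariance statement to be the main obstacle, and here I would invoke the general machinery of~\cite{Pkoszul}: a quasi-isomorphism of conilpotent coaugmented DG\+coalgebras over a field induces an equivalence of the coderived categories of their DG\+comodules via corestriction of scalars. This is genuinely delicate, since a quasi-isomorphism of DG\+comodules is \emph{not} in general an isomorphism in the coderived category---coacyclicity being strictly stronger than acyclicity---so that the conclusion truly uses the conilpotence of $C^\bu$ and $\udT(A^+)$. The proof compares the two sides through their cofree coresolutions, the coderived category of comodules over a conilpotent coalgebra being computed by complexes of cofree comodules.

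Granting the equivalence $\iota_*$, the remainder is formal. Composing $\iota_*^{-1}$ with the equivalence of Theorem~\ref{complexes-of-modules-augmented-bar-construction-thm} yields a triangulated equivalence
$$
 \sD(A\modl)\,\overset{\sim}{\rarrow}\,\sD^\co(\Br^\bu_\alpha(A)\comodl)
 \,\overset{\sim}{\rarrow}\,\sD^\co(C^\bu\comodl).
$$
To identify it with $M^\bu\mapsto C^\bu\ot^\tau M^\bu$, I must check that the comparison morphism displayed above is an isomorphism in $\sD^\co(\Br^\bu_\alpha(A)\comodl)$. Rather than verify coacyclicity of its cone by hand, I would apply the inverse equivalence of Theorem~\ref{complexes-of-modules-augmented-bar-construction-thm}, namely the functor $N^\bu\mapsto A\ot_k N^\bu$ equipped with its twisted differential. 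Because corestriction along~$\iota$ matches the twisting cochain on $\udT(A^+)$ with its restriction to $C^\bu$, this inverse functor sends the comparison morphism to a map of $A$\+modules whose source $A\ot^\tau C^\bu\ot^\tau M^\bu$ and target both resolve $M^\bu$ by Koszulity, and which is therefore a quasi-isomorphism, hence an isomorphism in $\sD(A\modl)$. An equivalence reflects isomorphisms, so the comparison morphism is a coderived isomorphism; naturality in $M^\bu$ then promotes the pointwise identification to an isomorphism of functors, completing the proof.
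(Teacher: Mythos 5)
There is a genuine gap at the step you yourself flag as the main obstacle. The ``general machinery'' you invoke---that a \emph{quasi-isomorphism} of conilpotent coaugmented DG\+coalgebras induces, via corestriction of scalars, an equivalence of the coderived categories of DG\+comodules---is false, and the paper itself supplies counterexamples. Take $B=k\oplus B^+$ with $B^+$ unital as in Example~\ref{bar-of-algebras-quasi-isomorphism}: the map $k\rarrow\Br^\bu_\beta(B)$ is a quasi-isomorphism of conilpotent DG\+coalgebras, yet $\sD^\co(k\comodl)$ is the derived category of vector spaces while $\sD^\co(\Br^\bu_\beta(B)\comodl)\simeq\sD(B\modl)$ by Theorem~\ref{complexes-of-modules-augmented-bar-construction-thm}; corestriction is nowhere near an equivalence. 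This is exactly the point of Remarks~\ref{cobar-not-preserves-quasi-isomorphisms-remarks}: the correct invariance of the coderived category is with respect to \emph{filtered} quasi-isomorphisms (equivalently, morphisms inverted in $k\coalg_\dg^\conilp[\FQuis^{-1}]$, i.e.\ morphisms whose cobar-construction is a quasi-isomorphism of DG\+algebras, cf.\ Theorem~\ref{augmented-algebras-coalgebras-duality}), and conilpotency alone does not upgrade a quasi-isomorphism to this status. Consequently the property you extract from Koszulity---that $\iota\:C^\bu\hookrightarrow\Br^\bu_\alpha(A)$ induces an isomorphism on cohomology because both sides compute $\Tor^A_*(k,k)$---is not the operative hypothesis and cannot carry the argument. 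Your concluding paragraph does not rescue this: showing $\iota_*(C^\bu\ot^\tau M^\bu)\rarrow\Br^\bu_\alpha(A,M^\bu)$ to be a coderived isomorphism only identifies $\iota_*\circ(C^\bu\ot^\tau{-})$ with the bar-construction equivalence, which says nothing about $C^\bu\ot^\tau{-}$ itself unless $\iota_*$ is already known to be an equivalence.

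What the nonhomogeneous Koszul hypothesis actually provides, via the Poincar\'e--Birkhoff--Witt theorem of~\cite{Pcurv,PP}, is the strictly stronger statement that the twisting cochain $\tau\:C^\bu\rarrow A$ of Example~\ref{nonhomogeneous-quadratic-twisting-cochain} is \emph{acyclic} in the sense of Section~\ref{bar-cobar-and-acyclic-twisting-cochains-subsecn}, i.e.\ that $\Cb^\bu_\gamma(C^\bu)\rarrow A$ is a quasi-isomorphism of DG\+algebras (equivalently, that $\iota$ becomes invertible after the filtered quasi-isomorphisms are inverted). The paper's route, following \cite[Theorems~6.5(a) and~6.6]{Pkoszul}, feeds this acyclicity directly into the general acyclic-twisting-cochain duality (Theorem~\ref{augmented-acyclic-twisting-cochain-duality-thm} of this survey), with the adjoint pair $C^\bu\ot^\tau{-}$ and $A\ot^\tau{-}$ furnishing the mutually inverse equivalences; no comparison of coderived categories along $\iota$ is needed. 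Your reduction to Theorem~\ref{complexes-of-modules-augmented-bar-construction-thm} could be repaired by replacing the false quasi-isomorphism invariance with this acyclicity input, but as written the central lemma you rely on fails.
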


\begin{proof}
 This is a particular case of~\cite[Theorems~6.5(a) and~6.6]{Pkoszul}.
 The inverse functor, $\sD^\co(C^\bu\comodl)\rarrow\sD(A\modl)$, assigns
to a left DG\+comodule $N^\bu$ over $C^\bu$ the complex of left
$A$\+modules $A\ot^\tau N^\bu$, which means the graded left $A$\+module
$A\ot_k N$ endowed with a twisted differential, as explained below
in Sections~\ref{twisted-differential-on-tensor-product-subsecn}\+-%
\ref{augmented-duality-comodule-side-subsecn}.
 The definition of the coderived category will be given
in Section~\ref{coderived-cdg-comodules-subsecn}.
\end{proof}

 Any associative algebra $A$ can be viewed as a nonhomogeneous Koszul
algebra with the trivial filtration, as explained in
Example~\ref{trivial-filtration-example}(2).
 For an augmented algebra $A$ endowed with the trivial filtration,
one has $C^\bu=\Br^\bu_\alpha(A)$.
 So Theorem~\ref{complexes-of-modules-augmented-bar-construction-thm}
can be obtained as a particular case of
Theorem~\ref{complexes-of-modules-quadratic-linear-thm} for
the trivial filtration~$F$.

 For a generalization of
Theorem~\ref{complexes-of-modules-quadratic-linear-thm}, see
Theorem~\ref{augmented-acyclic-twisting-cochain-duality-thm} below.

\begin{ex} \label{chevalley-eilenberg}
 Let $\g$ be a Lie algebra over a field~$k$ and $\Lambda(\g)$ be
the exterior algebra spanned by the $k$\+vector space~$\g$.
 Then the Chevalley--Eilenberg differential~$\d$ on $\Lambda(\g)$
makes $\Lambda(\g)$ a complex computing the homology spaces
$H_*(\g,k)$ of the Lie algebra $\g$ with coefficients in
the trivial $\g$\+module~$k$ \,\cite[Chapter~III]{ChE}, \cite{Kos}.
 The differential~$\d$ does not respect the exterior multiplication
on $\Lambda(\g)$, but it is an odd coderivation of the exterior
comultiplication; so $C^\bu=(\Lambda(\g),\d)$ is a DG\+coalgebra.

 The enveloping algebra $U(\g)$ is endowed with the natural
(Poincar\'e--Birkhoff--Witt) filtration $F$, making $U(\g)$
(the thematic example of) a nonhomogeneous Koszul algebra over
a field.
 There is also a natural augmentation~$\alpha$ on $U(\g)$,
corresponding to the action of~$\g$ in trivial $\g$\+modules.
 The construction above assigns to the augmented nonhomogeneous
quadratic algebra $(U(g),F,\alpha)$ the DG\+coalgebra
$(\Lambda(\g),\d)$.

 Moreover, for any $\g$\+module $M$, the homological
Chevalley--Eilenberg complex $(\Lambda(\g)\ot_kM,\>\d)$ of
the Lie algebra~$\g$ with coefficients in $M$ is a DG\+comodule
over the $(\Lambda(\g),\d)$.
 The construction above assigns the DG\+comodule $C^\bu\ot^\tau M
=(\Lambda(\g)\ot_kM,\>\d)$ to the $U(\g)$\+module~$M$.

 Thus Theorem~\ref{complexes-of-modules-quadratic-linear-thm}
claims that the construction of the homological Chevalley--Eilenberg
complex induces an equivalence between the derived category of
$\g$\+modules and the coderived category of DG\+comodules over
the DG\+coalgebra $(\Lambda(\g),\d)$.
 For a finite-dimensional Lie algebra~$\g$, the coalgebra
$\Lambda(\g)$ is finite-dimensional; so one can pass to the dual
algebra $\Lambda(\g^*)$ and say that the construction of
the cohomological Chevalley--Eilenberg complex
$(\Lambda(\g^*)\ot_k M,\>d)$ induces an equivalence between
the derived category of $\g$\+modules and the coderived category
of DG\+modules over the DG\+algebra
$\Lambda^\bu(\g^*)=(\Lambda(\g^*),d)$,
\begin{equation} \label{fin-dim-lie-algebra-koszul-duality}
 \sD(\g\modl)\simeq\sD^\co(\Lambda^\bu(\g^*)\modl).
\end{equation}
 This example can be found in~\cite[Example~6.6]{Pkoszul}.

 Let us emphasize that these assertions are certainly \emph{not}
true for the conventional derived category of DG\+(co)modules
in place of the coderived category.
 In fact, the derived category $\sD(\g\modl)$ is \emph{not}
equivalent to the derived category $\sD(\Lambda^\bu(\g^*)\modl)$
already when $\g$~is a finite-dimensional semisimple Lie algebra
over a field~$k$ of characteristic~$0$, or indeed, a one-dimensional
abelian Lie algebra (cf.\ Example~\ref{bar-of-modules-acyclic}(2)).
 We will continue this discussion in
Example~\ref{semisimple-chevalley-eilenberg} below.
\end{ex}

 A relative version of
Theorem~\ref{complexes-of-modules-quadratic-linear-thm}, with
the ground field~$k$ replaced by an arbitrary associative ring $R$,
can be found in the book~\cite{Prel}.
 See~\cite[Sections~3.8 and~6.6]{Prel}.

\subsection{Cobar-construction; DG-algebra and DG-module structures}
\label{cobar-construction-subsecn}
 Let $C$ be a (coassociative, counital) coalgebra over a field~$k$,
as defined in Section~\ref{coalgebras-and-comodules-subsecn}.
 A \emph{coaugmentation}~$\gamma$ on $C$ is a (counital) coalgebra
homomorphism $\gamma\:k\rarrow C$.
 So the composition $k\overset\gamma\rarrow C\overset\epsilon\rarrow k$
is the identity map.
 We denote by $C^+=\coker(\gamma)$ the cokernel of the augmentation map.
 Generally speaking, $C^+$ is a coassociative coalgebra without counit
(dually to the augmentation ideal, which is an associative algebra
without unit).
 The coalgebra homomorphism $\gamma\:k\rarrow C$ endows
the one-dimensional $k$\+vector space~$k$ with left and right
$C$\+comodule structures.

 The \emph{cobar-construction} $\Cb^\bu_\gamma(C)$ of a coaugmented
coalgebra $C=(C,\gamma)$ is defined as
the complex~\cite[Chapter~II]{EML}, \cite{Ad}
$$
 k\overset0\lrarrow C^+\overset\d\lrarrow C^+\ot_k C^+
 \overset\d\lrarrow C^+\ot_k C^+\ot_k C^+\lrarrow\dotsb
$$
with the differential given by the formulas $\d(c)=\mu^+(c)$, \ 
$\d(c_1\ot c_2)=\mu^+(c_1)\ot c_2-c_1\ot\mu^+(c_2)$,~\dots,
\begin{multline*}
 \d(c_1\ot\dotsb\ot c_n)=\mu^+(c_1)\ot c_2\ot\dotsb\ot c_n - \dotsb \\
 + (-1)^{i-1} c_1\ot\dotsb\ot c_{i-1}\ot\mu^+(c_i)\ot c_{i+1}\ot\dotsb
 \ot c_n+\dotsb+(-1)^{n-1}c_1\ot\dotsb\ot c_{n-1}\ot\mu^+(c_n),
\end{multline*}
etc., for all $c$, $c_i\in C^+$, \ $1\le i\le n$, and $n\ge1$.
 Here $\mu^+\:C^+\rarrow C^+\ot_k C^+$ is the comultiplication map
on~$C^+$ (induced by the comultiplication map $\mu\:C\rarrow C\ot_k C$).
 The leftmost differential $\d\:k\rarrow C^+$ is the zero map.

 Let $(C,\gamma)$ be a coaugmented coalgebra and $M$ be a left
$C$\+comodule.
 The \emph{cobar-construction} $\Cb^\bu_\gamma(C,M)$ of $C$ with
the coefficients in $M$ is the complex
$$
 M\overset\d\lrarrow C^+\ot_k M \overset\d\lrarrow
 C^+\ot_k C^+\ot_k M\lrarrow\dotsb
$$
with the differentials given by the formulas $\d(m)=\nu^+(m)$, \
$\d(c\ot m)=\mu^+(c)\ot m-c\ot\nu^+(m)$,~\dots,
\begin{multline*}
 \d(c_1\ot\dotsb\ot c_n\ot m)=
 \mu^+(c_1)\ot c_2\ot\dotsb\ot c_n\ot m-\dotsb \\
 +(-1)^{n-1} c_1\ot\dotsb\ot c_{n-1}\ot\mu^+(c_n)\ot m
 +(-1)^n c_1\ot\dotsb\ot c_n\ot\nu^+(m),
\end{multline*}
etc., for all $c_i\in C^+$, \ $1\le i\le n$, and $n\ge0$.
 Here $\nu^+\:M\rarrow C^+\ot_k M$ is the left $C^+$\+coaction map
obtained by composing the left $C$\+coaction map $\nu\:M\rarrow
C\ot_k M$ with the natural surjection $C\ot_k M\rarrow C^+\ot_k M$.

 Dually to the discussion in
Sections~\ref{coalgebra-structure-cobar-subsecn}\+-%
\ref{duality-formulated-for-complexes-of-modules-subsecn},
one observes that the underlying graded vector space $\Cb_\gamma(C)$
of the cobar-construction $\Cb^\bu_\gamma(C)$ coincides with
the graded $k$\+vector space $\bigoplus_{n=0}^\infty C^+{}^{\ot n}$.
 Consequently, the graded vector space $\Cb_\gamma(C)$ has natural
structures of an associative algebra and a coassociative coalgebra.

 It turns out that the cobar differential~$\d$ on $\Cb^\bu_\gamma(C)$
does \emph{not} respect the comultiplication on $\Cb_\gamma(C)=
\udT(C^+)$.
 However, the differential~$\d$ is compatible with the multiplication on
$\Cb_\gamma(C)=T(C^+)$, i.~e., it satisfies the Leibniz rule with signs
$$
 \d(ab)=\d(a)b+(-1)^{|a|}a\d(b)
$$
for any elements $a$ and $b\in T(C^+)$ of degrees $|a|$ and~$|b|$.
 In other words, the complex $\Cb^\bu_\gamma(C)$ endowed with
the graded algebra structure of $T(C^+)$ is a DG\+algebra over~$k$.

 The graded algebra of cohomology of the DG\+algebra $\Cb^\bu_\gamma(C)$
is naturally isomorphic to (the graded algebra with the opposite
multiplication to) the Ext algebra of the left $C$\+comodule~$k$,
$$
 H^*\Cb^\bu_\gamma(C)\simeq\Ext_C^*(k,k),
$$
where $k$~is endowed with the left $C$\+comodule structure via~$\gamma$
and the Ext is computed in the abelian category of left $C$\+comodules.
 Similarly, for any left $C$\+comodule $M$, the graded module of
cohomology of the DG\+module $\Cb^\bu_\gamma(C,M)$ over
the DG\+algebra $\Cb^\bu_\gamma(C)$ (see the discussion in the next
Section~\ref{duality-formulated-for-complexes-of-comodules})
is naturally isomorphic to the Ext module $\Ext_C^*(k,M)$ over
the algebra $\Ext_C^*(k,k)$,
$$
  H^*\Cb^\bu_\gamma(C,M)\simeq\Ext_C^*(k,M).
$$

\subsection{Derived Koszul duality formulated for complexes of comodules}
\label{duality-formulated-for-complexes-of-comodules}
 Let $N^\bu$ be a complex of left $C$\+comodules.
 Then the cobar-construction $\Cb^\bu_\gamma(C,N^\bu)$ is a bicomplex
of $k$\+vector spaces.
 We totalize this bicomplex by taking infinite direct sums along
the diagonals.

 Denote simply by $N$ the underlying graded $C$\+comodule of
a complex of left $C$\+comodules~$N^\bu$.
 The underlying graded vector space $\Cb_\gamma(C,N)$ of
the cobar-construction $\Cb^\bu_\gamma(C,N^\bu)$ can be viewed as
a free graded left module $T(C^+)\ot_k N$ over the tensor algebra
$T(C^+)$.
 It turns out that the total differential on $\Cb^\bu_\gamma(C,N^\bu)$
is compatible with the cobar differential~$\d$ on the DG\+algebra
$\Cb^\bu(C)$ and the left action of $\Cb_\gamma(C)$ in
$\Cb_\gamma(C,N)$; i.~e., a Leibniz rule with signs similar to
the one in Section~\ref{cobar-construction-subsecn} is satisfied
for this action.
 In other words, the complex $\Cb^\bu_\gamma(C,N^\bu)$ endowed with
the free graded module structure of $T(C^+)\ot_k N$ is
a left DG\+module over the DG\+algebra $\Cb^\bu_\gamma(C)$.

 We formulate two versions of the derived Koszul duality for
complexes of comodules, the nonconilpotent and the conilpotent one.
 The definitions of the coderived and the absolute derived categories
will be given in Sections~\ref{co-contra-derived-cdg-modules-subsecn}
and~\ref{coderived-cdg-comodules-subsecn}.

\begin{thm} \label{complexes-of-comodules-coaugmented-cobar-thm}
 Let $(C,\gamma)$ be a coaugmented coassociative coalgebra over
a field~$k$.
 Then the assignment $N^\bu\longmapsto\Cb^\bu_\gamma(C,N^\bu)$
induces a triangulated equivalence between the \emph{coderived}
category of left $C$\+comodules\/ $\sD^\co(C\comodl)$ and
the \emph{absolute derived} category of left DG\+modules over
the DG\+algebra\/ $\Cb^\bu_\gamma(C)$,
$$
 \sD^\co(C\comodl)\simeq\sD^\abs(\Cb^\bu_\gamma(C)\modl).
$$
\end{thm}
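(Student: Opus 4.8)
The plan is to exhibit this equivalence as the Koszul duality attached to the \emph{canonical twisting cochain} $\tau\:C\rarrow B$, where $B=\Cb^\bu_\gamma(C)$ and $\tau$ is the map of degree~$1$ given by the composition $C\twoheadrightarrow C^+\hookrightarrow B$ of the projection onto the cokernel of the coaugmentation with the inclusion of $C^+$ as the degree\+$1$ component of the tensor algebra $T(C^+)$. The first step is to record that $\tau$ satisfies the Maurer--Cartan equation $\d\tau+\tau\cup\tau=0$ in the convolution DG\+algebra $\Hom_k(C,B)$; this holds essentially by construction, since the cobar differential on $B$ was defined precisely so as to encode the comultiplication of $C$, while $C$ itself carries the zero differential. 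Out of $\tau$ one builds a pair of adjoint DG\+functors: the functor $\Phi\:N^\bu\longmapsto B\ot^\tau N^\bu=\Cb^\bu_\gamma(C,N^\bu)$ of the statement, whose underlying graded $B$\+module is the free module $B\ot_k N$, and its right adjoint $\Psi\:P^\bu\longmapsto C\ot^\tau P^\bu$, whose underlying graded $C$\+comodule is the cofree comodule $C\ot_k P$; both are equipped with the $\tau$\+twisted differentials. The adjunction lifts from the standard isomorphism $\Hom_B(B\ot_k N,\>P)\simeq\Hom_C(N,\>C\ot_k P)$ between the free and cofree functors on the underlying graded categories.

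The second step is to show that $\Phi$ and $\Psi$ descend to the quotient categories in the statement. One checks that $\Phi$ takes coacyclic complexes of $C$\+comodules to absolutely acyclic DG\+modules over $B$, and that $\Psi$ takes absolutely acyclic DG\+$B$\+modules to coacyclic complexes of comodules; since both functors are exact and carry the totalizations of short exact sequences that generate the acyclicity classes to totalizations of such sequences (and $\Phi$ preserves the infinite direct sums along which coacyclicity is defined on the comodule side), this is largely a matter of bookkeeping. Hence $\Phi$ and $\Psi$ induce a pair of adjoint triangulated functors between $\sD^\co(C\comodl)$ and $\sD^\abs(B\modl)$.

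It remains to prove that the adjunction unit $\id\rarrow\Psi\Phi$ and counit $\Phi\Psi\rarrow\id$ become isomorphisms in these quotient categories. I would reduce to the adapted generators: the cofree comodules $C\ot_k V$ cogenerate $\sD^\co(C\comodl)$, which is computed by the homotopy category of complexes of injective comodules, while the free modules $B\ot_k W$ generate $\sD^\abs(B\modl)$. Evaluating the unit on a cofree comodule and the counit on a free module, the cone is in each case governed by a two-sided \emph{Koszul complex} attached to $\tau$ ($B\ot^\tau C\ot^\tau B$ augmented towards $B$ for the counit, and $C\ot^\tau B\ot^\tau C$ augmented towards $C$ for the unit). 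The decisive input is the \emph{acyclicity} of the canonical twisting cochain: the one-sided Koszul complexes $C\ot^\tau B$ and $B\ot^\tau C$ are contractible as complexes of $k$\+vector spaces, an explicit contracting homotopy being furnished by the coaugmentation $\gamma$ together with the tensor-algebra grading of $B$, exactly as in the contracting homotopies of Examples~\ref{bar-of-modules-acyclic}. Contractibility of these complexes makes the counit cones absolutely acyclic and the unit cones coacyclic on the respective generators.

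The step I expect to be the real obstacle is the passage from the generators to arbitrary objects in the \emph{strong} sense of acyclicity that the statement demands. As Examples~\ref{bar-of-modules-acyclic} already show in the dual situation, ordinary quasi-isomorphism is too coarse a relation here, so it is not enough to know that the unit and counit cones are acyclic complexes; they must be shown to lie in the coacyclic, respectively absolutely acyclic, subcategories. On the module side this is exactly what forces the use of $\sD^\abs(B\modl)$ rather than the ordinary derived category, and it requires a filtration argument presenting a general DG\+$B$\+module as a transfinitely iterated extension built from copies of $B$ in a way compatible with the totalization, so that the contractibility of $B\ot^\tau C$ propagates to the absolute acyclicity of every counit cone. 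The parallel control of infinite direct sums and of injective coresolutions on the comodule side, ensuring that $\Psi$ and $\Phi$ interact correctly with the coderived category, is the matching difficulty; once both are in place, the unit and counit are isomorphisms and the equivalence follows.
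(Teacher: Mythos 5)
Your overall architecture---the canonical twisting cochain $\tau\:C\to B=\Cb^\bu_\gamma(C)$, the adjoint pair $B\ot^\tau{-}\dashv C\ot^\tau{-}$, descent via ``coacyclic goes to contractible'' (Remark~\ref{motivation-coacyclics-to-contractibles-remark}), and a filtration analysis of the adjunction cones---is the right one and matches the proof of \cite[Theorem~6.7(a)]{Pkoszul} that the paper cites. But the step you flag as the hard one is resolved incorrectly. First, the reduction to generators fails on the module side: the free DG\+modules $B\ot_k W$ generate the \emph{conventional} derived category $\sD(B\modl)$, not $\sD^\abs(B\modl)$, and the class of $M^\bu$ on which the counit cone is absolutely acyclic is not closed under the infinite direct sums and telescopes needed to build arbitrary DG\+modules from free ones. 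Concretely, for $C=k\times k$ with one of the two evident coaugmentations, $B$ is quasi-isomorphic to $k$, so $\sD(B\modl)\simeq\sD(k\vect)$, while $\sD^\abs(B\modl)\simeq\sD^\co(C\comodl)\simeq\sD(k\vect)\times\sD(k\vect)$; the free module sees only one factor. Second, your proposed fix---presenting a general DG\+$B$\+module as a transfinitely iterated extension of copies of $B$---is a semi-free resolution, whose comparison map to $M^\bu$ has merely \emph{acyclic} cone; in $\sD^\abs$ that cone is a nonzero obstruction, so nothing propagates. This conflates the two kinds of derived categories, which is exactly the distinction the theorem is about.

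The correct route treats an \emph{arbitrary} object directly. For the unit, the cone of $N^\bu\to C\ot^\tau B\ot^\tau N^\bu$ carries an exhaustive increasing filtration coming from the tensor-algebra grading of $B$, whose successive quotients are freely cogenerated in the sense dual to Lemma~\ref{freely-generated-cdg-module-lemma}, hence contractible; Lemma~\ref{filtrations-co-contra-acyclicity-lemma}(b) (the telescope) then gives coacyclicity. For the counit, the analogous filtration argument for arbitrary $M^\bu$ yields \emph{coacyclicity} of the cone over $B$---the telescope forces infinite direct sums, so absolute acyclicity does not come out of the argument directly. The missing ingredient, which you never invoke, is that the underlying graded algebra of $\Cb^\bu_\gamma(C)$ is a free (tensor) algebra, hence of global dimension at most~$1$, so that $\Ac^\co(B\modl)=\Ac^\abs(B\modl)$ by Theorem~\ref{fin-homol-dim-derived-cdg-modules}(a). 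This identification is not a cosmetic remark: it is what converts the output of the filtration argument into the absolute derived category appearing in the statement, and it is precisely the second sentence of the paper's proof. Note also that contractibility of $B\ot^\tau C$ alone does not control the counit cone for general $M^\bu$, since the inner twist of $B\ot^\tau C\ot^\tau M^\bu$ involves the $B$\+action on $M$; only the associated graded of the filtration reduces to the Koszul complex, which is why the convergence issues of Remarks~\ref{convergent-spectral-sequence-remark} and~\ref{divergent-spectral-sequence-remark} must be confronted rather than bypassed by a generation argument.
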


\begin{proof}
 This is a particular case of~\cite[Theorem~6.7(a)]{Pkoszul}.
 The absolute derived category $\sD^\abs(\Cb^\bu_\gamma(C)\modl)$
coincides with the coderived category $\sD^\co(\Cb^\bu_\gamma(C)\modl)$
by~\cite[Theorem~3.6(a)]{Pkoszul}; see
Theorem~\ref{fin-homol-dim-derived-cdg-modules}(a) below.
\end{proof}

\begin{thm} \label{complexes-of-comodules-conilpotent-cobar-thm}
 Let $C$ be a \emph{conilpotent} coassociative coalgebra over
a field~$k$, endowed with its unique coaugmentation~$\gamma$.
 Then the same assignment $N^\bu\longmapsto\Cb^\bu_\gamma(C,N^\bu)$
induces a triangulated equivalence between the \emph{coderived}
category of left $C$\+comodules\/ $\sD^\co(C\comodl)$ and
the conventional \emph{derived} category of left DG\+modules over
the DG\+algebra\/ $\Cb^\bu_\gamma(C)$,
$$
 \sD^\co(C\comodl)\simeq\sD(\Cb^\bu_\gamma(C)\modl).
$$
\end{thm}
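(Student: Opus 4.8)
Since a conilpotent coalgebra carries a unique coaugmentation~$\gamma$, it is in particular coaugmented, so Theorem~\ref{complexes-of-comodules-coaugmented-cobar-thm} already provides a triangulated equivalence $\sD^\co(C\comodl)\simeq\sD^\abs(B\modl)$ induced by the same functor $N^\bu\mapsto\Cb^\bu_\gamma(C,N^\bu)$, where I write $B=\Cb^\bu_\gamma(C)$. As noted in the proof of that theorem, the absolute and coderived categories of DG\+modules over~$B$ agree, $\sD^\abs(B\modl)=\sD^\co(B\modl)$, by Theorem~\ref{fin-homol-dim-derived-cdg-modules}(a). Hence the whole assertion reduces to showing that the Verdier localization $\sD^\co(B\modl)\rarrow\sD(B\modl)$ is an equivalence; equivalently, that \emph{every acyclic left DG\+module over~$B$ is coacyclic}, the reverse inclusion being automatic. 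Granting this, composition with the cobar equivalence upgrades Theorem~\ref{complexes-of-comodules-coaugmented-cobar-thm} to the desired $\sD^\co(C\comodl)\simeq\sD(B\modl)$.

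To prove that an acyclic DG\+module $M$ over~$B$ is coacyclic, I would play the two functors of the duality against each other. Apply the comodule-valued functor $P^\bu\mapsto C\ot^\tau P^\bu$ to obtain the cofree graded comodule $C\ot_kM$ with its twisted differential. Over the field~$k$ the acyclic complex $M$ is contractible, and $C\ot^\tau M$ is then coacyclic: filtering by the primitive filtration of~$C$, which is exhaustive precisely because $C$ is conilpotent, reduces the twisted differential to the untwisted one on the associated graded, where $\id_C$ tensored with a contracting homotopy of $M$ is a genuine comodule contraction. By the equivalence of Theorem~\ref{complexes-of-comodules-coaugmented-cobar-thm}, feeding the coacyclic comodule $C\ot^\tau M$ back through the cobar functor yields an \emph{absolutely acyclic}, hence coacyclic, DG\+module $\Cb^\bu_\gamma(C,\,C\ot^\tau M)$. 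Finally, for every~$M$ there is a natural bar-cobar morphism relating $M$ with $\Cb^\bu_\gamma(C,\,C\ot^\tau M)$; granting that its cone is coacyclic, the two remaining terms of the attached triangle are coacyclic, and therefore so is~$M$.

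The coacyclicity of this cone, for arbitrary~$M$, is the heart of the argument and the point where conilpotency is truly indispensable. The underlying graded algebra of~$B$ is the tensor algebra $T(C^+)$, and the primitive filtration $k=C_0\subset C_1\subset C_2\subset\dotsb$, \ $C=\bigcup_nC_n$, along which the reduced comultiplication strictly lowers the level, exhibits $B$ as a quasi-free DG\+algebra and equips the bar-cobar double complex with an increasing filtration whose associated graded carries an explicit contracting homotopy. The main obstacle is the convergence bookkeeping for this filtration when $M$ is \emph{unbounded}: one must check that the filtration is exhaustive and that the transfinite assembly of the contractible associated-graded layers remains within the class of coacyclic modules. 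That the conclusion genuinely requires the tensor-algebra structure, that is, conilpotency of~$C$, is already visible for the two-sided periodic acyclic complex over the dual numbers $k[\epsilon]/(\epsilon^2)$, whose underlying algebra is not a tensor algebra: that complex is acyclic but not coacyclic, so there the localization $\sD^\co\rarrow\sD$ is not an equivalence.
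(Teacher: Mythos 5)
Your route is genuinely different from the paper's (which simply invokes the direct proof of~\cite[Theorem~6.4(a)]{Pkoszul}): you deduce the conilpotent statement from the nonconilpotent
Theorem~\ref{complexes-of-comodules-coaugmented-cobar-thm}, reducing everything to the claim that every acyclic DG\+module over $B=\Cb^\bu_\gamma(C)$ is coacyclic. That reduction is sound, and your first ingredient is exactly right: for acyclic $M$, the comodule $C\ot^\tau M$ is coacyclic by filtering with the canonical (conilpotency) filtration of~$C$, on whose associated graded the twisting term vanishes and $\id\ot h$ is a genuine comodule contraction --- this is precisely the argument of Remark~\ref{convergent-spectral-sequence-remark} combined with Lemma~\ref{filtrations-co-contra-acyclicity-lemma}(b), and it is the one place where conilpotency of $C$ is really used.

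The step you leave ``granted'' --- coacyclicity of the cone of the counit $B\ot^\tau C\ot^\tau M\rarrow M$ --- is, however, not the heart of the matter at all: it is a formal consequence of the theorem you have already invoked. Theorem~\ref{complexes-of-comodules-coaugmented-cobar-thm} asserts that the adjoint pair $(B\ot^\tau{-},\,C\ot^\tau{-})$ descends to mutually inverse equivalences $\sD^\co(C\comodl)\simeq\sD^\abs(B\modl)$; the descended adjunction therefore has invertible counit, and since $\Ac^\abs(B\modl)$ is thick, the cone of $B\ot^\tau C\ot^\tau M\rarrow M$ is absolutely acyclic for \emph{every} $M$, with no conilpotency needed there. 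So your proof closes in one line rather than via the filtration-and-convergence analysis of the bar-cobar bicomplex you sketch in the last paragraph. Relatedly, the dual-numbers example does not isolate the role of conilpotency: $k[\varepsilon]/(\varepsilon^2)$ is not the cobar-construction of any coalgebra (its underlying graded algebra is not free), whereas for a nonconilpotent coaugmented $C$ the cobar-construction still has free underlying graded algebra and the equivalence $\sD^\co=\sD^\abs$ on the module side still holds; what fails without conilpotency is your \emph{first} step, $M$ acyclic $\Rightarrow$ $C\ot^\tau M$ coacyclic, not the cone step. With these two corrections your argument is complete and arguably cleaner than a from-scratch proof, at the price of taking the nonconilpotent theorem as input.
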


\begin{proof}
 This is a particular case of~\cite[Theorem~6.4(a)]{Pkoszul};
see~\cite[Th\'eor\`eme~2.2.2.2]{Lef} and~\cite[Section~4]{Kel} for
an earlier approach.
 It will be explained in the next
Section~\ref{structure-theory-of-coalgebras-secn} what
a ``conilpotent coalgebra'' is.
\end{proof}

 In both the theorems, the inverse functor assigns to a left
DG\+module $M^\bu$ over the DG\+algebra $A^\bu=\Cb^\bu_\gamma(C)$
the complex of left $C$\+comodules $C\ot^\tau M^\bu$, which is
a notation for the graded left $C$\+comodule $C\ot_k M$ endowed
with a twisted differential, as constructed below in
Sections~\ref{twisted-differential-on-tensor-product-subsecn}\+-%
\ref{augmented-duality-comodule-side-subsecn}.

 In both Theorems~\ref{complexes-of-comodules-coaugmented-cobar-thm}
and~\ref{complexes-of-comodules-conilpotent-cobar-thm},
the triangulated equivalence takes the irreducible left
$C$\+comodule~$k$ (with the $C$\+comodule structure defined in terms
of the coaugmentation~$\gamma$) to the free left DG\+module $A^\bu$
over the DG\+algebra $A^\bu=\Cb^\bu_\gamma(C)$.
 The same equivalences take the left DG\+module~$k$ over $A^\bu$
(with the $A$\+module structure on~$k$ defined in terms of the natural
augmentation of~$A^\bu$; see
Section~\ref{duality-dg-algebras-dg-coalgebras-subsecn}) to
the cofree (injective) left $C$\+comodule~$C$.

 For generalizations of
Theorems~\ref{complexes-of-comodules-coaugmented-cobar-thm}\+-%
\ref{complexes-of-comodules-conilpotent-cobar-thm}, see
Section~\ref{augmented-duality-comodule-side-subsecn} below.

\begin{rem} \label{inverting-the-arrows-remark}
 One can obtain the definition of a coalgebra from the definition
of an algebra over a field by inverting the arrows, but inverting
the arrows in the formulation of
Theorem~\ref{complexes-of-modules-augmented-bar-construction-thm}
does not exactly produce the formulation of
Theorem~\ref{complexes-of-comodules-coaugmented-cobar-thm}
or~\ref{complexes-of-comodules-conilpotent-cobar-thm}.
 The roles of algebras and coalgebras in derived nonhomogeneous
Koszul duality are \emph{not} symmetric up to inverting the arrows,
as there are subtle differences between the constructions on
the algebra and coalgebra side.

 Of course, one can say that the associative algebras are the monoid
objects in the monoidal (tensor) category of vector spaces, while
the coassociative coalgebras are the monoid objects in the monoidal
category opposite to vector spaces.
 Then the point is that the monoidal category of infinite-dimensional
vector spaces is \emph{not} equivalent to its opposite monoidal
category, and moreover, the properties of these categories are different
in a way important for nonhomogeneous Koszul duality.

 To begin with, the tensor product of vector spaces preserves infinite
direct sums, but not infinite products.
 Furthermore, the functor of tensor product of vector spaces (with one
of the two objects fixed) has a right adjoint, but not a left adjoint
functor.
 It is also relevant that the (filtered) direct limits of vector spaces
are exact functors, but the inverse limits aren't.
 These differences cause the lack of symmetry between the Koszul
duality theorems formulated above and in
Sections~\ref{duality-formulated-for-complexes-of-modules-subsecn}\+-%
\ref{nonhomogeneos-quadratic-dual-to-augmented-subsecn}.

 In particular, it is illuminating to observe that the constructions of
the DG\+comod\-ule $\Br^\bu_\alpha(A,M^\bu)$ and the DG\+module
$\Cb^\bu_\gamma(C,N^\bu)$ are \emph{not} transformed into one another
by inverting the arrows, for the subtle reason that the direct sum
(rather than the direct product) totalization of the respective
bicomplex is used in \emph{both} the constructions.
 The fact that the tensor products preserve the direct sums, but not
the direct products of vector spaces explains this choice of
the totalization procedures, but it breaks the symmetry between
the algebra and coalgebra sides.
\end{rem}

\Section{Structure Theory of Coalgebras; Conilpotency}
\label{structure-theory-of-coalgebras-secn}

 All coassociative coalgebras over a field are unions of their
finite-dimensional subcoalgebras.
 For this reason, the structure theory of coalgebras stands
approximately on the same level of complexity as the structure
theory of finite-dimensional algebras.
 So it is much simpler than the structure theory of infinite-dimensional
algebras and rings.
 Some details of this argument are elaborated upon in this section.

\subsection{Brief remarks about coalgebras}
\label{brief-remarks-about-coalgebras-subsecn}
 The theory of coalgebras is notoriously counterintuitive.
 Most algebraists seem to find it difficult to invert the arrows
properly in their mind in order to pass from the familiar ring theory
to the coalgebra theory.
 One needs to get used to that.

 Here we only mention a couple of obvious points.
 In ring theory, subrings are aplenty and quotient rings are rare.
 To pass to the quotient ring, one needs to have an ideal in
the ring.
 All ideals are subalgebras (without unit), but most nonunital
subalgebras are not ideals.
 Furthermore, all two-sided ideals are both left ideals and right
ideals, but most left or right ideals are not (two-sided) ideals.

 In coalgebras, the situation is reversed.
 A quotient coalgebra is the dual concept to a subalgebra, and
a subcoalgebra is the dual concept to a quotient algebra.
 Quotient coalgebras are aplenty, but subcoalgebras are rare.

 To pass to the quotient coalgebra, it suffices to have a coideal
in the coalgebra.
 If one is satisfied with obtaning a noncounital quotient coalgebra,
then the definition of a coideal reduces to this: a vector subspace $J$
in a coalgebra $C$ is said to be a \emph{coideal} (in the noncounital
sense) if $\mu(J)\subset J\ot_k C+C\ot_k J$ (where
$\mu\:C\rarrow C\ot_kC$ is the comultiplication map).
 The definition of a coideal in the counital sense includes
an additional condition that the counit $\epsilon\:C\rarrow k$
must vanish on a coideal~$J$.

 For comparison, a \emph{subcoalgebra} $D\subset C$ is a vector
subspace such that $\mu(D)\subset D\ot_kD$.
 One immediately observes that any subcoalgebra is a coideal (in
the noncounital sense), but most coideals are not subcoalgebras.
 Furthermore, any left coideal (i.~e., a left subcomodule in
the left $C$\+comodule~$C$) is a two-sided coideal, and any right
coideal is a two-sided coideal (in the noncounital sense), but most
(two-sided) coideals are neither left nor right coideals.

 It is also worth mentioning that the comodule structures induced
via homomorphisms of coalgebras are pushed forward rather than
pulled back.
 So if $f\:C\rarrow D$ is a homomorphism of coalgebras, then any
(left or right) $C$\+comodule acquires the induced structure of
a left $D$\+comodule.
 The resulting exact, faithful functor $C\comodl\rarrow D\comodl$
on the abelian categories of comodules is called
the \emph{corestriction of scalars} with respect to~$f$.

 We refer to~\cite[Section~1.4]{Swe} for further details.

\subsection{Local finite-dimensionality}
\label{local-finite-dimensionality-subsecn}
 The following classical result can be found
in~\cite[Corollary~2.1.4 and Theorem~2.2.1]{Swe}.

\begin{lem} \label{coalgebras-comodules-loc-fin-dim-lemma}
\textup{(a)} Any coassociative coalgebra $C$ is the (directed) union of
its finite-dimensional subcoalgebras.
 The sum of any two finite-dimensional subcoalgebras in $C$ is
a finite-dimensional subcoalgebra in~$C$.\par
\textup{(b)} Any (left) comodule over a coassociative coalgebra $C$
is the union of its finite-dimensional $C$\+subcomodules. \par
\textup{(c)} Any finite-dimensional $C$\+comodule $M$ is a comodule
over some finite-di\-men\-sional subcoalgebra $E\subset C$ (so the image
of the coaction map $\nu\:M\rarrow C\ot_k M$ is contained in $E\ot_k M$).
\end{lem}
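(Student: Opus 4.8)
The three assertions share a single mechanism: evaluating a coaction, or the comultiplication, on finitely many elements produces finitely many \emph{structure coefficients} in $C$, and the coassociativity axiom forces these coefficients to close up under comultiplication, while counitality recovers the original elements from them. The plan is to prove part~(c) first, then~(b), and to bootstrap~(a) from these. For part~(c), I would choose a $k$-basis $m_1,\dots,m_n$ of the finite-dimensional comodule $M$ and write $\nu(m_j)=\sum_i c_{ij}\ot m_i$ with $c_{ij}\in C$. Setting $E=\mathrm{span}_k(c_{ij})$, a finite-dimensional subspace, one has $\nu(M)\subseteq E\ot_k M$ by construction. Applying the coassociativity axiom $(\mu\ot\id_M)\circ\nu=(\id_C\ot\nu)\circ\nu$ to $m_j$ and comparing coefficients of the linearly independent $m_i$ yields $\mu(c_{ij})=\sum_p c_{pj}\ot c_{ip}$, so that $\mu(E)\subseteq E\ot_k E$; counitality gives $\epsilon(c_{ij})=\delta_{ij}$. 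Hence $E$ is a finite-dimensional subcoalgebra through which the coaction factors.

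For part~(b) it suffices to place an arbitrary element $m$ of a comodule $M$ inside a finite-dimensional subcomodule. I would write $\nu(m)=\sum_{i=1}^n c_i\ot m_i$ with the $c_i\in C$ \emph{linearly independent}, and put $N=\mathrm{span}_k(m_1,\dots,m_n)$. Counitality gives $m=\sum_i\epsilon(c_i)m_i\in N$. Choosing functionals $c_j^*\in C^*$ with $c_j^*(c_i)=\delta_{ij}$ and applying $c_j^*\ot\id_C\ot\id_M$ to the coassociativity identity $\sum_i\mu(c_i)\ot m_i=\sum_i c_i\ot\nu(m_i)$ shows that $\nu(m_j)\in C\ot_k N$ for every $j$; thus $N$ is a finite-dimensional subcomodule containing $m$, and $M$ is the union of such $N$.

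Finally, for part~(a) I would regard $C$ as a right comodule over itself via $\mu$. Given $c\in C$, the evident right-handed analogue of~(b) provides a finite-dimensional right subcomodule $N\subseteq C$ with $c\in N$, so that $\mu(N)\subseteq N\ot_k C$. This inclusion lets me pick a basis $n_1,\dots,n_r$ of $N$ and write $\mu(n_j)=\sum_i n_i\ot e_{ij}$; the coassociativity computation of part~(c), applied to the right comodule $N$, then shows that $E=\mathrm{span}_k(e_{ij})$ satisfies $\mu(E)\subseteq E\ot_k E$, i.e.\ $E$ is a finite-dimensional subcoalgebra. The decisive point is that $c$ itself lies in $E$: writing $c=\sum_j\lambda_j n_j$ and applying the counit to the \emph{left} tensor factor gives $c=(\epsilon\ot\id_C)\mu(c)=\sum_{i,j}\lambda_j\epsilon(n_i)e_{ij}\in E$. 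The remaining assertions are formal: if $D_1,D_2$ are finite-dimensional subcoalgebras then $\mu(D_1+D_2)\subseteq D_1\ot D_1+D_2\ot D_2\subseteq(D_1+D_2)\ot(D_1+D_2)$, so $D_1+D_2$ is one as well, whence the finite-dimensional subcoalgebras form a directed system, and by the per-element statement just proved their union is all of $C$.

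The main obstacle, and the conceptual heart of the matter, is part~(a): one must extract an honest \emph{subcoalgebra}---a subspace closed under $\mu$ into its own tensor square---rather than merely a coideal or a one-sided subcomodule. Coassociativity is exactly what delivers this closure for the structure coefficients, and it is the interplay between coassociativity (closure of the $e_{ij}$) and the counit axiom (membership of $c$ itself) that makes the bootstrap from~(b) succeed; finite-dimensionality throughout is automatic because only finite spanning sets are ever used.
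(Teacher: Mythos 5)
Your proof is correct, but it takes a genuinely different route from the one sketched in the paper. The paper proves part~(a) directly and declares the other parts ``similar or simpler'': for a finite-dimensional subspace $V\subset C$ it sets $E(V)=(\mu^{(2)})^{-1}(C\ot_k V\ot_k C)$, observes that counitality forces $E(V)\subset V$ (whence finite-dimensionality), that coassociativity makes $E(V)$ a subcoalgebra, and that every $c\in C$ lies in $E(V)$ once $V$ is taken to contain the middle components of $\mu^{(2)}(c)$. You instead run the classical structure-coefficient argument: you prove~(c) and~(b) first and then bootstrap~(a) by viewing $C$ as a right comodule over itself. The key identity $\mu(c_{ij})=\sum_p c_{pj}\ot_k c_{ip}$, the dual-basis functional trick in~(b), and the counit computation $c=(\epsilon\ot\id_C)\mu(c)\in E$ are all correctly deployed, and the reduction of~(a) to~(b) and~(c) is sound, as is the closing observation about sums of subcoalgebras giving directedness. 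Each approach buys something: the paper's $E(V)$ is canonical---it is in fact the largest subcoalgebra contained in $V$, which makes the finite-dimensionality, directedness, and union statements essentially immediate---while your version produces explicit finite spanning sets for the subcoalgebras and subcomodules and treats the comodule assertions~(b) and~(c) as the primary content rather than as corollaries left to the reader.
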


\begin{proof}
 Let us sketch a proof of the first assertion of part~(a); the other
assertions are similar or simpler.
 Given a coassociative, counital coalgebra $C$, consider an arbitrary
finite-dimensional vector subspace $V\subset C$, and denote by
$E\subset C$ the full preimage of $C\ot_k V\ot_k C$ under the iterated
comultiplication map $\mu^{(2)}=(\mu\ot\id_C)\circ\mu=(\id_C\ot\mu)
\circ\mu\:C\rarrow C\ot_k C\ot_k C$.
 Then $E=E(V)$ is a subcoalgebra in $C$, one has $E\subset V$ (so $E$ is
finite-dimensional), and $C=\bigcup_{V\subset C}E(V)$.
\end{proof}

 Specifying a coalgebra structure on a finite-dimensional $k$\+vector
space $E$ is equivalent to specifying an algebra structure on the dual
vector space~$E^*$.
 So the dual vector space to a finite-dimensional coalgebra is
a finite-dimensional algebra, and the dual vector space to
a finite-dimensional algebra is a finite-dimensional coalgebra.
 For infinite-dimensional vector spaces the situation is more
complicated: the dual vector space to a coalgebra has a natural
algebra structure, but \emph{not} vice versa.

\subsection{Conilpotent coalgebras}
\label{conilpotent-coalgebras-subsecn}
 A nonunital associative algebra $A$ is said to be \emph{nilpotent}
if there exists an integer $n\ge1$ such that the product of any
$n$~elements in $A$ vanishes, that is $A^n=0$.
 An augmented algebra $A$ is said to be \emph{nilpotent} if its
augmentation ideal $A^+$ is a nilpotent algebra without unit, that is,
the product of any $n$~elements in $A^+$ vanishes.

 Notice that the augmentation of a nilpotent augmented algebra $A$
is \emph{unique}.
 Since the product of any $n$~elements in $A^+$ vanishes, there are
no nonzero $k$\+algebra homomorphisms $A^+\rarrow k$; hence only one
unital $k$\+algebra homomorphism $A\rarrow k$.

 Coalgebras are better suited for nilpotency than algebras, in the sense
that one can speak of a coalgebra being conilpotent without assuming
existence of a related finite integer~$n$.
 One can say that a (unital or nonunital) finite-dimensional coalgebra
$E$ is \emph{conilpotent} if its dual finite-dimensional algebra is
nilpotent.
 Then an arbitrary coalgebra $C$ is said to be \emph{conilpotent} if
all its finite-dimensional subcoalgebras are conilpotent, i.~e., $C$
is a union of finite-dimensional conilpotent coalgebras.
 So a finite-dimensional conilpotent coalgebra always has a finite
conilpotency degree, but an infinite-dimensional conilpotent coalgebra
need \emph{not} have it.

 Here is an explicit definition of a conilpotent coalgebra without
counit that is easy to work with on the technical level.
 A noncounital coalgebra $D$ is said to be \emph{conilpotent} if
for every element $x\in D$ there exists an integer $n\ge1$ such that
the image of~$x$ under the iterated comultiplication map
$\mu^{(n)}\:D\rarrow D^{\ot n+1}$ vanishes.
 A coaugmented coalgebra $(C,\gamma)$ is called \emph{conilpotent} if
the noncounital coalgebra $C^+=C/\gamma(k)$ is conilpotent in
the sense of the previous definition.
 A conilpotent coaugmented coalgebra has a unique coaugmentation;
moreover, any counital coalgebra homomorphism between two conilpotent
coalgebras preserves the coaugmentations.

 A conilpotent coalgebra $C$ comes endowed with a natural increasing
filtration~$F$.
 Namely, $F_{-1}C=0$, \ $F_0C=\gamma(k)\simeq k$, and for every $n\ge0$
the subspace $F_nC\subset C$ consists of all elements $c\in C$ whose
image in $(C^+)^{\ot n+1}$ under the composition of maps
$C\rarrow C^{\ot n+1}\rarrow (C^+)^{\ot n+1}$ vanishes.
 Then one has $\mu(F_nC)\subset\sum_{p+q=n}F_pC\ot_k F_qC\subset
C\ot_kC$, so the filtration $F$ on $C$ is compatible with
the comultiplication.
 The equation $C=\bigcup_{n\ge0}F_nC$ expresses the condition that
$C$ is conilpotent.

 A noncounital conilpotent coalgebra $D$ comes endowed with a similarly
defined natural increasing filtration~$F$.
 Namely, one has $F_0D=0$, and for every $n\ge0$ the subspace
$F_nD\subset D$ is the kernel of the iterated comultiplication map
$\mu^{(n)}\:D\rarrow D^{\ot n+1}$.
 Then one has $\mu(F_nD)\subset\sum_{p+q=n}F_pD\ot_k F_qD\subset
D\ot_kD$ and $D=\bigcup_{n\ge1}F_nD$.

\begin{rem} \label{tensor-coalgebra-cofree-conilpotent-remark}
 Let $V$ be a $k$\+vector space.
 In the discussion of the tensor algebra $T(V)$ and tensor coalgebra
$\udT(V)$ in Section~\ref{coalgebra-structure-cobar-subsecn},
it was mentioned that $T(V)$ is the free associative algebra spanned
by $V$, but a discussion of a possible cofreeness property of
the tensor coalgebra $\udT(V)$ was postponed.
 Now it is the time for such a discussion.

 The first important observation is that the coaugmented coalgebra
$\udT(V)$ is always conilpotent.
 In fact, one has $F_n\udT(V)=\bigoplus_{i=0}^n V^{\ot i}$ for
every $n\ge0$.
 Moreover, $\udT(V)$ is the cofree conilpotent coalgebra cospanned
by $V$, in the following sense.
 Let $C$ be a conilpotent coalgebra, and let $f\:C\rarrow V$ be
a $k$\+linear map annihilating the coaugmentation, i.~e., such that
the composition $f\gamma\:k\rarrow V$ vanishes.
 Then there exists a unique coalgebra homomorphism $t\:C\rarrow\udT(V)$
whose composition with the direct summand projection $\udT(V)\rarrow
V^{\ot 1}=V$ is equal to~$f$.

 Let us emphasize that the assertion above is \emph{not} true for
nonconilpotent coalgebras~$C$.
 For such coalgebras, a homomorphism like~$t$ does not exist in general.
\end{rem}

\subsection{Cosemisimple coalgebras and the coradical filtration}
\label{cosemisimple-coalgebras-and-coradical-subsecn}
 The classical structure theory of finite-dimensional associative
algebras over a field tells that any such algebra $A$ has
a nilpotent Jacobson radical $J$, while the quotient algebra $A/J$
decomposes uniquely as a finite product of simple algebras.
 The Jacobson radical $J\subset A$ is simultaneously the unique maximal
nilpotent ideal and the unique minimal ideal in $A$ for which
the quotient algebra $A/J$ is semisimple.

 Moreover, if $f\:A\rarrow B$ is a surjective homomorphism of
finite-dimensional $k$\+algebras, then the Jacobson radicals $J(A)
\subset A$ and $J(B)\subset B$ are well-behaved with respect to~$f$
in the sense that $f(J(A))=J(B)$.
 The induced surjective homomorphism of semisimple algebras
$A/J(A)\rarrow B/J(B)$ is a projection onto a direct factor; in fact,
it represents the algebra $B/J(B)$ as a direct factor of
the algebra $A/J(A)$ in a unique way.

 Passing to the dual coalgebra, one obtains what can be called
``the structure theory of finite-dimensional coassociative
coalgebras''~$E$.
 Any such coalgebra has a unique maximal cosemisimple subcoalgebra
$E^\ss$, which decomposes uniquely as a direct sum of cosimple 
coalgebras.
 The (noncounital) quotient coalgebra $E/E^\ss$ is conilpotent.

 Moreover, if $E\subset D$ is a subcoalgebra in a finite-dimensional
coalgebra, then $E^\ss=E\cap D^\ss$.
 The cosemisimple coalgebra $D^\ss$ can be decomposed as a direct
sum of its subcoalgebra $E^\ss$ and a complementary subcoalgebra in
a unique way.

 Let us formulate the general definition of a \emph{cosemisimple 
coalgebra}, in the form of a list of equivalent conditions in
the next lemma.
 Here a nonzero coassociative, counital coalgebra over~$k$ is called
\emph{cosimple} if it has no nonzero proper subcoalgebras.

\begin{lem}
 For any coassociative, counital coalgebra $C$ over a field~$k$,
the following conditions are equivalent:
\begin{enumerate}
\item the abelian category of left $C$\+comodules is semisimple;
\item the abelian category of right $C$\+comodules is semisimple;
\item $C$ is the sum of its cosimple subcoalgebras;
\item $C$ uniquely decomposes as a direct sum of its
(finite-dimensional) cosimple subcoalgebras, with all the cosimple 
subcoalgebras of $C$ appearing as direct summands in this direct sum
decomposition. \qed
\end{enumerate}
\end{lem}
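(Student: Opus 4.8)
The plan is to reduce every condition to a statement about the finite-dimensional subcoalgebras of $C$ and then invoke the classical Artin--Wedderburn theory for finite-dimensional algebras through the dual-algebra correspondence. The main workhorse is the local finite-dimensionality Lemma~\ref{coalgebras-comodules-loc-fin-dim-lemma}: writing $C=\bigcup_E E$ as the directed union of its finite-dimensional subcoalgebras, every finite-dimensional left $C$\+comodule is a comodule over some finite-dimensional $E\subset C$, and such a comodule is simple over $C$ iff it is simple over $E$, by exactness and faithfulness of corestriction of scalars along $E\hookrightarrow C$. Since any left $C$\+comodule is the filtered union of its finite-dimensional subcomodules, the category $C\comodl$ is semisimple iff every finite-dimensional left $C$\+comodule is semisimple, hence iff for every finite-dimensional $E\subset C$ the category of finite-dimensional left $E$\+comodules is semisimple. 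The same reduction applies to right comodules in~(2), and condition~(3) is visibly the union over finite-dimensional $E$ of the condition ``$E$ is the sum of its cosimple subcoalgebras,'' because any cosimple subcoalgebra of $C$ is finite-dimensional (it contains a nonzero finite-dimensional subcoalgebra by Lemma~\ref{coalgebras-comodules-loc-fin-dim-lemma}(a), which must then be all of it).

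I would then settle the purely finite-dimensional equivalences by passing to the dual algebra. For finite-dimensional $E$, the abelian category of left $E$\+comodules is isomorphic to that of right $E^*$\+modules, and likewise right $E$\+comodules correspond to left $E^*$\+modules; under this correspondence cosimple subcoalgebras of $E$ dualize to simple two-sided factors of $E^*$, and a direct sum decomposition $E=\bigoplus_i D_i$ into cosimple subcoalgebras dualizes to a product decomposition $E^*=\prod_i D_i^*$ into simple algebras. Thus all three finite-dimensional conditions---left comodules semisimple, right comodules semisimple, and $E$ a sum of cosimple subcoalgebras---are each equivalent to $E^*$ being a semisimple algebra, by Artin--Wedderburn together with the left--right symmetry of semisimplicity for finite-dimensional algebras. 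Reassembling over the directed system of all finite-dimensional $E\subset C$ yields (1)$\Leftrightarrow$(2)$\Leftrightarrow$(3) for $C$.

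The passage (3)$\Rightarrow$(1) contains the one delicate point, which I expect to be the main obstacle: that $C$ being merely a \emph{sum} of cosimple subcoalgebras forces \emph{each} finite-dimensional $E\subset C$ to be cosemisimple. Given such $E$, finite-dimensionality lets me enclose it in a finite sum $D=D_1+\dots+D_n$ of cosimple subcoalgebras, itself a finite-dimensional subcoalgebra that is cosemisimple by the finite-dimensional case above; the structure-theoretic identity $E^\ss=E\cap D^\ss$ recorded earlier in Section~\ref{cosemisimple-coalgebras-and-coradical-subsecn} then gives $E^\ss=E\cap D=E$, so $E$ is cosemisimple. The reverse direction, that a cosemisimple $E$ is already the sum of its own cosimple subcoalgebras, is immediate from the finite-dimensional decomposition.

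Finally, to upgrade (3) to the sharper statement~(4), I would globalize the finite-dimensional uniqueness. Two distinct cosimple subcoalgebras $D\ne D'$ meet in a subcoalgebra contained in each, hence in $0$; more strongly, inside each finite-dimensional cosemisimple $E$ the cosimple subcoalgebras contained in $E$ sum directly and exhaust $E$, each occurring exactly once---this is the dual of the uniqueness of the Wedderburn decomposition of $E^*$ into simple factors. Passing to the colimit $C=\bigcup_E E$, these compatible decompositions assemble into a single direct sum $C=\bigoplus_\lambda D_\lambda$ over all cosimple subcoalgebras $D_\lambda\subset C$, and the finite-dimensional uniqueness propagates to show both that this decomposition is unique and that every cosimple subcoalgebra of $C$ appears as a summand. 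This establishes (1)--(3)$\Rightarrow$(4), while (4)$\Rightarrow$(3) is trivial, closing the cycle.
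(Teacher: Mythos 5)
Your proof is correct and follows exactly the route the paper intends: the lemma is stated there without proof as a classical fact, but the surrounding discussion in Section~\ref{cosemisimple-coalgebras-and-coradical-subsecn} (dualizing the Wedderburn structure theory of finite-dimensional algebras, combined with the local finite-dimensionality of Lemma~\ref{coalgebras-comodules-loc-fin-dim-lemma} and the compatibility $E^\ss=E\cap D^\ss$) is precisely the reduction you carry out. Nothing further is needed.
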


 Now let $C$ be an arbitrary (infinite-dimensional) coassociative
coalgebra.
 Representing $C$ as the directed union of its finite-dimensional
subcoalgebras $E$ and passing to the direct limit, one obtains
the following structural result.

\begin{prop}
 Any coassociative, counital coalgebra $C$ over a field~$k$ contains
a unique maximal cosemisimple subcoalgebra $C^\ss$, which is can be
obtained as the sum of all co(semi)simple subcoalgebras of~$C$.
 Simultaneously, $C^\ss$ is the unique minimal subcoalgebra among
all the subcoalgebras $D\subset C$ for which that the (noncounital)
quotient coalgebra $C/D$ is conilpotent. \qed
\end{prop}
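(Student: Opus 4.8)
The plan is to reduce everything to the finite-dimensional structure theory recalled just above the Proposition and then pass to the direct limit by means of Lemma~\ref{coalgebras-comodules-loc-fin-dim-lemma}. Writing $C=\bigcup_E E$ as the directed union of its finite-dimensional subcoalgebras, I would define $C^\ss=\bigcup_E E^\ss$, where $E^\ss\subset E$ is the maximal cosemisimple subcoalgebra of the finite-dimensional coalgebra~$E$. The first thing to verify is that this union is coherent and directed: for finite-dimensional subcoalgebras $E\subset D$ the relation $E^\ss=E\cap D^\ss$ from the finite-dimensional theory gives $E^\ss\subset D^\ss$, so the family $\{E^\ss\}$ is directed under inclusion and its union $C^\ss$ is a subcoalgebra of~$C$. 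Since each $E^\ss$ is a sum of cosimple subcoalgebras, so is $C^\ss$, whence $C^\ss$ is cosemisimple by the cosemisimplicity criterion of the preceding lemma (condition~(3)).

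Next I would show that $C^\ss$ is the maximal cosemisimple subcoalgebra, equivalently that it contains every cosimple subcoalgebra. The key auxiliary fact is that a finite-dimensional subcoalgebra $E_0$ of a cosemisimple coalgebra $S$ is itself cosemisimple: decomposing $S=\bigoplus_i S_i$ into cosimple summands, the finite-dimensional space $E_0$ lands inside a finite partial sum $D=\bigoplus_{i\in F}S_i$, which is a finite-dimensional cosemisimple coalgebra, so $E_0^\ss=E_0\cap D^\ss=E_0\cap D=E_0$. Given any cosemisimple subcoalgebra $S\subset C$ and an element $s\in S$, I choose (via Lemma~\ref{coalgebras-comodules-loc-fin-dim-lemma}(b)) a finite-dimensional subcoalgebra $E_0\subset S$ with $s\in E_0$; then $E_0$ is cosemisimple, so $E_0=E_0^\ss\subset C^\ss$ and $s\in C^\ss$. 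Thus $C^\ss$ contains every cosemisimple, in particular every cosimple, subcoalgebra, and being itself cosemisimple it is the unique maximal one and coincides with the sum of all co(semi)simple subcoalgebras.

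For the second characterization I would first record the local intersection formula $E\cap C^\ss=E^\ss$ for every finite-dimensional $E\subset C$: the inclusion $\supset$ is clear, while $E\cap C^\ss$ is a finite-dimensional subcoalgebra of the cosemisimple coalgebra $C^\ss$, hence cosemisimple by the auxiliary fact, hence contained in~$E^\ss$. Consequently the image of $E$ in the noncounital quotient coalgebra $C/C^\ss$ is $E/E^\ss$, which is conilpotent by the finite-dimensional theory; since every element of $C/C^\ss$ lies in such an image and conilpotency is detected by the vanishing of iterated comultiplications, $C/C^\ss$ is conilpotent. For minimality, let $D\subset C$ be any subcoalgebra with $C/D$ conilpotent and let $S$ be a cosimple subcoalgebra of~$C$. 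Composing $S\hookrightarrow C\rarrow C/D$, the intersection $S\cap D$ is a subcoalgebra of the cosimple coalgebra $S$, hence either $0$ or~$S$; were it~$0$, then $S$ would embed as a nonzero subcoalgebra of the conilpotent coalgebra $C/D$ and so be conilpotent, contradicting that a nonzero cosimple coalgebra is not conilpotent (its dual is a simple, hence non-nilpotent, algebra). Therefore $S\subset D$, so $C^\ss\subset D$, and $C^\ss$ is the unique minimal subcoalgebra with conilpotent quotient.

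The hard part will be the passage from the finite-dimensional facts to the direct limit in a way that makes the two descriptions of $C^\ss$ genuinely coincide — concretely, establishing the coherence relation $E^\ss\subset D^\ss$ and the intersection formula $E\cap C^\ss=E^\ss$, both of which rest on the lemma that a subcoalgebra of a cosemisimple coalgebra is again cosemisimple. Once that bookkeeping is secured, the maximality and minimality statements follow formally from cosimplicity and from the incompatibility of conilpotency with cosimple subcoalgebras.
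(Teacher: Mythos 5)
Your proof is correct and follows exactly the route the paper indicates (the Proposition is stated there without a written-out proof, the preceding sentence simply saying one represents $C$ as the directed union of its finite-dimensional subcoalgebras and passes to the direct limit, using the finite-dimensional facts $E^\ss=E\cap D^\ss$ recalled just above); your write-up supplies precisely the bookkeeping that passage requires. The only slip is cosmetic: to find a finite-dimensional subcoalgebra of $S$ containing a given element you want Lemma~\ref{coalgebras-comodules-loc-fin-dim-lemma}(a), not part~(b).
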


 Following the discussion in
Section~\ref{conilpotent-coalgebras-subsecn}, the conilpotent
noncounital coalgebra $D=C/C^\ss$ is endowed with a natural increasing
filtration~$F$.
 Denoting by $F_nC$ the full preimage of $F_n(C/C^\ss)$ under
the natural surjective homomorphism $C\rarrow C/C^\ss$, one obtains
a natural increasing filtration $F$ on~$C$, called the \emph{coradical
filtration}.
 What we call the ``maximal cosemisimple subcoalgebra'' $C^\ss$ of
a coalgebra $C$ is otherwise known as the \emph{coradical} of~$C$
\,\cite[Chapters~VIII\+-IX]{Swe}, \cite[Chapter~5]{Mon}.

\begin{rem}
 The definitions of the coradical filtration in~\cite{Swe,Mon} are
formulated in terms of the so-called \emph{wedge} operation on
vector subspaces of a coalgebra.
 Let $C$ be a coassociative coalgebra over~$k$ and $X$, $Y\subset C$
be two vector subspaces.
 According to~\cite[Section~9.0]{Swe}, \cite[proof of
Theorem~5.2.2]{Mon}, the wedge $X\wedge Y\subset C$ is defined as
the kernel of the composition $C\rarrow C\ot_k C\rarrow C/X\ot_k C/Y$
of the comultiplication map $C\rarrow C\ot_k C$ with the natural
surjection $C\ot_k C\rarrow C/X\ot_k C/Y$.
 According to~\cite[Proposition~9.0.0(c)]{Swe}, the wedge operation
is associative.
 By the definition, put $\bigwedge^1X=X$ and
$\bigwedge^nX=(\bigwedge^{n-1}X)\wedge X$ for $n\ge2$.

 Notice that the definition of the wedge makes perfect sense for
noncounital coalgebras~$D$ as well.
 Arguing by induction on~$n$, one easily shows that $\bigwedge^nX
\subset D$ is the kernel of the composition $D\rarrow D^{\ot n}
\rarrow(D/X)^{\ot n}$ of the iterated comultiplication map
$D\rarrow D^{\ot n}$ with the natural surjection $D^{\ot n}\rarrow
(D/X)^{\ot n}$ (for every $n\ge1$).
 Then, for a noncounital conilpotent coalgebra $D$, the definition
of the natural increasing filtration $F$ on $D$ given in
Section~\ref{conilpotent-coalgebras-subsecn} can be rephrased
as $F_nD=\bigwedge^{n+1}0\subset D$.

 Let $p\:C\rarrow D$ be a homomorphism of noncounital coalgebras.
 Then one can easily see that the wedge operation commutes with
taking the full preimages of vector subspaces under~$p$: for any
subspaces $X$, $Y\subset D$ one has
$p^{-1}(X\wedge Y)=p^{-1}X\wedge p^{-1}Y$, hence
$p^{-1}(\bigwedge^n X)=\bigwedge^n(p^{-1}X)$.

 Now let $C$ be a coassociative, counital coalgebra over~$k$.
 The coradical filtration is defined in~\cite[Section~9.1]{Swe} by
the rule $C_n=\bigwedge^{n+1}C^\ss\subset C$, where $C^\ss\subset C$
is what we call the maximal cosemisimple subcoalgebra (called
the coradical in~\cite{Swe,Mon}).
 In the book~\cite[\S5.2]{Mon}, the coradical filtration is defined by
induction using the rules $C_0=C^\ss$ and, essentially,
$C_n=C^\ss\wedge C_{n-1}\subset C$; this is clearly equivalent to
the definition in~\cite{Swe}.
 Taking $D=C/C^\ss$, one concludes that $C_n=\bigwedge^{n+1}C^\ss=
\bigwedge^{n+1}p^{-1}(0)=p^{-1}(\bigwedge^{n+1}0)=p^{-1}(F_nD)
=F_nC$ (where $p\:C\rarrow D$ is the natural surjection).
 Thus the definition of the coradical filtration by the rule
$C_n=\bigwedge^{n+1}C^\ss$ in~\cite{Swe,Mon} agrees with our definition
of it by the rule $F_nC=p^{-1}(F_nD)$ given above in this section.
\end{rem}

 A counital coalgebra $C$ is conilpotent (in the sense of
the definition in Section~\ref{conilpotent-coalgebras-subsecn}) if
and only if there is an isomorphism of coalgebras $C^\ss\simeq k$,
or in other words, if and only if the coalgebra $C^\ss$
is one-dimensional.

\Section{DG-Algebras and DG-Coalgebras}

 The aim of this section is to formulate a simple version of
Koszul duality between algebras and coalgebras (rather than modules
and comodules).
 The notion of a \emph{filtered quasi-isomorphism} of
DG-coalgebras~\cite{Hin2} plays a key role.

\subsection{Augmented DG-algebras} \label{augmented-dg-algebras-subsecn}
 Let $A^\bu=(A,d)$ be a DG\+algebra over a field~$k$.
 So $A=\bigoplus_{i\in\boZ} A^i$ is a graded $k$\+algebra and
$d\:A\rarrow A$ is a homogeneous $k$\+linear map of degree~$1$
satisfying the Leibniz rule with signs with respect to
the multiplication (in other words, $d$~is an odd derivation of~$A$)
such that $d^2=0$.

 An \emph{augmentation}~$\alpha$ on $A^\bu$ is a (unital) DG\+algebra
homomorphism $\alpha\:A^\bu\rarrow k$.
 So $\alpha(a)=0$ for all $a\in A^i$, \,$i\ne0$, \ $\alpha(ab)=
\alpha(a)\alpha(b)$ for all $a$, $b\in A$, \ $\alpha(1)=1$, and
$\alpha(d(a))=0$ for all $a\in A^{-1}$.
 We denote by $A^{\bu,+}=\ker(\alpha)\subset A^\bu$ the augmentation
ideal.
 So $A^{\bu,+}$ is a homogeneous two-sided DG\+ideal in $A^\bu$, and
$A^\bu=k\oplus A^{\bu,+}$ as a complex of $k$\+vector spaces.

 The \emph{bar-construction} $\Br^\bu_\alpha(A^\bu)$ of an augmented
DG\+algebra $(A^\bu,\alpha)$ is the bicomplex
$$
 k\overset0\llarrow A^{\bu,+}\llarrow A^{\bu,+}\ot_k A^{\bu,+}
 \llarrow A^{\bu,+}\ot_k A^{\bu,+}\ot_k A^{\bu,+}\llarrow\dotsb
$$
with the bar differential~$\d$ given by the formulas similar to those
in Section~\ref{augmented-algebras-subsecn} (except for the $\pm$~signs,
which are more complicated in the case of a DG\+algebra~$A^\bu$) and
the differential~$d$ induced by the differential~$d$ on~$A^\bu$.
 The differential~$d$ on $\Br^\bu_\alpha(A^\bu)$ acts on every
tensor power $A^{\bu,+}{}^{\ot n}$ of the augmentation ideal
$A^{\bu,+}$ by the tensor product of $n$~copies of the differential~$d$
on~$A^{\bu,+}$ (in the sense of the tensor product of complexes).

 The total complex of the bar-construction $\Br^\bu_\alpha(A^\bu)$
is produced by taking infinite direct sums along the diagonals.
 The underlying graded vector space of $\Br^\bu_\alpha(A^\bu)$ is
endowed with the graded coalgebra structure of the tensor coalgebra
$\Br_\alpha(A)=\udT(A^+[1])$.
 Here the shift of cohomological grading~$[1]$ in the latter formula is
a way to express the passage to the total complex of the bicomplex
$\Br^\bu_\alpha(A^\bu)$, with its total grading equal to the grading
induced by the grading of $A$ minus the grading by the number of
the tensor factors.
 This makes $\Br^\bu_\alpha(A^\bu)$ a DG\+coalgebra (as defined in
Section~\ref{dg-coalgebras-subsecn}).
 We refer to~\cite[Section~6.1]{Pkoszul} for further details, incuding
the (not so trivial) sign rules for the differentials~$\d$ and~$d$.

\subsection{Coaugmented DG-coalgebras}
\label{coaugmented-dg-coalgebras-subsecn}
 Let $C^\bu=(C,d)$ be a DG\+coalgebra over~$k$
(see Section~\ref{dg-coalgebras-subsecn}).
 A \emph{coaugmentation}~$\gamma$ on $C^\bu$ is a (counital)
DG\+coalgebra homomorphism $\gamma\:k\rarrow C$.
 So $\gamma(k)\subset C^0$, \ $\gamma$~is a homomorphism of coalgebras,
the composition $k\overset\gamma\rarrow C\overset\epsilon\rarrow k$ is
the identity map, and $d(\gamma(1))=0$.
 We put $C^{\bu,+}=\coker(\gamma)$; so $C^+$ is, generally speaking,
a DG\+coalgebra without counit, and $C^\bu=k\oplus C^{\bu,+}$ as
a complex of $k$\+vector spaces.
 
 The \emph{cobar-construction} $\Cb^\bu_\gamma(C^\bu)$ of
a coaugmented DG\+coalgebra $(C^\bu,\gamma)$ is the bicomplex
$$
 k\overset0\lrarrow C^{\bu,+}\lrarrow C^{\bu,+}\ot_k C^{\bu,+}
 \lrarrow C^{\bu,+}\ot_k C^{\bu,+}\ot_k C^{\bu,+}\lrarrow\dotsb
$$
with the cobar differential~$\d$ given by the formulas similar to
those in Section~\ref{cobar-construction-subsecn} (except for
the $\pm$~signs) and the differential~$d$ induced by
the differential~$d$ on~$C^\bu$.
 The differential~$d$ on $\Cb^\bu_\gamma(C^\bu)$ acts on every
tensor power $C^{\bu,+}{}^{\ot n}$ of the complex $C^{\bu,+}$ by
the tensor product of $n$~copies of the differential~$d$ on~$C^{\bu,+}$.

 The total complex of the cobar-construction $\Cb^\bu_\gamma(C^\bu)$
is produced by taking infinite direct sums along the diagonals.
 The underlying graded vector space of $\Cb^\bu_\gamma(C^\bu)$ is
endowed with the graded algebra structure of the tensor algebra
$\Cb_\gamma(C)=T(C^+[-1])$.
 Here the shift of cohomological grading~$[-1]$ is a way to express
the passage to the total complex of the bicomplex
$\Cb^\bu_\gamma(C^\bu)$, with its total grading equal to the grading
induced by the grading of $C$ plus the grading by the number of
tensor factors.
 This makes $\Cb^\bu_\gamma(C^\bu)$ a DG\+algebra.
 We refer to~\cite[Section~6.1]{Pkoszul} for further details,
including the sign rules for the differentials~$\d$ and~$d$.

\subsection{Conilpotent DG-coalgebras}
\label{conilpotent-dg-coalgebras-subsecn}
 A noncounital graded coalgebra $D$ is called \emph{conilpotent} if its
underlying noncounital ungraded coalgebra is conilpotent (see
the definition in Section~\ref{conilpotent-coalgebras-subsecn}).
 A coaugmented graded coalgebra $(C,\gamma)$ is called
\emph{conilpotent} if its underlying coaugmented ungraded coalgebra
is conilpotent; equivalently, this means that the noncounital
graded coalgebra $D=C/\gamma(k)$ is conilpotent.

 A coaugmented DG\+coalgebra $(C^\bu,\gamma)$ is called
\emph{conilpotent} if its underlying graded coalgebra $C$ is conilpotent.
 In other words, a DG\+coalgebra $C^\bu$ is conilpotent if and only if
the graded coalgebra $C$ is conilpotent \emph{and} its coaugmentation
$\gamma\:k\rarrow C$ is a DG\+coalgebra map, that is, $d(\gamma(1))=0$.

 By construction, the canonical increasing filtration $F$ on a conilpotent
(noncounital or coaugmented) graded coalgebra is a filtration by
homogeneous vector subspaces.
 The canonical increasing filtration $F$ on a conilpotent DG\+coalgebra
$C^\bu$ is a filtration by subcomplexes $F_nC^\bu\subset C^\bu$.

\subsection{Duality between DG-algebras and DG-coalgebras}
\label{duality-dg-algebras-dg-coalgebras-subsecn}
 Denote by $k\alg_\dg$ the category of DG\+algebras over~$k$ (with
the usual DG\+algebra homomorphisms).
 Furthermore, let $k\alg_\dg^\aug$ denote the category of augmented
DG\+algebras (with DG\+algebra homomorphisms preserving
the augmentations).

 Similarly, denote by $k\coalg_\dg$ the category of DG\+coalgebras
over~$k$, and by $k\coalg_\dg^\coaug$ the category of coaugmented
DG\+coalgebras.
 Let $k\coalg_\dg^\conilp\subset k\coalg_\dg^\coaug$ denote the full
subcategory whose objects are the conilpotent DG\+coal\-ge\-bras.

 One easily observes that the cobar-construction $\Cb^\bu_\gamma(C^\bu)$
of any coaugmented DG\+coalgebra $C^\bu$ is naturally an augmented
DG\+algebra (with the direct summand projection $\Cb_\gamma(C)\rarrow
(C^+)^{\ot0}=k$ providing the augmentation).
 The bar-construction $\Br^\bu_\alpha(A^\bu)$ of any augmented
DG\+algebra $A^\bu$ is not only naturally coaugmented, but even
a conilpotent DG\+coalgebra (essentially, because
the tensor coalgebra is conilpotent; see
Remark~\ref{tensor-coalgebra-cofree-conilpotent-remark}).

 So the bar-construction is a functor
$$
 \Br_\alpha^\bu\:k\alg_\dg^\aug\lrarrow k\coalg_\dg^\conilp,
$$
while the cobar-construction is a functor
$$
 \Cb_\gamma^\bu\:k\coalg_\dg^\coaug\lrarrow k\alg_\dg^\aug.
$$

\begin{lem} \label{augmented-bar-cobar-adjunction}
 The restriction of the cobar-construction to conilpotent
DG\+coalgebras,
$$
 \Cb_\gamma^\bu\:k\coalg_\dg^\conilp\lrarrow k\alg_\dg^\aug
$$
is naturally a left adjoint functor to the bar-construction\/
$\Br_\alpha^\bu\:k\alg_\dg^\aug\rarrow k\coalg_\dg^\conilp$.
\end{lem}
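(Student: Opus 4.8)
The plan is to run the classical bar--cobar adjunction argument, using the set of \emph{twisting cochains} as a common intermediary through which both Hom-sets factor functorially. Given a conilpotent coaugmented DG\+coalgebra $(C^\bu,\gamma)$ and an augmented DG\+algebra $(A^\bu,\alpha)$, I would call a homogeneous $k$\+linear map $\tau\:C^\bu\rarrow A^\bu$ of degree~$1$ a \emph{twisting cochain} if it vanishes on the coaugmentation and has vanishing composite with~$\alpha$ (so that it is really a map $\tau\:C^{\bu,+}\rarrow A^{\bu,+}$), and if it satisfies the Maurer--Cartan equation $d(\tau)+\tau\star\tau=0$. Here $d(\tau)=d_A\circ\tau+\tau\circ d_C$ is the differential induced on the Hom-complex by the internal differentials, while $\tau\star\tau$ is the convolution square, obtained by comultiplying via $\mu^+$ in $C^{\bu,+}$, applying $\tau\ot\tau$, and multiplying in $A^{\bu,+}$ (signs as in~\cite[Section~6.1]{Pkoszul}). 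Writing $\mathrm{Tw}(C^\bu,A^\bu)$ for this set, it is contravariant in $C^\bu$ and covariant in $A^\bu$. The goal is to produce natural bijections of both Hom-sets with $\mathrm{Tw}(C^\bu,A^\bu)$.

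First I would identify $\Hom_{k\alg_\dg^\aug}(\Cb^\bu_\gamma(C^\bu),\,A^\bu)$ with $\mathrm{Tw}(C^\bu,A^\bu)$. Since the underlying graded algebra of $\Cb^\bu_\gamma(C^\bu)$ is the \emph{free} graded algebra $T(C^{\bu,+}[-1])$, a homomorphism of augmented graded algebras $\Cb_\gamma(C)\rarrow A$ is uniquely and freely determined by an arbitrary degree-$0$ map $C^{\bu,+}[-1]\rarrow A^\bu$ of generators, equivalently a degree-$1$ map $\tau\:C^{\bu,+}\rarrow A^\bu$; compatibility with the augmentations forces $\tau$ to land in $A^{\bu,+}$. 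One then checks on generators that this algebra map commutes with the two differentials exactly when $\tau$ satisfies Maurer--Cartan: the internal differential $d_A$ together with the piece of the cobar differential induced by $d_C$ reproduces $d(\tau)$, while the piece of the cobar differential~$\d$ coming from the comultiplication $\mu^+$ reproduces $\tau\star\tau$.

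Next I would identify $\Hom_{k\coalg_\dg^\conilp}(C^\bu,\,\Br^\bu_\alpha(A^\bu))$ with the \emph{same} set. Dually, the underlying graded coalgebra of $\Br^\bu_\alpha(A^\bu)$ is the tensor coalgebra $\udT(A^{\bu,+}[1])$, which by Remark~\ref{tensor-coalgebra-cofree-conilpotent-remark} (applied degreewise in the graded setting) is \emph{cofree} among conilpotent coalgebras. Hence --- and this is precisely where conilpotency of $C^\bu$ is indispensable --- a homomorphism of coaugmented graded coalgebras $C\rarrow\udT(A^{\bu,+}[1])$ is uniquely determined by its composite with the projection $\udT(A^{\bu,+}[1])\rarrow A^{\bu,+}[1]$, namely by an arbitrary degree-$0$ map $C^{\bu,+}\rarrow A^{\bu,+}[1]$ annihilating the coaugmentation, i.e.\ again by a degree-$1$ map~$\tau$. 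Commutation with the differentials unwinds once more to the Maurer--Cartan equation, with $d_A$, $d_C$ contributing $d(\tau)$ and the bar differential~$\d$ (built from the multiplication of $A$) contributing $\tau\star\tau$.

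Finally, both identifications are manifestly natural: a morphism of augmented DG\+algebras $A^\bu\rarrow A'^\bu$ acts by post-composition on $\tau$ and on each Hom-set, while a morphism of conilpotent DG\+coalgebras $C'^\bu\rarrow C^\bu$ acts by pre-composition; composing the two bijections yields a natural isomorphism
$$
 \Hom_{k\alg_\dg^\aug}(\Cb^\bu_\gamma(C^\bu),\,A^\bu)\simeq
 \Hom_{k\coalg_\dg^\conilp}(C^\bu,\,\Br^\bu_\alpha(A^\bu)),
$$
which is exactly the asserted adjunction. The hard part will be purely a matter of bookkeeping rather than concept: one must verify that the differential-compatibility condition on each side unwinds to \emph{literally} the same Maurer--Cartan equation, matching all the $\pm$ signs hidden in the bar and cobar differentials and in the shifts $[-1]$ and~$[1]$ --- precisely the sign computations the paper elects to suppress. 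A secondary point worth emphasizing is that the cofreeness property of $\udT$ is valid only for a conilpotent source, so the restriction to $k\coalg_\dg^\conilp$ is genuinely essential for the bar-side bijection, and hence for the adjunction as stated.
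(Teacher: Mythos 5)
Your proposal is correct and follows essentially the same route as the paper's own proof: both identify the two Hom-sets with the set of twisting cochains $\tau\in\Hom_k^1(C^+,A^+)$ satisfying the Maurer--Cartan equation, using the freeness of $T(C^+[-1])$ on one side and the cofreeness of $\udT(A^+[1])$ among conilpotent graded coalgebras on the other, with the differential-compatibility on each side unwinding to the same equation. Your emphasis on conilpotency being indispensable for the cofreeness step likewise matches the paper's Remark~\ref{tensor-coalgebra-cofree-conilpotent-remark}.
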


\begin{proof}
 Essentially, the assertion holds because $T(V)$ is the free (graded)
algebra spanned by a (graded) vector space $V$, while $\udT(V)$ is
the cofree conilpotent (graded) coalgebra cospanned by~$V$.
 A more detailed explanation, based on the notion of a twisting cochain,
will be offered in
Section~\ref{bar-cobar-and-acyclic-twisting-cochains-subsecn} below.
\end{proof}

 One would like to define equivalence relations (i.~e., the classes of
morphisms to be inverted) in the categories of augmented DG\+algebras
and conilpotent DG\+coalgebras so that, after inverting these classes
of morphisms, the adjoint functors $\Br_\alpha^\bu$ and $\Cb_\gamma^\bu$
become equivalences of categories.
 One important obstacle is that it is \emph{not} good enough to
just invert the usual quasi-isomorphisms in both the categories
$k\alg_\dg^\aug$ and $k\coalg_\dg^\conilp$, for the reason demonstrated
by Example~\ref{bar-of-algebras-quasi-isomorphism}.

 The problem is resolved by using the concept of a \emph{filtered
quasi-isomorphism} introduced by Hinich in~\cite[Section~4]{Hin2}.
 In the terminology of~\cite{Hin2}, an \emph{admissible filtration}
on a coaugmented DG\+coalgebra $(C^\bu,\gamma)$ is an exhaustive
comultiplicative increasing filtration by subcomplexes $F_nC^\bu$
such that $F_{-1}C^\bu=0$ and $F_0C^\bu=\gamma(k)$.
 This means that $C^\bu=\bigcup_{n\ge0}F_nC^\bu$ and
$\mu(F_nC^\bu)\subset\sum_{p+q=n}^{p,q\ge0}F_pC^\bu\ot F_qC^\bu$
for every $n\ge0$.

 In particular, any coaugmented DG\+coalgebra admitting an admissible
filtration is conilpotent.
 Conversely, the canonical increasing filtration on any conilpotent
DG\+coalgebra is admissible.

 Let $f\:C^\bu\rarrow D^\bu$ be a morphism of conilpotent
DG\+coalgebras.
 The morphism~$f$ is said to be a \emph{filtered quasi-isomorphism}
if there exist admissible filtrations $F$ on both the DG\+coalgebras
$C^\bu$ and $D^\bu$ such that $f(F_nC)\subset F_nD$ and the induced map
$F_nC^\bu/F_{n-1}C^\bu\rarrow F_nD^\bu/F_{n-1}D^\bu$ is
a quasi-isomorphism of complexes of $k$\+vector spaces
for every $n\ge0$.

 Notice that there is \emph{no} reason to expect that the composition
of filtered quasi-isomorphisms should be a filtered quasi-isomorphism.
 Nevertheless, leaving set-theoretical issues aside, to any class of
morphisms $\sS$ in a category $\sC$ one can assign the category
$\sC[\sS^{-1}]$ obtained by formally inverting all morphisms from~$\sS$;
in particular, one can formally invert all filtered quasi-isomorphisms
of DG-coalgebras.

 By contrast, a morphism of DG\+algebras $f\:A\rarrow B$ is said to be
a quasi-iso\-mor\-phism if it is a quasi-isomorphism of the underlying
complexes of vector spaces.
 Clearly, for any pair of composable morphisms of DG\+algebras $f$
and~$g$, if two of the morphisms $f$, $g$, and~$gf$ are
quasi-isomorphisms, then so is the third one.

 The following theorem goes back to~\cite[Theorem~3.2]{Hin2}
and~\cite[Th\'eor\`eme~1.3.1.2]{Lef}.
 In the stated form, it can be found in~\cite[Theorem~6.10(b)]{Pkoszul}.

\begin{thm} \label{augmented-algebras-coalgebras-duality}
 Let\/ $\Quis$ be the class of all quasi-isomorphisms of augmented
DG\+algebras and\/ $\FQuis$ be the class of all filtered
quasi-isomorphisms of conilpotent DG\+coalgebras over~$k$.
 Then the adjoint functors\/ $\Br^\bu_\alpha$ and\/ $\Cb^\bu_\gamma$
induce mutually inverse equivalences of categories
$$
 \Br^\bu_\alpha\:k\alg_\dg^\aug[\Quis^{-1}]\,\simeq\,
 k\coalg_\dg^\conilp[\FQuis^{-1}]\,:\!\Cb^\bu_\gamma.
$$
\qed
\end{thm}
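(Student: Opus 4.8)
The plan is to deduce the equivalence from the bar-cobar adjunction of Lemma~\ref{augmented-bar-cobar-adjunction} by a purely formal localization argument, reducing the theorem to four concrete verifications. I would invoke the standard principle: if $F\colon\sC\rightleftarrows\sD\colon G$ is an adjunction with $F$ left adjoint to $G$, and $W_\sC$, $W_\sD$ are classes of morphisms in $\sC$ and $\sD$ with $F(W_\sC)\subset W_\sD$ and $G(W_\sD)\subset W_\sC$, then $F$ and $G$ descend to adjoint functors between the localizations $\sC[W_\sC^{-1}]$ and $\sD[W_\sD^{-1}]$; and if in addition every adjunction unit $X\rarrow GF(X)$ lies in $W_\sC$ and every counit $FG(Y)\rarrow Y$ lies in $W_\sD$, then the descended functors are mutually inverse equivalences, since the descended unit and counit are the localizations of the original ones and have become isomorphisms. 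I would apply this with $F=\Cb^\bu_\gamma$, $G=\Br^\bu_\alpha$, $W_\sC=\FQuis$, and $W_\sD=\Quis$; it applies verbatim even though $\FQuis$ is not closed under composition, as one is free to invert an arbitrary class of morphisms. This leaves four things to check: \textup{(i)}~$\Br^\bu_\alpha$ takes quasi-isomorphisms to filtered quasi-isomorphisms; \textup{(ii)}~$\Cb^\bu_\gamma$ takes filtered quasi-isomorphisms to quasi-isomorphisms; \textup{(iii)}~the unit $C^\bu\rarrow\Br^\bu_\alpha\Cb^\bu_\gamma(C^\bu)$ is a filtered quasi-isomorphism for every conilpotent DG\+coalgebra; and \textup{(iv)}~the counit $\Cb^\bu_\gamma\Br^\bu_\alpha(A^\bu)\rarrow A^\bu$ is a quasi-isomorphism for every augmented DG\+algebra.

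For~(i), I would put the bar filtration by number of tensor factors on $\Br^\bu_\alpha(A^\bu)$ and $\Br^\bu_\alpha(B^\bu)$; these are admissible, and since the bar differential strictly lowers the number of factors, the associated graded carries only the internal differential, with $\gr_n\Br^\bu_\alpha(A^\bu)=(A^{\bu,+}[1])^{\ot n}$. As $f^+\colon A^{\bu,+}\rarrow B^{\bu,+}$ is a quasi-isomorphism and we work over a field, the Künneth formula makes each $\gr_n\Br^\bu_\alpha(f)$ a quasi-isomorphism; this is exactly the definition of a filtered quasi-isomorphism. For~(ii), given admissible filtrations witnessing $\gr g$ a quasi-isomorphism, I would filter $\Cb^\bu_\gamma(C^\bu)=T(C^{\bu,+}[-1])$ by the ``weight'' $w=(\text{total filtration degree of the letters})-(\text{number of letters})$. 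The comultiplication part of the cobar differential preserves total filtration degree while raising the number of letters by one, hence strictly lowers~$w$, so on the associated graded only the internal differential survives and Künneth again makes $\gr^w\Cb^\bu_\gamma(g)$ a quasi-isomorphism. Since this weight filtration is exhaustive and bounded below ($w\ge0$), an induction on~$w$ using the five lemma on the short exact sequences $0\rarrow F_{w-1}\rarrow F_w\rarrow\gr_w\rarrow0$, followed by passage to the filtered colimit (homology commutes with filtered colimits of vector spaces), upgrades this to a genuine quasi-isomorphism of total complexes.

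Assertions~(iii) and~(iv) are the heart of bar-cobar duality and are cleanest in the twisting-cochain language of Section~\ref{twisting-cochains-secn}: the unit and counit are induced by the universal twisting cochains $C^\bu\rarrow\Cb^\bu_\gamma(C^\bu)$ and $\Br^\bu_\alpha(A^\bu)\rarrow A^\bu$, and~(iii)--(iv) say precisely that these are \emph{acyclic} in the sense of Section~\ref{bar-cobar-and-acyclic-twisting-cochains-subsecn}. For~(iv) I would filter $\Cb^\bu_\gamma\Br^\bu_\alpha(A^\bu)$ by the total number~$N$ of $A^{\bu,+}$-letters. The cobar deconcatenation differential preserves~$N$ while the bar multiplication differential lowers it, so the $N$-graded piece is the cobar complex of the \emph{cofree} conilpotent coalgebra $\udT(A^{\bu,+}[1])$ (Remark~\ref{tensor-coalgebra-cofree-conilpotent-remark}) carrying the internal differential. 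Fixing the underlying $N$-letter word, the deconcatenation complex is the tensor product over the $N-1$ gaps of the acyclic two-term ``cut/no-cut'' complexes, hence acyclic for $N\ge2$, equal to $A^{\bu,+}$ for $N=1$, and to~$k$ for $N=0$; thus $\gr_N$ of the counit is a quasi-isomorphism onto the matching piece of $A^\bu=k\oplus A^{\bu,+}$ for every~$N$, and the bounded-below exhaustive colimit argument of~(ii) finishes. Assertion~(iii) is handled dually: I would filter $C^\bu$ by its canonical (coradical) filtration, which is admissible and, crucially, exhaustive \emph{because $C^\bu$ is conilpotent}, and filter $\Br^\bu_\alpha\Cb^\bu_\gamma(C^\bu)$ compatibly; on the associated graded the unit becomes the analogous bar-of-cobar combinatorial complex, acyclic off the diagonal by the same ``cut/no-cut'' extra-degeneracy homotopy and Künneth, so that $\gr$ of the unit is a levelwise quasi-isomorphism --- which is exactly what makes the unit a filtered quasi-isomorphism.

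The main obstacle is the acyclicity input of~(iii) and~(iv): producing and verifying the explicit extra-degeneracy contracting homotopy on the associated-graded bar-cobar complexes (equivalently, the acyclicity of the universal twisting cochains), including a careful accounting of the DG signs that this survey otherwise suppresses. A second, more conceptual point of care is that conilpotency is not a convenience but a necessity in~(iii): it is precisely what makes the coradical filtration exhaustive, so that a levelwise quasi-isomorphism on the associated graded detects a genuine filtered quasi-isomorphism. Dropping conilpotency breaks this step, and indeed Example~\ref{bar-of-algebras-quasi-isomorphism} shows that ordinary quasi-isomorphisms of DG\+coalgebras cannot be the right notion --- which is the whole reason the class $\FQuis$, rather than $\Quis$, appears on the coalgebra side.
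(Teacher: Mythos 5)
Your attempt cannot really be measured against ``the paper's own proof,'' because the survey gives none: Theorem~\ref{augmented-algebras-coalgebras-duality} is stated without argument, with a pointer to \cite[Theorem~6.10(b)]{Pkoszul} and to the works of Hinich and Lef\`evre-Hasegawa. What you propose is the standard proof from those sources, and your four-point reduction is precisely the content that the survey itself attributes to the theorem in Remarks~\ref{cobar-not-preserves-quasi-isomorphisms-remarks} (that $\Br^\bu_\alpha$ takes $\Quis$ to $\FQuis$, that $\Cb^\bu_\gamma$ takes $\FQuis$ to $\Quis$, and that the adjunction counit is a quasi-isomorphism). The formal localization step is sound (including your observation that non-closure of $\FQuis$ under composition is harmless), and steps~(i), (ii) and~(iv) are correct as written: the weight $w=\sum_i p_i-n$ in~(ii) really does kill the comultiplication part of the cobar differential on the associated graded, and in~(iv) the filtration by the number $N$ of $A^{\bu,+}$\+letters reduces the counit to the cut/no-cut complex tensored with the internal differential, after which the bounded-below exhaustive induction finishes.

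The one genuine gap is in step~(iii). Passing to the associated graded of the coradical filtration on $C^\bu$ (and of the induced total-degree filtration on $\Br^\bu_\alpha\Cb^\bu_\gamma(C^\bu)$) does \emph{not} yet produce ``the analogous bar-of-cobar combinatorial complex.'' The reduced comultiplication maps $F_pC^+$ into $\sum_{p'+p''=p,\>p',p''\ge1}F_{p'}C^+\ot_k F_{p''}C^+$, so it \emph{preserves} total coradical degree and its leading term survives to the associated graded; the $\gr^p$\+complex therefore still carries a differential that raises the number of $C^+$\+letters, in addition to the concatenation and internal differentials to which your cut/no-cut contraction applies. (For $C=\udT(V)$ this surviving term is full deconcatenation, so it is certainly not negligible.) You need a secondary filtration by the number of $C^+$\+letters---which is bounded between $1$ and $p$ in each coradical degree~$p$, so the associated spectral sequence is finite and converges---before the combinatorial homotopy kicks in. This is exactly why~(iii) is genuinely harder than~(iv): on the algebra side there is no comultiplication left over once $N$ is fixed. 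With that extra filtration inserted, your argument closes, and it is the argument of the cited sources.
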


\begin{rems} \label{cobar-not-preserves-quasi-isomorphisms-remarks}
 Clearly, any filtered quasi-isomorphism of DG\+coalgebras is
a quasi-isomorphism of their underlying complexes of vector spaces
(i.~e., a quasi-isomorphism in the usual sense of the word).
 By the converse is \emph{not} true.

 The functor $\Br^\bu_\alpha\:k\alg_\dg^\aug\rarrow k\coalg_\dg^\conilp$
takes quasi-isomorphisms to filtered quasi-isomorphisms.
 In particular, it follows that it takes quasi-isomorphisms to
quasi-isomorphisms.

 The functor $\Cb^\bu_\gamma\:k\coalg_\dg^\conilp\rarrow k\alg_\dg^\aug$
takes filtered quasi-isomorphisms to quasi-isomorphisms.
 This important property of filtered quasi-isomorphisms of DG\+coalgebras
is a part of Theorem~\ref{augmented-algebras-coalgebras-duality}.

 However, the functor $\Cb^\bu_\gamma$ does \emph{not} take
quasi-isomorphisms to quasi-iso\-mor\-phisms, generally speaking.
 For example, let $f\:(A,\alpha)\rarrow(B,\beta)$ be a morphism of
augmented algebras (or augmented DG\+algebras) as in
Example~\ref{bar-of-algebras-quasi-isomorphism}, that is, $f$~is
\emph{not} a quasi-isomorphism, but $\Br^\bu(f)$ is.
 Put $C^\bu=\Br^\bu_\alpha(A^\bu)$, \ $D^\bu=\Br^\bu_\beta(B^\bu)$,
and $g=\Br^\bu(f)$; so $g\:(C^\bu,\gamma)\rarrow(D^\bu,\delta)$ is
a quasi-isomorphism (but not a filtered quasi-isomorphism!) of
conilpotent DG\+coalgebras.

 It is a part of Theorem~\ref{augmented-algebras-coalgebras-duality}
that the adjunction morphism $\Cb^\bu_\eta(\Br^\bu_\varepsilon(E^\bu))
\rarrow E^\bu$ is a quasi-isomorphism of DG\+algebras for any
augmented DG\+algebra $(E^\bu,\varepsilon)$.
 So the adjunction morphisms $\Cb^\bu_\gamma(\Br^\bu_\alpha(A^\bu))
\rarrow A^\bu$ and $\Cb^\bu_\delta(\Br^\bu_\beta(B^\bu))\rarrow B^\bu$
are quasi-isomorphisms.
 It follows that $\Cb^\bu(g)\:\Cb^\bu_\gamma(C^\bu)\rarrow
\Cb^\bu_\delta(D^\bu)$ is \emph{not} a quasi-isomorphism
of DG\+algebras.
\end{rems}

\subsection{Quillen equivalence between DG-algebras and DG-coalgebras}
\label{quillen-equivalence-dg-algebras-dg-coalgebras-subsecn}
 The derived Koszul duality and comodule-contramodule correspondence
results can be usually expressed in the language of Quillen 
equivalences between model categories~\cite[Section~8.4]{Pkoszul}.
 In particular, the Koszul duality between augmented DG\+algebras and
conilpotent DG\+coalgebras stated in
Theorem~\ref{augmented-algebras-coalgebras-duality} can be expressed
as a Quillen equivalence.
 Let us briefly sketch the related assertions.

 We suggest the book~\cite{Hov} as a standard reference source on
model categories and related concepts.
 The category $k\alg_\dg$ of DG\+algebras over~$k$ has a standard
model category structure in which the quasi-isomorphisms are the weak
equivalences and the surjective DG\+algebra maps (i.~e.,
the DG\+algebra maps that are surjective as maps of graded vector
spaces) are the fibrations~\cite{Hin1,Jar}.
 It is clear from the explicit descriptions of the classes of 
cofibrations and trivial cofibrations in $k\alg_\dg$, which can be
found in the formulation of~\cite[Theorem~9.1(a)]{Pkoszul}, that
the model category $k\alg_\dg$ is cofibrantly generated.
 The similar assertions hold for the category of augmented DG\+algebras
$k\alg_\dg^\aug$ \,\cite[Theorem~9.1(b)]{Pkoszul}.

 The category $k\coalg_\dg^\conilp$ of conilpotent DG\+coalgebras
over~$k$ also has a natural model
structure~\cite[Theorem~3.1]{Hin2}, \cite[Th\'eor\`eme~1.3.1.2(a)]{Lef},
\cite[Theorem~9.3(b)]{Pkoszul}.
 The weak equivalences can be described as the morphisms which get
inverted when one inverts the filtered quasi-isomorphisms.
 The cofibrations are the injective DG\+coalgebra maps (i.~e.,
the DG\+coalgebra maps that are injective as maps of graded vector
spaces).
 Explicit descriptions of all the classes of morphisms involved
can be found in the formulation of~\cite[Theorem~9.3(b)]{Pkoszul}.

 The model category $k\coalg_\dg^\conilp$ is likewise cofibrantly
generated.
 In fact, the injective morphisms of finite-dimensional DG\+coalgebras
are the generating cofibrations (as one can see from the fact that
any DG\+coalgebra is the directed union of its finite-dimensional
DG\+subcoalgebras;
cf.\ Section~\ref{local-finite-dimensionality-subsecn}).
 The injective filtered quasi-isomorphisms strictly compatible with
the filtrations, acting between finite-dimensional DG\+coalgebras,
are the generating trivial cofibrations (this follows from
the description of the trivial cofibrations as the retracts of
the injective filtered quasi-isomorphisms strictly compatible with
the filtrations, as per~\cite[Theorem~9.3(b)]{Pkoszul}).

 The pair of adjoint functors in
Lemma~\ref{augmented-bar-cobar-adjunction} and
Theorem~\ref{augmented-algebras-coalgebras-duality} is a Quillen
equivalence between $k\alg_\dg^\aug$ and $k\coalg_\dg^\conilp$
\,\cite[Theorem~3.2]{Hin2}, \cite[Th\'eor\`eme~1.3.1.2(b)]{Lef},
\cite[end of Section~9.3]{Pkoszul}.

\Section{Twisting Cochains}  \label{twisting-cochains-secn}

 The concept of a \emph{twisting cochain} explains the adjunction of
Lemma~\ref{augmented-bar-cobar-adjunction} and the constructions
of inverse equivalences in
Theorems~\ref{complexes-of-modules-augmented-bar-construction-thm},
\ref{complexes-of-modules-quadratic-linear-thm},
\ref{complexes-of-comodules-coaugmented-cobar-thm},
and~\ref{complexes-of-comodules-conilpotent-cobar-thm}.
 More generally, the notion of an \emph{acyclic twisting cochain},
defined in this section, allows to conveniently formulate
the Koszul duality theorems in wide contexts.

\subsection{The Hom DG-algebra and twisting cochains}
\label{hom-dg-algebra-and-twisting-cochains-subsecn}
 Let $C$ be a coassociative coalgebra and $A$ be an associative
algebra over a field~$k$.
 Then the vector space $\Hom_k(C,A)$ of all $k$\+linear maps
$C\rarrow A$ acquires an associative $k$\+algebra structure.
 Given two linear maps $f$, $g\in\Hom_k(C,A)$, their product
$fg\in\Hom_k(C,A)$ is constructed as the composition
$$
 C\overset\mu\lrarrow C\ot_kC\overset{f\ot g}\lrarrow A\ot_kA
 \overset m\lrarrow A,
$$
where $\mu\:C\rarrow C\ot_kC$ is the comultiplication and
$m\:A\ot_kA\rarrow A$ is the multiplication map.
 Given a counit $\epsilon\:C\rarrow k$ on $C$ and a unit $e\:k\rarrow A$
in $A$, the unit element $1\in\Hom_k(C,A)$ is constructed as
the composition $C\overset\epsilon\rarrow k\overset e\rarrow A$.

 Given two graded vector spaces $U$ and $V$ over~$k$, one denotes
by $\Hom_k(U,V)$ the graded Hom space.
 So $\Hom_k(U,V)=\bigoplus_{i\in\boZ}\Hom_k^i(U,V)$, where
$\Hom_k^i(U,V)$ is the vector space of all homogeneous $k$\+linear
maps $U\rarrow V$ of degree~$i$.
 Then, for any graded coalgebra $C$ and any graded algebra $A$,
the graded Hom space $\Hom_k(C,A)$ is a graded algebra over~$k$.

 Given two complexes of vector spaces $U^\bu$ and $V^\bu$,
the graded Hom space $\Hom_k(U,V)$ is endowed with a differential
in the usual way, producing the complex $\Hom_k^\bu(U^\bu,V^\bu)$.
 For any DG\+coalgebra $C^\bu$ and a DG\+algebra $A^\bu$ over~$k$,
the complex $\Hom_k^\bu(C^\bu,A^\bu)$ is a DG\+algebra.

 Let $E^\bu=(E,d)$ be a DG\+algebra.
 A homogeneous element $a\in E^1$ of degree~$1$ in $E$ is said to be
a \emph{Maurer--Cartan element} (or a \emph{Maurer--Cartan cochain})
if it satisfies the \emph{Maurer--Cartan equation} $a^2+d(a)=0$
in~$E^2$.

 Let $C^\bu$ be a DG\+coalgebra and $A^\bu$ be a DG\+algebra over~$k$.
 By the definition, a \emph{twisting cochain}~$\tau$ for $C^\bu$
and $A^\bu$ is a Maurer--Cartan cochain in the DG\+algebra
$\Hom_k^\bu(C^\bu,A^\bu)$.
 So $\tau\:C^\bu\rarrow A^\bu$ is a homogeneous $k$\+linear map of
degree~$1$ (i.~e., $\tau_i\:C^i\rarrow A^{i+1}$, \ $i\in\boZ$)
satisfying the Maurer--Cartan equation.

 Various conditions of compatibility with (co)augmentations are usually
imposed on twisting cochains.
 In particular, let $(C^\bu,\gamma)$ be a coaugmented DG\+coalgebra and
$(A^\bu,\alpha)$ be an augmented DG\+algebra.
 Twisting cochains $\tau\:C^\bu\rarrow A^\bu$ such that
$\alpha\circ\tau=0=\tau\circ\gamma$ will be important for us
in this Section~\ref{twisting-cochains-secn}.

\subsection{Bar-cobar adjunction and acyclic twisting cochains}
\label{bar-cobar-and-acyclic-twisting-cochains-subsecn}
 We start with some examples before passing to the general case.

\begin{exs} \label{bar-construction-twisting-cochains-examples}
 (1)~Let $(A,\alpha)$ be an augmented algebra over~$k$,
and let $C^\bu=\Br^\bu_\alpha(A)$ be its bar-construction,
as in Section~\ref{augmented-algebras-subsecn}.
 Let $\gamma\:k\rarrow C^\bu$ be the natural coaugmentation of
$\Br^\bu_\alpha(A)$ (as in
Section~\ref{duality-dg-algebras-dg-coalgebras-subsecn}).
 Then the composition~$\tau$ of the direct summand projection
$\Br^\bu_\alpha(A)\rarrow A^+{}^{\ot 1}=A^+$ and the inclusion
$A^+\rarrow A$ is a twisting cochain for the DG\+coalgebra $C^\bu$
and the algebra~$A$ (viewed as a DG\+algebra in the obvious way).
 The equations of compatibility with the augmentation and
the coaugmentation $\alpha\circ\tau=0=\tau\circ\gamma$ are satisfied
in this example.

\smallskip
 (2)~More generally, let $(A^\bu,\alpha)$ be an augmented DG\+algebra,
and let $C^\bu=\Br^\bu_\alpha(A^\bu)$ be its bar-construction,
as in Section~\ref{augmented-dg-algebras-subsecn}.
 Denote by $\gamma\:k\rarrow C^\bu$ the natural coaugmentation of
$\Br^\bu_\alpha(A^\bu)$.
 Then the composition $\tau\:C^\bu\rarrow A^\bu$ of the direct summand
projection $\Br^\bu_\alpha(A^\bu)\rarrow A^{\bu,+}{}^{\ot 1}=A^{\bu,+}$
and the inclusion $A^{\bu,+}\rarrow A^\bu$ is a twisting cochain for
the DG\+coalgebra $C^\bu$ and the DG\+algebra~$A^\bu$.
 The equations $\alpha\circ\tau=0=\tau\circ\gamma$ are satisfied
for this twisting cochain.
\end{exs}

\begin{exs} \label{cobar-construction-twisting-cochains-examples}
 (1)~Let $(C,\gamma)$ be a coaugmented coalgebra over~$k$,
and let $A^\bu=\Cb^\bu_\gamma(C)$ be its cobar-construction,
as in Section~\ref{cobar-construction-subsecn}.
 Let $\alpha\:A^\bu\rarrow k$ be the natural augmentation of
$\Cb^\bu_\gamma(C)$ (as in
Section~\ref{duality-dg-algebras-dg-coalgebras-subsecn}).
 Then the composition~$\tau$ of the natural surjection $C\rarrow C^+$
and the direct summand inclusion $C^+\rarrow\Cb^\bu_\gamma(C)$
is a twisting cochain for the coalgebra~$C$ (viewed as a DG\+coalgebra
in the obvious way) and the DG\+algebra~$A^\bu$.
 The equations of compatibility with the augmentation and
the coaugmentation $\alpha\circ\tau=0=\tau\circ\gamma$ are satisfied
in this example.

\smallskip
 (2)~More generally, let $(C^\bu,\gamma)$ be a coaugmented
DG\+coalgebra, and let $A^\bu=\Cb^\bu_\gamma(C^\bu)$ be
its cobar-construction, as in
Section~\ref{coaugmented-dg-coalgebras-subsecn}.
 Denote by $\alpha\:A^\bu\rarrow k$ the natural augmentation of
$\Cb^\bu_\gamma(C^\bu)$.
 Then the composition $\tau\:C^\bu\rarrow A^\bu$ of the natural
surjection $C^\bu\rarrow C^{\bu,+}$ and the direct summand inclusion
$C^{\bu,+}\rarrow\Cb^\bu_\gamma(C^\bu)$ is a twisting cochain for
the DG\+coalgebra $C^\bu$ and the DG\+algebra~$A^\bu$.
 The equations $\alpha\circ\tau=0=\tau\circ\gamma$ are satisfied
for this twisting cochain.
\end{exs}

\begin{ex} \label{nonhomogeneous-quadratic-twisting-cochain}
 Let $(A,F,\alpha)$ be an augmented nonhomogeneous quadratic algebra,
as defined in
Section~\ref{nonhomogeneos-quadratic-dual-to-augmented-subsecn},
and let $C^\bu\subset\Br^\bu_\alpha(A)$ be the related (nonhomogeneous
quadratic dual) DG\+subcoalgebra.
 Then the composition $C^\bu\rarrow\Br^\bu_\alpha(A)\rarrow A$ of
the inclusion map $C^\bu\rarrow\Br^\bu_\alpha(A)$ with
twisting cochain $\Br^\bu_\alpha(A)\rarrow A$ from
Example~\ref{bar-construction-twisting-cochains-examples}(1) is
a twisting cochain $\tau\:C^\bu\rarrow A$.
 Equivalently, the twisting cochain~$\tau$ can be constructed as
the composition of the direct summand projection $C\rarrow C^1=V=
F_1A/F_0A$ and the inclusion $F_1A/F_0A\simeq A^+\cap F_1A\rarrow A$.

 The natural coaugmentation $k\rarrow\Br^\bu_\alpha(A)$ of
the cobar DG\+algebra $\Br^\bu_\alpha(A)$ factorizes as
$k\rarrow C^\bu\rarrow\Br^\bu_\alpha(A)$, making $C^\bu$
a coaugmented (in fact, conilpotent) DG\+coalgebra with
the coaugmentation $\gamma\:k\rarrow C^\bu$.
 The twisting cochain $\tau\:C^\bu\rarrow A$ satisfies
the equations $\alpha\circ\tau=0=\tau\circ\gamma$.
\end{ex}

 Now we can return to the proof of
Lemma~\ref{augmented-bar-cobar-adjunction}.

\begin{proof}[Proof of Lemma~\ref{augmented-bar-cobar-adjunction}:
further details]
 Let $(A^\bu,\alpha)$ be an augmented DG\+algebra and $(C^\bu,\gamma)$
be a conilpotent DG\+coalgebra.
 Then the underlying graded coalgebra of the bar-construction
$\Br^\bu_\alpha(A^\bu)$ is $\Br_\alpha(A)=\udT(A^+[1])$ and
the underlying graded algebra of the cobar-construction
$\Cb^\bu_\gamma(C^\bu)$ is $\Cb_\gamma(C)=T(C^+[-1])$.

 Since $T(C^+[-1])$ is the free graded algebra spanned by $C^+[-1]$,
graded algebra homomorphisms $\Cb_\gamma(C)\rarrow A$ correspond
bijectively to homogeneous $k$\+linear maps $C^+[-1]\rarrow A$ of
degree~$0$.
 Among these, the graded algebra homomorphisms compatible with
the augmentations on $\Cb_\gamma(C)$ and $A$ correspond precisely
to the linear maps $C^+[-1]\rarrow A^+$.
 This means elements of the vector space $\Hom_k^1(C^+,A^+)$.

 Since $\udT(A^+[1])$ is the cofree conilpotent graded coalgebra
cospanned by $A^+[1]$ and $C$ is a conilpotent graded coalgebra,
graded coalgebra homomorphisms $C\rarrow\Br_\alpha(A)$ correspond
bijectively to homogeneous $k$\+linear maps $C^+\rarrow A^+[1]$
of degree~$0$.
 This means elements of the same vector space $\Hom_k^1(C^+,A^+)$.

 Finally, one has to check that a graded algebra homomorphism
$\Cb_\gamma(C)\rarrow A$ commutes with the differentials on
$\Cb^\bu_\gamma(C^\bu)$ and $A^\bu$ if and only if the related
element of $\Hom_k^1(C^+,A^+)\subset\Hom_k^1(C,A)$ satisfies
the Maurer--Cartan equation.
 Similarly, a graded coalgebra homomorphism $C\rarrow\Br_\alpha(A)$
commutes with the differentials on $C^\bu$ and $\Br^\bu_\alpha(A^\bu)$
if and only if the related element of $\Hom_k^1(C^+,A^+)\subset
\Hom_k^1(C,A)$ satisfies the same Maurer--Cartan equation.

 To sum up, both the augmented DG\+algebra homomorphisms
$\Cb^\bu_\gamma(C^\bu)\rarrow A^\bu$ and the (conilpotent)
DG\+coalgebra homomorphisms $C^\bu\rarrow\Br^\bu_\alpha(A^\bu)$
correspond bijectively to twisting cochains $\tau\:C^\bu\rarrow A^\bu$
satisfying the equations of compatibility with the augmentation
and the coaugmentation $\alpha\circ\tau=0=\tau\circ\gamma$.
\end{proof}

 Let $(A^\bu,\alpha)$ be an augmented DG\+algebra, $(C^\bu,\gamma)$ be
a conilpotent DG\+coalgebra, and $\tau\:C^\bu\rarrow A^\bu$ be
a twisting cochain satisfying the equations $\alpha\circ\tau=0=
\tau\circ\gamma$.
 The twisting cochain~$\tau$ is said to be \emph{acyclic} if one of
(or equivalently, both) the related DG\+algebra homomorphism
$\Cb^\bu_\gamma(C^\bu)\rarrow A^\bu$ and the DG\+coalgebra
homomorphism $C^\bu\rarrow\Br^\bu_\alpha(A^\bu)$ become isomorphisms
after the quasi-isomorphisms of DG\+algebras, or respectively
the filtered quasi-isomorphisms of conilpotent DG\+coalgebras, are
inverted, as in Theorem~\ref{augmented-algebras-coalgebras-duality}.
 Simply put, $\tau$~is called acyclic if the related homomorphism
of DG\+algebras $\Cb^\bu_\gamma(C^\bu)\rarrow A^\bu$ is
a quasi-isomorphism.

 For example, the twisting cochain from
Examples~\ref{cobar-construction-twisting-cochains-examples}
is acyclic, by the definition, whenever the coaugmented coalgebra
$(C,\gamma)$ or the coaugmented DG\+coalgebra $(C^\bu,\gamma)$
is conilpotent.
 It is a part of Theorem~\ref{augmented-algebras-coalgebras-duality}
that the twisting cochain from 
Examples~\ref{bar-construction-twisting-cochains-examples}
is acyclic for any augmented algebra $(A,\alpha)$ or
an augmented DG\+algebra $(A^\bu,\alpha)$.

 Concerning the twisting cochain~$\tau$ from
Example~\ref{nonhomogeneous-quadratic-twisting-cochain},
it is acyclic for any (augmented) nonhomogeneous \emph{Koszul}
algebra~$(A,F)$.
 This claim is a corollary of the proof of Poincar\'e--Birkhoff--Witt
theorem for nonhomogeneous quadratic/Koszul algebras
in~\cite[Sections~3.2\+-3.3]{Pcurv}
or~\cite[Proposition~7.2(ii) in Chapter~5]{PP}.

\subsection{Twisted differential on the tensor product}
\label{twisted-differential-on-tensor-product-subsecn}
 Let $(A,\alpha)$ be an augmented algebra and $M^\bu$ be a complex of
left $A$\+modules.
 Let us make some basic observations about the bar-constructions
$\Br^\bu_\alpha(A)$ and $\Br^\bu_\alpha(A,M^\bu)$, as defined in
Sections~\ref{augmented-algebras-subsecn}
and~\ref{algebras-modules-posing-the-problem-subsecn}.

 The underlying graded vector space $\Br_\alpha(A,M)$ of the complex
$\Br^\bu_\alpha(A,M^\bu)$ is isomorphic to the tensor product
$\Br_\alpha(A)\ot_kM$ of the underlying graded vector spaces of
the complexes $\Br^\bu_\alpha(A)$ and~$M^\bu$.
 But the differential on $\Br^\bu_\alpha(A,M^\bu)$ is \emph{not}
the tensor product of the differentials on $\Br^\bu_\alpha(A)$
and~$M^\bu$.
 Rather, in the formula for the differential of
$\Br^\bu_\alpha(A,M^\bu)$, there are the summands coming from
the differential of $\Br^\bu_\alpha(A)$, there is the summand coming
from the differential of $M^\bu$, but there is also one additional
summand, defined in terms of the action of $A$ in~$M$.

 Similarly, let $(C,\gamma)$ be a coaugmented coalgebra and $N^\bu$ be
a complex of left $C$\+comodules.
 Consider the cobar-constructions $\Cb^\bu_\gamma(C)$ and
$\Cb^\bu_\gamma(C,N^\bu)$, as defined in
Sections~\ref{cobar-construction-subsecn}
and~\ref{duality-formulated-for-complexes-of-comodules}.
 
 The underlying graded vector space $\Cb_\gamma(C,N)$ of the complex
$\Cb^\bu_\gamma(C,N^\bu)$ is isomorphic to the tensor product
$\Cb_\gamma(C)\ot_kN$ of the underlying graded vector spaces of
the complexes $\Cb^\bu_\gamma(C)$ and~$N^\bu$.
 But the differential on $\Cb^\bu_\gamma(C,N^\bu)$ is \emph{not}
the tensor product of the differentials on $\Cb^\bu_\gamma(C)$
and~$N^\bu$.
 Rather, in the formula for the differential of
$\Cb^\bu_\gamma(C,N^\bu)$, there are the summands coming from
the differential of $\Cb^\bu_\gamma(C)$, there is the summand coming
from the differential on $N^\bu$, but there is also one additional
summand, defined in terms of the coaction of $C$ in~$N$.

 The construction of the twisted differential on the tensor product
of a DG\+module and a DG\+comodule (twisted by a twisting
cochain~$\tau$) is abstracted from these observations.
 Let $A^\bu$ be a DG\+algebra and $C^\bu$ be a DG\+coalgebra over~$k$.
 Let $\tau\:C^\bu\rarrow A^\bu$ be a twisting cochain.

 Let $M^\bu=(M,d_M)$ be a right DG\+module over $A^\bu$, and let
$N^\bu=(N,d_N)$ be a left DG\+comodule over~$C^\bu$.
 Consider the tensor product $M\ot_k N$ of the underlying graded
vector spaces of $M^\bu$ and $N^\bu$, and endow it with
the differential given by the formula
$$
 d(x\ot y)=d_M(x)\ot y + (-1)^{|x|}x\ot d_N(y)\pm d^\tau(x\ot y)
$$
for all homogeneous elements $x\in M$ and $y\in N$ of
degrees~$|x|$ and~$|y|$.
 Here $d^\tau\:M\ot_k N\rarrow M\ot_kN$ is the composition
$$
 M\ot_k N\lrarrow M\ot_k C\ot_k N\lrarrow M\ot_k A\ot_k N
 \lrarrow M\ot_k N
$$
of the map induced by the comultiplication map $N\rarrow C\ot_k N$,
the map induced by the twisting cochain $\tau\:C\rarrow A$, and
the map induced by the multiplication map $M\ot_k A\rarrow M$.
 The reader can consult with~\cite[Section~6.2]{Pkoszul} for
the sign rule.

 Then one can check that $d^2(x\ot y)=0$.
 So the tensor product of graded vector spaces $M\ot_k N$ endowed
with the differential~$d$ is a complex.
 We denote this complex by $M^\bu\ot^\tau N^\bu$.

 Similarly, let $M^\bu=(M,d_M)$ be a left DG\+module over $A^\bu$,
and let $N^\bu=(N,d_N)$ be a right DG\+comodule over~$C^\bu$.
 Consider the tensor product $N\ot_kM$ of the respective underlying
graded vector spaces, and endow it with the differential given by
the formula
$$
 d(y\ot x)=d_N(y)\ot x + (-1)^{|y|}y\ot d_M(x)\pm d^\tau(y\ot x),
$$
where $d^\tau\:N\ot_k M\rarrow N\ot_kM$ is the composition
$$
 N\ot_k M\lrarrow N\ot_k C\ot_k M\overset\tau\lrarrow N\ot_k A\ot_k M
 \lrarrow N\ot_k M.
$$

 Once again, choosing the sign properly
(cf.~\cite[Section~6.2]{Pkoszul}), one can can check that
$d^2(y\ot x)=0$.
 So the tensor product $N\ot_k M$ endowed with the differential~$d$
is a complex.
 We denote this complex by $N^\bu\ot^\tau M^\bu$.

\subsection{Derived Koszul duality on the comodule side in
the augmented case} \label{augmented-duality-comodule-side-subsecn}
 Let $A^\bu$ be a DG\+algebra and $C^\bu$ be a DG\+coalgebra
over~$k$.
 Suppose that we are given a twisting cochain $\tau\:C^\bu\rarrow
A^\bu$.

 Given a left DG\+comodule $N^\bu$ over $C^\bu$, we consider
the complex of vector spaces $A^\bu\ot^\tau N^\bu$ constructed
in Section~\ref{twisted-differential-on-tensor-product-subsecn}.
 Then the structure of right DG\+module over $A^\bu$ on $A^\bu$ has
been eaten up in the construction of the twisted differential on
the tensor product, but the left DG\+module structure of $A^\bu$
over $A^\bu$ is inherited by the twisted tensor product, making
$A^\bu\ot^\tau N^\bu$ a left DG\+module over~$A^\bu$.

 Similarly, given a left DG\+module $M^\bu$ over $A^\bu$, we consider
the complex of vector spaces $C^\bu\ot^\tau M^\bu$ constructed
in Section~\ref{twisted-differential-on-tensor-product-subsecn}.
 Then the right DG\+comodule structure over $C^\bu$ on $C^\bu$ has
been consumed in the construction of the twisted differential on
the tensor product, but the left DG\+comodule structure of $C^\bu$
over $C^\bu$ is inherited by the twisted tensor product, making
$C^\bu\ot^\tau M^\bu$ a left DG\+comodule over~$C^\bu$.

 Denoting by $A^\bu\modl$ the DG\+category of left DG\+modules over
$A^\bu$ and by $C^\bu\comodl$ the DG\+category of left DG\+comodules
over $C^\bu$, one observes that the DG\+functor
$$
 A^\bu\ot^\tau{-}\,\:C^\bu\comodl\lrarrow A^\bu\modl
$$
is left adjoint to the DG\+functor
$$
 C^\bu\ot^\tau{-}\,\:A^\bu\modl\lrarrow C^\bu\comodl.
$$

 Similarly to
Section~\ref{duality-formulated-for-complexes-of-comodules},
we formulate two versions of the derived Koszul duality theorem,
a conilpotent and a nonconilpotent one.
 Let $(A^\bu,\alpha)$ be an augmented DG\+algebra and $(C^\bu,\gamma)$
be a coaugmented DG\+coalgebra.
 Suppose that we are given a twisting cochain $\tau\:C^\bu\rarrow A^\bu$
satisfying the equations $\alpha\circ\tau=0=\tau\circ\gamma$.

\begin{thm} \label{augmented-acyclic-twisting-cochain-duality-thm}
 In the context of the previous paragraph, assume further that $C^\bu$
is a conilpotent DG\+coalgebra (as defined in
Section~\ref{conilpotent-dg-coalgebras-subsecn})
and $\tau$~is an acyclic twisting cochain (as defined in
Section~\ref{bar-cobar-and-acyclic-twisting-cochains-subsecn}).
 Then the adjoint functors $M^\bu\longmapsto C^\bu\ot^\tau M^\bu$
and $N^\bu\longmapsto A^\bu\ot^\tau N^\bu$ induce a triangulated
equivalence between the conventional \emph{derived} category of
left DG\+modules over $A^\bu$ and the \emph{coderived} category of
left DG\+comodules over~$C^\bu$,
$$
 \sD(A^\bu\modl)\simeq\sD^\co(C^\bu\comodl).
$$
\end{thm}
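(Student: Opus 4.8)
The plan is to avoid a direct analysis of the unit and counit on generators, and instead to \emph{reduce} the statement to the conilpotent cobar-duality already recorded in Theorem~\ref{complexes-of-comodules-conilpotent-cobar-thm}, changing the DG-algebra along the quasi-isomorphism hidden in the acyclicity of~$\tau$. Since $C^\bu$ is conilpotent, Theorem~\ref{complexes-of-comodules-conilpotent-cobar-thm} provides a triangulated equivalence $\sD^\co(C^\bu\comodl)\simeq\sD(\Cb^\bu_\gamma(C^\bu)\modl)$. Writing $\tau_0\:C^\bu\rarrow\Cb^\bu_\gamma(C^\bu)$ for the canonical cobar twisting cochain of Example~\ref{cobar-construction-twisting-cochains-examples}(2), this equivalence is realized by the forward functor $N^\bu\longmapsto\Cb^\bu_\gamma(C^\bu,N^\bu)$ and the inverse functor $M^\bu\longmapsto C^\bu\ot^{\tau_0}M^\bu$.

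Next I would translate the acyclicity of~$\tau$ into an invariance statement for module categories. By Lemma~\ref{augmented-bar-cobar-adjunction}, the twisting cochain~$\tau$ corresponds to an augmented DG-algebra homomorphism $\phi\:\Cb^\bu_\gamma(C^\bu)\rarrow A^\bu$ satisfying $\tau=\phi\circ\tau_0$, and by the definition in Section~\ref{bar-cobar-and-acyclic-twisting-cochains-subsecn} the acyclicity of~$\tau$ means precisely that $\phi$~is a quasi-isomorphism of DG-algebras. I would then invoke the standard fact that a quasi-isomorphism of DG-algebras induces a triangulated equivalence of the (conventional) derived categories of left DG-modules, so that restriction of scalars $\phi^*$ and derived extension of scalars $A^\bu\ot^{\boL}_{\Cb^\bu_\gamma(C^\bu)}{-}$ are mutually inverse equivalences $\sD(\Cb^\bu_\gamma(C^\bu)\modl)\simeq\sD(A^\bu\modl)$.

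Composing the two equivalences yields a triangulated equivalence $\sD^\co(C^\bu\comodl)\simeq\sD(A^\bu\modl)$, and it remains to check that this composite is implemented by the functors named in the theorem. For a left DG-comodule $N^\bu$ the object $\Cb^\bu_\gamma(C^\bu,N^\bu)$ is, by construction, \emph{free} as a graded $\Cb^\bu_\gamma(C^\bu)$-module (its underlying graded module is $\Cb_\gamma(C)\ot_k N$), hence $K$-flat; therefore the derived extension of scalars is computed by the naive tensor product, and $A^\bu\ot_{\Cb^\bu_\gamma(C^\bu)}\Cb^\bu_\gamma(C^\bu,N^\bu)$ is the graded module $A\ot_k N$ whose differential is twisted by $\phi\circ\tau_0=\tau$, i.e.\ it is exactly $A^\bu\ot^\tau N^\bu$. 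Dually, precomposing the inverse functor $C^\bu\ot^{\tau_0}{-}$ with restriction of scalars $\phi^*$ returns $C^\bu\ot^\tau{-}$, since restricting a module along~$\phi$ turns the $\tau_0$-twist into the $\tau$-twist. Thus the composite equivalence and its inverse are the adjoint pair $A^\bu\ot^\tau{-}$ and $C^\bu\ot^\tau{-}$ of the theorem.

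The main obstacle I anticipate is the input invoked in the second step: the homotopy-invariance of the derived category of DG-modules under a quasi-isomorphism of DG-algebras. Its proof rests on the existence of $K$-flat (or cofibrant) resolutions of DG-modules, which is the genuine homological content here. The accompanying delicate point is the functor identification in the third step: one must know that the cobar comodules $\Cb^\bu_\gamma(C^\bu,N^\bu)$ are graded-free, and hence $K$-flat, so that the \emph{derived} extension of scalars along~$\phi$ coincides with the \emph{underived} twisted tensor product $A^\bu\ot^\tau N^\bu$; it is this coincidence that makes the abstract composite of equivalences equal to the concrete Koszul-duality functors.
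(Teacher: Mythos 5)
Your overall architecture---factor $\tau$ as $\phi\circ\tau_0$ through the cobar-construction, take the cobar case of the duality as the base case, and transport along the derived Morita equivalence induced by the quasi-isomorphism~$\phi$---is sound, and it is consistent with the definition of acyclicity used in Section~\ref{bar-cobar-and-acyclic-twisting-cochains-subsecn}. (The survey itself only cites \cite{Pkoszul} and supplements the citation by the direct arguments of Remarks~\ref{convergent-spectral-sequence-remark}, \ref{motivation-coacyclics-to-contractibles-remark} and Section~\ref{comments-on-proof-subsecn}, which show that the twisted functors take weakly trivial objects to contractible ones and arbitrary objects to adjusted ones.) One caveat on the reduction itself: the base case you invoke is not literally Theorem~\ref{complexes-of-comodules-conilpotent-cobar-thm}, which is stated for an \emph{ungraded} conilpotent coalgebra and complexes of comodules; you need its version for a conilpotent DG\+coalgebra $C^\bu$, i.e.\ exactly the case $\tau=\tau_0$ of the theorem being proved. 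That case carries essentially all of the homological content, so the reduction relocates the difficulty rather than removing it---acceptable if the base case is taken as known, but it should be said explicitly.

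The genuine gap is the justification ``free as a graded module, hence $K$\+flat.'' This implication is false, and falsely so in a way that is the central cautionary tale of the whole subject: an unbounded DG\+module with free underlying graded module need not be $K$\+flat or homotopy projective. The complex~\eqref{unbounded-acyclic-standard-example-eqn} of free modules over $\Lambda=k[\varepsilon]/(\varepsilon^2)$ is acyclic, yet applying ${-}\ot_\Lambda k$ to it yields a complex with zero differential and nonzero terms; so it cannot be $K$\+flat. The DG\+modules $\Cb^\bu_\gamma(C^\bu,N^\bu)$ \emph{are} homotopy projective, but the reason is the conilpotency of $C^\bu$, not graded-freeness: filtering $N$ by the preimages of $F_nC\ot_kN$ under the coaction map (where $F$ is the canonical filtration of $C^\bu$) induces an exhaustive increasing filtration of $\Cb^\bu_\gamma(C^\bu,N^\bu)$ by DG\+submodules whose successive quotients are honest induced DG\+modules $\Cb^\bu_\gamma(C^\bu)\ot_kV^\bu$ from complexes of vector spaces (the twist vanishes on the associated graded because $\tau_0\circ\gamma=0$); these are homotopy projective since $k$~is a field, and a telescope argument finishes. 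Equivalently, homotopy projectivity follows by adjunction from the fact that $C^\bu\ot^{\tau_0}{-}$ takes acyclic DG\+modules to contractible DG\+comodules (Remark~\ref{convergent-spectral-sequence-remark} strengthened as in Section~\ref{comments-on-proof-subsecn}). Alternatively, you can bypass $K$\+flatness altogether: the identification of the composite of the two \emph{right} adjoints, $C^\bu\ot^{\tau_0}\phi^*({-})=C^\bu\ot^\tau{-}$, holds on the nose with no derived-functor issues; once that composite is known to be an equivalence, the descended adjunction between $A^\bu\ot^\tau{-}$ and $C^\bu\ot^\tau{-}$ together with uniqueness of adjoints forces $A^\bu\ot^\tau{-}$ to be its inverse.
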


\begin{proof}
 This is a particular case of~\cite[Theorem~6.5(a)]{Pkoszul};
see~\cite[Th\'eor\`eme~2.2.2.2]{Lef} and~\cite[Section~4]{Kel} for
an earlier approach.
 The definition of the coderived category will be explained below
in Section~\ref{coderived-cdg-comodules-subsecn}.
\end{proof}

\begin{thm} \label{augmented-nonconilpotent-duality-thm}
 Let $(C^\bu,\gamma)$ be a coaugmented DG\+coalgebra, and let
$\tau\:C^\bu\rarrow\Cb^\bu_\gamma(C^\bu)=A^\bu$ be the twisting
cochain constructed in
Example~\ref{cobar-construction-twisting-cochains-examples}(2).
  Then the adjoint functors $M^\bu\longmapsto C^\bu\ot^\tau M^\bu$
and $N^\bu\longmapsto A^\bu\ot^\tau N^\bu$ induce a triangulated
equivalence between the \emph{absolute derived} category of
left DG\+modules over $A^\bu$ and the \emph{coderived} category of
left DG\+comodules over~$C^\bu$,
$$
 \sD^\abs(A^\bu\modl)\simeq\sD^\co(C^\bu\comodl).
$$
\end{thm}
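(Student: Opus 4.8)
The plan is to upgrade the adjunction between the exact DG\+functors $A^\bu\ot^\tau{-}\colon C^\bu\comodl\rarrow A^\bu\modl$ and $C^\bu\ot^\tau{-}\colon A^\bu\modl\rarrow C^\bu\comodl$ recorded in Section~\ref{augmented-duality-comodule-side-subsecn} to an equivalence between the quotient triangulated categories $\sD^\abs(A^\bu\modl)$ and $\sD^\co(C^\bu\comodl)$. First I would verify that the two functors descend to these quotients: since $C^\bu\ot^\tau{-}$ is additive, commutes with infinite direct sums, and sends a short exact sequence of DG\+modules to a short exact sequence of DG\+comodules, it carries totalizations of short exact sequences to totalizations of short exact sequences, and hence absolutely acyclic DG\+modules to coacyclic DG\+comodules; dually, $A^\bu\ot^\tau{-}$ carries coacyclic DG\+comodules to absolutely acyclic DG\+modules. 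This produces a pair of adjoint triangulated functors between $\sD^\co(C^\bu\comodl)$ and $\sD^\abs(A^\bu\modl)$, so the task reduces to proving that the adjunction unit and counit become isomorphisms there. A convenient preliminary remark is that, for the cobar twisting cochain of Example~\ref{cobar-construction-twisting-cochains-examples}(2), one has $A^\bu\ot^\tau N^\bu=\Cb^\bu_\gamma(C^\bu,N^\bu)$, so one of the composite functors literally recovers the cobar-construction with coefficients.

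One of the two adjunction morphisms can be disposed of by an explicit contracting homotopy. Because $A^\bu=\Cb^\bu_\gamma(C^\bu)$ has for its underlying graded algebra the \emph{free} tensor algebra $T(C^{\bu,+}[-1])$, the two-sided twisted complex $A^\bu\ot^\tau C^\bu\ot^\tau A^\bu$, together with its multiplication map to $A^\bu$, is a bar-type bimodule resolution whose reduced part is contractible. I would make this precise by filtering the complex by the number of tensor factors and exhibiting, on the associated graded, the standard extra-degeneracy homotopy that merges the outer free generator with the adjacent $C^{\bu,+}$\+factor. Tensoring this resolution with a left DG\+module $M^\bu$ over $A^\bu$ then shows that the counit $A^\bu\ot^\tau C^\bu\ot^\tau M^\bu\rarrow M^\bu$ has an absolutely acyclic cone, so the counit is an isomorphism in $\sD^\abs(A^\bu\modl)$.

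The remaining direction---that the unit $N^\bu\rarrow C^\bu\ot^\tau A^\bu\ot^\tau N^\bu$ is an isomorphism in $\sD^\co(C^\bu\comodl)$ for every left DG\+comodule $N^\bu$---is where the absence of conilpotency has to be confronted, and it is the step I expect to be the main obstacle. My plan is to argue along the coradical filtration of $C^\bu$ (Section~\ref{cosemisimple-coalgebras-and-coradical-subsecn}), an exhaustive increasing filtration $F$ compatible with the comultiplication whose zeroth term is the \emph{cosemisimple} coalgebra $C^\ss$. Passing to the associated graded reduces the coacyclicity of the cone of the unit to the corresponding statement for a coradically graded coalgebra, and ultimately to its degree-zero, cosemisimple part $C^\ss$, over which the category of comodules is semisimple and the twisted bar-cobar composite is transparently contractible. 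The difficulty is that, unlike the conilpotent situation of Theorem~\ref{augmented-acyclic-twisting-cochain-duality-thm}, this filtration need not terminate and its bottom layer $C^\ss$ is not the ground field; one must therefore reassemble the graded-level contractions into a single global coacyclicity statement, using the closure of the coacyclic complexes under infinite direct sums and under totalizations of filtered systems of short exact sequences. This same cosemisimple contribution is exactly what forces the \emph{absolute} derived category, rather than the conventional derived category, to appear on the DG\+module side---in agreement with the coincidence $\sD^\abs(\Cb^\bu_\gamma(C)\modl)=\sD^\co(\Cb^\bu_\gamma(C)\modl)$ invoked in the proof of Theorem~\ref{complexes-of-comodules-coaugmented-cobar-thm}. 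Once both adjunction morphisms are isomorphisms, the two adjoint functors are mutually inverse triangulated equivalences, which is the assertion of the theorem.
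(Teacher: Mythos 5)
Your overall skeleton---descend the adjoint pair to the two exotic derived categories, then show both adjunction morphisms have trivial cones---is the right one, and your treatment of the counit via the freeness of $T(C^{\bu,+}[-1])$ and a filtration by the number of tensor factors is in line with the strategy sketched in Section~\ref{comments-on-proof-subsecn} and Lemma~\ref{acyclic-filtration-lemma}. Two caveats even there. First, ``coacyclic $\mapsto$ absolutely acyclic'' for $A^\bu\ot^\tau{-}$ does not follow from ``totalizations go to totalizations'': the coacyclic class is generated using infinite direct sums, and a direct sum of absolutely acyclic objects is in general only coacyclic. You need either the stronger observation of Remark~\ref{motivation-coacyclics-to-contractibles-remark} (a short exact sequence of DG\+comodules is split as a sequence of graded comodules, so its image is the totalization of a graded-split sequence, hence \emph{contractible}, and contractibles are closed under direct sums), or the coincidence $\sD^\abs(A^\bu\modl)=\sD^\co(A^\bu\modl)$ of Theorem~\ref{fin-homol-dim-derived-cdg-modules}(a), available because the free graded algebra $T(C^{\bu,+}[-1])$ has global dimension~$\le1$. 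Second, converting ``exhaustive filtration with exact associated graded'' into ``absolutely acyclic cone'' for the counit needs the same coincidence, since the tensor-factor filtration is infinite and Lemma~\ref{acyclic-filtration-lemma}(a) only yields coacyclicity.

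The genuine gap is your treatment of the unit. The coradical filtration is the wrong tool. It is not automatic that the differential of a DG\+coalgebra preserves the coradical filtration (odd coderivations need not preserve $C^\ss$, just as derivations of finite-dimensional algebras need not preserve the Jacobson radical in positive characteristic), so the $F_nC^\bu$ need not be DG\+subcoalgebras. Even granting that, the reduction does not close up: $A^\bu=\Cb^\bu_\gamma(C^\bu)$ is built from all of $C^{\bu,+}$, so filtering the outer $C$\+factor of $C^\bu\ot^\tau A^\bu\ot^\tau N^\bu$ coradically does not produce on the associated graded the unit cone for $\gr^FC$ or for $C^\ss$; and the claim that the bar-cobar composite is ``transparently contractible'' over a cosemisimple coalgebra is unsubstantiated---semisimplicity of the comodule category plays no role. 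The actual argument is uniform in $C^\bu$ and uses no structure theory of coalgebras: filter $C^\bu\ot^\tau A^\bu\ot^\tau N^\bu$ by the number of $C^{\bu,+}$\+tensor factors coming from $A^\bu$; the resulting complex of graded $C$\+comodules $N\rarrow C\ot_kN\rarrow C\ot_kC^{\bu,+}\ot_kN\rarrow\dotsb$ is the two-sided cobar resolution, contractible by the ``extra degeneracy'' homotopy built from the counit of $C$ and the splitting $C=k\oplus C^{\bu,+}$ provided by~$\gamma$, valid for \emph{every} coaugmented coalgebra; then the comodule version of Lemma~\ref{acyclic-filtration-lemma}(a) gives coacyclicity of the cone. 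Conilpotency enters neither adjunction morphism; it is needed only if one wants the conventional derived category on the module side, where one must additionally show that $C^\bu\ot^\tau{-}$ kills acyclic DG\+modules (Remark~\ref{convergent-spectral-sequence-remark})---that is the step that genuinely uses the canonical filtration of a conilpotent coalgebra, and it is exactly the step one gives up by passing to $\sD^\abs(A^\bu\modl)$.
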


\begin{proof}
 This is a particular case of~\cite[Theorem~6.7(a)]{Pkoszul}.
 For the definitions of the coderived and absolute derived categories,
see Sections~\ref{co-contra-derived-cdg-modules-subsecn}\+-%
\ref{coderived-cdg-comodules-subsecn} below.
 The absolute derived category $\sD^\abs(A^\bu\modl)$ coincides with
the coderived category $\sD^\co(A^\bu\modl)$
by~\cite[Theorem~3.6(a)]{Pkoszul}; see
Theorem~\ref{fin-homol-dim-derived-cdg-modules}(a) below.
\end{proof}

 In both Theorems~\ref{augmented-acyclic-twisting-cochain-duality-thm}
and~\ref{augmented-nonconilpotent-duality-thm}, the triangulated
equivalence takes the left DG\+module~$k$ over~$A^\bu$ (with
the $A$\+module structure on~$k$ defined in terms of
the augmentation~$\alpha$) to the cofree left DG\+comodule $C^\bu$
over~$C^\bu$.
 The same equivalences take the free left DG\+module $A^\bu$ over
$A^\bu$ to the left DG\+comodule~$k$ over~$C^\bu$ (with
the $C$\+comodule structure on~$k$ defined in terms of
the coaugmentation~$\gamma$).

 For generalizations of
Theorems~\ref{augmented-acyclic-twisting-cochain-duality-thm}
and~\ref{augmented-nonconilpotent-duality-thm}, see
Section~\ref{koszul-duality-comodule-side-subsecn} below.

\Section{CDG-Rings and CDG-Coalgebras}

 Under Koszul duality, lack of chosen (co)augmentation on one side
corresponds to curvature on the other side.
 For example, to a nonaugmented algebra a curved DG\+coalgebra is
assigned.
 Moreover, a change of (co)augmentation corresponds to a change of
flat connection, or in other words, to a Maurer--Cartan twist of
the DG\+structure on the other side.
 The aim of this section is to extend the Koszul duality theorems
of Sections~\ref{duality-dg-algebras-dg-coalgebras-subsecn}
and~\ref{augmented-duality-comodule-side-subsecn} to
the nonaugmented context.

\subsection{Posing the problem of nonaugmented Koszul duality}
\label{posing-the-problem-nonaugmented-duality}
 Suppose that we are given an associative algebra $A$, and we would
like to compute the derived category of modules over it, in terms of
some kind of Koszul duality.
 Following the approach of Sections~\ref{algebras-and-modules-secn}
and~\ref{duality-formulated-for-complexes-of-modules-subsecn}, we
have to choose an augmentation of $A$ and produce the bar-construction
$C^\bu=\Br^\bu_\alpha(A)$ using an augmentation $\alpha\:A\rarrow k$.

 But an augmentation does not seem to be relevant to the problem of
describing $A$\+modules or complexes of $A$\+modules.
 What if $A$ does not admit an augmentation?
 A $k$\+algebra homomorphism $A\rarrow k$ does not always exist.
 Or if $A$ has many possible augmentations, what is the point of
choosing one of them?

 To be sure, the bar-complex $\Br^\bu_\alpha(A,M)$ computes
the vector spaces $\Tor^A_n(k,M)$ for a given $A$\+module $M$,
as mentioned in Section~\ref{bar-of-modules-subsecn}.
 But one can have $\Tor^A_n(k,M)=0$ for all $n\in\boZ$, while
$M\ne0$, as per Examples~\ref{bar-of-modules-acyclic}.
 This is the reason why the \emph{coderived category} of DG\+comodules,
rather than the conventional derived category, appears in
Theorem~\ref{complexes-of-modules-augmented-bar-construction-thm}.
 What is the point of choosing an $A$\+module structure on~$k$ and
considering the homological functor $\Tor^A_*(k,{-})$, only to
discover that this functor annihilates a big part of the desired
derived category $\sD(A\modl)$\,?

 The approach worked out in~\cite{Pkoszul} and going back
to~\cite{Pcurv} can be briefly stated as follows.
 Instead of looking for an augmentation $\alpha\:A\rarrow k$, let us
choose an arbitrary $k$\+linear map $v\:A\rarrow k$ for which $v(1)=1$.
 Surely any nonzero $k$\+algebra $A$ admits plenty of such maps~$v$.

 Let us extend the bar-construction $\Br^\bu_\alpha$ to the context
of arbitrary $k$\+linear maps~$v$ as above.
 Then the resulting object $\Br^\cu_v(A)$ is not a DG\+algebra, and
\emph{not even a complex} at all.
 But it is a graded coalgebra $C$ endowed with an odd coderivation
$\d\:C^n\rarrow C^{n+1}$ with a \emph{nonzero} square.
 The square of the differential~$\d$ is described as the commutator
with the \emph{curvature linear function} $h\:C^{-2}\rarrow k$.

 The resulting algebraic object $C^\cu=(C,\d,h)$ is called
a \emph{curved DG\+coalgebra}, or a \emph{CDG\+coalgebra} for brevity.
 To any $A$\+module $M$, we assign a \emph{CDG\+comodule}
$\Br^\cu_v(A,M)$ over~$C^\cu$; and similarly to any complex of
$A$\+modules~$M^\bu$.
 The differential on $\Br^\cu_v(A,M)$ does \emph{not} square to zero;
so the homology spaces of $\Br^\cu_v(A,M)$ (as well as of
$\Br^\cu_v(A)$) are \emph{undefined}.
 But we have already decided that we are not too much interested in
the Tor spaces $\Tor^A_*(k,M)$ anyway.

 Replacing the $k$\+linear map $v\:A\rarrow k$ by another such map
$v'\:A\rarrow k$, satisfying the same condition $v'(1)=1$, leads
to a CDG\+coalgebra $(C,d',h')$ naturally isomorphic to $(C,d,h)$.
 The DG\+categories of CDG\+comodules over $(C,d',h')$ and
$(C,d,h)$ are isomorphic.

 Finally, though one cannot speak of quasi-isomorphisms of
CDG\+comodules in the conventional sense of the word, and therefore
the conventional derived category of CDG\+comodules over $(C,d,h)$
is \emph{undefined}, the definition of the \emph{coderived} category
makes perfect sense for CDG\+comodules.
 The problem of computing the derived category of $A$\+modules is
solved by constructing a triangulated equivalence between the derived
category $\sD(A\modl)$ and the coderived category
$\sD^\co(C^\cu\comodl)$ of CDG\+comodules over $C^\cu=(C,d,h)$.

 Unrelated to the Koszul duality theory, curved DG\+modules also
appear in the literature in connection with the popular topic of
\emph{matrix factorizations} (which are rather special particular
cases of CDG\+modules) \cite{Eis,Buch,KL,Or0}.
 The coderived and absolute derived categories are an important
technical tool in the matrix factorization
theory~\cite{Or,PP2,EP,BDFIK,Pedg}.
 (Weakly) curved $\mathrm{A}_\infty$\+algebras play a fundamental
role in the Fukaya theory~\cite{Fu,FOOO,Cho,Pweak}.

\subsection{CDG-rings and CDG-modules}  \label{cdg-rings-subsecn}
 The following definitions go back to the paper~\cite{Pcurv}.
 The terminology ``curvature'' and ``connection'' comes from
an analogy with the respective concepts from differential geometry,
based on examples from differential geometry~\cite[Section~4]{Pcurv},
\cite[Sections~10.2\+-10.8]{Prel}.
 The latter class of examples, namely, the duality between the rings of
differential operators and (curved) DG\+rings of differential forms,
is an instance of a more complicated \emph{relative} version of
nonhomogeneous Koszul duality~\cite{Kap}, \cite[Section~7.2]{BD2},
\cite[Section~0.4 and Chapter~11]{Psemi}, \cite[Appendix~B]{Pkoszul},
\cite{Prel}, which falls outside of the scope of this survey.
 We refer to the book~\cite{Prel} for a definitive treatment.

 A \emph{curved DG\+ring} (\emph{CDG\+ring}) $B^\cu=(B,d,h)$ is a graded
ring $B=\bigoplus_{i\in\boZ}B^i$ endowed with the following data:
\begin{itemize}
\item $d\:B\rarrow B$ is an odd derivation of degree~$1$, that is,
for every $i\in\boZ$ an additive map $d_i\:B^i\rarrow B^{i+1}$ is
given such that the Leibniz rule with signs
$$
 d(bc)=d(b)c+(-1)^{|b|}bd(c)
$$
is satisfied for all homogeneous elements $b$ and $c\in B$ of
degrees~$|b|$ and~$|c|$;
\item $h\in B^2$ is an element.
\end{itemize}
 The following axioms relating $d$ and~$h$ must be satisfied:
\begin{enumerate}
\renewcommand{\theenumi}{\roman{enumi}}
\item\label{cdg-ring-differential-squared} the square of
the differential~$d$ on $B$ is described by the formula $d^2(b)=hb-bh$
for all $b\in B$;
\item $d(h)=0$.
\end{enumerate}
 The element $h\in B^2$ is called the \emph{curvature element}.

 A DG\+ring is the same thing as a CDG\+ring with $h=0$.
 The category of DG\+rings is a subcategory, but \emph{not} a full
subcategory of the category of CDG\+rings; the passage from
the uncurved to the curved DG\+rings involves not only adding new
objects to the category, but also adding new morphisms between
previously existing objects.
 Let us define the morphisms of CDG\+rings now.

 Let $B^\cu=(B,d_B,h_B)$ and $A^\cu=(A,d_A,h_A)$ be two CDG\+rings.
 A \emph{morphism of CDG\+rings} $B^\cu\rarrow A^\cu$ is a pair $(f,a)$,
where
\begin{itemize}
\item $f\:B\rarrow A$ is a homomorphism of graded rings;
\item $a\in A^1$ is an element
\end{itemize}
such that
\begin{enumerate}
\renewcommand{\theenumi}{\roman{enumi}}
\setcounter{enumi}{2}
\item $f(d_B(z))=d_A(f(z))+[a,f(z)]$ for all $z\in B$ (where the
graded commutator $[{-},{-}]$ is defined by the usual rule
$[x,y]=xy-(-1)^{|x||y|}yx$ for all homogeneous elements $x$ and~$y$
of degrees $|x|$ and~$|y|$);
\item $f(h_B)=h_A+d_A(a)+a^2$.
\end{enumerate}
 The element $a\in A^1$ is called the \emph{change-of-connection}
element.

 The composition of two morphisms of CDG\+rings $(C,d_C,h_C)\rarrow
(B,d_B,h_B)\rarrow(A,d_A,h_A)$ is given by the rule $(f,a)\circ(g,b)
=(f\circ g,\>a+f(b))$.
 The identity morphism $(B,d_B,h_B)\rarrow(B,d_B,h_B)$ is
the morphism $(\id_B,0)$.

 Morphisms of CDG\+rings $(\id_B,a)\:(B,d',h')\rarrow(B,d,h)$ are
called the \emph{change-of-connection} morphisms.
 All such morphisms of CDG\+rings are isomorphisms.
 Moreover, for any CDG\+ring $(B,d,h)$ and an element $a\in B^1$
there exists a unique CDG\+ring structure $(B,d',h')$ on the graded
ring $B$ such that $(\id_B,a)\:(B,d',h')\rarrow(B,d,h)$ is
a (change-of-connection iso)morphism of CDG\+rings.
 The twisted differential and curvature element $d'\:B\rarrow B$
and $h'\in B^2$ are given by the formulas $d'(z)=d(z)+[a,z]$ and
$h'=h+d(a)+a^2$.

 In particular, one can assume that $h=0$; so $(B,d)$ is a DG\+ring.
 Then one has $h'=0$ (i.~e., the pair $(B,d')$ is a DG\+ring again)
if and only if the equation $d(a)+a^2=0$ is satisfied, i.~e.,
$a\in B^1$ is a Maurer--Cartan element (in the sense of
Section~\ref{hom-dg-algebra-and-twisting-cochains-subsecn}).
 So any DG\+ring structure can be twisted by any Maurer--Cartain
cochain, producing a new DG\+ring structure on the same graded ring.
 The resulting DG\+ring $(B,d')$ is naturally isomorphic to
the original DG\+ring $(B,d)$ \emph{as a CDG\+ring}, but not
as a DG\+ring.

 The following definition of a \emph{CDG\+module} can be found
in~\cite[Section~4 of Chapter~5]{PP} or~\cite[Section~3.1]{Pkoszul}.
 Let $B^\cu=(B,d,h)$ be a CDG\+ring.
 A \emph{left CDG\+module} $M^\cu=(M,d_M)$ over $(B,d,h)$ is a graded
left $B$\+module $M=\bigoplus_{i\in\boZ}M^i$ endowed with
the following datum:
\begin{itemize}
\item $d_M\:M\rarrow M$ is an odd derivation of degree~$1$ compatible
with the odd derivation~$d$ on $B$, that is, for every $i\in\boZ$
an additive map $d_{M,i}\:M^i\rarrow M^{i+1}$ is given such that
the Leibniz rule with signs
$$
 d_M(bx)=d(b)x+(-1)^{|b|}bd_M(x)
$$
is satisfied for all homogeneous elements $b\in B$ and $x\in M$ of
degrees $|b|$ and~$|x|$.
\end{itemize}
 The following axiom must be satisfied:
\begin{enumerate}
\renewcommand{\theenumi}{\roman{enumi}}
\setcounter{enumi}{4}
\item\label{left-cdg-module-differential-squared} the square of
the differential~$d_M$ on $M$ is described by the formula $d_M^2(x)=hx$
for all $x\in M$.
\end{enumerate}

 Similarly, a \emph{right CDG\+module} $N^\cu=(N,d_N)$ over $(B,d,h)$
is a graded right $B$\+module $N=\bigoplus_{i\in\boZ}N^i$ endowed with
the following datum:
\begin{itemize}
\item $d_N\:N\rarrow N$ is an odd derivation of degree~$1$ compatible
with the odd derivation~$d$ on $B$, i.~e., $d_N$~acts on
the grading components of $N$ as $d_{N,i}\:N^i\rarrow N^{i+1}$ and
the Leibniz rule with signs
$$
 d_N(yb)=d_N(y)b+(-1)^{|y|}yd(b)
$$
is satisfied for all homogeneous elements $b\in B^{|b|}$ and
$y\in N^{|y|}$.
\end{itemize}
 The following axiom is imposed:
\begin{enumerate}
\renewcommand{\theenumi}{\roman{enumi}}
\setcounter{enumi}{5}
\item\label{right-cdg-module-differential-squared} the square of
the differential~$d_N$ on $N$ is given by the formula $d_N^2(y)=-yh$
for all $y\in N$.
\end{enumerate}

 Comparing the equation for the square of the differential
in~\eqref{cdg-ring-differential-squared}
with~\eqref{left-cdg-module-differential-squared}
and~\eqref{right-cdg-module-differential-squared}, one observes that
they do not agree.
 So \emph{there is no natural left} (or \emph{right}) \emph{CDG\+module
structure on the underlying graded ring $B$ of a CDG\+ring $(B,d,h)$}.
 However, any CDG\+ring $B^\cu$ is naturally a \emph{CDG\+bimodule}
over itself, in the sense of the definition
in~\cite[Section~3.10]{Pkoszul} or~\cite[Section~6.1]{Prel}.

 To any two (say, left) CDG\+modules $L^\cu=(L,d_L)$ and $M^\cu=(M,d_M)$
over a CDG\+ring $(B,d,h)$, one can assign the \emph{complex of
morphisms} $\Hom_B^\bu(L^\cu,M^\cu)$ from $L$ to~$M$.
 The degree~$n$ component $\Hom_B^n(L,M)$ of the complex
$\Hom_B^\bu(L^\cu,M^\cu)$ is the group of all homogeneous $B$\+linear
maps $L\rarrow M$ of degree~$n$ (see~\cite[Sections~1.1
and~3.1]{Pkoszul} for the sign rule involved).
 The differential~$d$ on the complex $\Hom_B^\bu(L^\cu,M^\cu)$ is
defined by the usual rule $d(f)(l)=d_M(f(l))-(-1)^{|f|}f(d_L(l))$
for all $l\in L$.

 The rule above is well-known to define a differential with zero square
on the graded abelian group of homogeneous morphisms between two
\emph{DG\+modules}.
 It turns out that for two CDG\+modules over a CDG\+ring, the same
formula still defines a differential with zero square, as two
curvature-related terms in the computation of $d^2(f)$ cancel
each other.
 Consequently, there is the \emph{DG\+category of left CDG\+modules}
over $B^\cu=(B,d,h)$, which we denote simply by $B^\cu\modl$.

 Given a morphism of CDG\+rings $(f,a)\:(B,d_B,h_B)\rarrow (A,d_A,h_A)$
and a left CDG\+module $(M,d_M)$ over $(A,d_A,h_A)$, one can endow
the graded left $A$\+module $M$ with the induced structure of graded
left $B$\+module and a twisted differential~$d'_M$, making $(M,d'_M)$
a left CDG\+module over $(B,d_B,h_B)$.
 The twisted differential $d'_M\:M\rarrow M$ is defined by the formula
$d'_M(x)=d_M(x)+ax$ for all $x\in M$.
 Similarly, for a right CDG\+module $(N,d_N)$ over $(A,d_A,h_A)$,
the twisted differential~$d'_N$ defined by the rule
$d'_N(y)=d_N(y)-(-1)^{|y|}ya$ for all $y\in N^{|y|}$ makes $(N,d'_N)$
a right CDG\+module over $(B,d_B,h_B)$.

 In particular, the DG\+categories of CDG\+modules over two isomorphic
CDG\+rings are naturally isomorphic.
 Specializing to CDG\+isomorphisms (or in other words,
change-of-connection isomorphisms) of DG\+rings, we come to
the (perhaps not too widely known) observation that
\emph{a Maurer--Cartan twist of the differential on a DG\+ring
does not change the DG\+category of DG\+modules}.
 The reason is that the differentials on DG\+modules $(M,d_M)$ over
a DG\+ring $(B,d)$ can be twisted (by a Maurer--Cartan cochain
$a\in B^1$) alongside with a twist $d\leadsto d'=d+[a,{-}]$ of
the differential on~$B$ (according to the rule above,
$d'_M(x)=d_M(x)+ax$).

\subsection{CDG-coalgebras and CDG-comodules}
\label{cdg-coalgebras-subsecn}
 A \emph{CDG\+algebra} $(B,d,h)$ over a field~$k$ is a graded
(associative, unital) $k$\+algebra endowed with a CDG\+ring structure
with a $k$\+linear differential~$d$.
 The definitions of a \emph{CDG\+coalgebra} $(C,d,h)$ and
a \emph{CDG\+comodule} $(M,d_M)$ over it are obtained from those of
a CDG\+algebra and a CDG\+module by inverting the arrows.
 Let us spell out some details.

 Partly following the terminology of~\cite{KLN}, let us utilize the term
\emph{precomplex} for a graded vector space $K$ endowed with
a $k$\+linear differential $d\:K\rarrow K$ of degree~$1$
with \emph{not necessarily zero square}.
 A CDG\+coalgebra $(C,d,h)$ and a CDG\+comodule $(M,d_M)$ are, first
of all, precomplexes: both $d\:C\rarrow C$ and $d_M\:M\rarrow M$ are
homogeneous $k$\+linear maps of degree~$1$ with, generally speaking,
nonzero squares.

 Given two precomplexes $V^\cu=(V,d_V)$ and $W^\cu=(W,d_W)$, one defines
their \emph{tensor product} $V^\cu\ot_kW^\cu=(V\ot_k W,\>d)$ as follows.
 The graded vector space $V\ot_k W$ is simply the tensor product of
the graded vector spaces $V$ and $W$.
 The differential~$d$ on $V\ot_kW$ is given by the usual rule
$d(v\ot w)=d_V(v)\ot w+(-1)^{|v|}v\ot d_W(w)$ for $v\in V^{|v|}$
and $w\in W^{|w|}$.

 Now we can define the notions of \emph{coderivations} on coalgebras
and comodules.
 Let $C$ be a graded coalgebra over~$k$.
 An \emph{odd coderivation} (of degree~$1$) on $C$ is a homogeneous
$k$\+linear map $d\:C\rarrow C$ of degree~$1$ such that
the comultiplication map $\mu\:C\rarrow C\ot_k C$ is a morphism of
precomplexes (i.~e., $\mu$~commutes with the differentials).
 Here the differential~$d$ on $C\ot_k C$ is defined by the rule above.
 One can check that any odd coderivation~$d$ on $C$ is compatible with
the counit, in the sense that the counit $\epsilon\:C\rarrow k$ is
also a morphism of precomplexes (where the differential on~$k$ is zero).

 Let $(C,d)$ be a graded coalgebra endowed with an odd coderivation
of degree~$1$, and let $M$ be a graded left $C$\+comodule.
 Then an \emph{odd coderivation} on $M$ \emph{compatible with}
the coderivation~$d$ on $C$ is a $k$\+linear map $d_M\:M\rarrow M$
of degree~$1$ such that the left coaction map $\nu\:M\rarrow C\ot_k M$
is a morphism of precomplexes.
 Here, once again, the differential~$d$ on $C\ot_k M$ is defined
by the rule above in terms of the differentials~$d$ on $C$ and~$d_M$
on~$M$.

 In order to spell out the rules for the squares of the differentials
$d$ and~$d_M$ on a CDG\+coalgebra $C$ and a CDG\+comodule $M$, we need
to have a brief preliminary discussion of algebras dual to coalgebras
and their actions in comodules.
 It was already mentioned in
Section~\ref{local-finite-dimensionality-subsecn} that the dual vector
space to any coalgebra is an algebra.
 Similarly, the graded dual vector space $C^*$ to a graded coalgebra $C$
is a graded algebra.
 Furthermore, any graded left $C$\+comodule $M$ can be endowed with
a graded $C^*$\+module structure with the action map defined as
the composition
$$
 C^*\ot_k M\lrarrow C^*\ot_k C\ot_k M\lrarrow M
$$
of the map induced by the left coaction map $\nu\:M\rarrow C\ot_kM$
and the map induced by the pairing map $C^*\ot_k C\rarrow k$.

 Let us pause at this point, however, and observe that for any
algebra $A$ there is the opposite algebra~$A^\rop$.
 Which one of the two opposite multiplications on $C^*$ should one
choose and use as the canonical choice of the multiplication on
the dual vector space to a coalgebra?
 The traditional way of making this choice~\cite[Section~2.1]{Swe}
results in \emph{left} $C$\+comodules becoming \emph{right}
$C^*$\+modules and vice versa.
 Our usual preference is to choose the sides so that left $C$\+comodules
become left $C^*$\+modules and right $C$\+comodules become right
$C^*$\+modules (cf.~\cite[beginning of Section~1.4]{Prev}).

 With these observations in mind, let us introduce some notation.
 Given a (graded) coalgebra $C$ and two (homogeneous) $k$\+linear
functions $a$, $b\:C\rarrow k$ (of some degrees $|a|$, $|b|\in\boZ$),
we denote by $a*b\:C\rarrow k$ the product of $a$ and~$b$ in
the graded algebra~$C^*$.
 Given a graded left comodule $M$ over $C$, an element $x\in M$,
and a linear function $b\:C\rarrow k$, we denote by $b*x\in M$
the result of the left action of the element $b\in C^*$ on
the element~$x$ in the left $C^*$\+module~$M$.
 Similarly, for a graded right comodule $N$ over $C$, a homogeneous
element $y\in N$, and a homogeneous linear function $b\:C\rarrow k$,
we let $y*b\in N$ denote the result of the right action of~$b$ on~$y$.
 We refer to~\cite[Section~4.1]{Pkoszul} for the sign rules.

 Now we can dualize the definitions from
Section~\ref{cdg-rings-subsecn}.
 A \emph{curved DG\+coalgebra} (\emph{CDG\+coalgebra}) $C^\cu=(C,d,h)$
over a field~$k$ is a graded coalgebra $C=\bigoplus_{i\in\boZ}C^i$
endowed with the following data:
\begin{itemize}
\item $d\:C\rarrow C$ is an odd coderivation of degree~$1$ (so
the grading components of~$d$ are $d_i\:C^i\rarrow C^{i+1}$);
\item $h\:C\rarrow k$ is a homogeneous linear function of degree~$2$
(so the only grading component of~$h$ is $h\:C^{-2}\rarrow k$).
\end{itemize}
 The following axioms relating $d$ and~$h$ must be satisfied:
\begin{enumerate}
\renewcommand{\theenumi}{\roman{enumi}}
\item\label{cdg-coalgebra-square-of-differential} the square of
the differential~$d$ on $C$ is described by the formula $d^2(c)=h*c-c*h$
for all $c\in C$;
\item $h(d(c))=0$ for all $c\in C^{-3}$.
\end{enumerate}
 The linear function $h\:C^{-2}\rarrow k$ is called
the \emph{curvature linear function}.

 A DG\+coalgebra is the same thing as a CDG\+coalgebra with $h=0$.
 The category of DG\+coalgebras is a subcategory, but \emph{not}
a full subcategory in the category of CDG\+coalgebras, as one can
see from the following definition of a morphism of CDG\+coalgebras.

 Let $C^\cu=(C,d_C,h_C)$ and $D^\cu=(D,d_D,h_D)$ be two CDG\+coalgebras.
 A \emph{morphism of CDG\+coalgebras} $C^\cu\rarrow D^\cu$ is
a pair $(f,a)$, where
\begin{itemize}
\item $f\:C\rarrow D$ is a homomorphism of graded coalgebras;
\item $a\:C\rarrow k$ is a homogeneous linear function of degree~$1$
\end{itemize}
such that
\begin{enumerate}
\renewcommand{\theenumi}{\roman{enumi}}
\setcounter{enumi}{2}
\item $d_D(f(c))=f(d_C(c))+f(a*c)-(-1)^{|c|}f(c*a)$ for all
homogeneous elements $c\in C^{|c|}$;
\item $h_D(f(c))=h_C(c)+a(d_C(c))+a^{*2}(c)$ for all $c\in C$.
\end{enumerate}
 The linear function $a\:C^{-1}\rarrow k$ is called
the \emph{change-of-connection linear function}.

 The composition of morphisms and the identity morphisms in
the category of CDG\+coalgebras are defined in the way similar/dual
to the one for CDG\+rings (see~\cite[Section~4.1]{Pkoszul} for
the details).

 Let $C^\cu=(C,d,h)$ be a CDG\+coalgebra.
 A \emph{left CDG\+comodule} $M^\cu=(M,d_M)$ over $C^\cu$ is a graded
left $C$\+comodule endowed with
\begin{itemize}
\item an odd coderivation $d_M\:M\rarrow M$ of degree~$1$ compatible
with the coderivation~$d$ on~$C$
\end{itemize}
such that
\begin{enumerate}
\renewcommand{\theenumi}{\roman{enumi}}
\setcounter{enumi}{4}
\item\label{left-cdg-comodule-square-of-differential} the square of
the differential~$d_M$ on $M$ is described by the formula $d_M^2(x)=h*x$
for all $x\in M$.
\end{enumerate}
 Similarly, a \emph{right CDG\+comodule} $N^\cu=(N,d_N)$ over $C^\cu$
is a graded right $C$\+comodule endowed with
\begin{itemize}
\item an odd coderivation $d_N\:N\rarrow N$ of degree~$1$ compatible
with the coderivation~$d$ on~$C$
\end{itemize}
such that
\begin{enumerate}
\renewcommand{\theenumi}{\roman{enumi}}
\setcounter{enumi}{5}
\item\label{right-cdg-comodule-square-of-differential} the square of
the differential~$d_N$ on $N$ is given by the formula $d_N^2(y)=-y*h$
for all $y\in N$.
\end{enumerate}

 Similarly to the theory of CDG\+modules over CDG\+rings discussed
in Section~\ref{cdg-rings-subsecn}, left CDG\+comodules over
a CDG\+coalgebra $C^\cu=(C,d,h)$ form a DG\+category $C^\cu\comodl$.
 Any morphism of CDG\+coalgebras $(f,a)\:(C,d_C,h_C)\rarrow
(D,d_D,h_D)$ induces a DG\+functor $C^\cu\comodl\rarrow D^\cu\comodl$
assigning to a CDG\+comodule $(M,d_M)$ the CDG\+comodule $(M,d'_M)$,
with the graded $D$\+comodule structure on $M$ obtained from
the graded $C$\+comodule structure on $M$ by the corestriction
of scalars (as mentioned in
Section~\ref{brief-remarks-about-coalgebras-subsecn}) and
the twisted differential $d'_M$ given by the rule
$d'_M(x)=d_M(x)+a*x$.
 So an isomorphism of CDG\+coalgebras induces an isomorphism of
the DG\+categories of CDG\+comodules over them.

 Similarly to Section~\ref{cdg-rings-subsecn}, the equations for
the square of the differential
in~\eqref{cdg-coalgebra-square-of-differential},
\eqref{left-cdg-comodule-square-of-differential}
and~\eqref{right-cdg-comodule-square-of-differential} are all different.
 So a CDG\+coalgebra $C^\cu$ is naturally \emph{neither} a left,
\emph{nor} a right CDG\+comodule over itself; but it has a natural
structure of a \emph{CDG\+bicomodule}
over itself~\cite[Section~4.8]{Pkoszul}.

\subsection{Nonaugmented bar-construction for algebras and modules}
\label{nonaugmented-bar-for-algebras-and-modules-subsecn}
 Let $A$ be a nonzero associative $k$\+algebra; so $1\in A$ is
a nonzero element.
 Consider the $k$\+vector space $A_+=A/(k\cdot 1)$
(cf.\ Section~\ref{augmented-algebras-subsecn}).

 Choose a $k$\+linear map $v\:A\rarrow k$ such that $v(1)=1$.
 So $v$~is a $k$\+linear retraction of the $k$\+algebra $A$ (viewed
as a $k$\+vector space) onto its unit line $k=k\cdot 1\subset A$.
 Put $V=\ker(v)\subset A$.
 Then the composition of linear maps $V\rarrow A\rarrow A_+$ is
an isomorphism, so the vector space $A_+$ can be identified with
the subspace $V\subset A$, and $A=k\oplus V$ as a $k$\+vector space.

 The multiplication map $m\:A\ot_k A\rarrow A$ decomposes into
components according to the direct sum decomposition $A=k\oplus V$.
 Notice that the restrictions of~$m$ to $k\cdot 1\ot_kA$ and
$A\ot_k k\cdot1$ are prescribed by the condition that $1\in A$ is
a unit, so we do not need to keep track of these.
 The restriction of the map~$m$ to $V\ot_k V\subset A\ot_kA$
provides a linear map $V\ot_k V\rarrow A=k\oplus V$.
 Denote its components by $m_V\:V\ot_k V\rarrow V$ and
$m_k\:V\ot_kV\rarrow k$.

 Consider the tensor coalgebra $\udT(A_+)=\bigoplus_{n=0}^\infty
A_+^{\ot n}$, and endow it with the differential~$\d$,
$$
 k\overset0\llarrow A_+\overset\d\llarrow A_+\ot_k A_+
 \overset\d\llarrow A_+\ot_k A_+\ot_k A_+\llarrow\dotsb,
$$
given by the formulas $\d(a\ot b)=m_V(a\ot b)$, \ $\d(a\ot b\ot c)
=m_V(a\ot b)\ot c-a\ot m_V(b\ot c)$,~\dots,
\begin{multline*}
 \d(a_1\ot\dotsb\ot a_n)=m_V(a_1\ot a_2)\ot a_3\ot\dotsb\ot a_n-\dotsb
 \\ +(-1)^{i+1}a_1\ot\dotsb\ot a_{i-1}\ot m_V(a_i\ot a_{i+1})
 \ot a_{i+2}\ot\dotsb\ot a_n+\dotsb \\
 +(-1)^n a_1\ot\dotsb\ot a_{n-2}\ot m_V(a_{n-1}\ot a_n),
\end{multline*}
etc., for all $a$, $b$, $c$, $a_i\in A_+$, \ $n\ge1$, and
$1\le i\le n-1$.
 The identification $A_+\simeq V$ is presumed here.
 The leftmost differential $\d\:A_+\rarrow k$ is the zero map.

 Notice that the map $m_V\:A_+\ot A_+\rarrow A_+$ is \emph{not}
an associative multiplication.
 Accordingly, the tensor coalgebra $\udT(A_+)$ with
the differential~$\d$ is \emph{not} a complex: one has $\d^2\ne0$
(generally speaking).
 Still, essentially by construction, $\d$~is an odd coderivation
of the graded coalgebra $\udT(A_+)$, in the sense of
Section~\ref{cdg-coalgebras-subsecn} (to make it an odd coderivation
of degree~$1$, one has to change the sign of the grading on
the tensor coalgebra).

 Denote by $\Br(A)$ the tensor coalgebra $\udT(A_+)$ with the sign
of the grading changed, so $\Br^{-n}(A)=A_+^{\ot n}$.
 Furthermore, denote by $h\:\Br^{-2}(A)\rarrow k$ the linear map
$-m_k\:A_+\ot_kA_+\rarrow k$.
 Then $\Br^\cu_v(A)=(\Br(A),\d,h)$ is a CDG\+coalgebra over~$k$;
the linear function~$h$ is its curvature linear function.
 This is the nonaugmented version of the bar-construction for
an associative algebra.

 Now let $M$ be a left $A$\+module.
 Consider the cofree left comodule $\udT(A_+)\ot_k M$ over
the tensor coalgebra $\udT(A_+)$, and endow it with
the differential~$\d$,
$$
 M \overset\d\llarrow A_+\ot_k M \overset\d\llarrow
 A_+\ot_k A_+\ot_k M\llarrow\dotsb,
$$
given by the formulas $\d(a\ot x)=ax$, \ $\d(a\ot b\ot x)=
m_V(a\ot b)\ot\nobreak x-a\ot bx$,~\dots,
\begin{multline*}
 \d(a_1\ot\dotsb\ot a_n\ot x)=
 m_V(a_1\ot a_2)\ot a_3\ot\dotsb\ot a_n\ot x - \dotsb \\
 +(-1)^n a_1\ot a_2\ot\dotsb\ot a_{n-2}\ot m_V(a_{n-1}\ot a_n)
 \ot x \\ +(-1)^{n+1}a_1\ot\dotsb\ot a_{n-1}\ot a_nx,
\end{multline*}
etc., for all $a$, $b$, $a_i\in A_+$, \ $x\in M$, and $n\ge1$.
 As above, the identification $A_+\simeq V$ is presumed here
(explaining, in particular, the notation $ax$ for the action of
an element $a\in A_+\simeq V$ on an element $x\in M$).

 Once again, the graded vector space $\udT(A_+)\ot_k M$ with
the differential~$\d$ is \emph{not} a complex.
 Instead, the cofree graded comodule $\udT(A_+)\ot_k M$ over the graded
coalgebra $\udT(A_+)$, with the differential~$\d$ on the cofree graded
comodule, is a left CDG\+comodule over the CDG\+coalgebra
$\Br^\cu_v(A)=(\Br(A),\d,h)$.
 We denote this CDG\+comodule by $\Br^\cu_v(A,M)$.
 This is the nonaugmented version of the bar-construction for
a module over an associative algebra.

 Now let $v'\:A\rarrow k$ be another $k$\+linear map such that
$v'(1)=1$.
 Put $V'=\ker(v')\subset A$, and notice natural isomorphisms
$V'\simeq A_+\simeq V$.
 The difference $v-\nobreak v'\:A\rarrow k$ is a $k$\+linear map
taking~$1$ to~$0$; so it factorizes through the natural surjection
$A\rarrow A_+$, defining a $k$\+linear map $l\:A_+\rarrow k$.
 The vector subspace $V'\subset A$ can be then described as
$$
 V'=\{a'=a+l(a)\mid a\in V\}\,\subset\,A.
$$

 Following the construction above, there are the linear maps
$m_{V'}\:V'\ot_k V'\rarrow V'$ and $m_k'\:V'\ot_k V'\rarrow k$
corresponding to the choice of the retraction $v'\:A\rarrow k$
of the $k$\+algebra $A$ onto its unit line $k=k\cdot 1$.
 Furthermore, there is the related differential~$\d'$ on the tensor
coalgebra $\udT(A_+)$ and the related curvature linear function~$h'$,
forming together a CDG\+coalgebra $\Br^\cu_{v'}(A)=(\Br(A),\d',h')$.

 The linear function $l\:A_+\rarrow k$ can be viewed as
a change-of-connection linear function $l\:\Br^{-1}(A)\rarrow k$.
 Then one observes that the pair $(\id,l)$ is a change-of-connection
isomorphism of CDG\+coalgebras $\Br^\cu_v(A)\rarrow\Br^\cu_{v'}(A)$.

 For any left $A$\+module $M$, the differential~$\d'$ on
the bar-construction $\Br^\cu_{v'}(A,M)$ is obtained from
the differential~$\d$ on the bar-construction $\Br^\cu_v(A,M)$ by
twisting with the change-of-connection linear function~$l$.
 So the equivalence of DG\+categories $\Br^\cu_v(A)\comodl\simeq
\Br^\cu_{v'}(A)\comodl$ induced by the isomorphism of CDG\+coalgebras
$(\id,l)\:\Br^\cu_v(A)\rarrow\Br^\cu_{v'}(A)$ takes the CDG\+comodule
$\Br^\cu_v(A,M)$ over $\Br^\cu_v(A)$ to the CDG\+comodule
$\Br^\cu_{v'}(A,M)$ over $\Br^\cu_{v'}(A)$.

 Furthermore, given a homomorphism of associative algebras
$f\:A\rarrow B$, each of them endowed with a $k$\+linear retraction
onto its unit $v\:A\rarrow k$ and $v'\:B\rarrow k$, the difference
$v-v'\circ f\:A\rarrow k$ defines a linear function $l\:A_+\rarrow k$.
 At the same time, the map~$f$ induces a $k$\+linear map
$f_+\:A_+\rarrow B_+$, which in turn induces a homomorphism of
the tensor coalgebras $g\:\Br(A_+)\rarrow\Br(B_+)$.
 Then the pair $(g,l)$ defines a morphism of CDG\+coalgebras
$(g,l)\:\Br^\cu_v(A)\rarrow\Br^\cu_{v'}(B)$ assigned to a morphism
of associative algebras $A\rarrow B$.
 This makes the nonaugmented bar-construction functorial.

\subsection{Nonaugmented bar-construction for DG-algebras}
\label{nonaugmented-bar-for-dg-algebras-subsecn}
 Having spelled out the details of the nonaugmented bar-construction
in the simplest case of a $k$\+algebra $A$, we will now briefly sketch
the generalization to DG\+algebras~$A^\bu$.
 The following construction is a nonaugmented version of
Section~\ref{augmented-dg-algebras-subsecn}.

 Let $A^\bu=(A,d)$ be a DG\+algebra over a field~$k$.
 Assume that $A^\bu\ne0$, or equivalently, $1\in A^0$ is a nonzero
element.
 Notice that $k\cdot1\subset A^\bu$ is a subcomplex, since $d(1)=0$.
 Put $A^\bu_+=A^\bu/(k\cdot1)$; so $A^\bu_+$ is a complex of
$k$\+vector spaces, whose underlying graded vector space we denote
by~$A_+$.

 Choose a homogeneous $k$\+linear map $v\:A\rarrow k$ of degree~$0$
such that $v(1)=1$.
 So $v\:A\rarrow k$ is a homogeneous $k$\+linear retraction of
the graded $k$\+vector space $A$ onto the unit line $k=k\cdot1
\subset A$.
 Put $V=\ker(v)$; so $V$ is a homogeneous $k$\+vector subspace in $A$
such that $A=k\oplus V$.
 The composition $V\rarrow A\rarrow A_+$ is an isomorphism of
graded $k$\+vector spaces.

 Both the multiplication map $m\:A\ot_k A\rarrow A$ and the differential
$d\:A\rarrow A$ decompose into components according to the direct
sum decomposition $A=k\oplus V$.
 The restrictions of~$m$ to $k\cdot1\ot_k A$ and $A\ot_k k\cdot1$ are
prescribed by the condition that $1\in A$ is a unit, while
the restriction of~$d$ to $k\cdot1$ vanishes; so we do not need to keep
track of these.
 The restriction of the map~$m$ to the subspace $V\ot_kV\subset A\ot_kA$
provides a linear map $V\ot_kV\rarrow A=k\oplus V$, whose components
we denote by $m_V\:V\ot_kV\rarrow V$ and $m_k\:V\ot_kV\rarrow k$.
 The restriction of the map~$d$ to the subspace $V\subset A$ is
a linear map $V\rarrow A=k\oplus V$, whose components we denote by
$d_V\:V\rarrow V$ and $d_k\:V\rarrow k$.

 Denote by $\Br(A)$ the graded coalgebra $\udT(A_+[1])$.
 (Here, as usual, the cohomological degree shift~$[1]$ is responsible
for the construction of the total grading on the bigraded vector space
$\bigoplus_{n=0}^\infty A_+^{\ot n}$.)
 One observes that, for any graded $k$\+vector space $W$, odd
coderivations~$d$ of degree~$1$ on the tensor coalgebra $\udT(W)$ are
uniquely determined by their corestrictions to $W$, i.~e.,
the compositions $\udT(W)\overset d\rarrow\udT(W)\rarrow W$,
where $\udT(W)\rarrow W^{\ot1}=W$ is the direct summand projection.
 Moreover, an arbitrary homogeneous linear map $\udT(W)\rarrow W$
of degree~$1$ corresponds to some odd coderivation~$d$ on $\udT(W)$
in this way.

 The CDG\+coalgebra $\Br^\cu_v(A^\bu)=(\Br(A),d,h)$ is now
constructed as follows.
 The odd coderivation $d\:\Br(A)\rarrow\Br(A)$ of (total) degree~$1$
is defined by the condition that its composition with the projection
$\Br(A)\rarrow A_+$ has two possibly nonzero bigrading
components, given by the maps $m_V\:A_+\ot_k A_+\rarrow A_+$ and
$d_V\:A_+\rarrow A_+$.
 The curvature linear function $h\:\Br^{-2}(A)\rarrow k$ also has two
possibly nonzero bigrading components, namely, the maps
$m_k\:(A_+\ot_k A_+)^0\rarrow k$ and $d_k\:A_+^{-1}\rarrow k$.
 We refer to~\cite[Section~6.1]{Pkoszul} for the sign rules.

 Now let $v'\:A\rarrow k$ be another homogeneous $k$\+linear map
such that $v'(1)=1$.
 Then the same construction as above produces a CDG\+coalgebra
structure $\Br^\cu_{v'}(A^\bu)\allowbreak=(\Br(A),d',h')$ on
the graded coalgebra $\Br(A)$.
 Similarly to the discussion in
Section~\ref{nonaugmented-bar-for-algebras-and-modules-subsecn},
the CDG\+coalgebras $\Br^\cu_v(A^\bu)$ and $\Br^\cu_{v'}(A^\bu)$
are connected by a natural change-of-connection isomorphism
$(\id,l)\:\Br^\cu_v(A^\bu)\rarrow\Br^\cu_{v'}(A^\bu)$.
 Here $l\:\Br^{-1}(A)\rarrow k$ is a change-of-connection linear
function whose only nonzero bigrading component is the linear function
$l\:A_+^0\rarrow k$ measuring the difference between the retractions
$v$ and~$v'$.

 Recall the notation $k\alg_\dg$ introduced in
Section~\ref{duality-dg-algebras-dg-coalgebras-subsecn} for
the category of DG\+al\-gebras over~$k$.
 Denote by $k\alg_\dg^+\subset k\alg_\dg$ the full subcategory of
\emph{nonzero} DG\+algebras; and let $k\coalg_\cdg$ denote
the category of CDG\+coalgebras over~$k$.

\begin{prop} \label{nonaugmented-bar-construction-functoriality}
 The assignment of the curved DG\+coalgebra\/ $\Br^\cu_v(A^\bu)$ to
a DG\+algebra $A^\bu$ can be naturally extended to a functor from
the category of nonzero DG\+algebras to the category of
CDG\+coalgebras over~$k$,
$$
 \Br^\cu_v\:k\alg_\dg^+\lrarrow k\coalg_\cdg.
$$
\end{prop}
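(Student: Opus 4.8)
The plan is to fix once and for all, for every nonzero DG\+algebra $A^\bu$, a homogeneous degree\+$0$ retraction $v=v_{A^\bu}\:A\rarrow k$ with $v(1)=1$, so that on objects the functor is given by the construction of Section~\ref{nonaugmented-bar-for-dg-algebras-subsecn}, namely $A^\bu\mapsto\Br^\cu_v(A^\bu)=(\Br(A),d,h)$ with $\Br(A)=\udT(A_+[1])$. Different choices of the family $(v_{A^\bu})$ would yield isomorphic functors via change-of-connection isomorphisms, so any one choice will do. It then remains to define the action on morphisms and to verify the two functor axioms. Given a DG\+algebra homomorphism $f\:A^\bu\rarrow B^\bu$, I imitate the algebra-level recipe at the end of Section~\ref{nonaugmented-bar-for-algebras-and-modules-subsecn}: the difference $v_{A^\bu}-v_{B^\bu}\circ f\:A\rarrow k$ annihilates $1\in A^0$, hence factors through a homogeneous degree\+$0$ map $A_+\rarrow k$, which after the grading shift $[1]$ I read as a change-of-connection linear function $l\:\Br^{-1}(A)\rarrow k$ of degree~$1$; meanwhile $f$ induces $f_+\:A_+\rarrow B_+$, and I let $g=\udT(f_+[1])\:\Br(A)\rarrow\Br(B)$ be the resulting graded coalgebra homomorphism. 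I then set $\Br^\cu_v(f)=(g,l)$ and must check that this is a morphism of CDG\+coalgebras $\Br^\cu_v(A^\bu)\rarrow\Br^\cu_v(B^\bu)$ in the sense of Section~\ref{cdg-coalgebras-subsecn}.

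The verification of the two CDG\+coalgebra morphism axioms is where I would concentrate the effort, organizing it so as to minimize sign bookkeeping. Since $\Br(B)=\udT(B_+[1])$ is the cofree conilpotent graded coalgebra cospanned by $B_+[1]$ (Remark~\ref{tensor-coalgebra-cofree-conilpotent-remark}), any $(g,g)$\+coderivation $\phi\:\Br(A)\rarrow\Br(B)$ — that is, a homogeneous map with $\mu\phi=(\phi\ot g\pm g\ot\phi)\mu$ — is determined by its corestriction, i.e.\ its composition with the projection $\Br(B)\rarrow B_+$. Writing $d_A$, $d_B$ for the coderivations of $\Br^\cu_v(A^\bu)$, $\Br^\cu_v(B^\bu)$, the two sides of the differential-compatibility axiom, $d_Bg$ versus $g\,d_A+\bigl(c\mapsto g(l*c)-(-1)^{|c|}g(c*l)\bigr)$, are each readily seen to be $(g,g)$\+coderivations: $d_Bg$ and $g\,d_A$ because $d_A$, $d_B$ are odd coderivations and $g$ is a coalgebra map, and $g$ composed with $c\mapsto l*c-(-1)^{|c|}c*l$ because the latter is the inner odd coderivation of $\Br(A)$ determined by $l\in\Br(A)^*$ (the graded-commutator analogue of the formula $d^2(c)=h*c-c*h$). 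Hence this axiom reduces to the single equality of the corestrictions $\Br(A)\rarrow B_+$ of the two sides, which unwinds to finitely many identities among the structure maps $m_V$, $m_k$, $d_V$, $d_k$ of $A^\bu$ and $B^\bu$, evaluated on $A_+$, $A_+\ot_k A_+$, and $A_+\ot_k A_+\ot_k A_+$, together with the defining relation $l=v_{A^\bu}-v_{B^\bu}\circ f$ on $A_+$. The curvature-compatibility axiom is an equality of degree\+$2$ functions $\Br(A)\rarrow k$, hence is supported on $\Br^{-2}(A)=(A_+\ot_k A_+)^0\oplus A_+^{-1}$ and is checked directly against the two curvature components $(m_k,d_k)$. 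For the precise $\pm$ signs throughout I would defer to the conventions of \cite[Section~6.1]{Pkoszul}, exactly as in the construction of the objects.

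Finally I would confirm functoriality. For the identity $\id_{A^\bu}$ one has $v_{A^\bu}-v_{A^\bu}\circ\id=0$, so $l=0$ and $g=\id$, giving $\Br^\cu_v(\id)=(\id,0)$, the identity morphism of CDG\+coalgebras. For composability, along $A^\bu\overset f\rarrow B^\bu\overset{f'}\rarrow C^\bu$ the graded-coalgebra parts compose by functoriality of $\udT$, namely $\udT((f'f)_+[1])=\udT(f'_+[1])\circ\udT(f_+[1])$, while the change-of-connection functions combine by the composition law of CDG\+coalgebra morphisms from Section~\ref{cdg-coalgebras-subsecn}, $(g',l')\circ(g,l)=(g'g,\,l+l'\circ g)$. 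A one-line computation on $A_+$ gives $l+l'\circ g=(v_{A^\bu}-v_{B^\bu}f)+(v_{B^\bu}f-v_{C^\bu}f'f)=v_{A^\bu}-v_{C^\bu}(f'f)$, which is exactly the change-of-connection function attached to $f'f$, so $\Br^\cu_v(f')\circ\Br^\cu_v(f)=\Br^\cu_v(f'f)$. The main obstacle is thus not functoriality, which is formal, but the sign-correct verification of the differential-compatibility axiom; the cofreeness reduction to corestrictions is what makes it tractable, cutting an a priori infinite family of identities down to a check on the first few tensor powers of $A_+$.
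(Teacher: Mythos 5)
Your proposal is correct and follows essentially the same route as the paper: fix an arbitrary retraction $v_{A}$ for each nonzero DG\+algebra, send a morphism $f$ to the pair $(g,l)$ with $g=\udT(f_+[1])$ and $l$ the degree\+$0$ difference $v_{A}-v_{B}\circ f$ read as a change-of-connection function on $A_+^0\subset\Br^{-1}(A)$, and observe that the composition law for CDG\+coalgebra morphisms makes the assignment functorial. The paper's proof is terser and defers the verification of the two CDG\+morphism axioms to the earlier ungraded discussion and to the sign conventions of~\cite[Section~6.1]{Pkoszul}; your cofreeness/coderivation reduction of the differential-compatibility axiom to a check on corestrictions is a reasonable way of filling in exactly the details the paper leaves implicit.
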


\begin{proof}
 In order to construct the functor $\Br^\cu_v$, one has to choose
for every nonzero DG\+algebra $A^\bu$ over $k$ a homogeneous
$k$\+linear retraction onto its unit $v_A\:A\rarrow k$.
 Such retractions can be chosen in an arbitrary way, and do not need
agree with each other in any sense.
 The resulting functor $\Br^\cu_v$, viewed up to a uniquely defined
isomorphism of functors, does not depend on the choice of
the retractions~$v_A$.

 Given a homomorphism of DG\+algebras $f\:A^\bu\rarrow B^\bu$, each
of them endowed with a homogeneous $k$\+linear retraction onto its unit
$v_A\:A\rarrow k$ and $v_B\:B\rarrow k$, one constructs the induced
morphism of CDG\+coalgebras $(g,l)\:\Br^\cu_{v_A}(A^\bu)\rarrow
\Br^\cu_{v_B}(B^\bu)$ in the way similar to the one in
Section~\ref{nonaugmented-bar-for-algebras-and-modules-subsecn}.
 The map~$f$ induces a homogeneous $k$\+linear map $f_+\:A_+
\rarrow B_+$ of degree~$0$, which in turn induces a graded coalgebra
homomorphism $g\:\Br(A)=\udT(A_+[1])\rarrow\udT(B_+[1])=\Br(B)$.
 The only nonzero bigrading component of the change-of-connection
linear function $l\:\Br^{-1}(A)\rarrow k$ is the linear function
$l\:A_+^0\rarrow k$ measuring the difference between the retractions
$v_A$ and $v_B\circ f\:A\rarrow k$.
\end{proof}

\subsection{Curved, noncoaugmented cobar-construction}
\label{curved-noncoaugmented-cobar-subsecn}
 This section is a generalization of
Section~\ref{coaugmented-dg-coalgebras-subsecn} to curved,
noncoaugmented DG\+coalgebras.
 The constructions below are dual to those of
Section~\ref{nonaugmented-bar-for-dg-algebras-subsecn}, up to a point.
 But then there are subtle differences between two dual pictures,
which we will explain.

 Let $C^\cu=(C,d_C,h_C)$ be a nonzero CDG\+coalgebra over a field~$k$.
 Then the counit $\epsilon\:C\rarrow k$ is a nonzero homogeneous
linear map of degree~$0$.
 Consider the graded vector subspace $C_+=\ker(\epsilon)\subset C$.

 Choose a homogeneous $k$\+linear map $w\:k\rarrow C$ of degree~$0$
such that the composition $k\overset w\rarrow C\overset\epsilon
\rarrow k$ is the identity map.
 So $w$~is a homogeneous $k$\+linear section of the counit
map~$\epsilon$.
 Put $W=\coker(w)$; then the composition $C_+\rarrow C\rarrow W$
is an isomorphism of graded $k$\+vector spaces and $C=k\oplus W$
as a graded $k$\+vector space.

 Both the comultiplication map $\mu\:C\rarrow C\ot_kC$ and
the differential $d_C\:C\rarrow C$ decompose into components according
to the direct sum decomposition $C=k\oplus W$.
 The projections of~$\mu$ onto $k\cdot1\ot_k C$ and $C\ot_k k\cdot1$
are prescribed by the counitality axiom, while the composition of~$d_C$
with the counit map vanishes; so we do not need to keep track of these.
 The composition of the map~$\mu$ with the projection $C\ot_k C\rarrow
W\ot_k W$ provides a $k$\+linear map $k\oplus W=C\rarrow W\ot_k W$,
whose components we denote by $\mu_W\:W\rarrow W\ot_k W$ and
$\mu_k\:k\rarrow W\ot_k W$.
 The composition of the map~$d_C$ with the projection $C\rarrow W$
provides a $k$\+linear map $k\oplus W=C\rarrow W$, whose components
we denote by $d_W\:W\rarrow W$ and $d_k\:k\rarrow W$.

 Denote by $\Cb(C)$ the free graded algebra $T(C_+[-1])$.
 One observes that, for any graded $k$\+vector space $V$, odd
derivations~$d$ of degree~$1$ on the tensor algebra $T(V)$ are
uniquely determined by their restructions to the subspace
(direct summand) of generators,  $V=V^{\ot1}\subset T(V)$.
 Moreover, any homogeneous linear map $V\rarrow T(V)$ of degree~$1$
corresponds to some odd derivation~$d$ on $T(V)$ in this way.

 We are going to construct a CDG\+algebra $\Cb^\cu_w(C^\cu)=
(\Cb(C),d_{\Cb},h_{\Cb})$, which can be called the curved,
noncoaugmented cobar-construction of a CDG\+coalgebra~$C^\cu$.
 Specifically, the odd derivation $d_{\Cb}\:\Cb(C)\rarrow\Cb(C)$ of
degree~$1$ is defined by the condition that its restriction to
the subspace of generators $C_+\subset\Cb(C)$ has three possibly
nonzero bigrading components, given by the maps $\mu_W\:C_+\rarrow
C_+\ot_k C_+$ and $d_W\:C_+\rarrow C_+$, \emph{and} also by
the curvature linear function $h_C\:C_+^{-2}=C^{-2}\rarrow k$.
 The curvature element $h_{\Cb}\in\Cb^2(C^\cu)$ has two possibly
nonzero bigrading components, namely, the elements $\mu_k(1)\in
(C_+\ot_k\nobreak C_+)^0$ and $d_k(1)\in C_+^1=C^1$.
 The sign rules can be found in~\cite[Section~6.1]{Pkoszul}.

\begin{rem}
 The first subtlety which deserves to be mentioned is that the previous
paragraph presumes that degree~$-2$ is different from degree~$0$.
 This is the case with the usual gradings by the group of integers,
which are generally presumed in this paper. 
 But sometimes one may be interested in $2$\+periodic complexes,
DG\+algebras, CDG\+coalgebras etc., graded by the group $\boZ/2\boZ$
(or by some other grading group, as in~\cite[Remark~1.1]{Pkoszul}
and~\cite[Section~1.1]{PP2}).
 When $2=0$ in the grading group~$\Gamma$, the equality $C_+^{-2}
=C^{-2}$ no longer holds, and the curvature linear function~$h_C$
also decomposes into two components, $h_W\:W\rarrow k$ and
$h_k\:k\rarrow k$.
 In this case, $h_W$~becomes a part of the differential~$d_{\Cb}$,
and $h_k$~becomes a third component of the curvature element~$h_{\Cb}$
in the cobar-construction (see~\cite[Section~6.1]{Pkoszul}).

 One can go further and consider the grading group $\Gamma=\{0\}$,
which makes sense over a ground field~$k$ of characteristic~$2$
(otherwise there are sign issues forcing the complexes to be at
least $\boZ/2\boZ$\+graded).
 In this setting, even change-of-connection elements/linear functions
can have nonzero unit/counit components (as $1=0$ in~$\Gamma$).
 We refer to~\cite[Section~6.1]{Pweak} for a discussion of the bar-
and cobar-constructions including a treatment of these unconventional
grading effects special to characteristic~$2$.
\end{rem}

 A \emph{coaugmentation}~$\gamma$ on a CDG\+coalgebra $(C,d,h)$
is a morphism of (graded, counital) coalgebras $\gamma\:k\rarrow C$
such that $(\gamma,0)\:(k,0,0)\rarrow(C,d,h)$ is a morphism of
CDG\+coalgebras.
 This definition includes the condition that the composition
$k\overset\gamma\rarrow C\overset d\rarrow C$ vanishes.
 (When $2=0$ in the grading group $\Gamma$, this definition also
includes the condition that the composition $k\overset\gamma\rarrow
C\overset h\rarrow k$ vanishes.)

 Given a coaugmented CDG\+coalgebra $(C^\cu,\gamma)$, one can take
the section $w=\gamma$ in the construction above and produce
a \emph{DG\+algebra} $\Cb^\bu_\gamma(C^\cu)=(\Cb(C),d_{\Cb})$.
 By the definition, the maps $\mu_k$ and~$d_k$ vanish in this case,
so $h_{\Cb}=0$.

 Otherwise, when no natural coaugmentation is available for $C^\cu$,
one has to choose an arbitrary section $w\:k\rarrow C$ to apply
the cobar-construction, and the issue of changing the section arises.
 Let $w'\:k\rarrow C$ be another homogeneous $k$\+linear map
of degree~$0$ such that $\epsilon\circ w'=\id_k$.
 Then the difference $b=w'(1)-w(1)$ is an element of the vector
subspace $C_+^0\subset C$.
 The vector space $C_+^0$ is one of the bigrading components of
the vector space $\Cb^1(C)$; so $b\in C_+^0\subset\Cb^1(C)$.
 The CDG\+algebras $\Cb^\cu_w(C^\cu)$ and $\Cb^\cu_{w'}(C^\cu)$ are
connected by a natural change-of-connection isomorphism
$(\id_{\Cb},b)\:\Cb^\cu_{w'}(C^\cu)\rarrow\Cb^\cu_w(C^\cu)$.

 Now suppose that we are given a change-of-connection isomorphism
of CDG\+coal\-gebras $(\id,a)\:(C,d,h)\rarrow(C,d',h')$, where
$a\:C^{-1}\rarrow k$ is a change-of-connection linear function
on~$C$.
 Let $w\:k\rarrow C$ be a homogeneous $k$\+linear section of the counit.
 Then the two cobar-constructions $\Cb^\cu_w(C,d,h)$ and
$\Cb^\cu_w(C,d',h')$ are naturally isomorphic as CDG\+algebras.

 To construct the latter isomorphism, one only needs to follow
the philosophy of nonaugmented Koszul duality, which tells that
change-of-connection isomorphisms on one side correspond to
changes of the chosen retraction of the unit or section of
the counit on the other side.
 The cobar-construction $\Cb^\cu_w(C,d',h')$ comes endowed with
a natural retraction onto the unit $v_{\Cb}$, namely, the direct
summand projection $\Cb(C)\rarrow C_+^{\ot0}=k$.
 (The map $v_{\Cb}$ is even an augmentation of the graded algebra
$\Cb(C)$, but it is \emph{not} an augmentation of the CDG\+algebra
$\Cb^\cu_w(C,d,h)$ unless $h_C=0$.)
 The desired isomorphism $\Cb^\cu_w(C,d,h)\simeq\Cb^\cu_w(C,d',h')$
is supposed \emph{not} to preserve these retractions onto the unit.

 Coming to the point, given two graded vector spaces $V$ and $W$,
any homogeneous $k$\+linear map $V\rarrow T(W)$ of degree~$0$ can be
uniquely extended to a graded $k$\+algebra homomorphism
$T(V)\rarrow T(W)$.
 Consider the graded $k$\+algebra automorphism $f_a\:\Cb(C)\rarrow
\Cb(C)$ defined by the rule $c\longmapsto c+a(c)$ for all
$c\in C_+=V=W$.
 Here $c+a(c)$ is an element of the graded vector space $V\oplus k
= V^{\ot1}\oplus V^{\ot0}\subset T(V)=\Cb(C)$.
 Then we have a natural isomorphism of CDG\+coalgebras
$(f_a,0)\:\Cb^\cu_w(C,d',h')\rarrow\Cb^\cu_w(C,d,h)$.

 Recall the notation $k\coalg_\cdg$ introduced in
Section~\ref{nonaugmented-bar-for-dg-algebras-subsecn} for
the category of CDG\+coalgebras over~$k$.
 Denote by $k\coalg_\cdg^+\subset k\coalg_\cdg$ the full subcategory
of nonzero CDG\+coalgebras, and by $k\coalg_\cdg^\coaug$
the category of coaugmented CDG\+coalgebras.
 Furthermore, let $k\alg_\cdg$ denote the category of CDG\+algebras
over~$k$.

\begin{prop} \label{curved-non-coaugmented-cobar-functoriality}
\textup{(a)} The assignment of the DG\+algebra $\Cb^\bu_\gamma(C^\cu)$
to a coaugmented CDG\+coalgebra $(C^\cu,\gamma)$ can be naturally
extended to a functor from the category of coaugmented CDG\+coalgebras
to the category of DG\+algebras over~$k$,
$$
 \Cb^\bu_\gamma\:k\coalg_\cdg^\coaug\lrarrow k\alg_\dg.
$$ \par
\textup{(b)} The assignment of the CDG\+algebra $\Cb^\cu_w(C^\cu)$
to a CDG\+coalgebra $C^\cu$ can be naturally extended to a functor
from the category of nonzero CDG\+coalgebras to the category of
CDG\+algebras over~$k$,
$$
 \Cb^\cu_w\:k\coalg_\cdg^+\lrarrow k\alg_\cdg.
$$
\end{prop}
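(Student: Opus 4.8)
The plan is to dualize the proof of Proposition~\ref{nonaugmented-bar-construction-functoriality}, treating both parts in parallel. On objects the construction is already in place: for part~(b) one fixes, once and for all, a homogeneous $k$\+linear section $w_C\:k\rarrow C$ of the counit for every nonzero CDG\+coalgebra $C^\cu$ (these sections may be chosen arbitrarily and need not be compatible with one another), while for part~(a) one simply takes $w_C=\gamma$, so that the cobar-construction becomes a genuine DG\+algebra. It remains to define the two functors on morphisms and to verify the functor axioms.

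On morphisms I would argue as follows. Given a morphism of CDG\+coalgebras $(f,a)\:C^\cu\rarrow D^\cu$, the graded coalgebra homomorphism $f$ preserves counits, hence restricts to $f_+\:C_+\rarrow D_+$. Since $\Cb(D)=T(D_+[-1])$ is the free graded algebra spanned by $D_+[-1]$, any homogeneous $k$\+linear map $C_+[-1]\rarrow\Cb(D)$ of degree~$0$ extends uniquely to a graded algebra homomorphism out of $\Cb(C)=T(C_+[-1])$. Following the principle (embodied by the automorphism $f_a$ of Section~\ref{curved-noncoaugmented-cobar-subsecn}) that a change-of-connection linear function on the coalgebra side becomes an algebra twist on the cobar side, I would let $g\:\Cb(C)\rarrow\Cb(D)$ be the graded algebra homomorphism extending the rule $c\longmapsto f_+(c)\pm a(c)$ on generators $c\in C_+$, the term $a(c)\in k=\Cb^0(D)$ being nonzero only on $C_+^{-1}$ and the sign being dictated by the requirement below. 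The change-of-connection element is supplied by the mismatch of the chosen sections: one sets $b=\bigl(f\circ w_C-w_D\bigr)(1)$, which lies in $D_+^0\subset\Cb^1(D)$ because both $f\circ w_C$ and $w_D$ split the counit $\epsilon_D$. The morphism of CDG\+algebras assigned to $(f,a)$ is then $(g,b)\:\Cb^\cu_{w_C}(C^\cu)\rarrow\Cb^\cu_{w_D}(D^\cu)$. In the coaugmented setting of part~(a), where $w_C=\gamma_C$, \ $w_D=\gamma_D$, and $f$ preserves the coaugmentations, one has $b=0$; since both cobar-constructions are then honest DG\+algebras, the result is an ordinary DG\+algebra homomorphism $g\:\Cb^\bu_{\gamma_C}(C^\cu)\rarrow\Cb^\bu_{\gamma_D}(D^\cu)$.

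Three checks remain. First, one verifies that $(g,b)$ obeys the two axioms of a morphism of CDG\+rings, namely $g(d_{\Cb}(z))=d_{\Cb}(g(z))+[b,g(z)]$ and $g(h_{\Cb})=h_{\Cb}+d_{\Cb}(b)+b^2$. As $g$ is an algebra homomorphism and $d_{\Cb}$ an odd derivation, the first identity need only be tested on the generators $C_+\subset\Cb(C)$, where it splits into its bigrading components: the $\mu_W$-part of $d_{\Cb}$ is handled by $f$ being a coalgebra homomorphism, while the $d_W$- and curvature-parts are matched against conditions~(iii) and~(iv) in the definition of the CDG\+coalgebra morphism $(f,a)$. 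The curvature axiom for $(g,b)$ similarly unwinds into the $\mu_k$- and $d_k$-components of $h_{\Cb}$ together with~(iv). Second, functoriality is obtained by checking that $(\id_C,0)$ goes to $(\id_{\Cb(C)},0)$ and that the composition law for CDG\+coalgebra morphisms (which is dual to the composition rule $(\phi,a)\circ(\psi,b)=(\phi\circ\psi,\,a+\phi(b))$ for CDG\+rings) is carried to the composition law for CDG\+algebra morphisms; here one uses the additivity of the section-mismatch elements~$b$ under composition. Third, for part~(b) one verifies that another system of sections produces a naturally isomorphic functor, the change-of-section isomorphisms $(\id,b)$ of Section~\ref{curved-noncoaugmented-cobar-subsecn} assembling into a natural transformation.

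The main obstacle I anticipate is the first check, and specifically the curvature compatibility. The delicate point is that the curvature linear function $h_C$ of the source coalgebra is absorbed into the \emph{differential} $d_{\Cb}$ of the cobar CDG\+algebra, whereas the cobar curvature element $h_{\Cb}$ instead records the $\mu_k$- and $d_k$-data of the chosen section; consequently the single coalgebra-morphism condition~(iv) must be shown to govern simultaneously the differential-compatibility and the curvature-compatibility of $(g,b)$ after the degree shift and the $a$\+twist. Keeping the bigrading components and the (suppressed) Koszul signs straight throughout this bookkeeping---and separately treating the exceptional grading groups with $2=0$, where $h_C$ acquires an extra component feeding into $d_{\Cb}$ as noted in the remark in Section~\ref{curved-noncoaugmented-cobar-subsecn}---is the part requiring genuine care; the remaining functor axioms are formal.
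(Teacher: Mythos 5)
Your proposal is correct and follows essentially the same route as the paper: the paper likewise defines $g$ on generators by $c\longmapsto f_+(c)-a(c)$ using the freeness of $T(D_+[-1])$, takes the change-of-connection element to be the mismatch of the chosen counit sections (dually to the retraction-mismatch in the bar-construction functoriality), and observes that this element vanishes in the coaugmented case; the remaining verifications are declared straightforward and skipped there. Your write-up in fact supplies more of the bookkeeping (the bigrading-component check of the CDG-morphism axioms and the compatibility of composition laws) than the paper's own proof does.
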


\begin{proof}
 Part~(a): let $C^\cu=(C,d_C,h_C)$ and $D^\cu=(D,d_D,h_D)$ be two
CDG\+coalgebras endowed with coaugmentations $\gamma\:k\rarrow C$
and $\delta\:k\rarrow D$.
 Let $(f,a)\:(C,\gamma)\rarrow(D,\delta)$ be a morphism of coaugmented
CDG\+coalgebras, i.~e., a morphism of CDG\+coalgebras preserving
coaugmentations, in the sense that $f\circ\gamma=\delta$.
 (When $1=0$ in the grading group $\Gamma$, the condition that
$a\circ\gamma=0$ should be added.)

 The morphism of DG\+algebras $g\:\Cb^\bu_\gamma(C)\rarrow
\Cb^\bu_\delta(D)$ induced by the coaugmented CDG\+coalgebra
morphism~$(f,a)$ is constructed as follows.
 The map $f$ induces a homogeneous $k$\+linear map $f_+\:C_+\rarrow
D_+$ of degree~$0$.
 The graded $k$\+algebra homomorphism $g\:\Cb(C)=T(C_+[-1])
\rarrow T(D_+[-1])=\Cb(D)$ is defined by the rule
$c\longmapsto f_+(c)-a(c)$ for all $c\in C_+=V$.
 Here $f_+(c)-a(c)$ is an element of the graded vector space
$W\oplus k=W^{\ot1}\oplus W^{\ot0}\subset T(W)=\Cb(D)$,
where $W=D_+$.

 To construct the functor in part~(b), one has to choose for every
nonzero CDG\+coalgebra $C^\cu$ over~$k$ a homogeneous $k$\+linear
section of its counit $w_C\:k\rarrow C$.
 Such sections are chosen in an arbitrary way, and do not need to
agree with each other in any sense.
 The resulting functor $\Cb^\cu_w$, viewed up to a uniquely defined
isomorphism of functors, does not depend on the choice of
the sections~$w_C$.

 The construction of the morphism of CDG\+algebras $(g,b)\:
\Cb_{w_C}^\cu(C^\cu)\rarrow\Cb_{w_D}^\cu(D^\cu)$ assigned to a morphism
of CDG\+coalgebras $(f,a)\:C^\cu\rarrow D^\cu$ by the functor
$\Cb^\cu_w$ in part~(b) combines the construction from the proof of
part~(a) with the dual version of the construction from the proof of
Proposition~\ref{nonaugmented-bar-construction-functoriality}.
 We skip further details, which are rather straightforward:
the section/retraction and connection changes are taken care of as
described in the discussion preceding the proposition.
\end{proof}

\begin{rem} \label{no-isos-of-bar-induced-by-changes-of-connection}
 One conspicuous difference between the expositions in
Sections~\ref{nonaugmented-bar-for-dg-algebras-subsecn}
and~\ref{curved-noncoaugmented-cobar-subsecn} is that
the bar-construction was only applied to \emph{uncurved} DG\+algebras
$A^\bu=(A,d)$ in Section~\ref{nonaugmented-bar-for-dg-algebras-subsecn},
while the cobar-construction was defined for \emph{curved}
CDG\+coalgebras $C^\cu=(C,d,h)$ in
Section~\ref{curved-noncoaugmented-cobar-subsecn}.
 Let us explain the situation.

 Actually, one can define the bar-construction in the context of
curved DG\+algebras and assign a CDG\+coalgebra
$\Br^\cu_v(B^\cu)=(\Br(B),d_{\Br},h_{\Br})$ to any CDG\+algebra
$B^\cu=(B,d_B,h_B)$ with a chosen retraction onto the unit line
$v\:B\rarrow k$.
 When the CDG\+algebra $B^\cu$ is augmented and
$\beta\:B^\cu\rarrow k$ is the augmentation, the construction
produces a DG\+coalgebra $\Br^\bu_\beta(B^\cu)=(\Br(B),d_{\Br})$
(as $h_{\Br}=0$ in this case).

 The latter approach was suggested in the paper~\cite{Nic}.
 Subsequently it was discovered and reported in
the paper~\cite[Section~5.1]{KLN} that the approach does not work
as intended.

 In fact, there is a massive loss of information involved with
the passage from an (augmented or nonaugmented) CDG\+algebra
$B^\cu=(B,d_B,h_B)$ over a field~$k$ with a nonzero curvature
element $h_B\ne0$ to the bar-construction $\Br^\cu_v(B^\cu)$.
 In particular, \emph{all (C)DG\+comodules over the (C)DG\+coalgebra
$\Br^\cu_v(B^\cu)$ are contractible} when $h_B\ne0$.
 This vanishing phenomenon, originally observed by Kontsevich, was
recorded in the present author's memoir~\cite[Remark~7.3]{Pkoszul}.

 The Kontsevich vanishing can be avoided by working with so-called
\emph{weakly curved} DG\+algebras, which presumes a more complicated
setting with a \emph{topological local ring of coefficients} instead
of a ground field~$k$.
 Then the theory becomes well-behaved, as it was established
in the memoir~\cite{Pweak}, where the Koszul duality theory for weakly
curved DG\+algebras and weakly curved $\mathrm{A}_\infty$\+algebras
was developed.

 Returning to the construction of the CDG\+coalgebra $\Br^\cu_v(B^\cu)$
over a field~$k$, the reader can find some details of this
(well-defined, but not well-behaved) construction
in~\cite[Section~6.1]{Pkoszul}, with a further discussion
in~\cite[Remark~6.1]{Pkoszul}.
 One point is worth mentioning: even though one can construct
the CDG\+coalgebra $\Br^\cu_v(B^\cu)$ or even the DG\+coalgebra
$\Br^\bu_\beta(B^\cu)$, one \emph{cannot} assign
a (C)DG\+coalgebra isomorphism to a change-of-connection
isomorphism of CDG\+algebras $(\id_B,a)\:(B,d',h')\rarrow(B,d,h)$
with a change-of-connection element $0\ne a\in B^1$.

 The explanation is that the tensor coalgebra $\Br(B)=\udT(B_+)$ is
conilpotent, hence uniquely coaugmented; and while there are
coderivations of the tensor coalgebras $\udT(W)$ which do not
preserve the coaugmentations, there exist \emph{no} such endomorphisms
or homomorphisms of tensor coalgebras.
 A useful intuition may be provided by the construction of the dual
algebra $C^*$ to a coalgebra $C$, as mentioned in
Sections~\ref{local-finite-dimensionality-subsecn}
and~\ref{cdg-coalgebras-subsecn}.
 Suppose that $W$ is a finite-dimensional (ungraded) $k$\+vector
space, and consider the tensor coalgebra $C=\udT(W)$.
 Then the dual algebra $C^*$ is the algebra of noncommutative
\emph{formal Taylor power series} in a finite number of variables
over~$k$.
 On the other hand, $T(W)$ is the algebra of noncommutative
\emph{polynomials}.
 Now, variable changes like $z\longmapsto z+1$ are well-defined as
automorphisms of (commutative or noncommutative) polynomial rings,
but one cannot make such a variable change act on the ring of formal
power series in~$z$.
 Still, $\d/\d z_i$ are well-defined derivations of the formal power
series ring $k[[z_1,\dotsc,z_n]]$ (or its noncommutative version);
they just cannot be integrated/exponentiated to automorphisms.

 To sum up the discussion in this remark, one can say that
coderivations of the tensor coalgebras $\udT(W)$ which do not preserve
the coaugmentation may be well-defined, but they are \emph{not}
well-behaved in the context of differential graded homological algebra.
 This is one difference between the properties of the tensor algebras
and the tensor coalgebras; this is also an explanation of
the Kontsevich vanishing.

 The problem of nonfunctoriality of the bar-construction with respect
to change-of-connection morphisms of CDG\+algebras is resolved by
passing from the usual (conilpotent) to the ``extended''
(nonconilpotent) bar-construction of~\cite[Definition~2.5]{GL}.
 See the discussion in~\cite[Section~4]{GL}.
\end{rem}

\subsection{Duality between DG-algebras and CDG-coalgebras}
\label{duality-DG-algebras-curved-DG-coalgebras-subsecn}
 Let $C^\cu=(C,d,h)$ be a CDG\+coalgebra and $\gamma\:k\rarrow C^\cu$
be a coaugmentation of $C^\cu$ (as defined in
Section~\ref{curved-noncoaugmented-cobar-subsecn}).
 The CDG\+coalgebra $C^\cu$ is said to be \emph{conilpotent} if
the coaugmented graded coalgebra $(C,\gamma)$ is conilpotent.
 In other words, a CDG\+coalgebra $C^\cu$ is conilpotent if and only if
the graded coalgebra $C$ is conilpotent with the (unique)
coaugmentation~$\gamma$ \emph{and} $(\gamma,0)\:k\rarrow C^\cu$
is a morphism of CDG\+coalgebras.

 As above, we denote by $k\alg_\dg$ the category of DG\+algebras
over~$k$ and by $k\alg_\dg^+\subset k\alg_\dg$ the full subcategory
of nonzero DG\+algebras.
 Furthermore, $k\coalg_\cdg$ is the category of CDG\+coalgebras
over~$k$, and $k\coalg_\cdg^\coaug$ is the category of coaugmented
CDG\+coalgebras (with the CDG\+coalgebra morphisms $(f,a)\:C^\cu
\rarrow D^\cu$ forming a commutative triangle with the coaugmentations
$(\gamma,0)\:k\rarrow C^\cu$ and $(\delta,0)\:k\rarrow D^\cu$).
 Denote by $k\coalg_\cdg^\conilp\subset k\coalg_\cdg^\coaug$
the full subcategory of conilpotent CDG\+coalgebras.

 Somewhat similarly to the discussion in
Section~\ref{duality-dg-algebras-dg-coalgebras-subsecn},
the bar-construction $\Br^\cu_v(A^\bu)$ of any DG\+algebra $A^\bu$
is naturally a coaugmented CDG\+coalgebra (with the direct summand
inclusion $k=A_+^{\ot0}\rarrow\Br(A)$ providing the coaugmentation).
 Moreover, the coaugmented CDG\+coalgebra $\Br^\cu_v(A^\bu)$ is
conilpotent.

 So the bar-construction of
Proposition~\ref{nonaugmented-bar-construction-functoriality}
is actually a functor
$$
 \Br^\cu_v\:k\alg_\dg^+\lrarrow k\coalg_\cdg^\conilp.
$$
 The curved coaugmented cobar-construction of
Proposition~\ref{curved-non-coaugmented-cobar-functoriality}(a)
obviously produces nonzero DG\+algebras, so is a functor
$$
 \Cb^\bu_\gamma\:k\coalg_\cdg^\coaug\lrarrow k\alg_\dg^+.
$$

\begin{lem} \label{curved-nonaugmented-bar-cobar-adjunction}
 The restriction of the cobar-construction to conilpotent
CDG\+coalgebras,
$$
 \Cb^\bu_\gamma\:k\coalg_\cdg^\conilp\lrarrow k\alg_\dg^+,
$$
is naturally a left adjoint functor to the bar-construction\/
$\Br^\cu_v\:k\alg_\dg^+\rarrow k\coalg_\cdg^\conilp$.
\hbadness=1200
\end{lem}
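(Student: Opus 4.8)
The plan is to follow the pattern of the proof of Lemma~\ref{augmented-bar-cobar-adjunction}, establishing the adjunction by exhibiting a natural bijection in which \emph{both} the DG\+algebra homomorphisms $\Cb^\bu_\gamma(C^\cu)\rarrow A^\bu$ and the conilpotent CDG\+coalgebra morphisms $C^\cu\rarrow\Br^\cu_v(A^\bu)$ are identified with \emph{curved twisting cochains}. By a curved twisting cochain I mean a homogeneous $k$\+linear map $\tau\:C\rarrow A$ of degree~$1$ annihilating the coaugmentation, $\tau\circ\gamma=0$, and satisfying a Maurer--Cartan equation of the form $D(\tau)+\tau\cup\tau+e_A\circ h_C=0$, where $\cup$ is the convolution product in the graded algebra $\Hom_k(C,A)$, the differential $D$ is induced by $d_A$ and $d_C$, \ $e_A\:k\rarrow A$ is the unit, and $h_C\:C\rarrow k$ is the curvature linear function of $C^\cu$. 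Equivalently, $\tau$ is a Maurer--Cartan element of the convolution CDG\+algebra $\Hom_k^\bu(C^\cu,A^\bu)$, a curved analogue of the Hom DG\+algebra of Section~\ref{hom-dg-algebra-and-twisting-cochains-subsecn}, whose curvature element is $e_A\circ h_C$; the signs are as in~\cite[Section~6.1]{Pkoszul}.

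First I would treat the algebra side. Since $\Cb_\gamma(C)=T(C_+[-1])$ is the free graded algebra on $C_+[-1]$, graded algebra homomorphisms $\Cb_\gamma(C)\rarrow A$ correspond bijectively to degree\+$0$ maps $C_+[-1]\rarrow A$, that is, to degree\+$1$ maps $\tau\:C\rarrow A$ with $\tau\circ\gamma=0$. Because the section $w=\gamma$ was used, the cobar curvature element $h_{\Cb}$ vanishes and $\Cb^\bu_\gamma(C^\cu)$ is a genuine DG\+algebra, so the homomorphism is required to commute with the differentials on the nose. Evaluating this commutation on the generators $C_+\subset\Cb(C)$ and recalling from Section~\ref{curved-noncoaugmented-cobar-subsecn} that $d_{\Cb}$ has three corestriction components, coming from $\mu_W$, from $d_W$, and from the curvature $h_C$, shows that commutation with differentials is precisely the curved Maurer--Cartan equation above. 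This reproduces the computation in the proof of Lemma~\ref{augmented-bar-cobar-adjunction}, the sole new feature being that the $h_C$\+component of $d_{\Cb}$ contributes the curvature term $e_A\circ h_C$.

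Next I would treat the coalgebra side. Since $\Br(A)=\udT(A_+[1])$ is the cofree conilpotent graded coalgebra cospanned by $A_+[1]$ (Remark~\ref{tensor-coalgebra-cofree-conilpotent-remark}) and $C$ is conilpotent, graded coalgebra homomorphisms $f\:C\rarrow\Br(A)$ correspond bijectively to degree\+$0$ maps $C_+\rarrow A_+[1]$, i.e.\ to the $A_+$\+valued part of a degree\+$1$ map $C\rarrow A$; the remaining $k$\+valued part is exactly the change\+of\+connection linear function $a\:C\rarrow k$ of degree~$1$ accompanying $f$ in a CDG\+coalgebra morphism $(f,a)$. Using the splitting $A=k\oplus V$ afforded by the retraction $v_A$, the pair $(f,a)$ assembles into a single degree\+$1$ map $\tau\:C\rarrow A$, with $v_A\circ\tau=a$ and $(\id-e_Av_A)\circ\tau$ the corestriction of~$f$. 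Coaugmentation\+preservation forces $\tau\circ\gamma=0$, while the two CDG\+coalgebra\+morphism axioms decode into the curved Maurer--Cartan equation: the differential axiom (the one involving $d_C$, $d_{\Br}$, and the $*$\+actions of~$a$) yields its $V$\+component, and the curvature axiom (equating $h_{\Br}\circ f$ with $h_C+a\circ d_C+a^{*2}$) yields its $k$\+component, the curvature $h_{\Br}$ supplying the $m_k$\+ and $d_k$\+terms. Thus both Hom\+sets are naturally identified with the set of curved twisting cochains $\tau\:C^\cu\rarrow A^\bu$.

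Finally, naturality of the resulting bijection is routine: a DG\+algebra map $A^\bu\rarrow B^\bu$ and a morphism of conilpotent CDG\+coalgebras $D^\cu\rarrow C^\cu$ act on twisting cochains by post\+ and pre\+composition (the latter twisted by the change\+of\+connection datum), matching the functorialities of $\Br^\cu_v$ and $\Cb^\bu_\gamma$ recorded in Propositions~\ref{nonaugmented-bar-construction-functoriality} and~\ref{curved-non-coaugmented-cobar-functoriality}. I expect the main obstacle to be precisely the bookkeeping of the change\+of\+connection datum: verifying that the single differential\+compatibility equation on the DG\+algebra side splits, under the retraction $v_A$, into exactly the pair of CDG\+coalgebra\+morphism axioms on the curved side, with the curvature term $e_A\circ h_C$ distributed correctly between the $V$\+ and $k$\+components and all signs (suppressed here, as throughout the survey) consistent with~\cite[Section~6.1]{Pkoszul}. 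This matching of curvature and connection data is the one genuinely new ingredient beyond the uncurved, augmented Lemma~\ref{augmented-bar-cobar-adjunction}.
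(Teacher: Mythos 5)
Your proposal is correct and follows exactly the route the paper indicates: its own ``hint of proof'' in Section~\ref{twisting-cochains-curved-subsecn} consists precisely of the statement that both Hom-sets are naturally in bijection with twisting cochains $\tau\:C^\cu\rarrow A^\bu$ (in the curved, Maurer--Cartan-in-$\Hom^\cu_k(C^\cu,A^\bu)$ sense) satisfying $\tau\circ\gamma=0$, which is what you establish. Your unpacking of how the pair $(f,a)$ on the CDG\+coalgebra side assembles via the retraction $v_A$ into a single degree\+$1$ map $\tau$, with the two morphism axioms decoding into the $V$\+ and $k$\+components of the curved Maurer--Cartan equation, correctly identifies and supplies the one detail beyond Lemma~\ref{augmented-bar-cobar-adjunction} that the paper leaves implicit.
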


 Lemma~\ref{curved-nonaugmented-bar-cobar-adjunction} is
a curved/nonaugmented version of
Lemma~\ref{augmented-bar-cobar-adjunction}.
 A very brief hint of proof will be given in
Section~\ref{twisting-cochains-curved-subsecn} below.

 One would like to define equivalence relations (that is, the classes
of morphisms to be inverted) in the categories of (nonzero)
DG\+algebras and conilpotent CDG\+coalgebras so that the adjoint
functors $\Br^\cu_v$ and $\Cb^\bu_\gamma$ become equivalences of
categories after these classes of morphisms are inverted.
 In the case of DG\+algebras, one can use the conventional
quasi-isomorphisms; but the notion of a quasi-isomorphism of
CDG\+coalgebras is \emph{undefined} because CDG\+coalgebras are
not complexes and do not have cohomology spaces.

 Similarly to Section~\ref{duality-dg-algebras-dg-coalgebras-subsecn},
the problem is resolved by considering \emph{filtered
quasi-isomorphisms}, which actually make sense for CDG\+coalgebras.
 An \emph{admissible filtration} on a coaugmented CDG\+coalgebra
$(C^\cu,\gamma)$ is defined as an exhaustive comultiplicative
increasing filtration by graded vector subspaces $F_nC^\cu\subset C^\cu$
such that $F_{-1}C^\cu=0$, \ $F_0C^\cu=\gamma(k)$, and
$d(F_nC^\cu)\subset F_nC^\cu$ for all $n\ge0$.

 In particular, any coaugmented CDG\+coalgebra having an admissible
filtration is conilpotent.
 Conversely, the canonical increasing filtration (as in
Section~\ref{conilpotent-coalgebras-subsecn}) on any conilpotent
CDG\+coalgebra is admissible.

 For any admissible filtration $F$ on a coaugmented CDG\+coalgebra
$(C^\cu,\gamma)$ one has $d^2(F_nC^\cu)\subset F_{n-1}C^\cu$ for
all $n\ge0$, since $d^2(c)=h*c-c*h$ for all $c\in C$ and
the curvature linear function $h\:C\rarrow k$ annihilates $F_0C^\cu$.
 Consequently, the induced differential~$d$ on the associated graded
vector space (in fact, the associated bigraded coalgebra)
$\gr^FC^\cu=\bigoplus_{n=0}^\infty F_nC^\cu/F_{n-1}C^\cu$ squares to
zero, so it makes $\gr^FC^\cu$ a complex, and in fact,
a (bigraded, uncurved) DG\+coalgebra.

 Let $(f,a)\:C^\cu\rarrow D^\cu$ be a morphism of conilpotent
CDG\+coalgebras.
 The morphism $(f,a)$ is said to be a \emph{filtered quasi-isomorphism}
if there exist admissible filtrations $F$ on both the CDG\+coalgebras
$C^\cu$ and $D^\cu$ such that $f(F_nC^\cu)\subset F_nD^\cu$ and
the induced map $F_nC^\cu/F_{n-1}C^\cu\rarrow F_nD^\cu/F_{n-1}D^\cu$
is a quasi-isomorphism of complexes of vector spaces for every $n\ge0$.
 Equivalently, this means that $\gr^Ff\:\gr^FC^\cu\rarrow\gr^FD^\cu$
is a quasi-isomorphism of (bigraded) DG\+coalgebras.

\begin{thm} \label{nonaugmented-algebras-curved-coalgebras-duality}
 Let\/ $\Quis$ be the class of all quasi-isomorphisms of (nonzero)
DG\+algebras and\/ $\FQuis$ be the class of all filtered
quasi-isomorphisms of conilpotent CDG\+coalgebras.
 Then the adjoint functors\/ $\Br^\cu_v$ and\/ $\Cb^\bu_\gamma$ induce
mutually inverse equivalences of categories
$$
 \Br^\cu_v\: k\alg_\dg^+[\Quis^{-1}]\,\simeq\,
 k\coalg_\cdg^\conilp[\FQuis^{-1}]\,:\!\Cb^\bu_\gamma.
$$
\end{thm}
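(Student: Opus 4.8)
The strategy is to reduce this curved, nonaugmented statement to the uncurved, augmented Theorem~\ref{augmented-algebras-coalgebras-duality} by an associated-graded argument in which the curvature is systematically stripped away. Write $L=\Cb^\bu_\gamma$ and $R=\Br^\cu_v$, so that $L\dashv R$ by Lemma~\ref{curved-nonaugmented-bar-cobar-adjunction} (here $\gamma$ is the \emph{canonical} coaugmentation of a conilpotent CDG\+coalgebra, and by the change-of-connection isomorphisms of Section~\ref{nonaugmented-bar-for-dg-algebras-subsecn} the functor $\Br^\cu_v$ is independent of the auxiliary retraction~$v$ up to canonical isomorphism, so all the statements are well posed). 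The proof rests on the following standard formal observation: if $L\dashv R$ with unit $\eta$ and counit $\varepsilon$, and if $L(\FQuis)\subset\Quis$ and $R(\Quis)\subset\FQuis$, then $L$ and $R$ descend to an adjoint pair between the localizations $k\coalg_\cdg^\conilp[\FQuis^{-1}]$ and $k\alg_\dg^+[\Quis^{-1}]$; and this descended pair consists of mutually inverse equivalences as soon as every component $\eta_{C^\cu}$ is a filtered quasi-isomorphism and every component $\varepsilon_{A^\bu}$ is a quasi-isomorphism (for then the localized unit and counit are natural isomorphisms). Thus everything reduces to four claims: that $R$ and $L$ preserve the respective classes, and that $\eta$ and $\varepsilon$ belong to the respective classes.

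I would first treat the preservation of classes, using throughout the canonical increasing filtration $F$ (Section~\ref{conilpotent-coalgebras-subsecn}), which is admissible and, as established just before the theorem, turns a conilpotent CDG\+coalgebra into an honest (bigraded, \emph{uncurved}) DG\+coalgebra on passage to $\gr^F$, since $d^2(F_nC^\cu)\subset F_{n-1}C^\cu$ and the curvature function kills $F_0$. For $R=\Br^\cu_v$ I equip $\Br^\cu_v(A^\bu)$ with its canonical bar filtration by the number of tensor factors; its associated graded is $\bigoplus_n(A^\bu_+[1])^{\ot n}$ with only the internal differential surviving, because both the multiplication component $m_V$ and the curvature $h$ strictly lower this filtration. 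A quasi-isomorphism $A^\bu\rarrow B^\bu$ induces a quasi-isomorphism $A^\bu_+\rarrow B^\bu_+$, hence, by Künneth over a field, a quasi-isomorphism on each tensor power; so $\Br^\cu_v(A^\bu)\rarrow\Br^\cu_v(B^\bu)$ is a filtered quasi-isomorphism. For $L=\Cb^\bu_\gamma$ I induce a filtration on the cobar tensor algebra from an admissible filtration on the CDG\+coalgebra; the curvature contribution to the cobar differential strictly lowers it, so the associated graded is the \emph{uncurved} cobar complex of $\gr^FC^\cu$, and the assertion that cobar carries filtered quasi-isomorphisms to quasi-isomorphisms becomes the corresponding uncurved statement contained in Theorem~\ref{augmented-algebras-coalgebras-duality} and Remarks~\ref{cobar-not-preserves-quasi-isomorphisms-remarks}.

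The heart of the matter is the two adjunction morphisms, and the same stripping of curvature reduces each to Theorem~\ref{augmented-algebras-coalgebras-duality}. For the counit $\varepsilon_{A^\bu}\:\Cb^\bu_\gamma\Br^\cu_v(A^\bu)\rarrow A^\bu$, I filter the source by the bar filtration of the inner $\Br^\cu_v(A^\bu)$ and the target $A^\bu$ by the trivial filtration $F_0=k$, $F_1=A$. On associated graded the bar-multiplication and the curvature drop out (both strictly lower the bar filtration), leaving only the internal differential and the deconcatenation comultiplication, which \emph{preserves} the filtration; the target becomes the trivial-multiplication augmented DG\+algebra $k\oplus A^\bu_+$, and $\gr\varepsilon$ is identified with the uncurved augmented bar-cobar counit $\Cb^\bu\Br^\bu(k\oplus A^\bu_+)\rarrow k\oplus A^\bu_+$, a quasi-isomorphism by Theorem~\ref{augmented-algebras-coalgebras-duality}. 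For the unit $\eta_{C^\cu}\:C^\cu\rarrow\Br^\cu_v\Cb^\bu_\gamma(C^\cu)$, I use the admissible filtration $F$ on $C^\cu$ together with the induced filtration on the target; on associated graded the curvature again disappears, $\gr\eta$ is identified with the uncurved unit $\gr^FC^\cu\rarrow\Br^\bu\Cb^\bu(\gr^FC^\cu)$, and Theorem~\ref{augmented-algebras-coalgebras-duality} shows this to be a filtered, hence ordinary, quasi-isomorphism of bigraded DG\+coalgebras. Lifting back through the filtrations then shows $\varepsilon_{A^\bu}$ to be a quasi-isomorphism and $\eta_{C^\cu}$ a filtered quasi-isomorphism, which closes the four claims.

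The main obstacle I anticipate is twofold. First, in each reduction one must verify that the curvature, together with the higher multiplication and comultiplication terms, \emph{strictly} changes the chosen filtration degree, so that the associated graded is \emph{exactly} the uncurved bar-cobar complex and $\gr$ of the adjunction morphism is \emph{precisely} the uncurved one; this sign-and-bookkeeping verification is the real content of the proof, and it is here that conilpotency is indispensable, since it guarantees that $F_0$ is the coaugmentation line on which the curvature linear function vanishes. Second, one must secure convergence of the associated spectral sequences: the induced filtrations are exhaustive and bounded below, but the cobar tensor algebra is totalized by direct sums, so passing from a quasi-isomorphism on $\gr$ back to a quasi-isomorphism of total complexes requires the standard completeness and boundedness hypotheses for the filtered complexes at hand. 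Finally, I would keep in mind the caveat from Section~\ref{duality-dg-algebras-dg-coalgebras-subsecn} that filtered quasi-isomorphisms need not compose, so that the passage to the localizations and the descent of the adjunction must proceed by formally inverting morphisms rather than through any calculus of fractions.
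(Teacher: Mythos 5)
Your proposal is correct and follows essentially the same route as the proof in the cited source \cite[Theorem~6.10(a)]{Pkoszul} (the survey itself only gives the citation): one checks that bar and cobar preserve the respective classes of weak equivalences and that the adjunction unit and counit are, respectively, a filtered quasi-isomorphism and a quasi-isomorphism, in each case by passing to the associated graded objects of the canonical/bar filtrations, where the curvature and the higher (co)multiplication terms drop out and the statement reduces to the augmented uncurved case of Theorem~\ref{augmented-algebras-coalgebras-duality}. The one bookkeeping point worth making explicit is that the change-of-connection components of the CDG\+coalgebra morphisms involved also strictly lower the admissible filtrations (they do, since they annihilate $F_0=\gamma(k)$ and the filtrations are comultiplicative), so that the induced maps on associated graded objects are honest morphisms of uncurved DG\+coalgebras.
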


\begin{proof}
 This is~\cite[Theorem~6.10(a)]{Pkoszul}.
\end{proof}

\begin{ex} \label{terminal-CDG-coalgebras-example}
 The following trivial example may be instructive.
 Notice that there are many nonzero DG\+algebras \emph{quasi-isomorphic}
to the zero DG\+algebra; they are all legitimate objects of
the category $k\alg_\dg^+$.
 One can easily see that all such DG\+algebras are isomorphic in
the category $k\alg_\dg^+[\Quis^{-1}]$.
 Typical representatives of this isomorphism class are the DG\+algebra
$A^\bu=(k[\varepsilon],d)$, with $\varepsilon\in A^{-1}$ and
$d(\varepsilon)=1$, and its quotient DG\+algebra $\overline A^\bu=
(k[\varepsilon]/(\varepsilon^2),d)$.
 For any (unital) DG\+algebra $X^\bu$ quasi-isomorphic to zero, there
exists a morphism of (unital) DG\+algebras $A^\bu\rarrow X^\bu$; so
$X^\bu$ is isomorphic to $A^\bu$ in $k\alg_\dg^+[\Quis^{-1}]$.

 One may wonder what the corresponding conilpotent CDG\+coalgebras are.
 Let $B^\cu=(k[h],0,h)$ be the CDG\+algebra over $k$ whose underlying
graded algebra is the algebra of polynomials in the curvature
element~$h$ (while the differential is $d=0$).
 As $B^\cu$ is a graded algebra with finite-dimensional components,
the graded dual vector space to $B^\cu$ has a natural structure of
CDG\+coalgebra, which we denote by $C^\cu$.
 So one has $C^n\simeq k$ whenever $n$~is an even nonpositive
integer, and $C^n=0$ otherwise; the curvature linear function
$h\:C^{-2}\rarrow k$ is an isomorphism, the differential on $C^\cu$
vanishes, and the comultiplication map $C^{-2n}\rarrow
C^{-2p}\ot_k C^{-2q}$ is an isomorphism for all $p+q=n$, \
$p$, $q\ge0$.
 One can easily see that $C^\cu=\Br^\cu_v(\overline A^\bu)$, where
$v\:\overline A\rarrow k$ is the unique homogeneous retraction onto
the unit line.
 (In particular, $C^\cu$ is obviously a conilpotent CDG\+coalgebra.)

 Furthermore, consider the CDG\+algebra $\overline B^\cu=
(k[h]/(h^2),0,h)$.
 The graded dual vector space $\overline C^\cu$ to $\overline B^\cu$
is a CDG\+coalgebra with $\overline C^n=k$ for $n=0$ and $n=-2$,
and $\overline C^n=0$ for all other~$n$.
 The curvature linear function $h\:\overline C^{-2}\rarrow k$ is
an isomorphism, while the differential on $\overline C^\cu$ vanishes.
 It is easy to see that $A^\bu=\Cb^\bu_\gamma(\overline C^\cu)$, where
$\gamma\:k\rarrow\overline C^\cu$ is the unique coaugmentation of
the conilpotent CDG\+coalgebra~$\overline C^\cu$.

 The CDG\+algebras $B^\cu$ and $\overline B^\cu$ were discussed
in~\cite[Section~2.2 and Proposition~3.2]{KLN} under the name of
``initial CDG\+algebras''.
 In fact, $B^\cu$ is the initial object of the category of
CDG\+algebras over~$k$ and \emph{strict} morphisms between them,
i.~e., morphisms $(f,0)$ with a vanishing change-of-connection
element $a=0$.
 By the same token, the CDG\+coalgebras $C^\cu$ and $\overline C^\cu$
might be called ``terminal CDG\+coalgebras'' (and indeed, the zero
DG\+algebra is the terminal object of the category $k\alg_\dg$).
\end{ex}

 Similarly to the discussion in
Section~\ref{quillen-equivalence-dg-algebras-dg-coalgebras-subsecn},
the equivalence of categories in
Theorem~\ref{nonaugmented-algebras-curved-coalgebras-duality} can be
expressed as a Quillen equivalence of model category structures.
 One additional complication as compared to
Section~\ref{quillen-equivalence-dg-algebras-dg-coalgebras-subsecn},
which arises here, is that the category $k\alg_\dg^+$ of nonzero
DG\+algebras has no terminal object (because the zero DG\+algebra, which
is the terminal object of $k\alg_\dg$, is excluded from $k\alg_\dg^+$).
 The category $k\coalg_\cdg^\conilp$ of conilpotent CDG\+coalgebras
has no terminal object, either (as only the category of conilpotent
CDG\+coalgebras and \emph{strict} morphisms between them has a terminal
object, mentioned in Example~\ref{terminal-CDG-coalgebras-example}
above).

 Still the definition of a model category (as in~\cite{Hov}) requires
existence of all limits and colimits.
 This problem is resolved by ``finalizing'' (formally adjoining terminal
objects to) both the categories $k\alg_\dg^+$ and
$k\coalg_\cdg^\conilp$ before defining model structures on them.
 We refer to~\cite[Section~9.2, Theorem~9.3(a), and end of
Section~9.3]{Pkoszul} for the details.

\subsection{Twisting cochains in the curved context}
\label{twisting-cochains-curved-subsecn}
 Let $V^\cu=(V,d_V)$ and $W^\cu=(W,d_W)$ be two precomplexes, i.~e.,
graded $k$\+vector spaces endowed with homogeneous $k$\+linear 
differentials $d_V\:V\rarrow V$ and $d_W\:W\rarrow W$ of degree~$1$
with possibly nonzero squares (as in
Section~\ref{cdg-coalgebras-subsecn}).
 Then the precomplex $\Hom^\cu_k(V^\cu,W^\cu)$ is defined as
the graded Hom space $\Hom_k(V,W)$ endowed with the differential
$d\:\Hom_k(V,W)\rarrow\Hom_k(V,W)$ of degree~$1$ given by the usual
formula $d(f)(x)=d_W(f(x))-(-1)^{|f|}f(d_V(x))$ for all
$f\in\Hom_k^{|f|}(V,W)$ and $x\in V^{|x|}$.
{\emergencystretch=0.5em\par}

 Let $C^\cu=(C,d_C,h_C)$ be a CDG\+coalgebra and $B^\cu=(B,d_B,h_B)$
be a CDG\+algebra over~$k$.
 Consider the graded Hom space $\Hom_k(C,B)$, and endow it with
a graded $k$\+algebra structure as constructed in
Section~\ref{hom-dg-algebra-and-twisting-cochains-subsecn}
and a differential~$d$ given by the usual rule as above.
 Furthermore, let $h\in\Hom^2_k(C,B)$ be the element given by
the formula $h(c)=\epsilon(c)h_B-h_C(c)\cdot1$ for all $c\in C$,
where $1\in B$ is the unit element and $\epsilon\:C\rarrow k$ is
the counit map.
 Then $\Hom^\cu_k(C^\cu,B^\cu)=(\Hom_k(C,B),d,h)$ is a CDG\+algebra
over~$k$ \,\cite[Section~6.2]{Pkoszul}.

 Let $E^\cu=(E,d,h)$ be a CDG\+ring over~$k$.
 Then an element $a\in E^1$ is said to be a \emph{Maurer--Cartan
element} if it satisfies the equation $a^2+d(a)+h=0$ in~$E^2$.
 Equivalently, $a\in E^1$ is a Maurer--Cartan element if and only
if there exists an odd derivation $d'\:E\rarrow E$ of degree~$1$
(specifically, $d'(e)=d(e)+[a,e]$ for all $e\in E$) such that
$(\id_E,a)\:(E,d',0)\rarrow(E,d,h)$ is an isomorphism of CDG\+rings.
 So Maurer--Cartan elements in a CDG\+ring parametrize its
change-of-connection isomorphisms with DG\+rings.

 By the definition, a \emph{twisting cochain}~$\tau$ for
a CDG\+coalgebra $C^\cu$ and a CDG\+algebra $B^\cu$ over~$k$ is
a Maurer--Cartan element in the CDG\+algebra $\Hom_k^\cu(C^\cu,B^\cu)$.
 So $\tau\:C^\cu\rarrow B^\cu$ is a homogeneous $k$\+linear map of
degree~$1$ satisfying the Maurer--Cartan equation.
 (There is a slight abuse of terminology involved here, as
by ``cochains'' one usually means elements in grading components
of a complex, while $\Hom_k^\cu(C^\cu,B^\cu)$ is not a complex.)

 This definition of a twisting cochain is a generalization of the one
in Section~\ref{hom-dg-algebra-and-twisting-cochains-subsecn}.
 Similarly to the discussion in
Section~\ref{hom-dg-algebra-and-twisting-cochains-subsecn},
various conditions of compatibility with (co)augmentations may be
imposed on twisting cochains.
 We will be particularly interested in the context where
$(C^\cu,\gamma)$ is a coaugmented (better yet, conilpotent)
CDG\+coalgebra, while $A^\bu$ is a DG\+algebra.
 In this case, twisting cochains $\tau\:C^\cu\rarrow A^\bu$ such that
$\tau\circ\gamma=0$ play an important role.

\begin{exs} \label{nonaugmented-bar-twisting-cochains-examples}
 (1)~This is a nonaugmented version of
Example~\ref{bar-construction-twisting-cochains-examples}(1).
 Let $A\ne0$ be an algebra over~$k$ and $C^\cu=\Br^\cu_v(A)$ be its
bar-construction, as in
Section~\ref{nonaugmented-bar-for-algebras-and-modules-subsecn}.
 Let $\gamma\:k\rarrow C^\cu$ be the natural coaugmentation of
$\Br^\cu_v(A)$ (as in
Section~\ref{duality-DG-algebras-curved-DG-coalgebras-subsecn}).
 Then the composition~$\tau$ of the direct summand projection
$\Br(A)\rarrow A_+^{\ot 1}=A_+$ and the inclusion $A_+\simeq
\ker(v)\rarrow A$ is a twisting cochain for the CDG\+coalgebra $C^\cu$
and the algebra $A$ (viewed as a (C)DG\+algebra in the obvious way).
 The equation of compatibility with the coaugmentation
$\tau\circ\gamma=0$ is satisfied in this example.

\smallskip
 (2)~This is a nonaugmented version of
Example~\ref{bar-construction-twisting-cochains-examples}(2).
 Let $A^\bu$ be a nonzero DG\+algebra and $C^\cu=\Br^\cu_v(A^\bu)$
be its bar-construction, as in
Section~\ref{nonaugmented-bar-for-dg-algebras-subsecn}.
 Let $\gamma\:k\rarrow C^\cu$ denote the natural coaugmentation of
$\Br^\cu_v(A^\bu)$.
 Then the composition $\tau\:C^\cu\rarrow A^\bu$ of the direct summand
projection $\Br(A)\rarrow A_{+}^{\ot1}=A_+$ and
the inclusion $A_+\simeq\ker(v)\rarrow A$ is a twisting cochain for
the CDG\+coalgebra $C^\cu$ and the DG\+algebra~$A^\bu$.
 The equation $\tau\circ\gamma=0$ is satisfied for this twisting
cochain.
\end{exs}

\begin{ex} \label{curved-cobar-construction-twisting-cochain-example}
 This is a curved version of
Example~\ref{cobar-construction-twisting-cochains-examples}(2).
 Let $(C^\cu,\gamma)$ be a coaugmented CDG\+coalgebra, and let
$A^\bu=\Cb^\bu_\gamma(C^\cu)$ be its cobar-construction, as in
Section~\ref{curved-noncoaugmented-cobar-subsecn}.
 Then the composition $\tau\:C^\cu\rarrow A^\bu$ of the natural
surjection $C^\cu\rarrow C^{\cu,+}=C^\cu/\gamma(k)$ and
the direct summand inclusion $C^{\cu,+}\rarrow\Cb_\gamma^\bu(C^\cu)$
is a twisting cochain for the CDG\+coalgebra $C^\cu$ and
the DG\+algebra~$A^\bu$.
 The equation $\tau\circ\gamma=0$ is satisfied for this twisting
cochain.
\end{ex}

\begin{ex} \label{curved-noncoaugmented-cobar-twisting-cochain-example}
 This is a noncoaugmented version of the previous example.
 Let $C^\cu\ne0$ be a CDG\+coalgebra, and let $B^\cu=\Cb^\cu_w(C^\cu)$
be its (noncoaugmented) cobar-construction, as in
Section~\ref{curved-noncoaugmented-cobar-subsecn}.
 Then the composition $\tau\:C^\cu\rarrow B^\cu$ of the surjection
$C\rarrow\coker(w)\simeq C_+$ and the direct summand inclusion
$C_+=C_+^{\ot1}\rarrow\Cb(C)$ is a twisting cochain for
the CDG\+coalgebra $C^\cu$ and the CDG\+algebra~$B^\cu$.
\end{ex}

\begin{proof}[Hint of proof of
Lemma~\ref{curved-nonaugmented-bar-cobar-adjunction}]
 Let $A^\bu$ be a nonzero DG\+algebra and $C^\cu$ be a conilpotent
CDG\+coalgebra over~$k$.
 Then both the DG\+algebra homomorphisms $\Cb^\bu_\gamma(C^\cu)
\allowbreak\rarrow A^\bu$ and the (coaugmented) CDG\+coalgebra
morphisms $C^\cu\rarrow\Br^\cu_v(A^\bu)$ correspond bijectively to
twisting cochains $\tau\:C^\cu\rarrow A^\bu$ satisfying the equation
of compatibility with the coaugmentation $\tau\circ\gamma=0$.
\emergencystretch=0em
\end{proof}

 Let $A^\bu$ be a nonzero DG\+algebra and $(C^\cu,\gamma)$ be
a conilpotent CDG\+coalgebra.
 Let $\tau\:C^\cu\rarrow A^\bu$ be a twisting cochain satisfying
the equation $\tau\circ\gamma=0$.
 Generalizing the definition from
Section~\ref{bar-cobar-and-acyclic-twisting-cochains-subsecn},
the twisting cochain~$\tau$ is said to be \emph{acyclic} if one of
(or equivalently, both) the related DG\+algebra homomorphism
$\Cb^\bu_\gamma(C^\cu)\rarrow A^\bu$ and the CDG\+coalgebra
morphism $C^\cu\rarrow\Br^\cu_v(A^\bu)$ become isomorphisms after
the quasi-isomorphisms of DG\+algebras, or respectively
the filtered quasi-isomorphisms of conilpotent CDG\+coalgebras,
are inverted, as in
Theorem~\ref{nonaugmented-algebras-curved-coalgebras-duality}.
 Simply put, the twisting cochain~$\tau$ is called acyclic if
the related homomorphism of DG\+algebras $\Cb^\bu_\gamma(C^\cu)
\rarrow A^\bu$ is a quasi-isomorphism.

 For example, the twisting cochain from
Example~\ref{curved-cobar-construction-twisting-cochain-example}
is acyclic, by the definition, whenever the coaugmented
CDG\+coalgebra $(C^\cu,\gamma)$ is conilpotent.
 It is a part of 
Theorem~\ref{nonaugmented-algebras-curved-coalgebras-duality}
that the twisting cochain from
Examples~\ref{nonaugmented-bar-twisting-cochains-examples} is
acyclic for any nonzero algebra $A$ or DG\+algebra~$A^\bu$.

\begin{ex} \label{curved-nonhomog-quadratic-twisting-cochain}
 The following example is a nonaugmented version of
Section~\ref{nonhomogeneos-quadratic-dual-to-augmented-subsecn}
and Example~\ref{nonhomogeneous-quadratic-twisting-cochain}.

 Let $(A,F)$ be a nonhomogeneous quadratic algebra, as defined
in Section~\ref{nonhomogeneos-quadratic-dual-to-augmented-subsecn},
and let $C=(\gr^FA)^?$ be the quadratic dual coalgebra to
the quadratic algebra $\gr^FA$.
 Let $v\:A\rarrow k$ be a $k$\+linear retraction onto the unit line.
 Then there is a natural isomorphism of vector spaces $\gr^F_1A
=F_1A/F_0A\simeq\ker(v)\cap F_1A\subset A$.
 So the vector space $V=\gr^F_1A$ can be viewed as a subspace in
$A_+=A/k\cdot1\simeq\ker(v)$, and the graded coalgebra $C$ can be
viewed as a subcoalgebra in $\udT(A_+)$,
$$
 C\subset\udT(V)\subset\udT(A_+)=\Br(A).
$$

 Similarly to
Section~\ref{nonhomogeneos-quadratic-dual-to-augmented-subsecn},
one observes that the subcoalgebra $C\subset\Br(A)$ is in fact
a CDG\+subcoalgebra in the CDG\+coalgebra $\Br^\cu_v(A)$ constructed
in Section~\ref{nonaugmented-bar-for-algebras-and-modules-subsecn},
\,$C^\cu\subset\Br^\cu_v(A)$.
 Quite simply, this means that the differential~$\d$ on
$\Br^\cu_v(A)$ preserves $C$, that is $\d(C)\subset C$.
 The curvature linear function $h\:C\rarrow k$ is produced as
the restriction of the curvature linear function $h_{\Br}\:\Br(A)
\rarrow k$ to the subcoalgebra $C\subset\Br(A)$.
 This is another way to spell out the nonhomogeneous quadratic
duality construction of~\cite[Proposition~2.2]{Pcurv}
or~\cite[Proposition~4.1 in Chapter~5]{PP}.

 We have constructed the CDG\+coalgebra $C^\cu=(C,\d,h)$ nonhomogeneous
quadratic dual to a nonhomogeneous quadratic algebra~$(A,F)$.
 Now the composition $C^\cu\rarrow\Br^\cu_v(A)\rarrow A$ of
the inclusion map $C^\cu\rarrow\Br^\cu_v(A)$ with the twisting
cochain $\Br^\cu_v(A)\rarrow A$ from
Example~\ref{nonaugmented-bar-twisting-cochains-examples}(1) is
a twisting cochain $\tau\:C^\cu\rarrow A$.
 Equivalently, the twisting cochain~$\tau$ can be constructed as
the composition of the direct summand projection $C\rarrow C^1=
F_1A/F_0A$ and the inclusion $F_1A/F_0A\simeq\ker(v)\cap F_1A
\rarrow A$.

 The direct summand inclusion $\gamma\:k=C_0\rarrow C$ is
a coaugmentation of the CDG\+coalgebra $C^\cu$, so $(C^\cu,\gamma)$
is a coaugmented, and in fact, conilpotent CDG\+coal\-gebra.
 The twisting cochain $\tau\:C^\cu\rarrow A$ satisfies the equation
$\tau\circ\gamma=0$.

 For any nonhomogeneous \emph{Koszul} algebra $(A,F)$, as defined
in Section~\ref{nonhomogeneos-quadratic-dual-to-augmented-subsecn}
(i.~e., a nonhomogeneous quadratic algebra for which the quadratic
graded algebra $\gr^FA$ is Koszul), the twisting cochain
$\tau\:C^\cu\rarrow A$ is acyclic.
 As in Section~\ref{bar-cobar-and-acyclic-twisting-cochains-subsecn},
this claim is a corollary of the proof of Poincar\'e--Birkhoff--Witt
theorem for nonhomogeneous Koszul algebras
in~\cite[Section~3.2\+-3.3]{Pcurv} or~\cite[Proposition~7.2(ii)
in Chapter~5]{PP}.

 A generalization of these constructions and the acyclicity claim
above to filtered \emph{DG\+algebras} $(A^\bu,F)$ can be found
in~\cite[Section~6.6]{Pkoszul}.
\end{ex}

\subsection{Derived Koszul duality on the comodule side}
\label{koszul-duality-comodule-side-subsecn}
 Let $B^\cu$ be a CDG\+algebra and $C^\cu$ be a CDG\+coalgebra
over~$k$, and let $\tau\:C^\cu\rarrow B^\cu$ be a twisting cochain.

 Given a left CDG\+comodule $N^\cu$ over $C^\cu$, we consider
the tensor product of graded vector spaces $B\ot_kN$ and endow
it with a differential~$d$ twisted with the twisting cochain~$\tau$
using the same formulas as in
Sections~\ref{twisted-differential-on-tensor-product-subsecn}\+-%
\ref{augmented-duality-comodule-side-subsecn}.
 Then $B^\cu\ot^\tau N^\cu=(B\ot_kN,\>d)$ is a left CDG\+module
over~$B^\cu$.

 Similarly, given a left CDG\+module $M^\cu$ over $B^\cu$, we consider
the tensor product of graded vector spaces $C\ot_kM$ and endow
it with a differential~$d$ twisted with the twisting cochain~$\tau$
using the same formulas as in
Sections~\ref{twisted-differential-on-tensor-product-subsecn}\+-%
\ref{augmented-duality-comodule-side-subsecn}.
 Then $C^\cu\ot^\tau M^\cu=(C\ot_kM,\>d)$ is a left CDG\+comodule
over~$C^\cu$.

 For example, let $A$ be a nonzero associative algebra and
$C^\cu=\Br^\cu_v(A)$ be its bar-construction, as in
Section~\ref{nonaugmented-bar-for-algebras-and-modules-subsecn}.
 Let $M$ be a left $A$\+module.
 Then the CDG\+module $\Br^\cu_v(A,M)$ over $\Br^\cu_v(A)$ constructed
in Section~\ref{nonaugmented-bar-for-algebras-and-modules-subsecn}
can be recovered as $\Br^\cu_v(A,M)=\Br^\cu_v(A)\ot^\tau M$, where
$\tau\:C^\cu\rarrow A$ is the twisting cochain from
Example~\ref{nonaugmented-bar-twisting-cochains-examples}(1).

 Recall the notation $B^\cu\modl$ for the DG\+category of left
CDG\+modules over $B^\cu$ and $C^\cu\comodl$ for the DG\+category
of left CDG\+comodules over $C^\cu$ (see
Sections~\ref{cdg-rings-subsecn}\+-\ref{cdg-coalgebras-subsecn}).
 Similarly to Section~\ref{augmented-duality-comodule-side-subsecn},
one observes that the DG\+functor
$$
 B^\cu\ot^\tau{-}\,:C^\cu\comodl\lrarrow B^\cu\modl
$$
is left adjoint to the DG\+functor
$$
 C^\cu\ot^\tau{-}\,:B^\cu\modl\lrarrow C^\cu\comodl.
$$

\begin{thm} \label{nonaugmented-acyclic-twisting-cochain-duality-thm}
 Let $A^\bu$ be a nonzero DG\+algebra and $(C^\cu,\gamma)$ be
a conilpotent CDG\+coalgebra over~$k$ (as defined in
Section~\ref{duality-DG-algebras-curved-DG-coalgebras-subsecn}).
 Let $\tau\:C^\cu\rarrow A^\bu$ be an acyclic twisting cochain
(as defined in Section~\ref{twisting-cochains-curved-subsecn});
this includes the condition that $\tau\circ\gamma=0$.
 Then the adjoint functors $M^\bu\longmapsto C^\cu\ot^\tau M^\bu$
and $N^\cu\longmapsto A^\bu\ot^\tau N^\cu$ induce a triangulated
equivalence between the conventional \emph{derived} category of
left DG\+modules over $A^\bu$ and the \emph{coderived} category of
left CDG\+comodules over~$C^\cu$,
$$
 \sD(A^\bu\modl)\simeq\sD^\co(C^\cu\comodl).
$$
\end{thm}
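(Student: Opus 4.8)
The plan is to show that the adjoint DG\+functors $G=A^\bu\ot^\tau{-}\:C^\cu\comodl\rarrow A^\bu\modl$ and $F=C^\cu\ot^\tau{-}\:A^\bu\modl\rarrow C^\cu\comodl$ (with $G$ left adjoint to $F$, as noted above) descend to an adjoint pair of triangulated functors between $\sD(A^\bu\modl)$ and $\sD^\co(C^\cu\comodl)$, and then to verify that the unit and counit of the descended adjunction are isomorphisms. For the descent, I would first check that $G$ takes coacyclic CDG\+comodules to acyclic DG\+modules: on underlying graded spaces $G(N)=A\ot_kN$, and since $A\ot_k{-}$ is exact over the field~$k$ and commutes with the infinite direct sums used to totalize short exact sequences of comodules, it carries the generators of the class of coacyclic comodules to contractible DG\+modules. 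The reverse descent, that $F$ takes acyclic DG\+modules to coacyclic CDG\+comodules, is more delicate and is best organized through the identification of $\sD^\co(C^\cu\comodl)$ with the homotopy category of CDG\+comodules whose underlying graded comodule is injective; here one exploits that $C$ is the union of its finite\+dimensional subcoalgebras and carries the coradical filtration, reducing the coacyclicity assertion to the underlying uncurved, finite\+dimensional level where it becomes an ordinary acyclicity statement.

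Once $F$ and $G$ descend to $\overline F$ and $\overline G$, I would reduce the verification of the adjunction (co)unit to generators. Both functors preserve arbitrary coproducts, because tensoring with $A$ or $C$ over~$k$ and direct\+sum totalization commute with direct sums; consequently the classes of objects on which the counit $\overline G\,\overline F\rarrow\id$ (respectively, the unit $\id\rarrow\overline F\,\overline G$) is an isomorphism are triangulated subcategories closed under coproducts. It therefore suffices to treat the generators: the free module $A^\bu$, which generates $\sD(A^\bu\modl)$ as a localizing subcategory, and the cofree comodules $C^\cu\ot_kV$, which generate $\sD^\co(C^\cu\comodl)$.

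On these generators the two composites become the two\+sided bar and cobar complexes. Evaluating at $A^\bu$ gives $\overline G\,\overline F(A^\bu)=A^\bu\ot^\tau C^\cu\ot^\tau A^\bu$ with the counit realized by the multiplication map to $A^\bu$, while evaluating at $C^\cu\ot_kV$ gives $\overline F\,\overline G(C^\cu\ot_kV)=C^\cu\ot^\tau A^\bu\ot^\tau C^\cu\ot_kV$ with the unit induced by the comultiplication of $C^\cu$. Thus the counit is an isomorphism precisely when the two\+sided bar complex $A^\bu\ot^\tau C^\cu\ot^\tau A^\bu\rarrow A^\bu$ is a quasi\+isomorphism of DG\+$A^\bu$\+bimodules, and (the factor~$V$ being carried along) the unit is an isomorphism precisely when the coaugmentation $C^\cu\rarrow C^\cu\ot^\tau A^\bu\ot^\tau C^\cu$ is an isomorphism in $\sD^\co(C^\cu\comodl)$. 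For the universal twisting cochain of Example~\ref{nonaugmented-bar-twisting-cochains-examples}, where $C^\cu=\Br^\cu_v(A^\bu)$, both statements hold by the standard extra\+degeneracy contracting homotopy on the (co)bar resolution. The general acyclic case is then deduced by comparison along the filtered quasi\+isomorphism $C^\cu\rarrow\Br^\cu_v(A^\bu)$ that the acyclicity of~$\tau$ provides through Theorem~\ref{nonaugmented-algebras-curved-coalgebras-duality}, using that the twisted tensor products transform a filtered quasi\+isomorphism of conilpotent CDG\+coalgebras into a quasi\+isomorphism.

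The main obstacle I expect is the reverse descent together with the propagation of the unit isomorphism: both require genuine control of coacyclicity, which is defined through totalizations of arbitrary (possibly infinite) exact complexes of comodules rather than through cones and coproducts alone. This is where the conilpotency of $C^\cu$ and its coradical filtration are essential, allowing one to pass to the associated graded, uncurved DG\+coalgebra and thereby trade the curved coacyclicity for ordinary acyclicity. A secondary technical point, needed for the reduction to the universal bar case, is the exactness\+type lemma that tensoring a filtered quasi\+isomorphism of CDG\+coalgebras with a DG\+module preserves quasi\+isomorphism; handling the two filtrations compatibly on both tensor factors is the delicate part of that lemma.
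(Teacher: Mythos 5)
Your overall architecture (descend both adjoint functors, then check that the adjunction unit and counit become isomorphisms) matches the paper's, and several ingredients are correct: the observation that $A^\bu\ot^\tau{-}$ kills coacyclic CDG\+comodules because short exact sequences of comodules split over~$k$ is exactly Remark~\ref{motivation-coacyclics-to-contractibles-remark}; using the canonical (coradical) filtration of the conilpotent $C^\cu$ to show that $C^\cu\ot^\tau{-}$ kills acyclic DG\+modules is Remark~\ref{convergent-spectral-sequence-remark} combined with Lemma~\ref{filtrations-co-contra-acyclicity-lemma}(b); and reducing the counit to the single compact generator $A^\bu$ of $\sD(A^\bu\modl)$ is legitimate, since both composites preserve coproducts.

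The genuine gap is on the comodule side. You verify the unit $N^\cu\rarrow C^\cu\ot^\tau A^\bu\ot^\tau N^\cu$ only on the ``cofree comodules $C^\cu\ot_kV$,'' claiming these generate $\sD^\co(C^\cu\comodl)$. But when the curvature~$h$ of $C^\cu$ is nonzero, the graded comodule $C\ot_kV$ carries \emph{no} natural CDG\+comodule structure: the square of the differential induced from $d_C$ sends $c\ot v$ to $(h*c-c*h)\ot v$, whereas a left CDG\+comodule requires $d^2=h*{-}$. The paper emphasizes this immediately after the statement of the theorem (``there is no natural structure of a left CDG\+comodule over $C^\cu$ on the cofree graded left $C$\+comodule~$C$''), so your proposed generating set is empty in precisely the curved situations the theorem is meant to cover. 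Even in the uncurved case, the claim that cofree DG\+comodules generate $\sD^\co(C^\bu\comodl)$ as a localizing subcategory is not free: proving it amounts to connecting an arbitrary DG\+comodule to the totalization of its cobar-type cofree coresolution through coacyclic objects, which requires exactly the telescope/filtration machinery you are trying to bypass. The paper's route is to verify the unit for an \emph{arbitrary} CDG\+comodule $N^\cu$ directly: the cone of the adjunction morphism carries an increasing filtration by the number of cobar tensor factors with $d(F_i)\subset F_{i+1}$, whose associated complex of graded comodules is exact precisely because $\tau$~is acyclic; Lemma~\ref{freely-generated-cdg-module-lemma} and Lemma~\ref{acyclic-filtration-lemma}(a) then give coacyclicity of the cone. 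A secondary caution: deducing the general acyclic case from the bar case ``through Theorem~\ref{nonaugmented-algebras-curved-coalgebras-duality}'' risks circularity, since in~\cite{Pkoszul} the algebra--coalgebra duality is itself obtained from the module--comodule duality theorems rather than the other way around.
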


\begin{proof}
 This is~\cite[Theorem~6.5(a)]{Pkoszul}.
 The particular case corresponding to the twisting cochain from
Examples~\ref{nonaugmented-bar-twisting-cochains-examples} can be
found in~\cite[Theorem~6.3(a)]{Pkoszul}, while
the case of the twisting cochain from
Example~\ref{curved-cobar-construction-twisting-cochain-example}
(for a conilpotent CDG\+coalgebra~$C^\cu$)
is considered in~\cite[Theorem~6.4(a)]{Pkoszul}.
 For the definition of the coderived category,
see Section~\ref{coderived-cdg-comodules-subsecn} below.
\end{proof}

 The triangulated equivalence of
Theorem~\ref{nonaugmented-acyclic-twisting-cochain-duality-thm} takes
the free left DG\+module $A^\bu$ over $A^\bu$ to the left
CDG\+comodule~$k$ over $C^\cu$ (with the $C$\+comodule structure
on~$k$ defined in terms of the coaugmentation~$\gamma$).
 Notice that, \emph{unlike} in
Theorems~\ref{augmented-acyclic-twisting-cochain-duality-thm}\+-%
\ref{augmented-nonconilpotent-duality-thm}, there is no natural
structure of a DG\+module over $A^\bu$ on the one-dimensional
vector space~$k$, as the DG\+algebra $A^\bu$ is not augmented.
 This fact is not unrelated to the fact that there is no natural
structure of a left CDG\+comodule over $C^\cu$ on the cofree graded
left $C$\+comodule $C$ (for the reason explained at the end of
Section~\ref{cdg-coalgebras-subsecn}).

\begin{thm} \label{nonaugmented-nonconilpotent-duality-thm}
 Let $C^\cu$ be a nonzero CDG\+coalgebra, and let $\tau\:C^\cu\rarrow
\Cb^\cu_w(C^\cu)=B^\cu$ be the twisting cochain from
Example~\ref{curved-noncoaugmented-cobar-twisting-cochain-example}.
 Then the adjoint functors $M^\cu\longmapsto C^\cu\ot^\tau M^\cu$
and $N^\cu\longmapsto B^\cu\ot^\tau N^\cu$ induce a triangulated
equivalence between the \emph{absolute derived} category of left
CDG\+modules over $B^\cu$ and the \emph{coderived} category of
left CDG\+comodules over~$C^\cu$,
$$
 \sD^\abs(B^\cu\modl)\simeq\sD^\co(C^\cu\comodl).
$$
\end{thm}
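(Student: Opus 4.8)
The plan is to exhibit the two functors as an adjoint pair descending to the quotient categories, and then to promote this adjunction to an equivalence by checking the unit and counit. By the discussion preceding the theorem, $\Psi=B^\cu\ot^\tau{-}\colon C^\cu\comodl\rarrow B^\cu\modl$ is a DG\+functor left adjoint to the DG\+functor $\Phi=C^\cu\ot^\tau{-}\colon B^\cu\modl\rarrow C^\cu\comodl$; both preserve contractible objects, so they induce an adjoint pair on the homotopy categories $\Hot(C^\cu\comodl)$ and $\Hot(B^\cu\modl)$. First I would verify that this pair passes to $\sD^\co$ and $\sD^\abs$. Since $C\ot_k{-}$ is exact and commutes with arbitrary direct sums, $\Phi$ carries totalizations of short exact sequences of CDG\+modules to such totalizations of CDG\+comodules and preserves coproducts; hence it sends absolutely acyclic $B^\cu$\+modules to coacyclic $C^\cu$\+comodules, and descends to $\Phi\colon\sD^\abs(B^\cu\modl)\rarrow\sD^\co(C^\cu\comodl)$. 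Dually $B\ot_k{-}$ is exact and preserves coproducts, so $\Psi$ sends coacyclic comodules to coacyclic $B^\cu$\+modules; here I would use that the underlying graded algebra of $B^\cu=\Cb^\cu_w(C^\cu)$ is the free algebra $T(C_+[-1])$, for which the coderived and absolute derived categories of CDG\+modules coincide (as in the proof of Theorem~\ref{complexes-of-comodules-coaugmented-cobar-thm}, via \cite[Theorem~3.6(a)]{Pkoszul}). Thus $\Psi$ sends coacyclic comodules to absolutely acyclic $B^\cu$\+modules and descends to $\Psi\colon\sD^\co(C^\cu\comodl)\rarrow\sD^\abs(B^\cu\modl)$.

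It remains to prove that the counit $\Psi\Phi\rarrow\mathrm{Id}$ and the unit $\mathrm{Id}\rarrow\Phi\Psi$ are isomorphisms in the quotient categories, and for this I would reduce to generators: the cofree CDG\+comodules $C^\cu\ot_k V$ generate $\sD^\co(C^\cu\comodl)$ and the free CDG\+modules $B^\cu\ot_k U$ generate $\sD^\abs(B^\cu\modl)$, and both functors commute with the relevant coproducts. The favourable direction is the counit. Its verification reduces to the assertion that the augmentation $B^\cu\ot^\tau C^\cu\rarrow B^\cu$ induced by the counit $\epsilon\colon C\rarrow k$ is a homotopy equivalence of left CDG $B^\cu$\+modules, naturally in the right $C^\cu$\+comodule structure, so that applying ${-}\ot^\tau M^\cu$ gives $\Psi\Phi(M^\cu)\simeq M^\cu$. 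This homotopy equivalence exists on the nose because $B=T(C_+[-1])$ is a tensor algebra: each element of $B\ot_k C$ lies in only finitely many tensor degrees, so the evident homotopy that inserts a tensor factor is a locally finite sum and contracts $B^\cu\ot^\tau C^\cu$ onto $B^\cu$. In particular $\Psi$ takes the generators $C^\cu\ot_k V$ to the generators $B^\cu\ot_k V$ by genuine homotopy equivalences.

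The hard part will be the unit $\mathrm{Id}\rarrow\Phi\Psi$ on the comodule side, that is, showing that the map $N^\cu\rarrow C^\cu\ot^\tau B^\cu\ot^\tau N^\cu$ has a coacyclic cone with no conilpotency assumption on $C^\cu$. The dual contracting homotopy on the bar\+type complex $C^\cu\ot^\tau B^\cu$ does \emph{not} converge, which is exactly the tensor\+algebra/tensor\+coalgebra asymmetry recorded in Remark~\ref{inverting-the-arrows-remark}: coproducts, not products, are preserved by the tensor product. The plan is to filter $C^\cu\ot^\tau B^\cu$ by the number of tensor factors of $B=T(C_+[-1])$, so that the associated graded object is a direct sum of Koszul\+type pieces whose differential is computed explicitly, and then to present the cone of the unit (on a cofree comodule) as built from totalizations of short exact sequences of CDG\+comodules assembled through \emph{infinite} direct sums. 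Establishing that this cone is coacyclic --- so that the infinite tail permitted by the coderived category is precisely what repairs the divergence of the naive homotopy --- is the crux, and is the step for which the coderived category, rather than the conventional derived category, is indispensable. Once the unit and counit are isomorphisms on generators, the descended functors $\Psi$ and $\Phi$ are mutually inverse triangulated equivalences $\sD^\co(C^\cu\comodl)\simeq\sD^\abs(B^\cu\modl)$; this recovers \cite[Theorem~6.7(a)]{Pkoszul}.
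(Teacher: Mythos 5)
Your skeleton is the right one, and it is essentially the strategy of \cite[Theorem~6.7(a)]{Pkoszul}, which is all that the present survey cites for this statement: descend the adjoint pair using exactness of $C\ot_k{-}$ and $B\ot_k{-}$ together with preservation of totalizations and coproducts, observe that coacyclic, contraacyclic and absolutely acyclic CDG\+modules over $B^\cu$ coincide because the free graded algebra $T(C_+[-1])$ has global dimension one (Theorem~\ref{fin-homol-dim-derived-cdg-modules}(a)), treat the counit by an explicit contraction exploiting the direct-sum decomposition of the tensor algebra, and treat the unit by the filtration by the number of tensor factors of~$B$. For the unit, the tool you are missing is exactly Lemma~\ref{acyclic-filtration-lemma}(a): your filtration satisfies $d(F_i)\subset F_{i+1}$ rather than $d(F_i)\subset F_i$, its associated complex of graded $C$\+comodules is the cobar resolution of $N$, which is contracted using only the counit of $C$ (so no conilpotency enters), and the lemma then yields coacyclicity of the cone. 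The plan for the crux is therefore sound, but, as you say yourself, it is not carried out.

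Two steps, however, would fail as written. First, the reduction to generators: when $h_C\ne0$ the cofree graded comodule $C\ot_kV$ admits no natural CDG\+comodule structure (its differential squares to $h*{-}-{-}*h$ instead of $h*{-}$; see the end of Section~\ref{cdg-coalgebras-subsecn} and the remark following Theorem~\ref{nonaugmented-acyclic-twisting-cochain-duality-thm}), and likewise $B\ot_kU$ is not a CDG\+module over $B^\cu$ when $h_B\ne0$; the objects on which you propose to test the unit and counit do not exist, and in any case the compact generators of $\sD^\co(C^\cu\comodl)$ are the finite-dimensional CDG\+comodules, not the cofree ones. The detour is also unnecessary, since both the contraction and the filtration argument apply to arbitrary $M^\cu$ and $N^\cu$. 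Second, your counit homotopy is not a homotopy in the DG\+category $B^\cu\modl$: moving a tensor factor between the $B$\+slot and the $C$\+slot is not left $B$\+linear, and a non-$B$\+linear degree~$-1$ map is not even meaningful as a contraction when $h_B\ne0$, because the underlying precomplex of a CDG\+module has $d^2=h_B\cdot{-}\ne0$. (For the same reason the map $B^\cu\ot^\tau C^\cu\rarrow B^\cu$ is not a closed morphism: $B^\cu$ is only a CDG\+bimodule over itself.) The repair is again in the paper's toolkit: filter the cone of $\Psi\Phi(M^\cu)\rarrow M^\cu$ by graded $B$\+submodules so that the successive quotients are freely generated in the sense of Lemma~\ref{freely-generated-cdg-module-lemma}, hence contractible, and conclude absolute acyclicity by Lemma~\ref{filtrations-co-contra-acyclicity-lemma}.
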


\begin{proof}
 This is~\cite[Theorem~6.7(a)]{Pkoszul}.
 The definitions of the coderived and absolute derived categories will
be explained in Sections~\ref{co-contra-derived-cdg-modules-subsecn}\+-%
\ref{coderived-cdg-comodules-subsecn} below.
 The absolute derived category $\sD^\abs(B^\cu\modl)$ coincides with
the coderived category $\sD^\co(B^\cu\modl)$
by~\cite[Theorem~3.6(a)]{Pkoszul}; see
Theorem~\ref{fin-homol-dim-derived-cdg-modules}(a) below.
\end{proof}

 For extended versions of
Theorems~\ref{nonaugmented-acyclic-twisting-cochain-duality-thm}
and~\ref{nonaugmented-nonconilpotent-duality-thm}
(``Koszul triality''), see Section~\ref{koszul-triality-subsecn}.
 Some additional comments on the proofs of the theorems will be
offered in Section~\ref{comments-on-proof-subsecn}.

 A version of nonconilpotent Koszul duality
(Theorem~\ref{nonaugmented-nonconilpotent-duality-thm}) with
the roles of the algebras and coalgebras switched (starting from
a DG\+algebra $A^\bu$ and considering the cofree nonconilpotent
coalgebra spanned by~$A_+$, or rather, the vector space dual
topological algebra) was suggested in the paper~\cite{GL}.
 For a version of derived nonhomogeneous Koszul duality for
\emph{DG\+categories}, see~\cite{HL}.

\Section{Derived Categories of the Second Kind}

 ``Derived categories of the second kind'' is a common name for
the \emph{coderived}, \emph{contraderived}, and \emph{absolute derived}
categories, while the conventional \emph{derived} category is called
the ``derived category of the first kind''.
 The \emph{classical homological algebra} can be described as
the domain where there is \emph{no difference} between derived
categories of the first and the second kind.
 Derived categories of the second kind play a crucial role in all
formulations of derived Koszul duality outside of the contexts
in which assumptions of boundedness of the (internal or cohomological)
grading ensure applicability of the classical homological algebra.

\subsection{Importance of derived categories of the second kind}
 The earliest versions of derived Koszul duality, establishing
the very existence of the phenomenon, were formulated for complexes
(DG\+algebras, etc.)\ up to conventional quasi-isomorphism.
 This includes Quillen's equivalence between the categories of
negatively cohomologically graded Lie DG\+algebras and
connected, simply connected, negatively cohomologically graded
cocommutative DG\+coalgebras over a field of characteristic
zero~\cite[Theorem~I]{Quil}.
 In this result, the boundedness conditions on the cohomological
grading allow to avoid the problem demonstrated in
Example~\ref{bar-of-algebras-quasi-isomorphism} above
(cf.\ our Theorems~\ref{augmented-algebras-coalgebras-duality}
and~\ref{nonaugmented-algebras-curved-coalgebras-duality}, where
the cohomological gradings are completely unbounded, but
the notion of a \emph{filtered quasi-isomorphism} is used).
 See, e.~g., \cite[Section~3]{Pqf} for noncommutative versions of
Quillen's theory.

 The most important area of derived Koszul duality results for
which the conventional quasi-isomorphisms are sufficient is
the \emph{homogeneous} Koszul duality.
 This means the study of complexes of modules, DG\+modules etc.\
endowed with an additional \emph{internal} grading which is
assumed to be positive or negative on the rings and bounded below
or bounded above on the modules.
 In some classical contexts, the modules are assumed to be finitely
generated, which serves to ensure that their internal gradings
are bounded.
 This theory goes back to the paper~\cite{BGG} (see
also~\cite[Appendix~A]{OSS}), where the \emph{derived
$S$\+$\Lambda$ duality} (the equivalence between the bounded
derived categories of finitely generated graded modules over
the symmetric algebra of a finite-dimensional vector space and
the exterior algebra of the dual vector space) was introduced.
 A standard reasonably modern source on derived homogeneous
Koszul duality is~\cite[Section~2.12]{BGS}; another exposition
can be found in~\cite[Appendix~A]{Pkoszul}.

 To give a simple explanation of the role of positive or bounded
internal grading, notice that $\Tor^A_0(k,M)\ne0$ for any bounded
below graded module $M\ne0$ over a positively graded algebra
$A=k\oplus A_1\oplus A_2\oplus\dotsb$.
 This serves to exclude counterexamples such as in
Examples~\ref{bar-of-modules-acyclic}.
 Say, one can take $A=k[x]$ to be the polynomial (symmetric) algebra in
one variable~$x$, internally graded so that the generator~$x$ sits in
the internal grading~$1$, and endowed with the augmentation
$\alpha\:A\rarrow k$ induced by the internal grading (so $\alpha(x)=0$).
 Then all the counterexamples of modules $M$ in
Example~\ref{bar-of-modules-acyclic}(2) are either ungraded, or
their grading is unbounded.

 When the conventional quasi-isomorphism is \emph{not} suitable as
the equivalence relation on one of the sides (which is usually, but
not always, the coalgebra or comodule/contramodule side) of Koszul
duality, another equivalence relation on the respective side has to be
defined instead.
 When the equivalence relation on the other (usually, the algebra or
module side) of the duality is still the conventional quasi-isomorphism,
a possibility of quick hack presents itself: call a morphism on
the coalgebra side a ``weak equivalence'' if the Koszul duality functor
transforms it into a quasi-isomorphism on the algebra side.
 In this approach, the Koszul duality becomes an equivalence of
two categories one of which is constructed partially in terms of
the other one.
 Such formulations of Koszul duality can be found in
the paper~\cite[Theorem~3.2]{Hin2},
the dissertation~\cite[Th\'eor\`eme~2.2.2.2]{Lef}, and
the note~\cite[Section~4]{Kel}, as well as in
the preprint~\cite[Section~7.2]{BD2}.

 The latter reference concerns the \emph{$D$\+$\Omega$ duality},
that is, Koszul duality between the rings of differential operators
and the de~Rham (C)DG\+rings of differential forms (which is 
a thematic example of \emph{relative} nonhomogeneous Koszul
duality~\cite{Prel}.)
 For another early approach to the $D$\+$\Omega$ duality,
see~\cite{Kap}.
 Such approaches were tried in the absence of understanding of
the existence of and the role of the constructions of
the \emph{derived categories of the second kind}.

\subsection{History of derived categories of the second kind, I}
\label{second-kind-reminiscences-subsecn}
 Let me start with some personal reminiscences.
 A shorter historical account of the same events can be found
in~\cite[Remark~9.2]{PS4}.

 The problem of constructing derived nonhomogeneous Koszul duality
(for nonhomogeneous Koszul algebras, as in 
Section~\ref{nonhomogeneos-quadratic-dual-to-augmented-subsecn})
was suggested to me by Misha Finkelberg sometime around~1991--92,
in a handwritten letter sent to Moscow (where I~lived) from
Massachusetts (where he was doing his Ph.~D. studies at Harvard).
 Alongside with A.~Vaintrob, Misha was my main informal de~facto
teacher and advisor in the second half of '80s and early '90s.

 Misha wrote that the problem of derived nonhomogeneous Koszul duality
(i.~e., constructing a derived equivalence between complexes of
modules over a nonhomogeneous Koszul algebra and DG\+modules over
the nonhomogeneous quadratic dual DG\+algebra) would be a natural
extension of my work on nonhomogeneous quadratic
duality~\cite{Pcurv,PP}.
 (The paper~\cite{Pcurv} was only submitted in November~1992, but very
early short versions of what became the book~\cite{PP} circulated
since 1991, if not 1990.)
 The case of a finite-dimensional Lie algebra and its
Chevalley--Eilenberg DG\+algebra (as in
Example~\ref{chevalley-eilenberg}) was perceived as a thematic
example, of course; and the idea to proceed further to
$D$\+$\Omega$ duality was suggested in Misha's letter.

 Soon, that is also around~1992, I~made an intensive attempt to attack
and conquer the problem, which resulted in a tentative understanding
that the derived category $\sD(\g\modl)$ was equivalent to the homotopy
category of DG\+modules over $\Lambda^\bu(\g^*)$ with injective (or
equivalently, projective) underlying graded $\Lambda(\g^*)$\+modules.
 From the contemporary point of view (say, after Becker's
paper~\cite{Bec}), that was about the best answer one could hope for.
 Back in the '90s, I~did not see it that way.

 Following my thinking of the early '90s, derived nonhomogeneous Koszul
duality was supposed to be an equivalence between \emph{Verdier
quotient categories} of the homotopy categories of DG\+modules, not
their \emph{subcategories}.
 I~wanted a \emph{derived} Koszul duality, not a \emph{homotopy} one!
(in the sense of the homotopy categories of complexes or DG\+modules,
that is, closed morphisms up to cochain homotopy).
 Specifically, whatever the construction of a category of DG\+modules
over $\Lambda^\bu(\g^*)$ equivalent to $\sD(\g\modl$) would be,
there was supposed to be a way to assign an object of this category
to an arbitrary DG\+module over $\Lambda^\bu(\g^*)$.
 I~did not see a natural way to assign a graded-injective DG\+module
to an arbitrary one.
 Once again, Becker's~\cite[Proposition~1.3.6]{Bec} did not exist
back then.

 By late 90's, I~perceived derived nonhomogeneous Koszul duality as
an important unsolved, perhaps unsolvable, problem obstructing
development of the theory of DG\+modules generally.
 In the meantime, I~moved from Moscow to the U.S., first as a visitor,
then as a graduate student, and finally as a postdoc.
 My access to the literature improved, and I~was able to observe that
the theory was developing nevertheless.
 On top of Spaltenstein's paper~\cite{Spal}, which was known to me
back in early '90s, there appeared an important paper of
Keller~\cite{Kel0}.
 I~had also noticed several papers by Neeman (e.~g.,~\cite{BN,Neem1}),
from which I~learned the concepts of homotopy (co)limits and
the telescope construction in triangulated categories, as well
as compactly generated triangulated categories and other things.

 In the 1998--99 academic year I~was an NSF postdoc at the Institute
for Advanced Study.
 My postdoc advisors there were P.~Deligne and V.~Voevodsky.
 In March~99 my term as a postdoc was coming to an end, and Voevodsky
suggested that we should meet for a discussion.
 At the meeting, Voevodsky returned to the topic of DG\+algebra and
DG\+module theory (which he occasionally mentioned to me during our
conversations over the years, starting from~1994).
 His idea was to develop a DG version of Tannaka theory for
conservative functors from an (enhanced) triangulated category to
the (derived) category of vector spaces, with applications to his
derived categories of motives and motivic realization functors
in mind.

 Discussing the problems involved, we arrived to the topic of
a Koszul duality between ungraded modules over the symmetric and
exterior algebras; and I~mentioned that it was an unsolved problem,
as the na\"\i ve approach with conventional derived categories
on both sides of the would-be equivalence did not work.
 Upon hearing my arguments, Voevodsky immediately replied that this
problem can be solved, and must be solved.
 I~went home and solved it.
 It was immediately obvious to me that an important discovery was made.
 By mid-April~99, I~had both the definition of what is now called
the coderived category
(Definitions~\ref{coderived-cdg-modules-definition}
and~\ref{coderived-cdg-comodules-definition} below), and the proofs
of derived nonhomogeneous Koszul duality results such as
Theorems~\ref{complexes-of-modules-augmented-bar-construction-thm},
\ref{complexes-of-modules-quadratic-linear-thm},
\ref{complexes-of-comodules-coaugmented-cobar-thm},
and~\ref{complexes-of-comodules-conilpotent-cobar-thm}.

 Having made my discovery, I~went to the IAS library to look for
relevant prior literature.
 The most important thing I~found was the remarkable series of
papers by Eilenberg and Moore~\cite{EM0,EM1,EM2}, with the final
piece of the series authored by Husemoller, Moore, and
Stasheff~\cite{HMS}.
 In particular, the 1962 paper~\cite{EM0} clarified the issue of
``divergent spectral sequences'', which was how the difficulties
in Koszul duality were spoken of in my Moscow circles in
the early~'90s (see Remarks~\ref{convergent-spectral-sequence-remark}
and~\ref{divergent-spectral-sequence-remark} below for
a discussion).

 The 1974 paper~\cite{HMS} introduced the distinction between what
it called the \emph{differential derived functors of the first} and
\emph{the second kind}.
 I~immediately realized that it was relevant to my exotic derived
categories of DG\+modules and DG\+comodules; so I~seized on this
terminology and started to call my categories ``derived categories
of the second kind'' (as opposed to the conventional derived category,
which was the ``derived category of the first kind'').

 Only after my definitions of derived categories of the second kind
were already invented, I~discovered Hinich's papers~\cite{Hin1,Hin2}.
 My recollection is that I~may have also reinvented Hinich's
definition of a filtered quasi-isomorphism of conilpotent
DG\+coalgebras before reading it in~\cite{Hin2}.

 That \emph{direct sums of injective modules} and \emph{direct products
of projective modules} play an important role in my constructions of
derived categories of the second kind for DG\+modules, I~also
well realized in Spring~1999.
 In this connection, I~found references such as the papers~\cite{Cha}
and~\cite{Bas} (explaining when the products of projective modules are
projective), which were eventually cited in the memoir~\cite{Pkoszul}.
 But the more advanced conditions ($*$) and~(${*}{*}$)
from~\cite[Sections~3.7 and~3.8]{Pkoszul}
(see Theorem~\ref{co-contra-derived-cdg-modules} below)
were only invented in~2009.

 In fact, the condition that the direct sums of injective modules are
injective characterized Noetherian rings, which in my thinking of
the time was general enough for an interesting theory (cf.~\cite{Kra}).
 But the rings over which products of projective modules are projective
turned out to be more rare (as per~\cite{Bas,Cha}), essentially only
Artinian rings and their immediate generalizations.
 This created a misconception that the contraderived categories were
rarely well-behaved outside of the realms of contramodules over
coalgebras over fields and modules over Artinian rings, which haunted
me for the years to come (until Spring~2012,
when~\cite[Appendix~B]{Pweak} was largely written and the notion of
\emph{contraherent cosheaves} was discovered).

 1998--99 was a year of Geometric Representation Theory at the IAS.
 My postdoc was not a part of that Special Program, but many
mathematicians I~knew, including Finkelberg and other people from
Moscow, were Members of the IAS for that academic year as a part of
that program.
 In the Spring~99 we had an informal seminar where I~presented my new
results on Koszul duality in the form of a series of talks.
 Subsequently I~spoke about these results at various seminars
(including H.-J.~Baues' Oberseminar Topologie at MPIM-Bonn
in July~2001).

 The first preprint versions of what became my memoir~\cite{Pkoszul}
only appeared in Spring~2009.
 There were several reasons for such a protracted delay, the most
important of them being that the task of typing an original research
exposition of the size and complexity of~\cite{Pkoszul} was way
above my abilities in the first half of~'00s.
 My early research work consisted of very short papers; writing
longer texts required a different technique.
 An acute problem of insufficient mathematical writing skills, which
tormented me since Fall~1995, was only resolved by about Winter~2006/07.
 But there was also a terminological problem.

 Naming things is important.
 The expression ``derived category of the second kind'' was both too
long and too imprecise, as there were at least two important
``derived categories of the second kind'' known to me in Spring~1999
already.
 Now they are called the \emph{coderived} and \emph{contraderived}
categories.
 I~was unable to invent such a nomenclature myself.
 What I~had in Spring~99 was not a terminology for the distinction
between the two dual concepts of a coderived and a contraderived
category, but only a \emph{notation}, and not a very good one at that.
 The coderived category was denoted by $\sD'$, and the contraderived
category by~$\sD''$ (one can still see traces of this notation in
the old letters~\cite{Plet} and the recent paper~\cite{Pps}).
 A work such as~\cite{Pkoszul} could not be written in a readable form
with such clumsy terminological tools.

 The terminology ``coderived category'' first appeared in
Keller's note~\cite{Kel}.
 One of the mathematicians in Moscow directed my attention to this
note sometime in mid-'00s.
 Upon seeing that the terminological problem had been solved, I~knew
that the time had come for me to write up my results.
 I~started with a treatise on semi-infinite homological
algebra~\cite{Psemi}, which was much heavier with lots of details that
I~could not keep all in my mind, but could only work out in writing;
so I~had more motivation to type that work.
 A very elaborate terminological system featuring the prefixes ``co-''
and ``contra-'' (and also ``semi-'') was developed for the purposes of
the book~\cite{Psemi} based on the idea I~borrowed from Keller's note.

 I~typed the bulk of the material in~\cite{Pkoszul} while being
a visitor at the IHES in March--April~2009.
 I~contacted Bernhard Keller by e-mail from Bures and went to visit
him in his office in Paris, and subsequently gave a talk at
the Algebra Seminar in the Institut Henri Poincar\'e.
 The title of the talk was ``Koszul Triality''.
 This talk is mentioned in the acknowledgement to~\cite{KLN}.

\subsection{Philosophy of derived categories of the second kind, I}
\label{philosophy-of-second-kind-I-subsecn}
 A complex (e.~g., a complex of modules or a DG\+module) $K^\bu$
can be thought of in two ways.
 One can view $K^\bu$ as a deformation of its \emph{cohomology module}
$H^*(K^\bu)$, or as a deformation of its \emph{underlying graded
module} $K$ (say, endowed with the zero differential).
 In the notation of~\cite{Pkoszul} (going back
to~\cite[Remark~I.5.1]{HMS}), the important operation of forgetting
the differential is denoted by the upper index~$\#$; so one can
write $K=K^{\bu}{}^\#$.

 These two ways of looking at a complex are reflected in the classical
concept of \emph{two spectral sequences of
hypercohomology}~\cite[Section~XVII.3]{CaE}.
 The old word ``hypercohomology'' means the cohomology groups one
obtains by applying a derived functor to a complex (rather than just
to a module).
 Let $F$ be a left exact functor and $K^\bu$ be a complex of modules,
bounded below, to which we apply the right derived functor $\boR^*F$.
 Then there are two spectral sequences
\begin{align*}
 {}^{I}\!E_2^{p,q}=\boR^pF(H^q(K^\bu))
 &\Longrightarrow\boR^{p+q}F(K^\bu), \\
 {}^{II}\!E_1^{p,q}=\boR^qF(K^p)
 &\Longrightarrow\boR^{p+q}F(K^\bu),
\end{align*}
the former of which starts from the derived functor $\boR^*F$ applied
to the \emph{cohomology} of the complex $K^\bu$, while the latter one
starts from the derived functor $\boR^*F$ applied to the \emph{terms}
of the complex~$K^\bu$.
 So the spectral sequence ${}^{I}\!E$ expresses the point of view on
$K^\bu$ as a ``deformation of its cohomology'', while the spectral
sequence ${}^{II}\!E$ is an expression of the vision of $K^\bu$ as
a ``deformation of its terms''.
 The condition that $K^\bu$ is a bounded below complex ensures that
both the spectral sequences converge to the same limit (viz.,
the hypercohomology groups $\boR^*F(K^\bu)$).

\begin{rem} \label{convergent-spectral-sequence-remark}
 Let us illustrate by examples the role of convergent and ``divergent''
spectral sequences in Koszul duality, as mentioned in
Section~\ref{second-kind-reminiscences-subsecn}.
 We start with a convergent example; a ``divergent'' one will be
presented below in Remark~\ref{divergent-spectral-sequence-remark}.

 Let $A^\bu$ be a DG\+algebra, $(C^\bu,\gamma)$ be a conilpotent
DG\+coalgebra, and $\tau\:C^\bu\rarrow A^\bu$ be twisting cochain
satisfying the equation $\tau\circ\gamma=0$ (cf.\
Theorem~\ref{augmented-acyclic-twisting-cochain-duality-thm}).
 Let us show that the functor $M^\bu\longmapsto C^\bu\ot^\tau M^\bu$
takes acyclic DG\+modules over $A^\bu$ to acyclic DG\+comodules
over~$C^\bu$.

 Indeed, let $F$ be the canonical increasing filtration on
the conilpotent DG\+coalgebra $C^\bu$ (as in
Sections~\ref{conilpotent-coalgebras-subsecn}
and~\ref{conilpotent-dg-coalgebras-subsecn}).
 Denote also by $F$ the induced increasing filtration on the tensor
product $C\ot_kM$; so $F_n(C\ot_kM)=(F_nC)\ot_kM$ for all $n\ge0$.
 Then $F$ is a filtration of $C^\bu\ot^\tau M^\bu$ by subcomplexes
(in fact, by DG\+subcomodules).
 Moreover, the differential on the successive quotient complexes
$F_n(C^\bu\ot^\tau M^\bu)/F_{n-1}(C^\bu\ot^\tau M^\bu)$ is simply
the tensor product differential on the tensor product of complexes
$(F_nC^\bu/F_{n-1}C^\bu)\ot_kM^\bu$, essentially due to
the assumption that $\tau\circ\gamma=0$.
 If the complex $M^\bu$ is acyclic, it follows that the complex
$F_n(C^\bu\ot^\tau M^\bu)/F_{n-1}(C^\bu\ot^\tau M^\bu)$ is acyclic
as well for every $n\ge0$.

 Now the increasing filtration $F$ on the complex $C^\bu\ot^\tau M^\bu$
is exhaustive; so the related spectral sequence converges (this is
an easy version of~\cite[Theorem~7.4]{EM0}), and it follows that
the complex $C^\bu\ot^\tau M^\bu$ is acyclic.
 (Dually one can show, using a similar complete decreasing filtration,
that the functor $M^\bu\longmapsto\Hom^\tau(C^\bu,M^\bu)$ from
Sections~\ref{twisted-differential-on-graded-Hom-subsecn}\+-%
\ref{koszul-duality-contramodule-side-subsecn} below takes acyclic
DG\+modules over $A^\bu$ to acyclic DG\+contramodules over~$C^\bu$;
cf.\ Theorem~\ref{contra-side-conilpotent-duality-thm}.)
 This argument can be found in~\cite[proof of Theorem~6.4]{Pkoszul},
where it is used to prove coacyclicity rather than acyclicity.
\end{rem}

 Having dispelled the mystery of ``divergent spectral sequences'',
the authors of~\cite{EM0} embarked upon a program of applying derived
functors (such as Ext and Tor) to DG\+modules, and particularly of
defining the derived functor Cotor (of the functor of cotensor product
of comodules) and applying it to DG\+comodules~\cite{EM2}.
 The program culminated in the paper~\cite{HMS}, where is was pointed
out that the two hypercohomology spectral sequences for DG\+modules
(as well as, generally speaking, for unbounded complexes) converge, in
one sense or another, to \emph{two different limits}.
 These two limits were called the \emph{differential derived functors
of the first} and \emph{second kind} in~\cite{HMS}.
 The only difference between their constructions consisted in using two
different ways of totalize a given bicomplex: taking either direct sums,
or direct products along the diagonals~\cite[Definition~I.4.1]{HMS}.
 In retrospective, one can say that \emph{classical homological algebra}
(as exemplified by~\cite{CaE} but not~\cite{EM0}) consisted in
studying complexes under suitable boundedness or finite homological
dimension assumptions guaranteeing that the two kinds of differential
derived functors agree.

\subsection{Philosophy of derived categories of the second kind, II}
\label{philosophy-of-second-kind-II-subsecn}
 No derived categories were mentioned in the papers~\cite{EM0,EM2,HMS},
but in the subsequent decades people became interested in derived
categories (including derived categories of DG\+modules).
 An important property of a well-behaved derived category is that it
can be defined in two or more equivalent ways: either as the category
of complexes up to an \emph{equivalence relation} (such as
the quasi-isomorphism), or as a category of \emph{adjusted} complexes
(such as complexes of projective or injective objects), which can be
used as \emph{resolutions} for constructing derived functors.
 From the derived category perspective, \emph{classical homological
algebra} is the realm where the equivalence relation on complexes is
the conventional quasi-isomorphism, and the adjustedness conditions
are the usual termwise conditions: the resolutions are the complexes
of injective modules, or the complexes of projective modules, etc.

 In particular, for any abelian category $\sA$ with enough injective
objects, the bounded below derived category $\sD^+(\sA)$ is equivalent
to the homotopy category of bounded below complexes of injective
objects in~$\sA$.
 Dually, for any abelian category $\sB$ with enough projectives objects,
the bounded above derived category $\sD^-(\sB)$ is equivalent to
the homotopy category of bounded above complexes of projective objects
in~$\sB$.
 These are the classical homological algebra settings.

 Here is a thematic counterexample showing that the assertions in
the previous paragraph are \emph{not} true for unbounded complexes.
 Let $\Lambda=k[\varepsilon]/(\varepsilon^2)$ be the exterior algebra
in one variable (or in other words, the ring of dual numbers).
 Consider the unbounded complex of free $\Lambda$\+modules with
one generator
\begin{equation} \label{unbounded-acyclic-standard-example-eqn}
 \dotsb\lrarrow\Lambda\overset\varepsilon\lrarrow\Lambda\overset
 \varepsilon\lrarrow\Lambda\lrarrow\dotsb,
\end{equation}
where all the differentials are the operators of multiplication
with~$\varepsilon$.
 Then~\eqref{unbounded-acyclic-standard-example-eqn} is an unbounded
complex of projective (also, injective) $\Lambda$\+modules which is
acyclic, but not contractible.
 So~\eqref{unbounded-acyclic-standard-example-eqn} is a zero object
in the derived category $\sD(\Lambda\modl)$, but a nonzero object in
the homogopy category of complexes of projective (or injective)
objects in $\Lambda\modl$.
 The point is that~\eqref{unbounded-acyclic-standard-example-eqn} is
a quite \emph{nonzero} object in the coderived category
$\sD^\co(\Lambda\modl)$.

 Denote by $C=\Lambda^*$ the dual vector space to $\Lambda$ with
its natural coalgebra structure.
 Then the triangulated equivalence of
Theorem~\ref{complexes-of-comodules-conilpotent-cobar-thm}
essentially (up to obvious grading effects) assigns the acyclic
complex~\eqref{unbounded-acyclic-standard-example-eqn}, viewed
as a complex of $C$\+comodules (as per the construction in
Section~\ref{cdg-coalgebras-subsecn}), to the $k[x]$\+module
$k[x,x^{-1}]$ from Example~\ref{bar-of-modules-acyclic}(2).

\begin{rem} \label{divergent-spectral-sequence-remark}
 Now we can present an example of ``divergent spectral sequence''
in Koszul duality promised in
Remark~\ref{convergent-spectral-sequence-remark}.
 Let $(C,\gamma)$ be a (say, conilpotent) coaugmented coalgebra
over~$k$.
 Similarly to
Remark~\ref{cobar-not-preserves-quasi-isomorphisms-remarks},
we observe that the functor $N^\bu\longmapsto\Cb^\bu_\gamma(C,N^\bu)$
from Theorem~\ref{complexes-of-comodules-conilpotent-cobar-thm}
\emph{need not} take acyclic complexes of comodules over $C$ to acyclic
DG\+modules over the DG\+algebra $\Cb^\bu_\gamma(C)$.

 Indeed, consider the coalgebra $C=\Lambda^*$ as above, and
the complex of
$C$\+comod\-ules~\eqref{unbounded-acyclic-standard-example-eqn}.
 Then the DG\+algebra $A^\bu=\Cb^\bu_\gamma(C)$ is, in fact,
the polynomial ring $A=k[x]$, endowed with the obvious grading
and the zero differential.
 The functor $M^\bu\longmapsto C\ot^\tau M^\bu$ takes the graded
$A$\+module $k[x,x^{-1}]$ (viewed as a DG\+module with zero
differential) to the acyclic complex of
$C$\+comodules~\eqref{unbounded-acyclic-standard-example-eqn}.

 It is a part of
Theorem~\ref{complexes-of-comodules-conilpotent-cobar-thm} that
the adjunction morphism $A^\bu\ot^\tau C^\bu\ot^\tau M^\bu\rarrow
M^\bu$ is a quasi-isomorphism of DG\+modules for any DG\+module
$M^\bu$ over~$A^\bu$.
 It follows that the functor $\Cb^\bu_\gamma(C,{-})=A^\bu\ot^\tau{-}$
takes the acyclic complex of
$C$\+comodules~\eqref{unbounded-acyclic-standard-example-eqn} to
a nonacyclic DG\+module over $A^\bu$ quasi-isomorphic to $k[x,x^{-1}]$.

 How can it happen?
 Consider the decreasing filtration $F$ on the graded vector
space $\Cb_\gamma(C)=\bigoplus_{n=0}^\infty C^+{}^{\ot n}$ defined
by the obvious rule $F^n\Cb_\gamma(C)=\bigoplus_{i=n}^\infty
C^+{}^{\ot i}$.
 Given a complex of $C$\+comodules $N^\bu$, let $F$ be the induced
filtration on the graded vector space $\Cb_\gamma(C,N)=
\Cb_\gamma(C)\ot_k N$; so $F^n\Cb_\gamma(C,N)=F^n\Cb_\gamma(C)\ot_kN$.
 Then $F^n\Cb^\bu_\gamma(C,N^\bu)$ is a subcomplex in
$\Cb^\bu_\gamma(C,N^\bu)$ for every $n\ge0$.
 Furthermore, the quotient complex $F^n\Cb^\bu_\gamma(C,N^\bu)/
F^{n+1}\Cb^\bu_\gamma(C,N^\bu)$ is the tensor product
$C^+{}^{\ot n}\ot_k N^\bu$ with the differential induced by
the differential on~$N^\bu$.
 So all the quotient complexes $F^n\Cb^\bu_\gamma(C,N^\bu)/
F^{n+1}\Cb^\bu_\gamma(C,N^\bu)$ are acyclic whenever
a complex of $C$\+comodules $N^\bu$ is acyclic.
 Still, the complex $\Cb^\bu_\gamma(C,N^\bu)$ need \emph{not} be
acyclic.
 How can it happen?

 This is what people na\"\i vely call a ``divergent spectral sequence''.
 In fact, the spectral sequence associated with the filtration $F$ on
the cobar-complex $\Cb^\bu_\gamma(C,N^\bu)$ does not diverge at all;
it just does not converge to the cohomology of the complex
$\Cb^\bu_\gamma(C,N^\bu)$.
 It is not supposed to.
 The point is that the decreasing filtration $F$ on the complex
$\Cb^\bu_\gamma(C,N^\bu)$ is \emph{not complete}.
 Simply put, if one wants the spectral sequence of a decreasing
filtration on the totalization of a bicomplex to converge to
the cohomology of the totalization, one needs to totalize the bicomplex
by taking \emph{direct products} along the diagonals and
not the direct sums.
 The spectral sequence associated with the filtration $F$ on
the cobar-complex $\Cb^\bu_\gamma(C,N^\bu)$ converges (at least in
the weak sense of~\cite[Theorem~7.4]{EM0}) to the cohomology of
the \emph{completion} of the complex $\Cb^\bu_\gamma(C,N^\bu)$,
that is, to the cohomology of the \emph{direct product totalization}
of the bicomplex.

 Let us denote by $\ovCb^\bu(C,N^\bu)$ the completed cobar-complex,
i.~e., the direct product totalization of the cobar-bicomplex of $C$
with the coefficients in~$N^\bu$.
 Then, by~\cite[Theorem~7.4]{EM0}, the complex $\ovCb^\bu(C,N^\bu)$ is
indeed acyclic for any acyclic complex of $C$\+comodules $N^\bu$
(cf.\ Remark~\ref{inverting-the-arrows-remark}).
 The completed cobar-construction is an interesting functor on its own;
a discussion of it can be found, e.~g.,
in~\cite[Section~8 of Chapter~5]{PP}.
 But it is \emph{not} the adjoint functor to the functor
$M^\bu\longmapsto C\ot^\tau M^\bu$ in
Theorem~\ref{complexes-of-comodules-conilpotent-cobar-thm}
(cf. Section~\ref{augmented-duality-comodule-side-subsecn}).

 As suggested in the first paragraph of the introduction
to~\cite{Pkoszul} (see~\cite[Section~0.1]{Pkoszul}), \emph{if one's
spectral sequence diverges, one should either replace the complex
with its completion, or choose a different filtration}.
 In the context of Koszul duality, it is better to choose
a different filtration.
 That is what one accomplishes by replacing acyclic complexes or
DG\+comodules with coacyclic ones.
\end{rem}

 The reader can find a further introductory discussion of
the complex~\eqref{unbounded-acyclic-standard-example-eqn} and its place
in Koszul duality in the book~\cite[Prologue]{Prel}, and an introductory
discussion of derived categories of the second kind largely centered
around the example~\eqref{unbounded-acyclic-standard-example-eqn}
in the paper~\cite[Section~5]{PSch}.
 This material goes back to~\cite[Example~3.3]{Pkoszul}, where another
counterexample can be also found: a totally finite-dimensional
(in fact, two-dimensional) DG\+module $M^\bu$ over a (two-dimensional)
DG\+algebra $B^\bu$ such that the underlying graded $B$\+module $M$ is
projective and injective, while the DG\+module $M^\bu$ is acyclic
but not contractible.

 The triangulated equivalence of
Theorem~\ref{augmented-acyclic-twisting-cochain-duality-thm}
assigns the acyclic DG\+module $M^\bu$ (viewed as a DG\+comodule
over the DG\+coalgebra $C^\bu=B^\bu{}^*$) to the $k[x]$\+module~$k_1$
($a=1$) from Example~\ref{bar-of-modules-acyclic}(2).
 Both the acyclic complex~\eqref{unbounded-acyclic-standard-example-eqn}
and the acyclic DG\+mod\-ule $M^\bu$ can be also viewed as assigned to
the respective modules over the abelian Lie algebra $\g=k$ by
the triangulated equivalence~\eqref{fin-dim-lie-algebra-koszul-duality}
from Example~\ref{chevalley-eilenberg}.

\begin{ex} \label{semisimple-chevalley-eilenberg}
 To give a further series of examples, let us continue the discussion
of Lie algebras from Example~\ref{chevalley-eilenberg}.
 Let $\g$~be a finite-dimensional semisimple Lie algebra over
a field~$k$ of characteristic~$0$, and let $\Lambda^\bu(\g^*)$ be
its cohomological Chevalley--Eilenberg complex.
 To any complex of modules $M^\bu$ over~$\g$, the Koszul
duality~\eqref{fin-dim-lie-algebra-koszul-duality} assigns
the cohomological Chevalley--Eilenberg complex (or rather, the total
complex of the bicomplex) $\Lambda^\bu(\g^*)\ot_k M^\bu$, with
the differential including summands induced by the Lie bracket
on~$\g$, the action of~$\g$ in $M$, and the differential on~$M$.
 Here the complex $\Lambda^\bu(\g^*)\ot_k M^\bu$ is viewed a
a DG\+module over the DG\+algebra~$\Lambda^\bu(\g^*)$.
 
 In particular, for any $\g$\+module $M$, the Chevalley--Eilenberg
complex $(\Lambda^\bu(\g^*)\ot_kM,\>d)$ computes the Lie algebra
cohomology $H^*(\g,M)$ with the coefficients in~$M$.
 Now one has $H^*(\g,M)=0$ for any nontrivial finite-dimensional
irreducible $\g$\+module~$M$.
 So $(\Lambda^\bu(\g^*)\ot_kM,\>d)$ is another example of
a finite-dimensional acyclic, noncontractible DG\+module whose
underlying graded module is both projective and injective.

 Let us explain in yet another way why a triangulated equivalence
such as~\eqref{fin-dim-lie-algebra-koszul-duality} \emph{cannot} hold
with the conventional derived category $\sD(\Lambda^\bu(\g^*)\modl)$
used instead of the coderived category.
 Any quasi-isomorphism of DG\+algebras $f\:A^\bu\rarrow B^\bu$ induces
a triangulated equivalence of the derived categories of DG\+modules,
$\sD(A^\bu\modl)\simeq\sD(B^\bu\modl)$ \,\cite[Example~6.1]{Kel0},
\cite[Theorem~1.7]{Pkoszul}.
 It is well-known that, for a semisimple Lie algebra~$\g$ in
$\operatorname{char}k=0$, the DG\+algebra $\Lambda^\bu(\g^*)$ is
formal (i.~e., quasi-isomorphic to its cohomology algebra);
indeed, the subalgebra of $\g$\+invariant elements in
$\Lambda^\bu(\g^*)$ is a DG\+subalgebra with zero differential
mapping quasi-isomorphically onto the cohomology.
 Moreover, the cohomology algebra $H^*(\g)=H^*(\g,k)$ is not very
informative: it is an exterior (free graded commutative) algebra
with generators in certain odd degrees.
 There is \emph{no} hope of recovering the derived category of
$\g$\+modules $\sD(\g\modl)$ from the quasi-isomorphism class of
the DG\+algebra~$\Lambda^\bu(\g^*)$.
\end{ex}

 So phenomena appearing to be specific to unbounded complexes also
manifest themselves in totally bounded, finite-dimensional DG\+modules.
 The explanation is that, if one wants to stay within the realm of
classical homological algebra, then one has to restrict oneself to
suitably bounded DG\+modules over DG\+rings $A^\bu$ belonging to
one of the following two classes: either $A^\bu$ is nonpositively
cohomologically graded~\cite[Theorem~3.4.1]{Pkoszul}, or $A^\bu$
is connected, simply connected, and positively cohomologically
graded~\cite[Theorem~3.4.2]{Pkoszul}.
 The DG\+algebra $B^\bu$ from~\cite[Example~3.3]{Pkoszul} (discussed
above), as well as, more generally, the DG\+algebra
$\Lambda^\bu(\g^*)$ from Examples~\ref{chevalley-eilenberg}
and~\ref{semisimple-chevalley-eilenberg}, belong to neither
of the two classes.

\subsection{Philosophy of derived categories of the second kind, III}
\label{philosophy-of-second-kind-III-subsecn}
 Thus the choice between derived categories of the first and the second
kind can be presented as consisting in \emph{having to decide what to do
with the complex~\eqref{unbounded-acyclic-standard-example-eqn}}.
 Is it to be considered as a trivial object and not acceptable as
a resolution (for computing derived functors with it), or as
a nontrivial object and acceptable as a resolution?
 We can now summarize the philosophy of two kinds of derived categories
as follows.

 In derived categories of the first kind (the conventional derived
categories):
\begin{itemize}
\item a complex or a DG\+module is viewed as a deformation of its
cohomology;
\item the conventional quasi-isomorphism (that is, the property of
a morphism to induce an isomorphism on the cohomology objects) is
the equivalence relation on complexes or DG\+modules in the derived
category;
\item strong and complicated conditions need to be imposed on
resolutions: the adjusted complexes or DG\+modules are known as
the \emph{homotopy projectives}, \emph{homotopy flats},
\emph{homotopy injectives} etc.; in slightly different terminological
systems they are called \emph{$K$\+projectives}, \emph{$K$\+flats}, 
\emph{$K$\+injectives} etc., or \emph{DG\+projectives},
\emph{DG\+flats}, \emph{DG\+injectives}, etc.\
(see~\cite[Remark~6.4]{PS4} for the terminological discussion).
\end{itemize}

 It is important here that one \emph{cannot} tell whether a complex or
DG\+module is homotopy adjusted (i.~e., can be used as a resolution
for derived categories or derived functors of the first kind) by
looking only on the terms of the complex or on the underlying graded
module of a DG\+module.
 The property of a complex/DG\+module to be homotopy adjusted
\emph{depends} on the differential.
 The classical works where the theory of derived categories of
the first kind was developed are~\cite{Spal,Kel0,Hin1}.

 In derived categories of the second kind (the coderived,
contraderived, and absolute derived categories):
\begin{itemize}
\item a complex or a (C)DG\+module is viewed as a deformation of its
underlying graded module, or of itself endowed with the zero
differential;
\item strong and complicated equivalence relations are imposed on
complexes or DG\+modules in the derived categories; the related
classes of trivial objects are called the \emph{coacyclic},
\emph{contraacyclic}, or \emph{absolutely acyclic} complexes or
(C)DG\+modules; for conilpotent (C)DG\+coalgebras, the equivalence
relation of \emph{filtered quasi-isomorphism} is used;
\item all the conventional termise injective or termwise projective
complexes, or (C)DG\+modules with injective/projective underlying
graded modules are adjusted (i.~e., can be used as resolutions);
more precisely:
\begin{itemize}
\item in the coderived categories, all complexes of injective
objects or graded-injective (C)DG\+modules are acceptable as
resolutions;
\item in the contraderived categories, all complexes of projective
objects or graded-projective (C)DG\+modules are acceptable as
resolutions.
\end{itemize}
\end{itemize}

 It is important here that one \emph{cannot} tell whether a morphism
is a weak equivalence for a derived category of the second kind,
or whether a complex or DG\+module is coacyclic/contraacyclic/etc.\
by looking only on its underlying complex of abelian groups.
 The property of a complex/DG\+module to be co- or contraacyclic
\emph{depends} on the module structure.
 The classical works where the theory of derived categories of
the second kind was developed are~%
\cite{Hin2,Lef,Kel,Jor,Kra,IK,Neem2,KLN,Psemi,Pkoszul,EP,Bec,Neem3,Sto}.

 The terminology \emph{coderived category} reflects an understanding
that the coderived categories are most suitable for \emph{comodules}.
 This was perhaps an intended meaning of the term in Keller's
note~\cite{Kel}, where it was first introduced (in fact, in
the original definition in~\cite{Kel}, the coderived category was
defined for DG\+comodules \emph{only}; the definition was not
applicable to modules or DG\+modules).

 Dually, the \emph{contraderived category} is most suitable for
\emph{contramodules}, while the conventional derived category works
best for complexes of modules or DG\+modules.
 This philosophy was developed and played a central role in the present
author's monograph on semi-infinite homological algebra~\cite{Psemi}.
 The technical aspects of the assertion ``coderived categories are
best behaved for comodules, contraderived categories for contramodules,
and conventional derived categories for modules'' are discussed in
detail in the memoir~\cite[Sections~1\+-4]{Pkoszul} (cf.\
Sections~\ref{co-contra-derived-cdg-modules-subsecn}\+-%
\ref{coderived-cdg-comodules-subsecn}
and~\ref{contraderived-cdg-contramodules-subsecn} below).

 But there are also other considerations.
 In particular, \emph{the conventional derived category makes no sense
for curved structures}.
 There is a partial exception to this rule in the (complicated,
technical, and special) \emph{weakly curved} case, as developed
in~\cite{Pweak}.
 But generally speaking, CDG\+modules (as well as CDG\+rings,
CDG\+comodules etc.)\ have \emph{no} cohomology groups or modules,
so the notion of a conventional quasi-iso\-mor\-phism is
\emph{undefined} for them (as we mentioned already in
Section~\ref{posing-the-problem-nonaugmented-duality}).
 Thus, for CDG\+modules one has to consider derived categories of
the second kind.

 This presents no problem for Koszul duality.
 But, say, derived categories of \emph{matrix factorizations} can be
only defined as derived categories of the second kind~\cite{Or,PP2,EP}.
 So sometimes one is forced to work with the coderived categories of
modules, technically complicated as they might be.
 Conversely, differential derived functors of the first kind for
DG\+comodules appear in the context of the Eilenberg--Moore
spectral sequence~\cite{EM2}; see~\cite[Section~0.2.10]{Psemi}
for a discussion.

\subsection{Coderived and contraderived categories of CDG-modules}
\label{co-contra-derived-cdg-modules-subsecn}
 To any bicomplex one can assign its total complex.
 In fact, there are several ways to do it: at least, one has to choose
between taking infinite direct sums or infinite products along
the diagonals.
 There are further possibilities, which may be called
the \emph{Laurent totalizations} (as in ``the Laurent formal
power series''): one can take direct sums in one direction and
direct products in the other one~\cite[Definition~I.3.4]{HMS}.
 For bicomplexes with only a finite number of nonzero terms on every
diagonal, there is only one way of producing the total complex.

 Here we are interested in a special class of \emph{bicomplexes with
three rows}: namely, the \emph{short exact sequences of complexes}.
 It is straightforward to see that the total complex of any short exact
sequence of complexes (say, in an abelian category, or even in
an exact category) is acyclic.

 The concept of totalization can be extended to DG\+categories, where
one can speak of the (product or coproduct) totalization of a complex
of objects and \emph{closed morphisms of degree\/~$0$} between
them~\cite[Section~1.2]{Pkoszul}, \cite[Section~1.3]{Pedg}.
 In particular, one can consider complexes (e.~g., short exact
sequences) of DG\+modules or CDG\+mod\-ules, and their totalizations.

 Specifically, let $B^\bu=(B,d)$ be a DG\+ring.
 A \emph{short exact sequence of DG\+modules} $0\rarrow K^\bu\rarrow
L^\bu\rarrow M^\bu\rarrow0$ over $B^\bu$ consists of three (say, left)
DG\+modules $K^\bu$, $L^\bu$, $M^\bu$ and two morphisms
$f\in\Hom^0_B(K,L)$ and $g\in\Hom^0_B(L,M)$ such that $d(f)=0=d(g)$ in
the respective complexes $\Hom^\bu_B(K,L)$ and $\Hom^\bu_B(L,M)$, and
$0\rarrow K\overset f\rarrow L\overset g\rarrow M\rarrow0$ is
a short exact sequence of graded $B$\+modules.
 The \emph{totalization} $\Tot(K^\bu\to L^\bu\to M^\bu)$ of $0\rarrow
K^\bu\rarrow L^\bu\rarrow M^\bu\rarrow0$ is constructed by passing to
the direct sums of the (at most three) grading components along every
diagonal, and setting the total differential to be the sum of $d_K$,
$d_L$, $d_M$, $f$, and~$g$ with suitable signs.
 Then $\Tot(K^\bu\to L^\bu\to M^\bu)$ is again a (left)
DG\+module over~$B^\bu$.

 The definitions of a \emph{short exact sequence of CDG\+modules}
$0\rarrow K^\cu\rarrow L^\cu\rarrow M^\cu\rarrow0$ over a CDG\+ring
$B^\cu$ and its \emph{total CDG\+module} $\Tot(K^\cu\to L^\cu\to M^\cu)$
over $B^\cu$ are spelled out literally the same (recall that the Hom
of two CDG\+modules over $B^\cu$ is a complex, as explained in
Section~\ref{cdg-rings-subsecn}).
 The totalization can be also constructed as an iterated cone: take
the cone $\cone(f)$ of the closed morphism $f\:K^\cu\rarrow L^\cu$ in
the DG\+category of CDG\+modules $B^\cu\modl$, and put
$\Tot(K^\cu\to L^\cu\to M^\cu)=\cone(\cone(f)\to M^\cu)$.
 (We are only speaking of the totalization as defined up to some
cohomological grading shift, and refrain from specifying any preferred
choice of such grading shift, as it is not important for our purposes.)

 Let $\Hot(B^\cu\modl)$ denote the homotopy category of left
CDG\+modules over a CDG\+ring $B^\cu$, i.~e., the category whose
objects are the CDG\+modules over $B^\cu$ and morphisms are
closed morphisms of degree~$0$ up to cochain homotopy.
 As the DG\+category $B^\cu\modl$ has shifts and cones, its homotopy
category $\Hot(B^\cu\modl)$ is a triangulated category~\cite{BonKap}.

\begin{defn}[{\cite[Section~3.3]{Pkoszul}}]
\label{absolute-derived-cdg-modules-definition}
 Let $B^\cu$ be a CDG\+ring.
 A left CDG\+module over $B^\cu$ is said to be \emph{absolutely acyclic}
if it belongs to the minimal thick subcategory of $\Hot(B^\cu\modl)$
containing the totalizations of short exact sequences of
left CDG\+modules over~$B^\cu$.
 The full subcategory of absolutely acyclic CDG\+modules is denoted
by $\Ac^\abs(B^\cu\modl)\subset\Hot(B^\cu\modl)$.
 The triangulated Verdier quotient category
$$
 \sD^\abs(B^\cu\modl)=\Hot(B^\cu\modl)/\Ac^\abs(B^\cu\modl)
$$
is called the \emph{absolute derived category} of left CDG\+modules
over~$B^\cu$.
\end{defn}

\begin{defn}[{\cite[Section~2.1]{Psemi}, \cite[Section~3.3]{Pkoszul}}]
\label{coderived-cdg-modules-definition}
 A left CDG\+module over $B^\cu$ is said to be \emph{coacyclic}
if it belongs to the minimal triangulated subcategory
of $\Hot(B^\cu\modl)$ containing the totalizations of short exact
sequences of left CDG\+modules over~$B^\cu$ \emph{and closed under
infinite direct sums}.
 The thick subcategory of coacyclic CDG\+modules is denoted
by $\Ac^\co(B^\cu\modl)\subset\Hot(B^\cu\modl)$.
 The triangulated Verdier quotient category
$$
 \sD^\co(B^\cu\modl)=\Hot(B^\cu\modl)/\Ac^\co(B^\cu\modl)
$$
is called the \emph{coderived category} of left CDG\+modules
over~$B^\cu$.
\end{defn}

\begin{defn}[{\cite[Section~4.1]{Psemi}, \cite[Section~3.3]{Pkoszul}}]
\label{contraderived-cdg-modules-definition}
 A left CDG\+module over $B^\cu$ is said to be \emph{contraacyclic}
if it belongs to the minimal triangulated subcategory
of $\Hot(B^\cu\modl)$ containing the totalizations of short exact
sequences of left CDG\+modules over~$B^\cu$ \emph{and closed under
infinite products}.
 The thick subcategory of contraacyclic CDG\+modules is denoted
by $\Ac^\ctr(B^\cu\modl)\subset\Hot(B^\cu\modl)$.
 The triangulated Verdier quotient category
$$
 \sD^\ctr(B^\cu\modl)=\Hot(B^\cu\modl)/\Ac^\ctr(B^\cu\modl)
$$
is called the \emph{contraderived category} of left CDG\+modules
over~$B^\cu$.
\end{defn}

\begin{ex}[{\cite[Examples~3.3]{Pkoszul}}]
 The doubly unbounded acyclic complex of $\Lambda$\+modules in
the formula~\eqref{unbounded-acyclic-standard-example-eqn}
from Section~\ref{philosophy-of-second-kind-II-subsecn}
is \emph{neither} coacyclic, \emph{nor} contraacyclic.
 Its (bounded above) acyclic subcomplex of canonical truncation
is \emph{contraacyclic}, but \emph{not} coacyclic.
 Dually, the (bounded below) acyclic quotient complex of canonical
truncation of~\eqref{unbounded-acyclic-standard-example-eqn} is
\emph{coacyclic}, but \emph{not} contraacyclic.
 Consequently, \emph{neither} one of the three complexes is
absolutely acyclic.
 We refer to~\cite[beginning of Section~5]{PSch} for a detailed
discussion.
\end{ex}

 The maximal natural generality for 
Definitions~\ref{absolute-derived-cdg-modules-definition}\+-%
\ref{contraderived-cdg-modules-definition} is that of \emph{exact
DG\+cat\-e\-gories}~\cite[Section~5.1]{Pedg}.
 This concept, suggested in~\cite[Section~3.2 and
Remarks~3.5\+-3.7]{Pkoszul}, was worked out in the preprint~\cite{Pedg}.

 The following two theorems are the main results of the basic theory
of coderived, contraderived, and absolute derived categories of
CDG\+modules.

\begin{thm} \label{fin-homol-dim-derived-cdg-modules}
\textup{(a)} Let $B^\cu=(B,d,h)$ be a CDG\+ring whose underlying graded
ring $B$ has finite left global dimension (as a graded ring; i.~e.,
the abelian category of graded left $B$\+modules has finite homological
dimension).
 Then the three classes of coacyclic, contraacyclic, and absolutely
acyclic CDG\+modules over $B^\cu$ coincide,
$$
 \Ac^\co(B^\cu\modl)=\Ac^\abs(B^\cu\modl)=\Ac^\ctr(B^\cu\modl),
$$
and accordingly, the three derived categories of the second kind
coincide,
$$
 \sD^\co(B^\cu\modl)=\sD^\abs(B^\cu\modl)=\sD^\ctr(B^\cu\modl).
$$ \par
\textup{(b)} Let $A^\bu=(A,d)$ be a DG\+ring whose underlying graded
ring $A$ has finite left global dimension (as a graded ring).
 Assume additionally that \emph{either} $A^n=0$ for all $n>0$,
\emph{or otherwise} $A^n=0$ for all $n<0$, the ring $A^0$ is
(classically) semisimple, and $A^1=0$.
 Then the three classes of coacyclic, contraacyclic, and absolutely
acyclic DG\+modules over $A^\bu$ coincide with the class of all
acyclic DG\+modules,
$$
 \Ac(A^\bu\modl)=\Ac^\co(A^\bu\modl)=
 \Ac^\abs(A^\bu\modl)=\Ac^\ctr(A^\bu\modl).
$$
 Accordingly, all the four derived categories coincide,
$$
 \sD(A^\bu\modl)=\sD^\co(A^\bu\modl)=
 \sD^\abs(A^\bu\modl)=\sD^\ctr(A^\bu\modl).
$$
\end{thm}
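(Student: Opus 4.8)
The plan is to prove both parts by reducing the coincidence of the derived categories of the second kind to two orthogonality facts together with the bounded resolutions supplied by the finite global dimension hypothesis. Throughout I would use that the totalization of a short exact sequence of (C)DG\+modules is absolutely acyclic, and more generally that the totalization of any \emph{finite} exact sequence of CDG\+modules is absolutely acyclic (splice the finite sequence into short exact sequences and assemble their totalizations through the resulting distinguished triangles). The inclusions $\Ac^\abs(B^\cu\modl)\subseteq\Ac^\co(B^\cu\modl)$ and $\Ac^\abs(B^\cu\modl)\subseteq\Ac^\ctr(B^\cu\modl)$ hold by construction, since $\Ac^\abs$ is a triangulated subcategory containing the totalizations of short exact sequences, while $\Ac^\co$ and $\Ac^\ctr$ are the minimal such subcategories closed respectively under coproducts and products; the analogous easy inclusions into $\Ac(A^\bu\modl)$ in part~(b) hold because acyclic DG\+modules form a triangulated subcategory closed under all sums and products.

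For part~(a) the heart is two orthogonality facts. The class of $C$ with $\Hom_{\Hot(B^\cu\modl)}(C,J)=0$ for all $J$ with graded\+injective underlying module is triangulated, closed under coproducts, and contains every totalization of a short exact sequence (because $\Hom$ into a graded\+injective module sends such a totalization to an acyclic $\Hom$\+complex); hence it contains $\Ac^\co$. In particular a graded\+injective coacyclic module is contractible, and dually, using products, a graded\+projective contraacyclic module is contractible; neither fact needs finite global dimension. The hypothesis enters through resolutions: via the freely generated CDG\+module construction (Lemma~\ref{freely-generated-cdg-module-lemma}) any CDG\+module is a closed\+epimorphic image of one with graded\+projective underlying module, and since $B$ has left global dimension $d$ the $d$\+th syzygy is again graded\+projective, producing a finite graded\+exact resolution whose totalization is absolutely acyclic. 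Thus every $M$ is isomorphic in $\sD^\abs(B^\cu\modl)$ to the graded\+projective totalization $Q$ of the complex of the resolving modules, and dually, using the cofreely cogenerated construction and the finite graded injective dimension, to a graded\+injective $Q'$. If $M$ is coacyclic then $Q'$ is coacyclic, hence contractible, hence zero in $\sD^\abs$, so $M\in\Ac^\abs$; the contraacyclic case is symmetric. This gives the reverse inclusions and proves~(a).

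For part~(b) I would first invoke~(a) to obtain $\Ac^\co(A^\bu\modl)=\Ac^\abs(A^\bu\modl)=\Ac^\ctr(A^\bu\modl)$, so it remains only to identify this common class with $\Ac(A^\bu\modl)$. The inclusion $\Ac^\co\subseteq\Ac$ is immediate since $h=0$. For the converse, given an acyclic $M$, the resolution argument of part~(a) produces a graded\+projective $Q$ with $M\cong Q$ in $\sD^\abs\subseteq\sD^\co$; as $M$ and $Q$ differ by an absolutely acyclic (hence acyclic) object, $Q$ is itself acyclic. The crux is then the claim that, under either grading hypothesis, a graded\+projective acyclic DG\+module is contractible. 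Granting this, $Q$ is contractible, hence coacyclic, hence $M\cong Q$ is coacyclic, yielding $\Ac\subseteq\Ac^\co$ and finishing the proof.

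The main obstacle is exactly this last claim, and it is where the grading conditions do their work by excluding the pathology of the doubly unbounded acyclic complex~\eqref{unbounded-acyclic-standard-example-eqn}. In the nonpositively graded case one builds a contracting homotopy on a graded\+projective acyclic DG\+module degree by degree, using that the free modules $A[n]$ are supported in bounded\+above degrees so that acyclicity propagates without obstruction; in the connected, simply connected case the semisimplicity of $A^0$ (making all graded $A^0$\+modules projective) together with $A^1=0$ plays the analogous role, ruling out the degree\+shifting that produces noncontractible acyclic complexes of projectives. I expect verifying this contractibility cleanly and uniformly across the two grading regimes to be the most delicate step, whereas the orthogonality and resolution arguments are essentially formal once the freely generated and cofreely cogenerated CDG\+module constructions are available.
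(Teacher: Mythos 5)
Your part~(a) is correct and is essentially the paper's own argument (that of \cite[Theorem~3.6]{Pkoszul}): the two orthogonality facts (coacyclics are left orthogonal to graded-injectives, graded-projectives are left orthogonal to contraacyclics, so that graded-injective coacyclics and graded-projective contraacyclics are contractible) combined with the finite graded-projective resolutions and graded-injective coresolutions supplied by the finite global dimension, whose totalizations are isomorphic to the given object in $\sD^\abs$. Nothing to add there.

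In part~(b) there is a genuine gap in the crux step. You claim that ``under either grading hypothesis, a graded-projective acyclic DG\+module is contractible,'' attribute this to the grading conditions, and propose a degree-by-degree contracting homotopy. But the first grading condition ($A^n=0$ for $n>0$) is satisfied by \emph{any} ring placed in degree~$0$, in particular by $\Lambda=k[\varepsilon]/(\varepsilon^2)$, over which the complex~\eqref{unbounded-acyclic-standard-example-eqn} is graded-projective, acyclic and not contractible; so the grading alone cannot yield your claim, and a mechanism that never invokes the finite global dimension cannot be right. Moreover, for genuine DG\+modules (with $A^{<0}\ne0$ or $d_A\ne0$) the classical repair --- the syzygy argument showing that acyclic complexes of projectives over a ring of finite global dimension split --- does not transfer, because $\ker(d_Q)$ is not an $A$\+submodule of~$Q$. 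The claim is in fact true under the full hypotheses, but only as a \emph{consequence} of the theorem (acyclic $\Rightarrow$ contraacyclic, plus orthogonality), so taking it as an input is circular unless you supply an independent proof, which your sketch does not. The route the paper intends (via \cite[Theorems~3.4.1(d) and~3.4.2(d)]{Pkoszul}) is different and avoids the issue: under the grading hypotheses the canonical truncations are DG\+sub/quotient modules, so any acyclic $M^\bu$ sits in a short exact sequence of DG\+modules whose sub is bounded above and acyclic (hence contraacyclic, by exhibiting it as an inverse limit with surjective transition maps of finite acyclic, hence absolutely acyclic, truncations, as in Lemma~\ref{filtrations-co-contra-acyclicity-lemma}(c)) and whose quotient is bounded below and acyclic (hence coacyclic, by the exhaustive filtration with contractible ``freely generated'' quotients of Lemmas~\ref{freely-generated-cdg-module-lemma} and~\ref{filtrations-co-contra-acyclicity-lemma}(b), or Lemma~\ref{acyclic-filtration-lemma}(a)); part~(a) then identifies both pieces with absolutely acyclic objects, and $\Ac^\abs$ is closed under extensions. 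No contractibility of a doubly unbounded graded-projective acyclic DG\+module is ever needed as an input, and the division of labour is the opposite of what you describe: the grading conditions give the truncation/filtration step without any global dimension assumption, while the finite global dimension enters only through part~(a).
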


\begin{proof}
 Part~(a) is~\cite[Theorem~3.6(a)]{Pkoszul}.
 For an alternative proof based on the concept of a \emph{cotorsion
pair}, see~\cite[Corollaries~4.8 and~4.15]{Pctrl}.
 For a generalization to exact DG\+categories (covering also
Theorems~\ref{fin-homol-dim-derived-cdg-comodules}
and~\ref{fin-homol-dim-derived-cdg-contramodules} below),
see~\cite[Theorem~5.6]{Pedg}, or for a much wider generalization
with weaker assumptions, \cite[Theorem~8.9]{Pedg}.

 Part~(b) for complexes of modules over a ring of finite global
dimension is a particular case of~\cite[Remark~2.1]{Psemi}.
 In full generality, part~(b) is provable by comparing part~(a)
with~\cite[Theorem~3.4.1(d)]{Pkoszul} (in the first case) or
with~\cite[Theorem~3.4.2(d)]{Pkoszul} (in the second case).
\end{proof}

 For a further discussion of assumptions under which a result like
Theorem~\ref{fin-homol-dim-derived-cdg-modules}(b) holds, explore
the references in~\cite[last paragraph of Section~3.6]{Pkoszul}.
 In particular, by~\cite[Theorem~9.4]{Pkoszul}, one has
$\Ac(A^\bu\modl)=\Ac^\co(A^\bu\modl)=\Ac^\abs(A^\bu\modl)=
\Ac^\ctr(A^\bu\modl)$ for any \emph{cofibrant} DG\+algebra $A^\bu$
over a commutative ring of finite global dimension.

 Let $B$ be a graded ring.
 The following conditions on~$B$ are relevant for the theory of derived
categories of the second kind~\cite[Sections~3.7 and~3.8]{Pkoszul}:
\begin{itemize}
\item[($*$)] any countable direct sum of injective graded left
$B$\+modules has finite injective dimension (as a graded $B$\+module);
\item[(${*}{*}$)] any countable product of projective graded left
$B$\+modules has finite projective dimension (as a graded $B$\+module).
\end{itemize}
 Notice that any graded ring $B$ of finite left global dimension
(as in Theorem~\ref{fin-homol-dim-derived-cdg-modules}(a))
satisfies both ($*$) and~(${*}{*}$).

 One denotes by $\Hot(B^\cu\modl_\inj)\subset\Hot(B^\cu\modl)$ the full
triangulated subcategory in the homotopy category formed by all
the CDG\+modules $J^\cu=(J,d_J)$ over $B^\cu$ whose underlying graded
left $B$\+modules $J$ are injective (as graded left $B$\+modules).
 Such CDG\+modules $J^\cu$ over $B^\cu$ are called
\emph{graded-injective}.

 Dually, one denotes by $\Hot(B^\cu\modl_\proj)\subset\Hot(B^\cu\modl)$
the full triangulated subcategory in the homotopy category formed by all
the CDG\+modules $P^\cu=(P,d_P)$ over $B^\cu$ whose underlying graded
left $B$\+modules $P$ are projective.
 Such CDG\+modules $P^\cu$ over $B^\cu$ are called
\emph{graded-projective}.
 
\begin{thm} \label{co-contra-derived-cdg-modules}
\textup{(a)} Let $B^\cu=(B,d,h)$ be a CDG\+ring whose underlying
graded ring $B$ satisfies condition~\textup{($*$)}.
 Then the composition\/ $\Hot(B^\cu\modl_\inj)\rarrow\Hot(B^\cu\modl)
\rarrow\sD^\co(B^\cu\modl)$ of the triangulated inclusion functor\/
$\Hot(B^\cu\modl_\inj)\rarrow\Hot(B^\cu\modl)$ and the Verdier quotient
functor\/ $\Hot(B^\cu\modl)\rarrow\sD^\co(B^\cu\modl)$ is
an equivalence of triangulated categories,
$$
 \Hot(B^\cu\modl_\inj)\simeq\sD^\co(B^\cu\modl).
$$ \par
\textup{(b)} Let $B^\cu=(B,d,h)$ be a CDG\+ring whose underlying
graded ring $B$ satisfies condition~\textup{(${*}{*}$)}.
 Then the composition\/ $\Hot(B^\cu\modl_\proj)\rarrow\Hot(B^\cu\modl)
\rarrow\sD^\ctr(B^\cu\modl)$ of the triangulated inclusion functor\/
$\Hot(B^\cu\modl_\proj)\rarrow\Hot(B^\cu\modl)$ and the Verdier quotient
functor\/ $\Hot(B^\cu\modl)\rarrow\sD^\ctr(B^\cu\modl)$ is
an equivalence of triangulated categories, \hfuzz=5pt
$$
 \Hot(B^\cu\modl_\proj)\simeq\sD^\ctr(B^\cu\modl).
$$
\end{thm}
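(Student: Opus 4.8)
The plan is to show that the pair $\bigl(\Ac^\co(B^\cu\modl),\,\Hot(B^\cu\modl_\inj)\bigr)$ is a semiorthogonal decomposition of the homotopy category $\Hot(B^\cu\modl)$, from which the equivalence in part~(a) follows by the standard description of a Verdier quotient by one side of such a decomposition; part~(b) then follows by the dual argument, with direct products, graded-projectives, contraacyclics and condition~(${*}{*}$) replacing direct sums, graded-injectives, coacyclics and~($*$). Concretely, I would prove two assertions: \textbf{(A)} every graded-injective CDG\+module is right orthogonal in $\Hot(B^\cu\modl)$ to every coacyclic CDG\+module; and \textbf{(B)} every CDG\+module $M^\cu$ admits a closed degree-zero morphism $M^\cu\rarrow J^\cu$ into a graded-injective CDG\+module with coacyclic cone. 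Granting these: by~(A), any CDG\+module that is at once graded-injective and coacyclic is contractible (its identity endomorphism lies in a vanishing Hom group), and localizing at the coacyclic objects does not alter morphisms between graded-injective objects, so the composite functor is fully faithful; and (B) gives essential surjectivity.

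For \textbf{(A)}, fix a graded-injective $J^\cu$. Since its underlying graded module is injective, the functor $\Hom_B^\bu(-,J^\cu)$ is exact on the abelian category of CDG\+modules and closed degree-zero morphisms, hence sends any short exact sequence of CDG\+modules to a short exact sequence of complexes of $k$\+vector spaces. As totalization commutes with $\Hom_B^\bu(-,J^\cu)$, the complex $\Hom_B^\bu(\Tot(K^\cu\to L^\cu\to M^\cu),J^\cu)$ is the total complex of a short exact sequence and is therefore acyclic; in particular $\Hom_{\Hot}(\Tot(K^\cu\to L^\cu\to M^\cu),J^\cu)=0$. The class of $A^\cu$ with $\Hom_{\Hot}(A^\cu[n],J^\cu)=0$ for all $n\in\boZ$ is triangulated and closed under arbitrary direct sums, because $\Hom_{\Hot}(\bigoplus_i A_i^\cu,J^\cu)=\prod_i\Hom_{\Hot}(A_i^\cu,J^\cu)$; since it contains all totalizations of short exact sequences, it contains $\Ac^\co(B^\cu\modl)$. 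Note that this step uses neither~($*$) nor~(${*}{*}$).

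The substance lies in \textbf{(B)}, and this is where condition~($*$) is invoked. First I would verify that the abelian category of CDG\+modules with closed degree-zero morphisms has enough graded-injectives: the functor forgetting the differential admits a right adjoint carrying an injective graded $B$\+module to a (contractible) graded-injective CDG\+module, and composing the adjunction unit with an embedding of $M$ into an injective graded module embeds $M^\cu$ into a graded-injective CDG\+module. Iterating yields a coresolution $0\rarrow M^\cu\rarrow J_0^\cu\rarrow J_1^\cu\rarrow\dotsb$ by graded-injectives that is exact on underlying graded modules. Totalizing by \emph{direct sums} along the diagonals produces $\widetilde J^\cu$ with a morphism $M^\cu\rarrow\widetilde J^\cu$ whose cone is the totalization of the exact (bounded-below) complex $(M^\cu\to J_0^\cu\to J_1^\cu\to\dotsb)$ of CDG\+modules; since $\Ac^\co$ is closed under cones and countable direct sums (Definition~\ref{coderived-cdg-modules-definition}), this cone is coacyclic. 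The difficulty is that $\widetilde J^\cu$ need \emph{not} be graded-injective, its underlying module on each diagonal being an infinite direct sum of injectives. Condition~($*$) guarantees that this sum, hence $\widetilde J^\cu$, has \emph{finite} graded-injective dimension; a \emph{finite} injective coresolution of $\widetilde J^\cu$ then lifts to a bounded resolution by CDG\+modules whose finite direct-sum totalization $J^\cu$ is genuinely graded-injective, with the cone of $\widetilde J^\cu\rarrow J^\cu$ absolutely acyclic. By the octahedron axiom the composite $M^\cu\rarrow\widetilde J^\cu\rarrow J^\cu$ has coacyclic cone, proving~(B).

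I expect the infinite-to-finite totalization inside (B) to be the main obstacle. One must track that the coproduct totalization of an \emph{unbounded} acyclic complex of CDG\+modules remains coacyclic (via a telescope/homotopy-colimit presentation, relying on closure of $\Ac^\co$ under cones and countable coproducts) and, decisively, that condition~($*$) converts the otherwise non-injective infinite coproduct into an object of finite injective dimension, so that the second, finite resolution step terminates in an honestly graded-injective module. The dual statement~(b) carries the symmetric difficulty: the \emph{product} totalization must be shown to stay contraacyclic, and~(${*}{*}$) must bound the projective dimension of the relevant infinite products of projectives.
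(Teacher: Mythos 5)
Your proposal is correct and follows essentially the same route as the proof the paper relies on: the survey proves this theorem by citing \cite[Theorems~3.7 and~3.8]{Pkoszul}, whose argument is precisely the semiorthogonal decomposition you describe --- orthogonality of coacyclic objects to graded-injective ones, followed by the two-stage coresolution in which the infinite coproduct totalization is corrected by a finite coresolution using condition~($*$), and dually for part~(b). The auxiliary facts you flag as the main obstacles (contractibility of CDG\+modules freely (co)generated by graded modules, and coacyclicity of CDG\+modules carrying a filtration whose associated graded complex is acyclic) are exactly Lemmas~\ref{freely-generated-cdg-module-lemma}, \ref{filtrations-co-contra-acyclicity-lemma}, and~\ref{acyclic-filtration-lemma} in the last section of this survey, so your sketch completes to the paper's intended proof.
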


\begin{proof}
 Part~(a) is~\cite[Theorem~3.7]{Pkoszul}, and part~(b)
is~\cite[Theorem~3.8]{Pkoszul}.
 For an alternative proof based on the notion of a \emph{cotorsion
pair}, see~\cite[Corollary~4.18]{Pctrl} for part~(a)
and~\cite[Corollary~4.9]{Pctrl} for part~(b).
 For a generalization to exact DG\+categories (covering also
Theorems~\ref{coderived-cdg-comodules-thm}
and~\ref{contraderived-cdg-contramodules-thm} below),
see~\cite[Theorem~5.10]{Pedg}.
\end{proof}

\subsection{Coderived category of CDG-comodules}
\label{coderived-cdg-comodules-subsecn}
 This section is a comodule version of the previous one.
 In the spirit of the discussion in the end of
Section~\ref{philosophy-of-second-kind-III-subsecn}, we will see
(in Theorem~\ref{coderived-cdg-comodules-thm}) that
the coderived categories of comodules are somewhat better behaved
than the coderived categories of modules.
 On the other hand, it makes \emph{no} sense to consider
``contraderived categories of comodules'', as the functors of
infinite products are usually \emph{not} exact in comodule categories.

 Let $C^\cu=(C,d,h)$ be a CDG\+coalgebra over~$k$.
 Similarly to Section~\ref{co-contra-derived-cdg-modules-subsecn},
one can speak of short exact sequences $0\rarrow K^\cu\rarrow L^\cu
\rarrow M^\cu\rarrow0$ of left CDG\+comodules over $C^\cu$ and their
totalizations (total CDG\+comodules) $\Tot(K^\cu\to L^\cu\to M^\cu)$,
which are again left CDG\+comodules over~$C^\cu$.

 Let $\Hot(C^\cu\comodl)$ denote the homotopy category of left
CDG\+comodules over a CDG\+coalgebra~$C^\cu$.
 Similarly to Section~\ref{co-contra-derived-cdg-modules-subsecn},
the DG\+category $C^\cu\comodl$ has shifts and cones, so its
homotopy category $\Hot(C^\cu\comodl)$ is triangulated.

\begin{defn}[{\cite[Section~4.2]{Pkoszul}}]
\label{absolute-derived-cdg-comodules-definition}
 Let $C^\cu$ be a CDG\+coalgebra.
 A left CDG\+comod\-ule over $C^\cu$ is said to be \emph{absolutely
acyclic} if it belongs to the minimal thick subcategory of
$\Hot(C^\cu\comodl)$ containing the totalizations of short exact
sequences of left CDG\+comodules over~$C^\cu$.
 The full subcategory of absolutely acyclic CDG\+comodules is denoted
by $\Ac^\abs(C^\cu\comodl)\subset\Hot(C^\cu\comodl)$.
 The triangulated Verdier quotient category
$$
 \sD^\abs(C^\cu\comodl)=\Hot(C^\cu\comodl)/\Ac^\abs(C^\cu\comodl)
$$
is called the \emph{absolute derived category} of left CDG\+comodules
over~$C^\cu$.
\end{defn}

\begin{defn}[{\cite[Section~2.1]{Psemi}, \cite[Section~4.2]{Pkoszul}}]
\label{coderived-cdg-comodules-definition}
 A left CDG\+comodule over $C^\cu$ is said to be \emph{coacyclic}
if it belongs to the minimal triangulated subcategory
of $\Hot(C^\cu\comodl)$ containing the totalizations of short exact
sequences of left CDG\+comodules over~$C^\cu$ \emph{and closed under
infinite direct sums}.
 The thick subcategory of coacyclic CDG\+comodules is denoted
by $\Ac^\co(C^\cu\comodl)\subset\Hot(C^\cu\comodl)$.
 The triangulated Verdier quotient category
$$
 \sD^\co(C^\cu\comodl)=\Hot(C^\cu\comodl)/\Ac^\co(C^\cu\comodl)
$$
is called the \emph{coderived category} of left CDG\+comodules
over~$C^\cu$.
\end{defn}

 In the context of the next theorem, one can keep in mind that,
\emph{unlike} for rings, the left and right global dimensions agree
for any (ungraded or graded) coalgebra $C$ over a field~$k$
(see~\cite[beginning of Section~4.5]{Pkoszul}).

\begin{thm} \label{fin-homol-dim-derived-cdg-comodules}
 Let $C^\cu=(C,d,h)$ be a CDG\+coalgebra over~$k$ whose
underlying graded coalgebra $C$ has finite left global dimension
(as a graded coalgebra; i.~e., the abelian category of graded left
$C$\+comodules has finite homological dimension).
 Then the two classes of coacyclic and absolutely acyclic
CDG\+comodules over $C^\cu$ coincide,
$$
 \Ac^\co(C^\cu\comodl)=\Ac^\abs(C^\cu\modl),
$$
and accordingly, the two derived categories of the second kind coincide,
$$
 \sD^\co(C^\cu\comodl)=\sD^\abs(C^\cu\comodl).
$$
\end{thm}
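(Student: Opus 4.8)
The plan is to prove the identity of the classes of trivial objects $\Ac^\co(C^\cu\comodl)=\Ac^\abs(C^\cu\comodl)$, from which the equality of Verdier quotients $\sD^\co(C^\cu\comodl)=\sD^\abs(C^\cu\comodl)$ is immediate; this mirrors the module statement of Theorem~\ref{fin-homol-dim-derived-cdg-modules}(a). One inclusion is formal. A triangulated subcategory closed under infinite direct sums is automatically closed under direct summands (the B\"okstedt--Neeman/Eilenberg swindle argument), so $\Ac^\co(C^\cu\comodl)$ is a \emph{thick} subcategory of $\Hot(C^\cu\comodl)$ containing all totalizations of short exact sequences of CDG\+comodules, and hence contains the minimal such subcategory $\Ac^\abs(C^\cu\comodl)$. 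The real content of the theorem is therefore the reverse inclusion $\Ac^\co\subseteq\Ac^\abs$: every coacyclic CDG\+comodule is already absolutely acyclic when the homological dimension is finite.

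To obtain this, I would first record a semiorthogonality lemma: for any coacyclic CDG\+comodule $A^\cu$ and any CDG\+comodule $J^\cu$ whose underlying graded $C$\+comodule is injective, one has $\Hom_{\Hot(C^\cu\comodl)}(A^\cu,J^\cu)=0$. This rests on the comodule analogue of the basic fact underlying Theorem~\ref{co-contra-derived-cdg-modules}, namely that a closed degree\+zero map from the totalization of a short exact sequence of CDG\+comodules into a graded\+injective CDG\+comodule is null\+homotopic. Indeed, the class of those $A^\cu$ annihilating $\Hom_{\Hot}({-},J^\cu)$ for all graded\+injective $J^\cu$ is triangulated, is closed under direct sums (since $\Hom(\bigoplus_\alpha A_\alpha,J)=\prod_\alpha\Hom(A_\alpha,J)$), and contains all such totalizations, hence contains $\Ac^\co$. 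A fortiori, a CDG\+comodule that is both coacyclic and graded\+injective is contractible.

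The decisive lemma uses finite homological dimension. Because the abelian category of graded left $C$\+comodules has enough injectives (the cofree comodules $C\ot_kV$ are injective and every comodule embeds into such) and, by hypothesis, injective dimension $\le d$ for all objects, I would construct for an arbitrary CDG\+comodule $M^\cu$ a \emph{finite} graded\+injective coresolution
$$
 0\rarrow M^\cu\rarrow J^{0,\cu}\rarrow J^{1,\cu}\rarrow\dotsb\rarrow
 J^{d-1,\cu}\rarrow Z^{d,\cu}\rarrow0 ,
$$
by iterating a functorial embedding of a CDG\+comodule into a graded\+injective one (the comodule version of the cofree/cobar construction of~\cite[Section~3.7]{Pkoszul}); the bound on injective dimension forces the $d$\+th cosyzygy $Z^{d,\cu}$ to be itself graded\+injective, which is exactly what permits truncation to finite length. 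The totalization of this bounded exact complex of CDG\+comodules lies in $\Ac^\abs$, being an iterated cone of the totalizations of the short exact sequences into which a finite exact complex decomposes.

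Combining the two lemmas finishes the argument. Writing $G^\cu=\Tot(J^{0,\cu}\rarrow\dotsb\rarrow Z^{d,\cu})$ for the totalization of the injective part, there is a closed morphism $M^\cu\rarrow G^\cu$ whose cone is the totalization of the full finite resolution $0\rarrow M^\cu\rarrow J^{0,\cu}\rarrow\dotsb\rarrow Z^{d,\cu}\rarrow0$, hence lies in $\Ac^\abs$; and $G^\cu$ is graded\+injective, as a finite direct sum of shifts of injective graded comodules is injective. If $M^\cu$ is coacyclic, the distinguished triangle $M^\cu\rarrow G^\cu\rarrow\cone(M^\cu\rarrow G^\cu)\rarrow M^\cu[1]$ shows $G^\cu$ coacyclic; being also graded\+injective, $G^\cu$ is contractible, hence the zero object of $\Hot$ and in particular absolutely acyclic. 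Two of the three terms of the triangle now lie in the thick subcategory $\Ac^\abs$, so $M^\cu$ does too, giving $\Ac^\co\subseteq\Ac^\abs$. I expect the main obstacle to be the third paragraph: producing the graded\+injective coresolution \emph{at the curved level}, so that the embeddings are compatible with the coderivation and the curvature linear function, that the cosyzygies are genuine CDG\+comodules, and that finite injective dimension of the underlying graded comodules really upgrades to graded\+injectivity of the truncating cosyzygy $Z^{d,\cu}$.
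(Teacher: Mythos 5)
Your argument is correct and is essentially the proof of \cite[Theorem~4.5(a)]{Pkoszul}, to which the survey defers: one inclusion is formal, and for the other one first establishes the semiorthogonality $\Hom_{\Hot(C^\cu\comodl)}(A^\cu,J^\cu)=0$ for coacyclic $A^\cu$ and graded-injective $J^\cu$ (so that coacyclic graded-injectives are contractible), then uses the finite global dimension to replace an arbitrary CDG\+comodule, modulo $\Ac^\abs$, by a graded-injective one via a finite coresolution. The curved-level coresolution you flag as the main obstacle is unproblematic: the forgetful functor from CDG\+comodules to graded $C$\+comodules has a right adjoint taking an injective graded comodule to a contractible graded-injective CDG\+comodule (the comodule analogue of the freely generated CDG\+modules $G^+(L)$ of Lemma~\ref{freely-generated-cdg-module-lemma}), so iterating the adjunction units yields the resolution, and ordinary dimension shifting in the abelian category of graded comodules makes the $d$\+th cosyzygy graded-injective.
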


\begin{proof}
 This is~\cite[Theorem~4.5(a)]{Pkoszul}.
 For a generalization to exact DG\+categories,
see~\cite[Theorem~5.6(a) or Theorem~8.9(a)]{Pedg}.
\end{proof}

 It would be interesting to obtain a DG\+comodule version of
Theorem~\ref{fin-homol-dim-derived-cdg-modules}(b).
 The nonexistence of a meaningful notion of a ``contraacyclic
DG\+comodule'' stands in the way of applying an argument similar to
the proof of Theorem~\ref{fin-homol-dim-derived-cdg-modules}(b) above
(cf.~\cite[Section~4.3]{Pkoszul}).
 The case of complexes of comodules over a coalgebra of finite global
dimension is covered by~\cite[Remark~2.1]{Psemi}.

 Notice that the comodule version of condition~($*$) from
Section~\ref{co-contra-derived-cdg-modules-subsecn} holds for
\emph{any} (graded) coalgebra $C$ over a field~$k$, because the class
of all injective $C$\+comodules is closed under infinite direct sums.
 Indeed, the injective $C$\+comodules are the direct summands of
the cofree ones (cf.\ Section~\ref{coalgebras-and-comodules-subsecn});
and cofree left $C$\+comodules have the form $C\ot_k V$, where $V$
ranges over (graded) $k$\+vector spaces; so cofree $C$\+comodules
obviously form a class closed under infinite direct sums.

 Similarly to Section~\ref{co-contra-derived-cdg-modules-subsecn},
we denote by $\Hot(C^\cu\comodl_\inj)\subset\Hot(C^\cu\comodl)$
the full triangulated subcategory in the homotopy category formed by all
the CDG\+comodules whose underlying graded $C$\+comodules are injective.
 Such CDG\+comodules are called \emph{graded-injective}.

\begin{thm} \label{coderived-cdg-comodules-thm}
 Let $C^\cu=(C,d,h)$ be a CDG\+coalgebra over~$k$. 
 Then the composition\/ $\Hot(C^\cu\comodl_\inj)\rarrow
\Hot(C^\cu\comodl)\rarrow\sD^\co(C^\cu\comodl)$ of the triangulated
inclusion functor\/ $\Hot(C^\cu\comodl_\inj)\rarrow\Hot(C^\cu\comodl)$
and the Verdier quotient functor\/ $\Hot(C^\cu\comodl)\rarrow
\sD^\co(C^\cu\comodl)$ is an equivalence of triangulated categories,
$$
 \Hot(C^\cu\comodl_\inj)\simeq\sD^\co(C^\cu\comodl).
$$
\end{thm}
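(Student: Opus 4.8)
The plan is to realize the composite functor as an equivalence by verifying the two hypotheses of the standard orthogonality criterion that identifies a homotopy category of adjusted objects with a Verdier quotient: first, that every graded-injective CDG\+comodule is right orthogonal to every coacyclic one inside $\Hot(C^\cu\comodl)$, which yields full faithfulness; and second, that every CDG\+comodule can be connected to a graded-injective one by a closed morphism with coacyclic cone, which yields essential surjectivity. This follows the pattern of the module statement in Theorem~\ref{co-contra-derived-cdg-modules}(a), but is cleaner here because the comodule analogue of condition~($*$) holds for \emph{every} coalgebra over a field: injective $C$\+comodules, being direct summands of cofree ones $C\ot_k V$, are closed under arbitrary direct sums.

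For the orthogonality step I would fix a graded-injective $J^\cu$ and study the contravariant functor $M^\cu\mapsto\Hom^\bu_C(M^\cu,J^\cu)$ into complexes of $k$\+vector spaces. Since $J$ is injective as a graded $C$\+comodule, $\Hom_C({-},J)$ is exact on graded comodules, so $\Hom^\bu_C({-},J^\cu)$ sends a short exact sequence of CDG\+comodules to a short exact sequence of complexes, and hence sends the totalization of such a sequence to an acyclic complex (the total complex of a short exact sequence of complexes is acyclic). As this functor also turns infinite direct sums into products, and products of acyclic complexes of vector spaces are acyclic, the class of $M^\cu$ with $\Hom^\bu_C(M^\cu,J^\cu)$ acyclic is a triangulated subcategory closed under direct sums and containing all totalizations of short exact sequences; it therefore contains $\Ac^\co(C^\cu\comodl)$. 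Because the whole Hom\+complex is acyclic, $\Hom_{\Hot}(A^\cu,J^\cu[n])=H^n\Hom^\bu_C(A^\cu,J^\cu)=0$ for all $n$ and all coacyclic $A^\cu$. Full faithfulness of $\Hot(C^\cu\comodl_\inj)\rarrow\sD^\co(C^\cu\comodl)$ then follows from the usual roof computation: given graded-injective $J_1^\cu$, $J_2^\cu$ and a morphism in $\sD^\co$ represented by $J_1^\cu\overset{s}{\llarrow}X^\cu\rarrow J_2^\cu$ with $\cone(s)$ coacyclic, the vanishing of $\Hom_{\Hot}(\cone(s),J_2^\cu[n])$ in all degrees forces $s^\ast$ to be an isomorphism, so no new morphisms are introduced.

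For essential surjectivity I would build, functorially in $M^\cu$, a graded-injective coresolution. Dually to the graded-free CDG\+module resolutions used on the module side, the cofree comodule comonad $V\mapsto C\ot_k V$ supplies a graded-injective CDG\+comodule $G(M^\cu)$, with underlying graded comodule $C\ot_k M^\#$ and a curvature-twisted differential, together with a closed monomorphism $M^\cu\rarrow G(M^\cu)$ (built from the coaction and corrected by $h$, just as $C^\cu$ itself fails to be a CDG\+comodule over itself without such a correction) whose cokernel is again a CDG\+comodule. Iterating on the cokernel yields an exact sequence $0\rarrow M^\cu\rarrow J^{0,\cu}\rarrow J^{1,\cu}\rarrow\dotsb$ of CDG\+comodules with each $J^{i,\cu}$ graded-injective. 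I would then set $J^\cu=\Tot(J^{0,\cu}\to J^{1,\cu}\to\dotsb)$ using the direct-sum totalization; this is graded-injective precisely because injective comodules over a field are closed under infinite direct sums, and the cone of the induced closed map $M^\cu\rarrow J^\cu$ is, up to shift, the totalization of the entire exact complex $0\rarrow M^\cu\rarrow J^{\bullet,\cu}$. Once this cone is shown coacyclic, $M^\cu\cong J^\cu$ in $\sD^\co(C^\cu\comodl)$, and the functor is essentially surjective.

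The step I expect to be the main obstacle is proving that the totalization of this \emph{infinite} (but bounded below in the resolution direction) exact complex is coacyclic. For a bounded complex of short exact sequences this is immediate via iterated cones of totalizations of short exact sequences, but here the diagonals of the bicomplex meet infinitely many terms, so one must assemble the coacyclic totalizations of the individual short exact sequences $0\rarrow Z^{n,\cu}\rarrow J^{n,\cu}\rarrow Z^{n+1,\cu}\rarrow 0$ (with $Z^{0,\cu}=M^\cu$ and $Z^{n,\cu}$ the comodules of cocycles) through an infinite splicing. I would handle this by filtering $J^\cu$ by the brutal truncations of the resolution, identifying the successive quotients with the totalizations of the short exact sequences above, and deducing coacyclicity from closure of $\Ac^\co$ under extensions, cones, and countable direct sums; this is exactly the point at which the closure of the coacyclic class under infinite direct sums in Definition~\ref{coderived-cdg-comodules-definition}, and the closure of injectives under coproducts, are indispensable.
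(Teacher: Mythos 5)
Your two-step strategy---right orthogonality of graded-injectives to coacyclics for full faithfulness, plus closed embeddings into graded-injectives with coacyclic cones for essential surjectivity---is exactly the route of the proof this survey cites (\cite[Theorem~4.4(c)]{Pkoszul}), and your orthogonality half is correct as written: since $\Hom^\bu_C({-},J^\cu)$ takes short exact sequences of CDG\+comodules to short exact sequences of complexes and direct sums to products (which are exact on $k$\+vector spaces), the comodules it annihilates form a triangulated subcategory closed under coproducts containing all totalizations, hence containing $\Ac^\co(C^\cu\comodl)$. The totalization step you flag as the main obstacle is precisely what Lemmas~\ref{filtrations-co-contra-acyclicity-lemma}(b) and~\ref{acyclic-filtration-lemma}(a) of this survey are designed for; one small correction there: the successive quotients of the relevant filtration cannot be the objects $\Tot(Z^{n,\cu}\to J^{n,\cu}\to Z^{n+1,\cu})$ (their underlying graded comodules have each $Z^{n}$ appearing twice, so they do not assemble to $\bigoplus_n J^{n}$). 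After replacing the brutal truncation filtration $F_i$ by $G_i=F_i+d(F_i)$, the subquotients $G_i/G_{i-1}$ are contractible by (the comodule version of) Lemma~\ref{freely-generated-cdg-module-lemma}, and the telescope of Lemma~\ref{filtrations-co-contra-acyclicity-lemma}(b) finishes the argument.

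The one step that fails as literally described is the construction of $G(M^\cu)$. There is no CDG\+comodule structure on the cofree graded comodule $C\ot_k M$ (cogenerated by the underlying graded vector space of $M^\cu$) making the coaction $\nu\:M\to C\ot_kM$ a closed monomorphism: with the tensor-product differential the defect $d^2(c\ot m)-h*(c\ot m)=-(c*h)\ot m+c\ot(h*m)$ does not vanish, and it cannot be cancelled by modifying the coderivation, for the same degree mismatch between the axioms for $d^2$ on a CDG\+coalgebra and on a left CDG\+comodule that prevents $C^\cu$ from being a left CDG\+comodule over itself---so your parenthetical analogy identifies the obstruction rather than a cure. The standard repair is to double the cogenerators: embed the graded comodule $M$ into an injective graded comodule $I$ (for instance $I=C\ot_kM$ via~$\nu$) and take the contractible graded-injective CDG\+comodule cofreely cogenerated by $I$, with underlying graded comodule a direct sum of two copies of $I$ (one shifted by one degree) and differential carrying one copy identically onto the other and the other to $h*({-})$; this is the exact dual of the $G^+$ construction in the proof of Lemma~\ref{freely-generated-cdg-module-lemma}. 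Closed morphisms from any $N^\cu$ into this object are the same as graded comodule maps $N\to I$, so the embedding $M\hookrightarrow I$ yields the desired closed monomorphism $M^\cu\rightarrowtail J^{0,\cu}$ with CDG\+comodule cokernel. With this substitution the rest of your argument goes through unchanged.
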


\begin{proof}
 This is~\cite[Theorem~4.4(c)]{Pkoszul}.
 For a generalization to exact DG\+categories,
see~\cite[Theorem~5.10(a)]{Pedg}.
\end{proof}

\begin{rem}
 For any DG\+ring $A^\bu=(A,d)$, the derived category of DG\+modules
$\sD(A^\bu\modl)$ is compactly generated (in the sense
of~\cite{Neem0,Neem1}).
 In fact, it follows immediately from the definitions that the free
DG\+module $A^\bu$ over $A^\bu$ is a single compact generator of
$\sD(A^\bu\modl)$ \,\cite[Section~3.5]{Kel2}.
 On the coalgebra side of Koszul duality, the related observation is
that, for any CDG\+coalgebra $C^\cu=(C,d,h)$ over~$k$, the coderived
category of CDG\+comodules $\sD^\co(C^\cu\comodl)$ is compactly
generated.
 The CDG\+comodules whose underlying graded $k$\+vector spaces are
(totally) finite-dimensional form a set of compact generators of
$\sD^\co(C^\cu\comodl)$ \,\cite[Section~5.5]{Pkoszul}.
 The latter result has many analogous versions and far-reaching 
generalizations, including the case of the coderived category of
CDG\+modules over a graded Noetherian
CDG\+ring~\cite[Section~3.11]{Pkoszul} (see
also~\cite[Proposition~1.5(d)]{EP}), the Becker coderived category
of a locally coherent abelian category~\cite[Corollary~6.13]{Sto},
the coderived category of a locally Noetherian abelian
DG\+category~\cite[Theorem~9.23]{Pedg}, and the coderived category
of a locally coherent abelian DG\+category under a certain assumption
of ``finite fp\+projective dimension''~\cite[Theorem~9.39]{Pedg}.
\end{rem}

\subsection{History, II, and conclusion}
 Let me try to explain my original (end of March~1999) motivation
for introducing Definitions~\ref{coderived-cdg-modules-definition}
and~\ref{coderived-cdg-comodules-definition}.
 The following remark, purporting to serve as the explanation, is
written in the notation of twisting cochains, because it is very
convenient; though back in Spring~99 I~was not familiar with
twisting cochains.
 Rather, I~was thinking in terms of Koszul duality functors arising
in the context of bar- and cobar-constructions, such as in
Theorems~\ref{complexes-of-modules-augmented-bar-construction-thm},
\ref{complexes-of-comodules-coaugmented-cobar-thm},
and~\ref{complexes-of-comodules-conilpotent-cobar-thm}.

\begin{rem} \label{motivation-coacyclics-to-contractibles-remark}
 The trouble with Koszul duality functors is that they can take
acyclic complexes or DG\+comodules to nonacyclic ones, as illustrated
by the example in Remark~\ref{divergent-spectral-sequence-remark}.
 By contrast, one can notice that the Koszul duality functors
mentioned above in this survey always take coacyclic objects to
\emph{contractible} ones!

 In full generality, let $B^\cu$ be a CDG\+algebra, $C^\cu$ be
a CDG\+coalgebra, and $\tau\:C^\cu\rarrow B^\cu$ be a twisting cochain.
 Then the CDG\+module $B^\cu\ot^\tau N^\cu$ over $B^\cu$ is
contractible for any coacyclic CDG\+comodule $N^\cu$ over~$C^\cu$.
 Similarly, the CDG\+comodule $C^\cu\ot^\tau M^\cu$ over $C^\cu$ is
contractible for any coacyclic CDG\+module $M^\cu$ over~$B^\cu$.

 Indeed, let us consider, e.~g., a short exact sequence $0\rarrow
K^\cu\rarrow L^\cu\rarrow M^\cu\rarrow0$ of CDG\+comodules
over~$C^\cu$.
 Notice that, for any CDG\+comodule $N^\cu$ over $C^\cu$,
the underlying graded $B$\+module structure of the CDG\+module
$B^\cu\ot^\tau N^\cu$ does not depend either on the differential
or on the graded $C$\+comodule structure on $N^\cu$, but only on
the underlying graded vector space of~$N^\cu$.
 But as a short exact sequence of graded vector spaces, the sequence
$0\rarrow K^\cu\rarrow L^\cu\rarrow M^\cu\rarrow0$ is split.
 Consequenly, the induced short sequence of CDG\+modules
$0\rarrow B^\cu\ot^\tau K^\cu\rarrow B^\cu\ot^\tau L^\cu\rarrow
B^\cu\ot^\tau M^\cu\rarrow0$ over $B^\cu$ is not only exact, but its
underlying short exact sequence of graded $B$\+modules is even
\emph{split exact}.

 Now one can easily see that, for any short exact sequence of
CDG\+modules over $B^\cu$ that is split exact as a short exact
sequence of graded $B$\+modules, the corresponding total CDG\+module
over $B^\cu$ is contractible.
 Obviously, one has $B^\cu\ot^\tau\Tot(K^\cu\to L^\cu\to M^\cu)
=\Tot(B^\cu\ot^\tau K^\cu\to B^\cu\ot^\tau L^\cu\to
B^\cu\ot^\tau M^\cu)$.
 Thus the CDG\+module $B^\cu\ot^\tau\Tot(K^\cu\to L^\cu\to M^\cu)$
over $B^\cu$ is contractible. 
\end{rem}

 While my work on derived categories of the second kind remained
unwritten until 2007 or~09, other people were approaching
the same or almost the same concepts from various angles.
 Filtered quasi-isomorphisms of DG\+coalgebras were introduced
by Hinich~\cite{Hin2} back in 1998, and the same approach was
extended to DG\+comodules by K.~Lef\`evre-Hasegawa~\cite{Lef}
in~2003.
 In the same year, the term \emph{coderived category}, together
with its first definition for DG\+comodules, appeared in
Keller's note~\cite{Kel}.

 The study of the homotopy category of unbounded complexes of
projective modules was initiated by J\o rgensen~\cite{Jor} in~2003,
and for the homotopy category of unbounded complexes of injective
objects in a locally Noetherian Grothendieck abelian category this
was done by Krause~\cite{Kra} in~2004.
 Later this line of research was taken up and developed by Neeman
in~\cite{Neem2,Neem3} and St\!'ov\'\i\v cek in~\cite{Sto}.

 In the meantime, essentially the same constructions
as in my Definitions~\ref{absolute-derived-cdg-modules-definition}\+-%
\ref{contraderived-cdg-modules-definition}, of what they called
``derived categories'' in quotes, were arrived at by
Keller, Lowen, and Nicol\'as~\cite{KLN} in~2006, but their results
remained unavailable to the public until after my seminar talk
in Paris in April~2009.
 The first \texttt{arXiv} version of my research monograph~\cite{Psemi},
where Definitions~\ref{coderived-cdg-modules-definition}\+-%
\ref{contraderived-cdg-modules-definition} were spelled out in
the context of exact categories, appeared in~2007; and the first
\texttt{arXiv} version of the memoir~\cite{Pkoszul}, where these
definitions were presented and studied for CDG\+modules, CDG\+comodules,
and CDG\+contramodules, became available in~2009.

 To summarize, the theory of derived categories of the second kind
identifies three constructions, showing that they produce one and
the same triangulated category.
 Here $C^\cu=(C,d,h)$ is a conilpotent CDG\+coalgebra over a field~$k$:
\begin{enumerate}
\item the coderived category of left CDG\+comodules over $C^\cu$
defined as in the note~\cite[Section~4]{Kel}, using
the cobar-construction in order to pass to DG\+modules over
the DG\+algebra $\Cb^\bu_\gamma(C^\cu)$;
\item the homotopy category of graded-injective left CDG\+comodules
over $C^\cu$ (that is, CDG\+comodules with injective underlying graded
$C$\+comodules), which is the CDG\+comodule\ version of the homotopy
category of complexes of injective modules as
in~\cite[Section~2]{Kra};
\item the coderived category of left CDG\+comodules over $C^\cu$
as per Definition~\ref{coderived-cdg-comodules-definition}, which
is the comodule version of~\cite[Section~3.1\,(A1\+-A2)]{KLN}.
\end{enumerate}

 The equivalence of~(1) and~(3) is a corollary of
Theorem~\ref{augmented-acyclic-twisting-cochain-duality-thm}
(for DG\+coalgebras)
or Theorem~\ref{nonaugmented-acyclic-twisting-cochain-duality-thm}
(in full generality).
 The equivalence of~(2) and~(3) does not depend on the conilpotency
assumption and holds for any CDG\+coalgebra $C^\cu$ over~$k$;
this is Theorem~\ref{coderived-cdg-comodules-thm}.

\subsection{Becker's derived categories of the second kind}
 From our point of view, the most important development in the theory
of derived categories of the second kind after~\cite{Psemi,Pkoszul} was
Becker's paper~\cite{Bec}.
 The specific result which is presumed here,
\cite[Proposition~1.3.6]{Bec}, is stated in the language of abelian
model structures; so we will translate it below into the perhaps more
familiar language of triangulated categories.
 This is an extension of the approach of~\cite{Jor,Kra,Neem2,Neem3}
from complexes of modules into the CDG\+module realm.

 After some hesitation, we dared to put in writing
in~\cite[Remark~9.2]{PS4} the terminology of \emph{derived categories
of the second kind in the sense of Positselski} vs.\
\emph{derived categories of the second kind in the sense of Becker}.
 The definitions in
Sections~\ref{co-contra-derived-cdg-modules-subsecn}\+-%
\ref{coderived-cdg-comodules-subsecn} are those of derived categories
of the second kind in my sense.

 We are following the expositions in~\cite[Sections~7 and~9]{PS4} (in
the case of abelian categories) and~\cite[Sections~4.2\+-4.3]{Pctrl}
(in the case case of CDG\+modules).
 Let $B^\cu$ be a CDG\+ring.
 A left CDG\+module $X^\cu$ over $B^\cu$ is said to be
\emph{contraacyclic in the sense of Becker} if the complex of abelian
groups $\Hom_B^\bu(P^\cu,X^\cu)$ is acyclic for any graded-projective
left CDG\+module $P^\cu$ over~$B^\cu$.
 The thick subcategory of Becker-con\-traacyclic CDG\+modules in
the homotopy category is denoted by $\Ac^\bctr(B^\cu\modl)\subset
\Hot(B^\cu\modl)$; and the related triangulated Verdier quotient
category
$$
 \sD^\bctr(B^\cu\modl)=\Hot(B^\cu\modl)/\Ac^\bctr(B^\cu\modl)
$$
is called the \emph{contraderived category of left CDG\+modules
over $B^\cu$ in the sense of Becker}.

 Dually, a left CDG\+module $Y^\cu$ over $B^\cu$ is said to be
\emph{coacyclic in the sense of Becker} if the complex of abelian
groups $\Hom_B^\bu(Y^\cu,J^\cu)$ is acyclic for any graded-injective
left CDG\+module $J^\cu$ over~$B^\cu$.
 The thick subcategory of Becker-coacyclic CDG\+modules in
the homotopy category is denoted by $\Ac^\bco(B^\cu\modl)\subset
\Hot(B^\cu\modl)$; and the related triangulated Verdier quotient
category
$$
 \sD^\bco(B^\cu\modl)=\Hot(B^\cu\modl)/\Ac^\bco(B^\cu\modl)
$$
is called the \emph{coderived category of left CDG\+modules over
$B^\cu$ in the sense of Becker}.

 The result of~\cite[Theorem~3.5]{Pkoszul} tells that
\emph{co/contraacyclicity in the sense of Positselski implies
co/contraacyclicity in the sense of Becker}.
 Thus Becker's co/contraderived categories are ``nonstrictly smaller''
than mine.

 Becker's result~\cite[Proposition~1.3.6]{Bec} tells (or rather,
implies) that the compositions of triangulated functors
$$
 \Hot(B^\cu\modl_\proj)\lrarrow\Hot(B^\cu\modl)\lrarrow
 \sD^\bctr(B^\cu\modl)
$$
and
$$
 \Hot(B^\cu\modl_\inj)\lrarrow\Hot(B^\cu\modl)\lrarrow
 \sD^\bco(B^\cu\modl)
$$
are triangulated equivalences
$$
 \sD^\bctr(B^\cu\modl)\simeq\Hot(B^\cu\modl_\proj)
 \quad\text{and}\quad
 \sD^\bco(B^\cu\modl)\simeq\Hot(B^\cu\modl_\inj)
$$
for any CDG\+ring~$B^\cu$.
 Thus one can say that ``the Becker analogue of
Theorem~\ref{co-contra-derived-cdg-modules} holds for any CDG\+ring''.

 On the other hand, with~\cite[Proposition~1.3.6]{Bec} in mind, one
can interpret Theorem~\ref{co-contra-derived-cdg-modules} as telling
that \emph{under the assumption of condition~\textup{($*$)}
or~\textup{(${*}{*}$)}, Becker's and Positselski's derived categories
of the second kind agree}.
 It is an \emph{open problem} whether they agree for an arbitrary
CDG\+ring; in fact, this is not known even for complexes of modules
over a ring (see~\cite[Examples~2.5(3) and~2.6(3)]{Pps} for
a discussion).

 Positselski's and Becker's derived categories of the second kind
are known to agree for CDG\+comodules or CDG\+contramodules over
a CDG\+coalgebra over a field (see
Theorem~\ref{coderived-cdg-comodules-thm} above
and Theorem~\ref{contraderived-cdg-contramodules-thm} below).
 It is in this sense that one says that ``the coderived categories
are (known to be) better behaved for comodules, and the contraderived
categories for contramodules''.

 The definitions in~\cite{KLN,Psemi,Pkoszul,PP2,EP,Pps,PSch,Prel,Pedg}
are those of co/contraderived categories in the sense of
Positselski.
 The definitions in~\cite{Jor,Kra,Neem2,Bec,Neem3,Sto,Pctrl,PS4}
are those of co/contraderived categories in the sense of Becker.

\Section{Contramodules over Coalgebras}

 Contramodules are dual analogues of comodules.
 To any coalgebra (over a field), one can assign the (abelian)
categories of left and right contramodules over it, alongside with
the comodule categories.
 The structure theory of contramodules over a coalgebra over a field
is more complicated, but not too much more complicated, than
the structure theory of comodules.
 The historical obscurity/neglect of contramodules is responsible for
the popular misconception that injective objects are much more common
than projective objects in ``naturally appearing'' abelian categories.
 We suggest the survey paper~\cite{Prev} as the standard reference
source on contramodules.

\subsection{The basics of contramodules}
\label{basics-of-contramodules-subsecn}
 The definitions of coalgebras and comodules (as in
Section~\ref{coalgebras-and-comodules-subsecn}) are obtained by
writing down the definitions of algebras and modules in the tensor
notation and inverting the arrows.
 In order to arrive to the definition of a contramodule, it remains
to notice that there are \emph{two} equivalent ways to express
the definition of a module in a tensor (or Hom) notation.

 An algebra over a field~$k$ is a vector space $A$ endowed with
linear maps $m\:A\ot_kA\rarrow A$ (the multiplication map) and
$e\:k\rarrow A$ (the unit map).
 A left $A$\+module $M$ is a $k$\+vector space endowed with a linear
map $n\:A\ot_kM\rarrow M$ (the action map).
 The usual associativity and unitality axioms need to be imposed.
 But there is a different way to spell out the definition of a module.

 A left $A$\+module $M$ is the same thing as a $k$\+vector space
endowed with a $k$\+linear map $p\:M\rarrow\Hom_k(A,M)$.
 This is just an expression of the tensor-Hom adjunction: the maps $n$
and~$p$ are connected by the rule $p(x)(a)=n(a\ot x)\in M$ for all
$x\in M$ and $a\in A$.
 We leave it to the reader to write down the associativity and
unitality axioms for an $A$\+module $M$ in terms of the map~$p$.

 The definition of a \emph{contramodule} over a coalgebra $C$ over
a field~$k$ is obtained by inverting the arrows in the definition of
a module given in terms of the map~$p$.
 We refer to~\cite[Section~2.6]{BBW} for a relevant discussion of
monad-comonad and comonad-monad adjoint pairs.

 Specifically, let $C$ be a (coassociative and counital) coalgebra
over a field~$k$.
 A \emph{left $C$\+contramodule} $P$ is a $k$\+vector space endowed
with a $k$\+linear map of \emph{left $C$\+contraaction}
$$
 \pi\:\Hom_k(C,P)\lrarrow P
$$
satisfying the following \emph{contraassociativity} and
\emph{contraunitality} axioms.
 Firstly, the two compositions
$$
 \Hom_k(C,\Hom_k(C,P))\simeq\Hom_k(C\ot_kC,\>P)\rightrightarrows
 \Hom_k(C,P)\rarrow P
$$
must be equal to each other, $\pi\circ\Hom_k(C,\pi)=
\pi\circ\Hom_k(\mu,P)$.
 Secondly, the composition
$$
 P\rarrow\Hom_k(C,P)\rarrow P
$$
must be equal to the identity map, $\pi\circ\Hom_k(\epsilon,P)=\id_P$
\,\cite[Section~1.1]{Prev}.
 Here, as in Section~\ref{coalgebras-and-comodules-subsecn},
\,$\mu\:C\rarrow C\ot_kC$ and $\epsilon\:C\rarrow k$ are
the comultiplication and counit maps of the coalgebra~$C$.

 In the definition of a \emph{left} contramodule, the identification
$\Hom_k(C,\Hom_k(C,P))\simeq\Hom_k(C\ot_kC,\>P)$ is obtained as
a particular case of the adjunction isomorphism $\Hom_k(U,\Hom_k(V,W))
\simeq\Hom_k(V\ot_kU,\>W)$, where $U$, $V$, and $W$ are arbitrary
$k$\+vector spaces.
 In the definition of a \emph{right} contramodule, the isomorphism
$\Hom_k(V,\Hom_k(U,W))\simeq\Hom_k(V\ot_kU,\>W)$ is presumed.

 For any right $C$\+comodule $N$ and any $k$\+vector space $V$,
the vector space $\Hom_k(N,V)$ has a natural structure of left
$C$\+contramodule (see~\cite[Section~1.2]{Prev} for the details).
 The left $C$\+contramodule $\Hom_k(C,V)$ is called the \emph{free}
left $C$\+contramodule generated by the vector space~$V$.

 For any left $C$\+contramodule $Q$, the $k$\+vector space of all
left $C$\+contramodule maps $\Hom_k(C,V)\rarrow Q$ is naturally
isomorphic to the $k$\+vector space of all $k$\+linear maps
$V\rarrow Q$,
$$
 \Hom^C(\Hom_k(C,V),Q)\simeq\Hom_k(V,Q).
$$
 Hence the ``free contramodule'' terminology.

\subsection{Duality-analogy of comodules and contramodules}
\label{duality-analogy-subsecn}
 The duality-analogy (or ``covariant duality'') between comodules and
contramodules is a remarkable yet unfamiliar phenomenon.
 It can be expressed by saying that the categories of comodules and
contramodules look as though they were opposite categories---up to
a point.
 In fact, \emph{no} category of contramodules is ever the opposite
category to a category of comodules; for example, over the coalgebra
$C=k$, both the categories of $C$\+comodules and $C$\+contramodules
coincide with the category of $k$\+vector spaces (which is certainly
\emph{not} equivalent to its opposite category).
 But the \emph{analogy} is striking.

 Let $C$ be a coalgebra over~$k$.
 Then \emph{both} the category of left $C$\+comodules $C\comodl$ and
the category of left $C$\+contramodules $C\contra$ are abelian.

 The abelian category $C\comodl$ has enough injective objects;
in fact, the injective comodules are precisely the direct summands of
the cofree comodules (as defined in
Section~\ref{coalgebras-and-comodules-subsecn}).
 The abelian category $C\contra$ has enough projective objects;
in fact, the projective contramodules are precisely the direct summands
of the free contramodules (as defined in
Section~\ref{basics-of-contramodules-subsecn})
\,\cite[Section~1.2]{Prev}.

 The forgetful functor $C\comodl\rarrow k\vect$ from the category of
$C$\+comodules to the category of $k$\+vector spaces is exact and
preserves infinite coproducts (but \emph{not} infinite products).
 Consequently, the coproduct functors in $C\comodl$ are exact
(moreover, so are the functors of filtered colimit).

 The forgetful functor $C\contra\rarrow k\vect$ is exact and preserves
infinite products (but \emph{not} infinite coproducts).
 Consequently, the product functors in $C\contra$ are exact (but
the filtered limits are \emph{not} exact in $C\contra$, of course,
as they are not exact already in $k\vect$).

 A wide class of abelian categories to which the categories of comodules
belong is called the class of \emph{Grothendieck categories}.
 A wide class of abelian categories to which the categories of
contramodules belong is called the class of \emph{locally presentable
abelian categories with enough projective objects}.
 The main reference source on the latter class of abelian categories
is the preprint~\cite{Pper} (see also~\cite[Section~6]{PS1}).
 The duality-analogy (or ``covariant duality'') between these two
classes of abelian categories is emphasized in the paper~\cite{PS4}
(cf.~\cite{PS1,PS2}).

\subsection{Comodules and contramodules over power series in
one variable}  \label{power-series-in-one-variable-subsecn}
 The aim of this section is to introduce the intuition of contramodules
as \emph{modules with infinite summation operations}.

 The dual vector space to an infinite-dimensional (discrete) vector
space comes endowed with a natural topology; such topological vector
spaces are known as \emph{linearly compact} or \emph{pseudocompact}
(or \emph{pro-finite-dimensional}).
 In particular, the dual vector spaces to infinite-dimensional
coalgebras are topological algebras.
 In fact, the category of coalgebras is anti-equivalent to the category
of linearly compact topological algebras (see the discussion
in~\cite[first paragraph of Section~1.3]{Prev}). 

 So coalgebras can be identified by the names of their dual topological
algebras.
 In particular, let $kz^*$ be a one-dimensional vector space with
the basis vector~$z^*$.
 Consider the tensor coalgebra $C=\udT(kz^*)$, as defined in
Section~\ref{coalgebra-structure-cobar-subsecn}.
 Then the dual topological algebra to $C$ is the algebra $C^*=k[[z]]$
of formal Taylor power series in the variable~$z$ (cf.\
Remark~\ref{no-isos-of-bar-induced-by-changes-of-connection}).

 Let $C$ be a coalgebra over~$k$.
 As explained in Section~\ref{cdg-coalgebras-subsecn}, any comodule
over a coalgebra $C$ is a module over the algebra~$C^*$.
 Similarly, any contramodule over $C$ is also a module over~$C^*$.
 The composition
$$
 C^*\ot_kP\lrarrow\Hom_k(C,P)\lrarrow P
$$
of the natural injective map $C^*\ot_k P\rarrow\Hom_k(C,P)$ with
the contraaction map $\pi\:\Hom_k(C,P)\rarrow P$ defines an action
of $C^*$ in~$P$.

 We would like to describe comodules and contramodules over
the coalgebra $C=\udT(kz^*)$.
 Namely, a $C$\+comodule $M$ is the same thing as a $k[z]$\+module with
a \emph{locally nilpotent} action of the operator~$z$.
 This means that for every element $m\in M$ there exists an integer
$n\ge1$ such that $z^nm=0$ in~$M$.
 The coaction map $M\rarrow C\ot_kM$ is given in terms of
the $k[z]$\+module structure on $M$ by the formula
$$
 m\longmapsto\sum\nolimits_{n=0}^\infty z^*{}^{\ot n}\ot z^nm
 \qquad\text{for all $m\in M$},
$$
and the condition of local nilpotence of the action of~$z$ in $M$
comes from the condition that the sum in the right-hand side must be
finite (if it is to define an element of the tensor product $C\ot_kM$).

 A $C$\+contramodule $P$ is the same thing as a $k$\+vector space with
the following \emph{$z$\+power infinite summation operation}.
 To every sequence of elements $p_0$, $p_1$, $p_2$~\dots~$\in P$,
an element denoted formally by $\sum_{n=0}^\infty z^np_n\in P$ is
assigned.
 The following axioms must be satisfied:
$$
 \sum\nolimits_{n=0}^\infty z^n(ap_n+bq_n)=
 a\sum\nolimits_{n=0}^\infty z^np_n +
 b\sum\nolimits_{n=0}^\infty z^nq_n
 \quad\text{for all $p_n$, $q_n\in P$, \ $a$, $b\in k$}
$$
(\emph{linearity}),
$$
 \sum\nolimits_{n=0}^\infty z^np_n=p_0
 \quad\text{if $p_1=p_2=\dotsb=0$ in $P$}
$$
(\emph{contraunitality}), and
$$
 \sum\nolimits_{i=0}^\infty z^i\sum\nolimits_{j=0}^\infty z^jp_{ij}
 = \sum\nolimits_{n=0}^\infty z^n\sum\nolimits_{i,j\ge0}^{i+j=n}p_{ij}
$$
for all $p_{ij}\in P$ (\emph{contraassociativity}).
 In the latter equation, the first three summation signs denote
the $z$\+power infinite summation operation, while the forth one means
a finite sum of elements in~$P$ \,\cite[Section~1.3]{Prev}.

 The intuition of contramodules as modules with infinite summation
operations is axiomatized in the concepts of contramodules over
a commutative ring with a finitely generated
ideal~\cite[Sections~3\+-4]{Pcta}, \cite[Section~1]{Pdc},
contramodules over topological rings and topological associative
algebras~\cite[Sections~2.1 and~2.3]{Prev}, \cite[Section~6]{PS1},
contramodules over topological Lie algebras~\cite[Sections~1.7
and~2.4]{Prev}, etc.

 One special property of the coalgebra $C=\udT(kz^*)$ is that
a $C$\+contramodule structure can be uniquely recovered from
its underlying $k[[z]]$\+module structure.
 In fact, the forgetful functor to the category of modules over
the polynomial algebra $C\contra\rarrow k[z]\modl$ is already fully
faithful~\cite[Remark~A.1.1]{Psemi}, \cite[Theorem~3.3]{Pcta},
\cite[Theorem~B.1.1]{Pweak}.
 This is \emph{not} true for an arbitrary coalgebra $C$, of course;
still this is true for any \emph{finitely cogenerated} conilpotent
coalgebra~$C$ \,\cite[Theorem~2.1]{Psm}.
 See~\cite[Section~3.8]{Prev} for a discussion of far-reaching
generalizations.

\subsection{Nonseparated contramodules}
\label{nonseparated-contramodules-subsecn}
 The duality-analogy between comodules and contramodules has some
limitations, though.
 The examples of \emph{nonseparated contramodules} demonstrate
such limitations.

 Let $f\:C\rarrow D$ be a homomorphism of coalgebras.
 Then, as mentioned in
Section~\ref{brief-remarks-about-coalgebras-subsecn}, every
$C$\+comodule acquires a $D$\+comodule structure.
 Similarly, every $C$\+contramodule acquires a $D$\+contramodule
structure.
 The resulting functor $C\contra\rarrow D\contra$ is called
the \emph{contrarestriction of scalars} with respect to~$f$.

 In particular, let $E\subset C$ be a subcoalgebra.
 Then the corestriction and contrarestriction of scalars are
fully faithful functors
\begin{align*}
 E\comodl &\lrarrow C\comodl, \\
 E\contra &\lrarrow C\contra.
\end{align*}

 The functor $E\comodl\rarrow C\comodl$ has a right adjoint functor
assigning to every left $C$\+comodule $M$ its maximal subcomodule
${}_EM\subset M$ whose $C$\+comodule structure comes from
an $E$\+comodule structure.
 The $E$\+comodule ${}_EM$ can be computed as the kernel of
the composition of maps $M\rarrow C\ot_k M\rarrow C/E\ot_kM$.

 The functor $E\contra\rarrow C\contra$ has a left adjoint functor
assigning to every left $C$\+contramodule $P$ its maximal
quotient contramodule ${}^E\!P$ whose $C$\+contramodule structure
comes from an $E$\+contramodule structure.
 The $E$\+contramodule ${}^E\!P$ can be computed as the cokernel of
the composition of maps $\Hom_k(C/E,P)\rarrow\Hom_k(C,P)\rarrow P$.

 According to Lemma~\ref{coalgebras-comodules-loc-fin-dim-lemma}, any
$C$\+comodule $M$ is the union of its finite-dimensional subcomodules,
and each of these is a comodule over a finite-dimensional subcoalgebra
in~$C$.
 Therefore, one has
$$
 M=\bigcup\nolimits_{E\subset C} {}_EM,
$$
where the union is taken over all the finite-dimensional subcoalgebras
$E\subset C$.

 The dual assertion for contramodules in \emph{not} true.
 Specifically, for any $C$\+con\-tramodule $P$ one can consider
the natural map
\begin{equation} \label{map-to-inverse-limit-over-E}
 P\lrarrow\varprojlim\nolimits_{E\subset C} {}^E\!P,
\end{equation}
where the inverse limit is taken over all the finite-dimensional
subcoalgebras $E\subset C$.
 The map~\eqref{map-to-inverse-limit-over-E} is \emph{not} injective
in general.

 In fact, there exists an infinite-dimensional coalgebra $C$ and
a \emph{two-dimensional} $C$\+contramodule $P$ for which
$\varprojlim_{E\subset C} {}^E\!P$ is a one-dimensional
$C$\+contramodule.
 So the $C$\+contramodule structure on $P$ does \emph{not} come from
a contramodule structure over any finite-dimensional subcoalgebra
in $C$, and the intersection of the kernels of the maps $P\rarrow
{}^E\!P$ is a one-dimensional subcontramodule in the two-dimensional
contramodule~$P$ \,\cite[Section~A.1.2]{Psemi}.

 Let us now consider the example of the coalgebra $C=\udT(kz^*)$ from
Section~\ref{power-series-in-one-variable-subsecn}.
 Then the map~\eqref{map-to-inverse-limit-over-E} is simply the
natural map from $P$ to the $z$\+adic completion of~$P$,
\begin{equation} \label{map-to-z-adic-completion}
 P\lrarrow\varprojlim\nolimits_{n\ge1} P/z^nP.
\end{equation}
 The map~\eqref{map-to-z-adic-completion} is known to be
\emph{surjective} for any $C$\+contramodule~$P$
\,\cite[Lemma~A.2.3]{Psemi}, \cite[Theorems~3.3 and~5.6]{Pcta},
\cite[Section~1]{Pdc}.
 But there exist $C$\+contramodules $P$ for which 
the map~\eqref{map-to-z-adic-completion} is \emph{not injective}.
 In other words, \emph{any $C$\+contramodule is $z$\+adically
complete, but it need not be $z$\+adically separated}.
 The now-classical counterexample can be found
in~\cite[Example~2.5]{Sim}, \cite[Section~A.1.1]{Psemi},
\cite[Example~3.20]{Yek}, \cite[Example~2.7(1)]{Pcta},
\cite[Section~1.5]{Prev}.

 In fact, the mentioned counterexample from~\cite{Sim,Psemi,Yek}
is a $C$\+contramodule $P$ with the following property.
 There exists a sequence of elements $p_0$, $p_1$, $p_2$,~\dots~$\in P$
such that $z^np_n=0$ for every $n\ge0$, but
$\sum_{n=0}^\infty z^np_n\ne0$ in~$P$.
 Here we use the $z$\+power infinite summation operation notation
from Section~\ref{power-series-in-one-variable-subsecn}.
 So \emph{the contramodule infinite summation operations cannot be
interpreted as any kind of limits of finite partial
sums}~\cite[Preface]{Psemi},
\cite[Section~0.2 of the Introduction]{Prev}.

\subsection{Contramodule Nakayama lemma and irreducible contramodules}
 As explained in Section~\ref{nonseparated-contramodules-subsecn},
the map~\eqref{map-to-inverse-limit-over-E} need not be injective.
 However, this map \emph{never} vanishes (for a nonzero
$C$\+contramodule~$P$).

 Dropping the contraunitality axiom from the definition of
a contramodule in Section~\ref{basics-of-contramodules-subsecn}, one
obtains the definition of a contramodule over a noncounital
coalgebra~$D$.
 The following result is called the \emph{contramodule Nakayama lemma}
(for contramodules over coalgebras over a field).

\begin{lem} \label{nakayama-lemma}
\textup{(a)} Let $D$ be a conilpotent noncounital coalgebra
(as defined in Section~\ref{conilpotent-coalgebras-subsecn}) and $P$
be a nonzero $D$\+contramodule.
 Then the contraaction map\/ $\pi\:\Hom_k(D,P)\rarrow P$ is \emph{not}
surjective. \par
\textup{(b)} Let $C$ be a (counital) coalgebra and $P$ be a nonzero
left $C$\+contramodule.
 Then there exists a finite-dimensional subcoalgebra $E\subset C$
such that ${}^E\!P\ne0$.
\end{lem}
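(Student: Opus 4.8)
The plan is to treat part~(a) as the essential content---a contramodule incarnation of Nakayama's lemma---and then derive part~(b) from it by a d\'evissage over the coradical together with local finite-dimensionality. For part~(a) I would work with the canonical increasing filtration $F_\bullet D$ on the conilpotent noncounital coalgebra $D$ from Section~\ref{conilpotent-coalgebras-subsecn}, where $F_0D=0$, \ $F_nD=\ker(\mu^{(n)}\:D\rarrow D^{\ot n+1})$, \ $D=\bigcup_{n\ge1}F_nD$, and $\mu(F_nD)\subset\sum_{p+q=n}F_pD\ot_kF_qD$ with all summands having $p,q\ge1$. Arguing by contradiction, I would assume the contraaction $\pi\:\Hom_k(D,P)\rarrow P$ is surjective and aim to prove $P=0$.

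The mechanical engine is the following. Since we work over a field, $\Hom_k(V,-)$ is exact for every $V$, so surjectivity of $\pi$ forces surjectivity of $\Hom_k(V,\pi)\:\Hom_k(V,\Hom_k(D,P))\rarrow\Hom_k(V,P)$. Taking $V=D^{\ot n}$ and combining this with the contraassociativity axiom $\pi\circ\Hom_k(D,\pi)=\pi\circ\Hom_k(\mu,P)$ and the identity $\mu^{(n+1)}=(\mu^{(n)}\ot\id_D)\circ\mu$, I would show by induction on~$n$ that every $p\in P$ can be written as $p=\pi(g_n\circ\mu^{(n)})$ for some $g_n\:D^{\ot n+1}\rarrow P$. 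As $g_n\circ\mu^{(n)}$ annihilates $F_nD=\ker\mu^{(n)}$, this yields $\pi\bigl(\Hom_k(D/F_nD,\>P)\bigr)=P$ for \emph{every} $n\ge1$. A sanity anchor for why conilpotency is doing the work: already on the primitive layer $F_1D=\ker\mu$ the comultiplication vanishes, so a contramodule over $F_1D$ has a square-zero contraaction, and there surjectivity of the contraaction immediately forces vanishing.

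The hard part will be upgrading the family ``$\pi(\Hom_k(D/F_nD,P))=P$ for all finite~$n$'' to the conclusion $P=0$. One cannot merely intersect over~$n$: although $\bigcap_n\Hom_k(D/F_nD,P)=\Hom_k(D/\bigcup_nF_nD,\>P)=0$, the set-image of $\pi$ does not commute with this intersection, and a naive telescoping of preimages $f^{(n)}$ (with $f^{(n)}$ vanishing on $F_nD$ and $\pi(f^{(n)})=p$) merely reproduces the \emph{nonseparatedness} phenomenon of Section~\ref{nonseparated-contramodules-subsecn} instead of a contradiction. The genuine contramodule structure must be invoked: the successive corrections $e_n=f^{(n+1)}-f^{(n)}$ should be assembled not as a pointwise limit in $\Hom_k(D,P)$ but into a single auxiliary map in $\Hom_k(D\ot_kD,\>P)=\Hom_k(D,\Hom_k(D,P))$, so that the index shift is absorbed by the comultiplication and contraassociativity makes the collapse \emph{exact}. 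This is precisely the completeness/infinite-summation input that distinguishes contramodules from the formal dual of comodules (compare the surjectivity of the completion map~\eqref{map-to-z-adic-completion}); pinning down this assembly, rather than the preceding bookkeeping, is where the real work lies.

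Finally, for part~(b) I would reduce to~(a). Let $C^\ss\subset C$ be the coradical; by the Proposition in Section~\ref{cosemisimple-coalgebras-and-coradical-subsecn} the quotient $D=C/C^\ss$ is conilpotent noncounital, and the contrarestriction picture of Section~\ref{nonseparated-contramodules-subsecn} identifies ${}^{C^\ss}\!P$ with $P$ modulo the image of the reduced $D$-contraaction $\Hom_k(D,P)\hookrightarrow\Hom_k(C,P)\overset\pi\rarrow P$. Applying part~(a) to this $D$-contramodule structure on $P$ shows that for $P\ne0$ the maximal quotient ${}^{C^\ss}\!P$ is nonzero. Since ${}^{C^\ss}\!P$ is a contramodule over the cosemisimple coalgebra $C^\ss$, which is a direct sum of finite-dimensional cosimple subcoalgebras, a nonzero element already factors through one finite-dimensional summand; together with Lemma~\ref{coalgebras-comodules-loc-fin-dim-lemma} this would locate a finite-dimensional subcoalgebra $E\subset C$ with ${}^E\!P\ne0$, as required.
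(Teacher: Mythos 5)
Your deduction of part~(b) from part~(a) is sound and is exactly the route the paper takes: pass to the conilpotent noncounital quotient $D=C/C^\ss$, observe that ${}^{C^\ss}\!P$ is the cokernel of the induced $D$\+contraaction $\Hom_k(D,P)\rarrow P$, apply~(a) to get ${}^{C^\ss}\!P\ne0$, and then use the decomposition $C^\ss=\bigoplus_\alpha E_\alpha$ into finite-dimensional cosimple summands. One small imprecision there: a nonzero element of a contramodule over $\bigoplus_\alpha E_\alpha$ does not ``factor through one summand''---the underlying vector space is the \emph{product} $\prod_\alpha{}^{E_\alpha}\!P$, and what you need is that a nonzero object of the product category has a nonzero component; this is the content of the cited description of contramodules over infinite direct sums of coalgebras (\cite[Lemma~A.2.2]{Psemi}), which the paper explicitly invokes at this point.

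Part~(a), however, contains a genuine gap, and you have located it yourself. The bookkeeping you do carry out---surjectivity of $\Hom_k(V,\pi)$ for all $V$, the induction via contraassociativity giving $p=\pi(g_n\circ\mu^{(n)})$ with $g_n\circ\mu^{(n)}$ vanishing on $F_nD$---is the easy half, and your observation that one cannot simply intersect over~$n$ is correct. But the step you defer is the entire substance of the contramodule Nakayama lemma. The actual argument (this is \cite[Lemma~A.2.1]{Psemi}, which the paper cites rather than reproves) requires more than a gesture: one must choose the lifts \emph{compatibly}, i.e.\ take $f_{n+1}\in\Hom_k(D^{\ot n+1},P)$ with $\Hom_k(D^{\ot n},\pi)(f_{n+1})=f_n$ (not merely any preimage of~$p$ under the iterated contraaction, or the telescoping fails); then use conilpotency to see that the relevant infinite sum of the maps $f_n\circ\mu^{(n-1)}$ is \emph{pointwise finite} and hence defines an honest element of $\Hom_k(D,P)$ (respectively of $\Hom_k(D\ot_kD,P)$); and finally apply the contraassociativity identity $\pi\circ\Hom_k(D,\pi)=\pi\circ\Hom_k(\mu,P)$ to that single element to produce the telescoping cancellation yielding $p=0$. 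None of these three steps is routine---the compatibility of the lifts and the pointwise-finiteness are exactly where conilpotency and the contramodule axioms interact---so as written the proposal establishes only the preliminary reductions and not the lemma itself.
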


\begin{proof}
 Part~(a) is~\cite[Lemma~A.2.1]{Psemi} (see~\cite[Lemma~1.3.1]{Pweak},
\cite[Lemmas~2.1 and~3.22]{Prev} and the references therein
for generalizations).
 To deduce part~(b) from part~(a), one has to use the structure theory
of coalgebras, or more specifically, the fact that the maximal
cosemisimple subcoalgebra $C^\ss\subset C$ is a direct sum of
finite-dimensional (cosimple) coalgebras and the quotient coalgebra
without counit $D=C/C^\ss$ is conilpotent
(see Section~\ref{cosemisimple-coalgebras-and-coradical-subsecn})
together with a description of contramodules over an infinite direct
sum of coalgebras~\cite[Lemma~A.2.2]{Psemi}.
\end{proof}

 It follows from Lemma~\ref{nakayama-lemma}(b) that any 
\emph{irreducible} $C$\+contramodule is finite-dimen\-sional (moreover,
it is a contramodule over a finite-dimensional subcoalgebra in~$C$).

 In fact, while there is, of course, no way to define a contramodule
structure on an arbitrary $C$\+comodule, there does exist a natural
way to define a left $C$\+contramodule structure on any
\emph{finite-dimensional} left
$C$\+comodule~\cite[Section~A.1.2]{Psemi}.
 Indeed, the functor $N\longmapsto N^*$ is an anti-equivalence between
the categories of finite-dimensional right and left $C$\+comodules,
and on the other hand, the dual vector space to a right $C$\+comodule
is a left $C$\+contramodule (as mentioned in
Section~\ref{basics-of-contramodules-subsecn}).
 This produces a fully faithful covariant functor from the category of
finite-dimensional left $C$\+comodules to the category of
finite-dimensional left $C$\+contramodules; a finite-dimensional left
$C$\+contramodule $P$ belongs to the essential image of this functor
if and only if its contramodule structure comes from
an $E$\+contramodule structure for some finite-dimensional subcoalgebra
$E\subset C$ (that is, $P={}^E\!P$).

 This covariant fully faithful functor restricts to a bijection between
the isomorphism classes of irreducible left $C$\+comodules and
irreducible left $C$\+contramodules.
 Both the sets of isomorphism classes of irreducibles are also naturally
bijective to the set of all cosimple subcoalgebras in $C$, of course.

\subsection{Contratensor product} \label{contratensor-product-subsecn}
 The formalisms of tensor product and Hom-type operations play a key
role in the book~\cite{Psemi} and the memoir~\cite{Pweak}.
 In the ring and module theory, the formalism of tensor/Hom operations
has only two such operations, viz., the tensor product and Hom of
(bi)modules.
 In the context of coalgebras, contramodules, etc., the tensor/Hom
operations are more numerous.

 In particular, the definition of the \emph{cotensor product} of 
comodules goes back, at least, to the paper~\cite{EM2}.
 On the other hand, the construction of the \emph{contratensor product}
of a comodule and a contramodule seems to have first appeared
in~\cite{Plet,Psemi}.

 Let $C$ be a coalgebra, $N$ be a right $C$\+comodule, and $P$ be a left
$C$\+contramodule.
 The \emph{contratensor product} $N\ocn_CP$ is a $k$\+vector space
constructed as the cokernel of the difference of two natural maps
$$
 N\ot_k\Hom_k(C,P)\rightrightarrows N\ot_k P.
$$
 Here the first map $N\ot_k\Hom_k(C,P)\rarrow N\ot_kP$ is induced by
the contraaction map $\pi\:\Hom_k(C,P)\rarrow P$, while the second
map is the composition $N\ot_k\Hom_k(C,P)\rarrow N\ot_kC\ot_k\Hom_k(C,P)
\rarrow N\ot_k P$ of the map induced by the coaction map
$\nu\:N\rarrow N\ot_kC$ and the map induced by the natural evaluation
map $C\ot_k\Hom_k(C,P)\rarrow P$ \,\cite[Section~3.1]{Prev}.

 For any right $C$\+comodule $N$ and any $k$\+vector space~$V$,
there is a natural isomorphism of vector spaces
$$
 N\ocn_C\Hom_k(C,V)\simeq N\ot_k V.
$$
 For any right $C$\+comodule $N$, any left $C$\+contramodule $P$, and
any vector space~$V$, there is a natural isomorphism of vector spaces
$$
 \Hom^C(P,\Hom_k(N,V))\simeq\Hom_k(N\ocn_CP,\>V),
$$
where, as in Section~\ref{basics-of-contramodules-subsecn},
\,$\Hom^C$ denotes the space of morphisms in the category of
left $C$\+contramodules.

 For a discussion of a generalization of the construction of
contratensor product to topological rings, see~\cite[Section~7.2]{PS1}
(some further discussion and references can be found
in~\cite[end of Section~3.3]{Prev}).

\subsection{Underived co-contra correspondence}
\label{underived-co-contra-subsecn}
 Let $C$ be a (coassociative, counital) coalgebra over a field~$k$.
 As mentioned in Section~\ref{duality-analogy-subsecn}, there are
enough injective objects in the abelian category of left $C$\+comodules,
and these injectives are precisely the direct summands of the cofree
left $C$\+comodules $C\ot_kV$ (where $V$ ranges over the $k$\+vector
spaces).
 Similarly, there are enough projective objects in the abelian
category of left $C$\+contramodules, and these projectives are precisely
the direct summands of the free left $C$\+contramodules $\Hom_k(C,V)$.

 It turns out that there is a natural equivalence between the additive
categories of injective left $C$\+comodules and projective left
$C$\+contramodules,
\begin{equation} \label{underived-co-contra-equivalence}
 C\comodl_\inj\simeq C\contra_\proj.
\end{equation}

 The simplest way to construct the category
equivalence~\eqref{underived-co-contra-equivalence} is to define it on
cofree comodules and free contramodules by the rule
$C\ot_k V\longleftrightarrow\Hom_k(C,V)$.
 Then one has to compute that the groups of morphisms agree,
\begin{multline*}
 \Hom_C(C\ot_k U,\>C\ot_kV)\simeq\Hom_k(C\ot_kU,\>V) \\ \simeq
 \Hom_k(U,\Hom_k(C,V))\simeq\Hom^C(\Hom_k(C,U),\,\Hom_k(C,V))
\end{multline*}
for any $k$\+vector spaces $U$ and $V$, in view of the descriptions
of morphisms into a cofree comodule and from a free contramodule
in Sections~\ref{coalgebras-and-comodules-subsecn}
and~\ref{basics-of-contramodules-subsecn}.
 A category-theoretic version of this argument (an equivalence of
Kleisli categories for adjoint comonad-monad pairs) is discussed
in~\cite[Section~2.6(2)]{BBW}.

 Another way to obtain the additive category
equivalence~\eqref{underived-co-contra-equivalence} is to construct
a pair of adjoint functors
\begin{equation} \label{underived-co-contra-adjoint-pair}
 \Hom_C(C,{-})\:C\comodl \leftrightarrows C\contra
 \,:\!C\ocn_C{-}
\end{equation}
and check that it restricts to an equivalence between the full
subcategories of injective comodules and projective contramodules.

 Here the right adjoint functor $\Hom_C(C,{-})$ is the Hom functor
in the comodule category $C\comodl$.
 Taking the Hom eats up the left $C$\+comodule structure on $C$, but
the right $C$\+comodule structure on $C$ stays and induces a left
$C$\+contramodule structure on the Hom space, essentially as
explained in Section~\ref{basics-of-contramodules-subsecn}.
 The left adjoint functor $C\ocn_C{-}$ is the functor of contratensor
product.
 Taking the contratensor product consumes the right $C$\+comodule
structure on $C$, but the left $C$\+comodule structure on $C$ remains
and induces a left $C$\+comodule structure on the contratensor
product (cf.~\cite[Sections~1.2 and~3.1]{Prev}).

 For much more advanced discussions of the philosophy of
comodule-contramodule correspondence, see the introductions to
the papers~\cite{Pmgm,Pps}.
 One important early work related to the co-contra correspondence
is~\cite[Section~4]{IK}.
 From the contemporary point of view, the co-contra correspondence
can be thought of as a particular case of the \emph{tilting-cotilting
correspondence}~\cite{PS1,PS2}.
 A discussion of underived versions of the co-contra correspondence
containing various generalizations of the results above in this section
can be found in~\cite[Sections~3.4\+-3.6]{Prev}.

\subsection{History of contramodules}
 The notion of a contramodule (over a coalgebra over a commutative
ring) was invented by Eilenberg and Moore and defined \emph{on par}
with comodules in the 1965 memoir~\cite[Section~III.5]{EM1}.
 Besides~\cite{EM1}, two other papers on contramodules were published
in 1965 and 1970, among them the rather remarkable paper~\cite{Bar}.
 Then contramodules were all but forgotten for 30 to 40 years.

 In the meantime, what came to be known as ``Ext-$p$-complete'' or
``weakly-$l$-complete'' abelian groups~\cite{BouKan,Jan} or (in a later
terminology) ``cohomologically $I$\+adically complete'' or
``derived $I$\+adically complete modules'' (for a finitely generated
ideal $I$ in a commutative ring)~\cite{PSY} were defined and studied
by authors who remained apparently completely unaware of
the connection with Eilenberg and Moore's contramodules.
 We refer to the presentation~\cite{Psli3} for a detailed discussion
of this part of the history.

 As described in Section~\ref{second-kind-reminiscences-subsecn},
in Spring~1999 the present author went to the IAS library to look
for prior literature relevant to my March--April~99 discovery of
derived categories of the second kind and their role in derived
nonhomogeneous Koszul duality.
 Among other things, I~found the definition of a contramodule in
a paper copy of the memoir~\cite{EM1} which the IAS library had.

 In retrospect, I~could have realized already then that contramodules
form an ideal context for my definition of the contraderived category
(which I~already had in Spring~99).
 But I~didn't realize that, though I~was a bit unhappy about
the definition of the contraderived category (called ``the derived
category~$\sD''$\,'' at the time) having too little use.
 The definition of a contramodule felt strange.
 As many people had before and would later, I~remained unimpressed by
the definition of a contramodule in~1999, though I~kept a recollection
of it at the bottom of my memory.

 My view of contramodules changed in Summer~2000 when I~was working
on the first (2000) part of the ``Summer letters on semi-infinite
homological algebra''~\cite{Plet}.
 It was then that I~recalled the definition of a contramodule and
realized that contramodules are suited for use as the coefficients
for semi-infinite \emph{cohomology} (as opposed to semi-infinite
\emph{homology}) theories.
 The concept of comodule-contramodule correspondence was also arrived
at in~2000--02 and reflected in~\cite{Plet}.
 What is now called the contraderived category of contramodules
(together with its mixed counterpart, ``the semicontraderived category
of semicontramodules'') was first considered in~\cite{Plet}.

 The realization that derived nonhomogeneous Koszul duality has
a contramodule side, and should be formulated in the form of
a ``triality'' with the comodule side, the contramodule side,
and the co-contra correspondence forming three sides of a triangular
diagram of triangulated category equivalences, came to me by
mid-'00s.

 The letters~\cite{Plet} were eventually posted to the internet, where
some people could read them, including in particular T.~Brzezi\'nski
(as transliterated Russian is somewhat similar to Polish).
 This influenced his thinking, as exemplified by publications
such as~\cite{BBW}.
 My own account of contramodule theory first appeared in the August
2007 preprint version of the book~\cite{Psemi} and subsequenly in
the memoir~\cite{Pkoszul}.

 The September~2007 presentation of Brzezi\'nski on
contramodules~\cite{Bsli} featured the following statistics of
MathSciNet search hits:
\begin{itemize}
\item comodules = 797;
\item contramodules = 3.
\end{itemize}
 As I~am typing these lines (on July~19, 2022), the current statistics
of MathSciNet search hits (``Anywhere=(${-}$)'') is
\begin{itemize}
\item comodules = 1457;
\item contramodules = 30.
\end{itemize}
 From our point of view, contramodules (over coalgebras and corings)
should be treated on par and in parallel with comodules
in most contexts.

\Section{CDG-Contramodules}

 The contramodule side of the derived Koszul duality complements
the comodule side.
 Together with the derived comodule-contramodule correspondence,
they form a commutative triangle diagram of triangulated category
equivalences called the \emph{Koszul triality}.

\subsection{Graded contramodules}
 Let $C=\bigoplus_{n\in\boZ}C^n$ be a graded coalgebra.
 The definition of a \emph{graded $C$\+contramodule} is easily
formulated \emph{by analogy} with the definition of an ungraded
contramodule over an ungraded coalgebra in
Section~\ref{basics-of-contramodules-subsecn}.
 All one needs to do is to replace the usual ungraded notions of
the tensor product and Hom spaces with the ones intrinsic to the world
of graded vector spaces.

 So a \emph{graded left $C$\+contramodule} is a graded $k$\+vector
space $P$ endowed with a \emph{left contraaction map}
$\pi\:\Hom_k(C,P)\rarrow P$, where $\Hom_k(C,P)$ denotes the graded
Hom space (of homogeneous $k$\+linear maps $C\rarrow P$ of various
degrees $n\in\boZ$) and $\pi$~is a morphism of graded vector spaces
(i.~e., a homogeneous linear map of degree~$0$).
 The same contraassociativity and contraunitality axioms as in
Section~\ref{basics-of-contramodules-subsecn} are imposed.

 However, one can easily get confused trying to define what it means
to \emph{specify a grading} on a given ungraded $C$\+contramodule.
 The problem is that the conventional thinking of a grading as a direct
sum decomposition that can be either ignored or taken into account at
one's will is insufficient.
 Rather, one needs to think of the \emph{category} of graded vector
spaces and the forgetful functor from it to the category of
ungraded ones.
 The point is that there is \emph{more than one} such forgetful functor.
 \emph{Two} of them are important for the contramodule theory.

 A suitable formalism was suggested in~\cite[Section~11.1.1]{Psemi} and
then again in~\cite[Section~2.1]{Prel}.
 To a graded vector space $V$ one can assign \emph{two} ungraded
vector spaces, viz., $\Sigma V=\bigoplus_{n\in\boZ} V^n$ and
$\Pi V=\prod_{n\in\boZ} V^n$.
 Further possibilities include the \emph{Laurent summations}
$\bigoplus_{n<0}V^n\oplus\prod_{n\ge0} V^n$ and
$\prod_{n\le0}V^n\oplus\bigoplus_{n>0}V^n$ (cf.\ the beginning of
Section~\ref{co-contra-derived-cdg-modules-subsecn}), but for our
purposes we do not need these options.

 Given a graded module (or even a nonpositively or nonnegatively graded
ring), one can \emph{choose} between producing its underlying ungraded
module (respectively, ring) by taking the direct sum \emph{or}
the direct product of the grading components, that is the forgetful
functor $\Sigma$ or~$\Pi$.
 For coalgebras, comodules, and contramodules, there is no choice.
 The underlying ungraded coalgebra or comodule is constructed by
applying the direct sum forgetful functor~$\Sigma$.
 The underlying ungraded contramodule is obtained by applying
the direct product forgetful functor~$\Pi$.

 The reason for such preferences in respect to the functors of
forgetting the grading lies in the natural isomorphisms
\begin{gather*}
 \Sigma(U\ot_kV)\simeq \Sigma U\ot_k \Sigma V, \\
 \Pi\Hom_k(V,W)\simeq \Hom_k(\Sigma V,\Pi W),
\end{gather*}
which hold for all graded vector spaces $U$, $V$, and~$W$.
 Here the tensor product/Hom spaces in the left-hand sides of
the formulas are taken in the realm of graded vector spaces, while
in the right-hand sides of the formulas the tensor product/Hom
functors are applied to ungraded vector spaces.

 So, if $C$ is a graded coalgebra, $M$ is a graded $C$\+comodule, and
$P$ is a graded $C$\+contramodule, then $\Sigma M$ is an ungraded
$\Sigma C$\+comodule, while $\Pi P$ is an ungraded
$\Sigma C$\+contramodule.
 Thus specifying a grading on an ungraded $C$\+contramodule $Q$ means
decomposing $Q$ into a \emph{direct product} of its grading
components~\cite[Remark~2.2]{Pkoszul}.
 We refer to~\cite[Section and Remark~2.2]{Pkoszul} for a discussion
of the related sign rules.

\subsection{Definition of CDG-contramodules}
\label{cdg-contramodules-subsecn}
 Given a DG\+coalgebra $C^\bu$, one can spell out the notion of
a (\emph{left}) \emph{DG\+contramodule} $P^\bu$ over $C^\bu$ similarly
to the definition of a DG\+comodule in
Section~\ref{dg-coalgebras-subsecn}.
 All one needs to do is to transfer the definition of a contramodule
from Section~\ref{basics-of-contramodules-subsecn} to the world of
complexes of vector spaces.
 We skip the details, which can be found in~\cite[Section~2.3]{Pkoszul},
and pass to the more general case of CDG\+contramodules.

 Continuing the exposition from Section~\ref{cdg-coalgebras-subsecn},
we use the language of precomplexes of vector spaces.
 Given two precomplexes $V^\cu=(V,d_V)$ and $W^\cu=(W,d_W)$,
the \emph{Hom precomplex} $\Hom^\cu_k(V^\cu,W^\cu)$ is defined as
explained in Section~\ref{twisting-cochains-curved-subsecn}.

 Now we can define the notion of a \emph{contraderivation} of
a contramodule.
 Let $C$ be a graded coalgebra endowed with an odd coderivation
$d\:C\rarrow C$ of degree~$1$, as defined in
Section~\ref{cdg-coalgebras-subsecn}; and let $P$ be a graded
left $C$\+contramodule.
 Then an \emph{odd contraderivation} on $P$ \emph{compatible with}
the coderivation~$d$ on $C$ is a $k$\+linear map $d_P\:P\rarrow P$
of degree~$1$ such that the left contraaction map
$\pi\:\Hom_k(C,P)\rarrow P$ is a morphism of precomplexes.
 Here the differential~$d$ on $\Hom_k(C,P)$ is defined by the rule
from Section~\ref{twisting-cochains-curved-subsecn} in terms of
the differentials~$d$ on $C$ and~$d_P$ on~$P$.

 Furthermore, as explained in
Section~\ref{power-series-in-one-variable-subsecn}, any contramodule
$P$ over a coalgebra $C$ is naturally a module over the algebra~$C^*$.
 This construction has an obvious graded version: any graded
contramodule $P$ over a graded coalgebra $C$ is naturally endowed
with a graded module structure over the graded dual vector space $C^*$
to $C$, with the natural structure of graded algebra on~$C^*$.
 Similarly to the discussion in Section~\ref{cdg-coalgebras-subsecn},
one has to choose between two opposite ways of defining
the multiplication on~$C^*$.
 We prefer to choose the sides so that left $C$\+comodules become
left $C^*$\+modules; then left $C$\+contramodules also become left
$C^*$\+modules.

 Given a graded left contramodule $P$ over a graded coalgebra $C$,
a homogeneous element $p\in P$, and a homogeneous linear function
$b\:C\rarrow k$ we let $b*p\in P$ denote the result of the left
action of~$b$ on~$p$.
 We refer to~\cite[Section~4.1]{Pkoszul} for the sign rule.

 Now we can present our definition.
 Let $C^\cu=(C,d,h)$ be a CDG\+coalgebra over~$k$.
 A \emph{left CDG\+contramodule} $P^\cu=(P,d_P)$ over $C^\cu$ is
a graded left $C$\+contramodule endowed with
\begin{itemize}
\item an odd contraderivation $d_P\:P\rarrow P$ of degree~$1$
compatible with the coderivation~$d$ on~$C$
\end{itemize}
such that
\begin{enumerate}
\renewcommand{\theenumi}{\roman{enumi}}
\setcounter{enumi}{6}
\item the square of the differential~$d_P$ on $P$ is described by
the formula $d_P^2(p)=h*p$ for all $p\in P$.
\end{enumerate}

 Similarly to the theories of CDG\+modules and CDG\+comodules
(as in Sections~\ref{cdg-rings-subsecn}\+-\ref{cdg-coalgebras-subsecn}),
left CDG\+contramodules over a CDG\+coalgebra $C^\cu=(C,d,h)$ form
a DG\+cat\-e\-gory $C^\cu\contra$.
 Any morphism of CDG\+coalgebras $(f,a)\:(C,d_C,h_C)\rarrow
(D,d_D,h_D)$ induces a DG\+functor $C^\cu\contra\rarrow D^\cu\contra$
assigning to a CDG\+con\-tramodule $(P,d_P)$ the CDG\+contramodule
$(P,d'_P)$, with the graded $D$\+contramodule structure on $P$
obtained from the graded $C$\+contramodule structure on $P$ by
the contrarestriction of scalars (as mentioned in
Section~\ref{nonseparated-contramodules-subsecn}) and the twisted
differential~$d'_P$ given by the rule $d'_P(p)=d_P(p)+a*p$.
 So an isomorphism of CDG\+coalgebras induces an isomorphism of
the DG\+categories of CDG\+contramodules over them.

\subsection{Contraderived category of CDG-contramodules}
\label{contraderived-cdg-contramodules-subsecn}
 This section is a contramodule version of
Sections~\ref{co-contra-derived-cdg-modules-subsecn}\+-%
\ref{coderived-cdg-comodules-subsecn}.
 In the spirit of the discussion in the end of
Section~\ref{philosophy-of-second-kind-III-subsecn}, we will see
(in Theorem~\ref{contraderived-cdg-contramodules-thm}) that
the contraderived categories of contramodules are somewhat better
behaved than the contraderived categories of modules.
 On the other hand, it makes \emph{no} sense to consider
``coderived categories of contramodules'', as the functors of infinite
coproducts are usually \emph{not} exact in contramodule categories
(contramodule categories of homological dimension~$1$ being
a remarkable exception; see~\cite[Remark~1.2.1]{Pweak}). 

 Let $C^\cu=(C,d,h)$ be a CDG\+coalgebra over~$k$.
 Similarly to Sections~\ref{co-contra-derived-cdg-modules-subsecn}\+-%
\ref{coderived-cdg-comodules-subsecn}, one can speak of short exact
sequences $0\rarrow P^\cu\rarrow Q^\cu\rarrow R^\cu\rarrow0$ of left
CDG\+contramodules over $C^\cu$ and their totalizations (total
CDG\+contramodules) $\Tot(P^\cu\to Q^\cu\to R^\cu)$,
which are again left CDG\+contramodules over~$C^\cu$.

 Let $\Hot(C^\cu\contra)$ denote the homotopy category of left
CDG\+contramodules over a CDG\+coalgebra~$C^\cu$.
 Similarly to Sections~\ref{co-contra-derived-cdg-modules-subsecn}\+-%
\ref{coderived-cdg-comodules-subsecn}, the DG\+category $C^\cu\contra$
has shifts and cones, so its homotopy category $\Hot(C^\cu\contra)$
is triangulated.

\begin{defn}[{\cite[Section~4.2]{Pkoszul}}]
\label{absolute-derived-cdg-contramodules-definition}
 Let $C^\cu$ be a CDG\+coalgebra.
 A left CDG\+con\-tr\-amod\-ule over $C^\cu$ is said to be
\emph{absolutely acyclic} if it belongs to the minimal thick subcategory
of $\Hot(C^\cu\contra)$ containing the totalizations of short exact
sequences of left CDG\+contramodules over~$C^\cu$.
 The full subcategory of absolutely acyclic CDG\+contramodules is
denoted by $\Ac^\abs(C^\cu\contra)\subset\Hot(C^\cu\contra)$.
 The triangulated Verdier quotient category
$$
 \sD^\abs(C^\cu\contra)=\Hot(C^\cu\contra)/\Ac^\abs(C^\cu\contra)
$$
is called the \emph{absolute derived category} of left
CDG\+contramodules over~$C^\cu$.
\end{defn}

\begin{defn}[{\cite[Section~4.1]{Psemi}, \cite[Section~4.2]{Pkoszul}}]
\label{contraderived-cdg-contramodules-definition}
 A left CDG\+contramodule over $C^\cu$ is said to be
\emph{contraacyclic} if it belongs to the minimal triangulated
subcategory of $\Hot(C^\cu\contra)$ containing the totalizations of
short exact sequences of left CDG\+contramodules over~$C^\cu$
\emph{and closed under infinite products}.
 The thick subcategory of contraacyclic CDG\+contramodules is denoted
by $\Ac^\ctr(C^\cu\contra)\subset\Hot(C^\cu\contra)$.
 The triangulated Verdier quotient category
$$
 \sD^\ctr(C^\cu\contra)=\Hot(C^\cu\contra)/\Ac^\ctr(C^\cu\contra)
$$
is called the \emph{contraderived category} of left CDG\+contramodules
over~$C^\cu$.
\end{defn}

 In the context of the next theorem, it is helpful to recall that
the homological dimensions of the three abelian categories of (ungraded
or graded) left $C$\+comodules, right $C$\+comodules, and left
$C$\+contramodules coincide for any  (ungraded or graded) coalgebra $C$
over a field~$k$ (see~\cite[beginning of Section~4.5]{Pkoszul}).
 The common value of these three dimensions is called the \emph{global
dimension} of the (ungraded or graded) coalgebra~$C$.

\begin{thm} \label{fin-homol-dim-derived-cdg-contramodules}
 Let $C^\cu=(C,d,h)$ be a CDG\+coalgebra over~$k$ whose
underlying graded coalgebra $C$ has finite global dimension
(as a graded coalgebra; i.~e., the abelian category of graded left
$C$\+contramodules has finite homological dimension).
 Then the two classes of contraacyclic and absolutely acyclic
CDG\+contramodules over $C^\cu$ coincide,
$$
 \Ac^\ctr(C^\cu\contra)=\Ac^\abs(C^\cu\contra),
$$
and accordingly, the two derived categories of the second kind coincide,
$$
 \sD^\ctr(C^\cu\contra)=\sD^\abs(C^\cu\contra).
$$
\end{thm}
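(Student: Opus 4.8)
The plan is to prove the theorem by establishing the two inclusions between $\Ac^\ctr(C^\cu\contra)$ and $\Ac^\abs(C^\cu\contra)$ separately, with only the second requiring the finite global dimension hypothesis. The inclusion $\Ac^\abs(C^\cu\contra)\subseteq\Ac^\ctr(C^\cu\contra)$ holds for an arbitrary CDG\+coalgebra: the class $\Ac^\ctr(C^\cu\contra)$ is a triangulated subcategory of $\Hot(C^\cu\contra)$ closed under infinite products, and any triangulated subcategory closed under (countable) products is automatically closed under direct summands, hence thick; since it contains the totalizations of all short exact sequences of left CDG\+contramodules, it contains the minimal thick subcategory with that property, which is precisely $\Ac^\abs(C^\cu\contra)$.

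For the reverse inclusion, the whole argument is dual to the proof of the comodule statement in Theorem~\ref{fin-homol-dim-derived-cdg-comodules}, interchanging injective comodules with projective contramodules, coproducts with products, and the coderived with the contraderived category. First I would record the two structural inputs on the contramodule side. The abelian category $C\contra$ has enough projective objects, the projectives being the direct summands of the free contramodules $\Hom_k(C,V)$ (Section~\ref{duality-analogy-subsecn}); and the class of graded\+projective CDG\+contramodules is closed under infinite products, because $\prod_\alpha\Hom_k(C,V_\alpha)\simeq\Hom_k(C,\prod_\alpha V_\alpha)$ is again free. This is the exact counterpart of the closure of injective comodules under coproducts used in the comodule case. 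Next, finiteness enters: since the graded coalgebra $C$ has finite global dimension $d$, every graded left $C$\+contramodule has projective dimension at most~$d$, and one can lift a graded\+projective resolution to the CDG level to obtain, for any left CDG\+contramodule $P^\cu$, a finite exact sequence $0\rarrow\mathbf{Q}_d^\cu\rarrow\dotsb\rarrow\mathbf{Q}_0^\cu\rarrow P^\cu\rarrow0$ of CDG\+contramodules in which each $\mathbf{Q}_i^\cu$ is graded\+projective. The totalization of this finite exact sequence is absolutely acyclic (being built by iterated cones from the totalizations of its constituent short exact sequences), so in $\Hot(C^\cu\contra)$ the object $P^\cu$ is connected by a distinguished triangle, modulo an absolutely acyclic object, to the totalization $\mathbf{Q}^\cu=\Tot(\mathbf{Q}_d^\cu\rarrow\dotsb\rarrow\mathbf{Q}_0^\cu)$, which is itself graded\+projective.

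Given this, the proof concludes as follows. If $P^\cu$ is contraacyclic, then $\mathbf{Q}^\cu$ is contraacyclic as well (by the two\+out\+of\+three property in the triangle, the third term being absolutely acyclic, hence contraacyclic). The key input is that a graded\+projective CDG\+contramodule which is contraacyclic must be contractible: this is the contramodule analogue of Theorem~\ref{co-contra-derived-cdg-modules}(b) and of the comodule Theorem~\ref{coderived-cdg-comodules-thm}, realized as the statement $\Hot(C^\cu\contra_\proj)\simeq\sD^\ctr(C^\cu\contra)$ (Theorem~\ref{contraderived-cdg-contramodules-thm} below), whose proof relies precisely on the closure of graded\+projectives under products noted above. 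A contractible CDG\+contramodule is absolutely acyclic, so $\mathbf{Q}^\cu\in\Ac^\abs(C^\cu\contra)$, and therefore $P^\cu\in\Ac^\abs(C^\cu\contra)$ as well, by applying two\+out\+of\+three once more to the same triangle. The main obstacle I expect is twofold: carrying out the lifting of a graded\+projective resolution to a genuine finite exact sequence of CDG\+contramodules in the curved setting (where there is no differential to induce the resolution termwise), and establishing the graded\+projective representation theorem $\Hot(C^\cu\contra_\proj)\simeq\sD^\ctr(C^\cu\contra)$ without circularity. The equality of the Verdier quotients $\sD^\ctr(C^\cu\contra)=\sD^\abs(C^\cu\contra)$ is then immediate from the coincidence of the two classes of acyclic objects.
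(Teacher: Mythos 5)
Your argument is correct and is essentially the proof of \cite[Theorem~4.5(b)]{Pkoszul} that the survey cites for this statement: reduce a contraacyclic CDG\+contramodule, modulo absolutely acyclic objects, to a graded\+projective one by means of a finite resolution, and then use the semiorthogonality underlying Theorem~\ref{contraderived-cdg-contramodules-thm} (which is established for arbitrary CDG\+coalgebras, independently of the present theorem, so there is no circularity) to conclude that a graded\+projective contraacyclic CDG\+contramodule is contractible, hence absolutely acyclic. The one step you flag as an obstacle is handled not by lifting an arbitrary graded\+projective resolution of the underlying graded contramodule $P^\#$, but by iterating the canonical closed surjection $G^+(F)\rarrow P^\cu$, where $F\rarrow P^\#$ is a surjection from a free graded contramodule and $G^+(F)$ denotes the CDG\+contramodule freely generated by $F$ (its underlying graded contramodule is the direct sum of $F$ and a shift of $F$, hence projective); after finitely many steps the kernel has projective underlying graded contramodule by the usual syzygy argument, which produces the desired finite exact sequence of graded\+projective CDG\+contramodules.
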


\begin{proof}
 This is~\cite[Theorem~4.5(b)]{Pkoszul}.
 For a generalization to exact DG\+categories,
see~\cite[Theorem~5.6(b) or Theorem~8.9(b)]{Pedg}.
\end{proof}

 Notice that the contramodule version of condition~($**$) from
Section~\ref{co-contra-derived-cdg-modules-subsecn} holds for
\emph{any} (graded) coalgebra $C$ over a field~$k$, because the class
of all projective $C$\+contramodules is closed under infinite products.
 Indeed, the projective $C$\+contramodules are the direct summands of
the free ones (see Sections~\ref{basics-of-contramodules-subsecn}\+-%
\ref{duality-analogy-subsecn}), and free left $C$\+contramodules have
the form $\Hom_k(C,V)$, where $V$ ranges over (graded) $k$\+vector
spaces; so free $C$\+contramodules obviously form a class closed under 
infinite products.

 Similarly to Sections~\ref{co-contra-derived-cdg-modules-subsecn}\+-%
\ref{coderived-cdg-comodules-subsecn}, we denote by
$\Hot(C^\cu\contra_\proj)\subset\Hot(C^\cu\contra)$
the full triangulated subcategory in the homotopy category formed by
all the CDG\+contramodules whose underlying graded $C$\+contramodules
are projective.
 Such CDG\+contramodules are called \emph{graded-projective}.

\begin{thm} \label{contraderived-cdg-contramodules-thm}
 Let $C^\cu=(C,d,h)$ be a CDG\+coalgebra over~$k$. 
 Then the composition\/ $\Hot(C^\cu\contra_\proj)\rarrow
\Hot(C^\cu\contra)\rarrow\sD^\ctr(C^\cu\contra)$ of the triangulated
inclusion functor\/ $\Hot(C^\cu\contra_\proj)\rarrow\Hot(C^\cu\contra)$
and the Verdier quotient functor\/ $\Hot(C^\cu\contra)\rarrow
\sD^\ctr(C^\cu\contra)$ is an equivalence of triangulated categories,
$$
 \Hot(C^\cu\contra_\proj)\simeq\sD^\ctr(C^\cu\contra).
$$
\end{thm}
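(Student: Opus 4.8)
The plan is to imitate the proofs of Theorem~\ref{co-contra-derived-cdg-modules}(b) and Theorem~\ref{coderived-cdg-comodules-thm}, exhibiting $\Hot(C^\cu\contra_\proj)$ as one side of a semiorthogonal decomposition of $\Hot(C^\cu\contra)$ whose other side is the subcategory $\Ac^\ctr(C^\cu\contra)$ of contraacyclic CDG\+contramodules. Concretely, I would establish two facts and then invoke a standard lemma on Verdier quotients: (1)~\emph{left orthogonality}, that $\Hom_{\Hot(C^\cu\contra)}(P^\cu,A^\cu)=0$ for every graded\+projective $P^\cu$ and every contraacyclic $A^\cu$; and (2)~\emph{existence of resolutions}, that every CDG\+contramodule $X^\cu$ admits a closed degree\+zero morphism $P^\cu\rarrow X^\cu$ from a graded\+projective CDG\+contramodule whose cone is contraacyclic. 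Given (1) and (2), the composite functor is fully faithful by (1) (morphisms out of a graded\+projective object are unaffected by inverting contraacyclics) and essentially surjective by (2); this is the usual triangulated argument, and $\Hot(C^\cu\contra_\proj)$ is itself triangulated since shifts and cones of graded\+projective CDG\+contramodules are again graded\+projective.

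Fact~(1) I would prove using the functor $\Hom^\bu_{C^\cu}(P^\cu,{-})$ of morphism complexes in the DG\+category $C^\cu\contra$. Since $P$ is graded\+projective, $\Hom_C(P,{-})$ is exact on underlying graded $C$\+contramodules, so this functor sends a short exact sequence of CDG\+contramodules to a short exact sequence of complexes of $k$\+vector spaces, hence sends the totalization of such a sequence to an acyclic complex. Moreover $\Hom^\bu_{C^\cu}(P^\cu,{-})$ commutes with infinite products, and products of acyclic complexes of vector spaces are acyclic. Therefore the class of those $A^\cu$ for which $\Hom^\bu_{C^\cu}(P^\cu,A^\cu)$ is acyclic is a triangulated subcategory closed under products and containing all totalizations of short exact sequences; by Definition~\ref{contraderived-cdg-contramodules-definition} it contains every contraacyclic $A^\cu$, and passing to $H^0$ yields~(1).

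For Fact~(2) I would build a graded\+projective resolution and totalize it by products. Using the free\+forgetful adjunction, each CDG\+contramodule is a graded\+surjective image of a free CDG\+contramodule of $\Hom_k(C,{-})$\+type carrying the appropriate odd contraderivation; iterating produces a complex $\dotsb\rarrow P_1^\cu\rarrow P_0^\cu\rarrow X^\cu\rarrow0$ of CDG\+contramodules, exact as a complex of graded $C$\+contramodules, with each $P_n^\cu$ graded\+projective. Totalizing $P_\bu^\cu$ by taking infinite \emph{products} along the diagonals yields a CDG\+contramodule $P^\cu$; it is graded\+projective precisely because products of projective graded $C$\+contramodules are projective (the contramodule form of condition~(${*}{*}$), valid for any coalgebra over a field, as noted before the theorem). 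The induced morphism $P^\cu\rarrow X^\cu$ has cone the product\+totalization of the graded\+exact complex, and splitting the resolution into short exact sequences $0\rarrow Z_{n+1}^\cu\rarrow P_n^\cu\rarrow Z_n^\cu\rarrow0$ and assembling their totalizations under closure by products exhibits this cone as an object of $\Ac^\ctr(C^\cu\contra)$.

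The main obstacle is Fact~(2): one must define the free CDG\+contramodule functor compatibly with the curvature~$h$ (so that $d_P^2=h*{-}$ holds), and then verify that the product\+totalization of the resolution is genuinely graded\+projective and that its cone is contraacyclic rather than merely graded\+exact. This computation is entirely parallel to, but dual to, the comodule argument behind Theorem~\ref{coderived-cdg-comodules-thm}, with direct sums systematically replaced by direct products. The exactness of products on $C$\+contramodules and the closure of projective contramodules under products are exactly what makes the contramodule case unconditional, in contrast to the module case of Theorem~\ref{co-contra-derived-cdg-modules}(b), which required hypothesis~(${*}{*}$).
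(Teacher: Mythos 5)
Your proposal is correct and follows essentially the same route as the proof the paper points to (\cite[Theorem~4.4(d)]{Pkoszul}): a semiorthogonal decomposition of $\Hot(C^\cu\contra)$ obtained from the left orthogonality of graded-projectives to contraacyclics together with resolutions by free contramodules $\Hom_k(C,{-})$ totalized by infinite products, whose cones are shown contraacyclic by exactly the complete-filtration/telescope argument recorded in Lemma~\ref{filtrations-co-contra-acyclicity-lemma}(c) and Lemma~\ref{acyclic-filtration-lemma}(b) of this survey. Your identification of the closure of projective graded contramodules under products as the reason the statement is unconditional (in contrast to condition~(${*}{*}$) for CDG-rings) is also the intended one.
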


\begin{proof}
 This is~\cite[Theorem~4.4(d)]{Pkoszul}.
 For a generalization to exact DG\+categories,
see~\cite[Theorem~5.10(b)]{Pedg}.
\end{proof}

\subsection{Derived co-contra correspondence}
 Let $C^\cu=(C,d,h)$ be a CDG\+coalgebra over~$k$.
 Then there is a natural pair of adjoint DG\+functors between
the DG\+categories of left CDG\+comodules and left CDG\+contramodules
over~$C^\cu$.
 The right adjoint DG\+functor is
$$
 \Hom_C(C^\cu,{-})\:C^\cu\comodl\lrarrow C^\cu\contra,
$$
while the left adjoint DG\+functor is
$$
 C^\cu\ocn_C{-}\,\:C^\cu\contra\lrarrow C^\cu\comodl.
$$

 Let us spell out the constructions of these functors.
 For any left CDG\+comodule $M^\cu$ over $C^\cu$, the underlying
graded left $C$\+contramodule $\Hom_C(C,M)$ of the CDG\+contramodule
$\Hom_C(C^\cu,M^\cu)$ is simply the graded left $C$\+contramodule of
homogeneous left $C$\+comodule morphisms $C\rarrow M$.
 This is the graded version of the construction of the functor
$\Hom_C(C,{-})$ from Section~\ref{underived-co-contra-subsecn}.
 So the degree~$n$ component $\Hom_C^n(C,M)$ of the graded
$C$\+contramodule $\Hom_C(C,M)$ is the vector space of all
homogeneous left $C$\+comodule homomorphisms $C\rarrow M$ of
degree~$n$ (for every $n\in\boZ$).
 The differential~$d$ on the left CDG\+contramodule
$\Hom_C(C^\cu,M^\cu)$ is given by the usual rule for the differential
on the Hom space of two precomplexes, as in
Sections~\ref{twisting-cochains-curved-subsecn}
and~\ref{cdg-contramodules-subsecn}.

 For any left CDG\+contramodule $P^\cu$ over $C^\cu$, the underlying
graded left $C$\+comodule $C\ocn_C P$ of the CDG\+comodule
$C^\cu\ocn_C P^\cu$ is simply the contratensor product of
the graded right $C$\+comodule $C$ and the graded left
$C$\+contramodule~$P$.
 Here one has to extend the construction of the contratensor product
functor from Section~\ref{contratensor-product-subsecn} to the realm
of graded coalgebras, graded comodules, and graded contramodules,
which is done in the most straightforward way
(see~\cite[Section~2.2]{Pkoszul} for the details).
 The graded left $C$\+comodule structure on the contratensor product
$C\ocn_C P$ is induced by the graded left $C$\+comodule structure
on $C$, as in Section~\ref{underived-co-contra-subsecn}.
 The differential~$d$ on the left CDG\+comodule $C^\cu\ocn_C P^\cu$
is given by the usual rule for the differential of the tensor product
of two precomplexes, as in Section~\ref{cdg-coalgebras-subsecn}.

 The standard notation for the comodule-contramodule correspondence
functors (as in~\cite{Psemi,Pkoszul,Pweak}) is
$\Psi_{C^\cu}=\Hom_C(C^\cu,{-})$ and $\Phi_{C^\cu}=C^\cu\ocn_C{-}$.

\begin{thm} \label{derived-co-contra-theorem}
 Let $C^\cu$ be a CDG\+coalgebra over~$k$.
 Then the adjoint DG\+functors\/ $\Hom_C(C^\cu,{-})$ and
$C^\cu\ocn_C{-}$, restricted to the full DG\+subcategories of
graded-injective CDG\+comodules and graded-projective
CDG\+contramodules, induce a triangulated equivalence
$$
 \Hot(C^\cu\comodl_\inj)\simeq\Hot(C^\cu\contra_\proj).
$$
 Consequently, the right derived functor of\/ $\Hom_C(C^\cu,{-})$
and the left derived functor of $C^\cu\ocn_C{-}$ are mutually
inverse triangulated equivalences between the coderived and
the contraderived category,
$$
 \boR\Hom_C(C^\cu,{-})\:\sD^\co(C^\cu\comodl)\simeq
 \sD^\ctr(C^\cu\contra)\,:\!C^\cu\ocn_C^\boL{-}.
$$
\end{thm}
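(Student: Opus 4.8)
The plan is to prove the result in two stages, exactly as the statement suggests: first establish the equivalence $\Hot(C^\cu\comodl_\inj)\simeq\Hot(C^\cu\contra_\proj)$ at the level of homotopy categories of adjusted objects, and then deduce the derived equivalence from it using the identifications of Theorems~\ref{coderived-cdg-comodules-thm} and~\ref{contraderived-cdg-contramodules-thm}.

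Let me describe the two stages carefully.

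\begin{proof}[Proof proposal]
\emph{Stage 1: the adjunction and its restriction.} First I would verify that the two DG\+functors $\Psi_{C^\cu}=\Hom_C(C^\cu,{-})$ and $\Phi_{C^\cu}=C^\cu\ocn_C{-}$, whose constructions were spelled out just above, form an adjoint pair at the level of DG\+categories; this is the graded/curved lift of the underived adjunction~\eqref{underived-co-contra-adjoint-pair} from Section~\ref{underived-co-contra-subsecn}, and it descends to an adjunction between the homotopy categories $\Hot(C^\cu\comodl)$ and $\Hot(C^\cu\contra)$. The crucial observation is that this adjoint pair restricts to the full DG\+subcategories of graded-injective CDG\+comodules and graded-projective CDG\+contramodules: the functor $\Psi_{C^\cu}$ takes a graded-injective CDG\+comodule to a graded-projective CDG\+contramodule, and $\Phi_{C^\cu}$ takes a graded-projective CDG\+contramodule to a graded-injective CDG\+comodule. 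This rests on the underived category equivalence~\eqref{underived-co-contra-equivalence}, $C\comodl_\inj\simeq C\contra_\proj$, which identifies the cofree comodule $C\ot_k V$ with the free contramodule $\Hom_k(C,V)$ and shows the Hom groups agree. I would then check that the adjunction unit and counit are \emph{isomorphisms} of CDG\+comodules/contramodules (not merely homotopy equivalences) after this restriction, reducing the verification to the cofree/free case where it follows from the natural isomorphisms $N\ocn_C\Hom_k(C,V)\simeq N\ot_k V$ and $\Hom_C(C,\>C\ot_kV)\simeq\Hom_k(C,V)$ recorded in Sections~\ref{contratensor-product-subsecn} and~\ref{underived-co-contra-subsecn}. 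The key point is that on cofree/free objects the differentials on both sides match up, so the equivalence of underlying graded objects promotes to a DG\+equivalence, hence to the triangulated equivalence $\Hot(C^\cu\comodl_\inj)\simeq\Hot(C^\cu\contra_\proj)$.

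\emph{Stage 2: passing to the derived equivalence.} By Theorem~\ref{coderived-cdg-comodules-thm}, the composite $\Hot(C^\cu\comodl_\inj)\rarrow\sD^\co(C^\cu\comodl)$ is a triangulated equivalence, and by Theorem~\ref{contraderived-cdg-contramodules-thm}, so is $\Hot(C^\cu\contra_\proj)\rarrow\sD^\ctr(C^\cu\contra)$. Composing the equivalence of Stage~1 with the inverses of these two functors yields the desired equivalence $\sD^\co(C^\cu\comodl)\simeq\sD^\ctr(C^\cu\contra)$. To identify this composite with the derived functors $\boR\Psi_{C^\cu}$ and $\boL\Phi_{C^\cu}$, I would argue that the right derived functor of $\Hom_C(C^\cu,{-})$ is computed by first replacing a CDG\+comodule by a homotopy-equivalent graded-injective one (available by Theorem~\ref{coderived-cdg-comodules-thm}) and then applying $\Psi_{C^\cu}$; dually for $\boL\Phi_{C^\cu}$. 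Stage~1 then says precisely that these derived functors are mutually inverse, since on adjusted objects the underived functors already are.

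\emph{The main obstacle.} The hard part will be Stage~1, specifically verifying that the restricted functors genuinely land in the adjusted subcategories and that the adjunction morphisms become isomorphisms there. The subtlety is that graded-injective CDG\+comodules are only \emph{direct summands} of cofree ones (and graded-projective CDG\+contramodules only summands of free ones), so one cannot simply compute on cofree/free objects and be done; one must check that the natural transformations are compatible with retracts and with the twisted differentials $d_M$, $d_P$ carrying the curvature data. Keeping track of the curvature linear function $h$ through the Hom and contratensor constructions, and confirming that the two curvature-related terms cancel exactly as they do for the $\Hom$ complex in Section~\ref{cdg-rings-subsecn}, is where the real work and the sign bookkeeping lie. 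I would organize this by first treating cofree/free objects, where the isomorphisms are explicit, and then extending to summands by naturality and additivity of all functors involved.
\end{proof}
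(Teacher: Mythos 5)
Your proposal is correct and follows essentially the same route as the paper: establish that the adjoint DG\+functors restrict to mutually inverse equivalences between the graded-injective and graded-projective subcategories (via the underived correspondence of Section~\ref{underived-co-contra-subsecn}), then invoke Theorems~\ref{coderived-cdg-comodules-thm} and~\ref{contraderived-cdg-contramodules-thm} to pass to the co/contraderived categories. The only simplification the paper offers over your ``main obstacle'' worry is the observation that the adjunction unit and counit are \emph{closed} morphisms, and a closed morphism of CDG\+co/contramodules is an isomorphism if and only if its underlying morphism of graded co/contramodules is one --- so no curvature or differential bookkeeping is needed beyond the underived graded equivalence~\eqref{underived-co-contra-equivalence}, and the direct-summand issue is absorbed automatically.
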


\begin{proof}
 This is~\cite[Theorem~5.2]{Pkoszul}.
 To prove the first assertion, one observes that the adjunction
morphisms $C^\cu\ocn_C\Hom_C(C^\cu,M^\cu)\rarrow M^\cu$ and
$P^\cu\rarrow\Hom_C(C^\cu,\>C^\cu\ocn_C P^\cu)$ are (closed)
isomorphisms of objects in the respective DG\+categories for any
graded-injective left CDG\+comodule $M^\cu$ and graded-projective
left CDG\+contramodule $P^\cu$ over~$C^\cu$.
 As the property of a closed morphism of CDG\+comodules or
CDG\+contramodules to be an isomorphism depends only on
the underlying morphism of graded co/contramodules, the claim
essentially follows from the graded version of the underived
co-contra correspondence theory of
Section~\ref{underived-co-contra-subsecn}.
 In other words, the DG\+functors $\Hom_C(C^\cu,{-})$ and
$C^\cu\ocn_C{-}$ are mutually inverse equivalences between
the DG\+categories of graded-injective CDG\+comodules and
graded-projective CDG\+contramodules, $C^\cu\comodl_\inj\simeq
C^\cu\contra_\proj$; hence they induce an equivalence of
the respective homotopy categories.

 The second assertion follows from the first one in view of
Theorems~\ref{coderived-cdg-comodules-thm}
and~\ref{contraderived-cdg-contramodules-thm}.
 Essentially, the derived functor $\boR\Hom_C(C^\cu,{-})$ is
constructed by applying the functor $\Hom_C(C^\cu,{-})$ to
graded-injective CDG\+comodules, while the derived functor
$C^\cu\ocn_C^\boL{-}$ is constructed by applying the functor
$C^\cu\ocn_C{-}$ to graded-projective CDG\+contramodules.
\end{proof}

 For references to further discussions of the philosophy and
generalizations of the co-contra correspondence, see the end of
Section~\ref{underived-co-contra-subsecn}.

\subsection{Twisted differential on the graded Hom space}
\label{twisted-differential-on-graded-Hom-subsecn}
 This section is the contramodule version of
Section~\ref{twisted-differential-on-tensor-product-subsecn}.
 For the sake of simplicity of the exposition (to avoid
a detailed discussion of CDG\+bimodules and CDG\+bicomodules),
we restrict ourselves to the uncurved case.

 Let $A^\bu$ be a DG\+algebra and $C^\bu$ be a DG\+coalgebra over~$k$,
and let $\tau\:C^\bu\rarrow A^\bu$ be a twisting cochain (as defined in
Section~\ref{hom-dg-algebra-and-twisting-cochains-subsecn}).
 Let $M^\bu=(M,d_M)$ be a left DG\+module over $A^\bu$, and let
$Q^\bu=(Q,d_Q)$ be a left DG\+contramodule over~$C^\bu$.
 Consider the graded Hom space $\Hom_k(M,Q)$ of the underlying
graded vector spaces of $M^\bu$ and $Q^\bu$, and endow it with
the differential given by the formula
$$
 d(f)(x)=d_Q(f(x))-(-1)^{|f|}f(d_M(x))\pm d^\tau(f)
$$
for all homogeneous $k$\+linear maps $f\in\Hom_k^{|f|}(M,Q)$ and
homogeneous elements $x\in M^{|x|}$.
 Here $d^\tau\:\Hom_k(M,Q)\rarrow\Hom_k(M,Q)$ is the homogeneous
map of degree~$1$ given by the rule
$$
 d^\tau(f)(x)=\pi(c\mapsto f(\tau(c)x)),
$$
for all $f\in\Hom_k^{|f|}(M,Q)$ and $x\in M^{|x|}$.
 In this formula, $\pi\:\Hom_k(C,Q)\rarrow Q$ is the contraaction map,
while $c\longmapsto f(\tau(c)x)$ is the homogeneous $k$\+linear map
$C\rarrow Q$ assigning to an element $c\in C$ the image of
the element $\tau(c)x\in M$ under the map~$f$.
 The reader can consult with~\cite[Section~6.2]{Pkoszul} for
the sign rule.

 Then one can check that $d^2(f)=0$.
 So the graded vector space $\Hom_k(M,Q)$ endowed with
the differential~$d$ is a complex.
 We denote this complex by $\Hom^\tau(M^\bu,Q^\bu)$.
 
 Analogously, let $N^\bu=(N,d_N)$ be a left DG\+comodule over $C^\bu$,
and let $P^\bu=(P,d_P)$ be a left DG\+module over~$A^\bu$.
 Consider the graded Hom space $\Hom_k(N,P)$ of the underlying
graded vector spaces of $N^\bu$ and $P^\bu$, and endow it with
the differential given by the formula
$$
 d(f)(y)=d_P(f(y))-(-1)^{|f|}f(d_N(y))\pm d^\tau(f)
$$
for all homogeneous $k$\+linear maps $f\in\Hom_k^{|f|}(N,P)$ and
homogeneous elements $y\in N^{|y|}$.
 Here $d^\tau\:\Hom_k(N,P)\rarrow\Hom_k(N,P)$ is the homogeneous
map of degree~$1$ given by the rule that, for any homogeneous
linear map $f\in\Hom_k^{|f|}(N,P)$, the map $d^\tau(f)\in
\Hom_k^{|f|+1}(N,P)$ is equal to the composition
$$
 N\overset\nu\lrarrow C\ot_k N\overset{\tau\ot f}\lrarrow
 A\ot_k P\overset n\lrarrow P
$$
of the comultiplication map $\nu\:N\rarrow C\ot_k N$, the map
$\tau\ot f\:C\ot_kN\rarrow A\ot_kP$, and the multiplication map
$n\:A\ot_k P\rarrow P$.

 Once again, choosing the sign properly
(cf.~\cite[Section~6.2]{Pkoszul}), one can check that $d^2(f)=0$.
 So the graded vector space $\Hom_k(N,P)$ endowed with
the differential~$d$ is a complex.
 We denote this complex by $\Hom^\tau(N^\bu,P^\bu)$.

\subsection{Derived Koszul duality on the contramodule side}
\label{koszul-duality-contramodule-side-subsecn}
 This section is the contramodule version of
Section~\ref{koszul-duality-comodule-side-subsecn}.
 Let $B^\cu$ be a CDG\+algebra and $C^\cu$ be a CDG\+coalgebra over~$k$,
and let $\tau\:C^\cu\rarrow B^\cu$ be a twisting cochain
(as defined in Section~\ref{twisting-cochains-curved-subsecn}).

 Given a left CDG\+contramodule $Q^\cu$ over $C^\cu$, we consider
the graded Hom space $\Hom_k(B,Q)$ and endow it with
the differential~$d$ twisted with the twisting cochain~$\tau$ using
the same formulas as in
Section~\ref{twisted-differential-on-graded-Hom-subsecn}.
 Then $\Hom^\tau(B^\cu,Q^\cu)=(\Hom_k(B,Q),\,d)$ is a left
CDG\+module over~$B^\cu$.
 Here the left $B$\+module structure on $B$ has been eaten up in
the construction of the twisted differential on the Hom space,
but the right $B$\+module structure on $B$ remains and induces
the underlying graded left $B$\+module structure of the CDG\+module
$\Hom^\tau(B^\cu,Q^\cu)$.

 Similarly, given a left CDG\+module $P^\cu$ over $B^\cu$, we
consider the graded Hom space $\Hom_k(C,P)$ and endow it with
the differential~$d$ twisted with the twisting cochain~$\tau$ using
the same formulas as in
Section~\ref{twisted-differential-on-graded-Hom-subsecn}.
 Then $\Hom^\tau(C^\cu,P^\cu)=(\Hom_k(C,P),\,d)$ is a left
CDG\+contramodule over~$C^\cu$.
 Here the left $C$\+comodule structure on $C$ has been consumed in
the construction of the twisted differential on the Hom space,
but the right $C$\+comodule structure on $C$ stays and induces
the underlying graded left $C$\+contramodule structure of
the CDG\+contramodule $\Hom^\tau(C^\cu,P^\cu)$.

 Recall the notation $B^\cu\modl$ for the DG\+category of left
CDG\+modules over $B^\cu$ and $C^\cu\contra$ for the DG\+category
of left CDG\+contramodules over $C^\cu$ (see
Sections~\ref{cdg-rings-subsecn} and~\ref{cdg-contramodules-subsecn}).
 Dually to Section~\ref{koszul-duality-comodule-side-subsecn}, one
observes that the DG\+functor
$$
 \Hom^\tau(B^\cu,{-})\:C^\cu\contra\lrarrow B^\cu\modl
$$
is right adjoint to the DG\+functor
$$
 \Hom^\tau(C^\cu,{-})\:B^\cu\modl\lrarrow C^\cu\contra.
$$

\begin{thm} \label{contra-side-conilpotent-duality-thm}
 Let $A^\bu$ be a nonzero DG\+algebra and $(C^\cu,\gamma)$ be
a conilpotent CDG\+coalgebra over~$k$ (as defined in
Section~\ref{duality-DG-algebras-curved-DG-coalgebras-subsecn}).
 Let $\tau\:C^\cu\rarrow A^\bu$ be an acyclic twisting cochain
(as defined in Section~\ref{twisting-cochains-curved-subsecn});
this includes the condition that $\tau\circ\gamma=0$.
 Then the adjoint functors $P^\bu\longmapsto\Hom^\tau(C^\cu,P^\bu)$
and $Q^\cu\longmapsto\Hom^\tau(A^\bu,Q^\cu)$ induce a triangulated
equivalence between the conventional \emph{derived} category of
left DG\+modules over $A^\bu$ and the \emph{contraderived} category of
left CDG\+contramodules over~$C^\cu$,
$$
 \sD(A^\bu\modl)\simeq\sD^\ctr(C^\cu\contra).
$$
\end{thm}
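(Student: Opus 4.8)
The plan is to obtain the stated equivalence as a \emph{composite} of two equivalences already recorded in this survey, and then to recognize the functor $P^\bu\longmapsto\Hom^\tau(C^\cu,P^\bu)$ inside that composite. The two inputs are the comodule-side Koszul duality of Theorem~\ref{nonaugmented-acyclic-twisting-cochain-duality-thm}, giving $\sD(A^\bu\modl)\simeq\sD^\co(C^\cu\comodl)$ via $M^\bu\longmapsto C^\cu\ot^\tau M^\bu$, and the derived comodule-contramodule correspondence of Theorem~\ref{derived-co-contra-theorem}, giving $\boR\Hom_C(C^\cu,{-})\colon\sD^\co(C^\cu\comodl)\simeq\sD^\ctr(C^\cu\contra)$. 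Composing them produces an equivalence $\sD(A^\bu\modl)\simeq\sD^\ctr(C^\cu\contra)$, and the work is to identify this composite with the Koszul duality functors of the theorem and to account for the inverse.

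First I would carry out the key identification at the level of CDG-structures, \emph{before} passing to (co/contra)derived categories. For a left DG-module $M^\bu$ over $A^\bu$, the CDG-comodule $C^\cu\ot^\tau M^\bu$ has underlying graded $C$-comodule $C\ot_k M$, which is cofree, hence graded-injective (Section~\ref{coderived-cdg-comodules-subsecn}). Consequently $\boR\Hom_C(C^\cu,{-})$ applied to it needs no resolution: it is computed directly as $\Hom_C(C^\cu,\>C^\cu\ot^\tau M^\bu)$. Using the cofree-comodule adjunction $\Hom_C(C,\>C\ot_k M)\simeq\Hom_k(C,M)$ recalled in Section~\ref{coalgebras-and-comodules-subsecn}, the underlying graded $C$-contramodule of this object is $\Hom_k(C,M)$, which is exactly the underlying graded contramodule of $\Hom^\tau(C^\cu,M^\bu)$. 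The remaining task is to check that the two differentials agree: the twisted differential of Section~\ref{koszul-duality-contramodule-side-subsecn} must be matched against the one induced from $C^\cu\ot^\tau M^\bu$ through the adjunction, following the sign conventions of \cite[Section~6.2]{Pkoszul}. Granting this, there is a natural isomorphism of CDG-contramodules $\Hom_C(C^\cu,\>C^\cu\ot^\tau M^\bu)\simeq\Hom^\tau(C^\cu,M^\bu)$, so the composite equivalence above literally \emph{is} the functor $M^\bu\longmapsto\Hom^\tau(C^\cu,M^\bu)$.

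To see that $\Hom^\tau(C^\cu,{-})$ descends to the derived level in the first place, I would use this same isomorphism together with conilpotency. By the computation reproduced in Remark~\ref{convergent-spectral-sequence-remark} (applied in the coacyclic rather than merely acyclic form), $C^\cu\ot^\tau M^\bu$ is coacyclic whenever $M^\bu$ is acyclic; being simultaneously coacyclic and graded-injective, it is contractible by Theorem~\ref{coderived-cdg-comodules-thm}. Since $\Hom_C(C^\cu,{-})$ is a DG-functor it preserves contractibility, so $\Hom^\tau(C^\cu,M^\bu)$ is contractible, and in particular the forward functor sends acyclic DG-modules into the contraacyclic class. For the inverse, I would dualize the argument of Remark~\ref{motivation-coacyclics-to-contractibles-remark}: a short exact sequence of CDG-contramodules splits as a sequence of graded $k$-vector spaces, so $\Hom^\tau(A^\bu,{-})$ carries it to a short exact sequence of CDG-modules split over the graded ring $A$, whose totalization is contractible; as $\Hom^\tau(A^\bu,{-})$ commutes with products it sends contraacyclic CDG-contramodules to contractible, hence acyclic, DG-modules, and therefore descends to a functor $\sD^\ctr(C^\cu\contra)\rarrow\sD(A^\bu\modl)$. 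The DG-adjunction of Section~\ref{koszul-duality-contramodule-side-subsecn} then makes $\Hom^\tau(A^\bu,{-})$ right adjoint to the equivalence $\Hom^\tau(C^\cu,{-})$ at the derived level, and a right adjoint to an equivalence is automatically a quasi-inverse, which yields the theorem.

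I expect the main obstacle to be the honest verification that the composite derived functor is computed \emph{without an auxiliary resolution}: one must confirm both that $C^\cu\ot^\tau M^\bu$ is genuinely adjusted for $\boR\Hom_C(C^\cu,{-})$ and that the adjunction isomorphism of underlying graded contramodules intertwines the two twisted differentials. Everything else is either quoted from the earlier theorems or is the dualization of arguments already spelled out in Remarks~\ref{convergent-spectral-sequence-remark} and~\ref{motivation-coacyclics-to-contractibles-remark}; the delicate point is the sign-correct matching of differentials, which is precisely the kind of detail this survey defers to \cite{Pkoszul}.
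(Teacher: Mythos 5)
Your proposal is correct, but it takes a genuinely different route from the one the survey relies on. The paper's own proof is a citation of \cite[Theorem~6.5(b)]{Pkoszul}, whose strategy (sketched in Section~\ref{comments-on-proof-subsecn} and Remarks~\ref{convergent-spectral-sequence-remark} and~\ref{motivation-coacyclics-to-contractibles-remark}) treats the contramodule side directly and in parallel with the comodule side: one shows that $\Hom^\tau(C^\cu,{-})$ takes acyclic DG\+modules to contractible CDG\+contramodules, that $\Hom^\tau(A^\bu,{-})$ takes contraacyclic CDG\+contramodules to contractible DG\+modules, and then that the unit and counit of this adjunction become isomorphisms in the respective derived categories; the commutativity of the triality triangle is checked afterwards. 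You instead bootstrap the contramodule side from the other two sides of the triangle, the pivot being the natural isomorphism of CDG\+contramodules $\Hom_C(C^\cu,\>C^\cu\ot^\tau M^\bu)\simeq\Hom^\tau(C^\cu,M^\bu)$, valid because $C\ot_kM$ is cofree so the cofree-comodule adjunction applies and no injective resolution is needed; this is precisely the commutative diagram~\eqref{conilpotent-triality-eqn} read in the other direction, and since you establish that identification at the underived level there is no circularity. What your route buys is economy: the only new verifications are that identification (an honest but routine sign check) and the descent of the two functors to the derived level, which you handle correctly (coacyclicity of $C^\cu\ot^\tau M^\bu$ for acyclic $M^\bu$ via the canonical filtration of the conilpotent coalgebra, contractibility of coacyclic graded-injectives from Theorem~\ref{coderived-cdg-comodules-thm}, and the dualization of Remark~\ref{motivation-coacyclics-to-contractibles-remark} for $\Hom^\tau(A^\bu,{-})$, which does commute with products). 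What the direct approach buys is independence and symmetry: it does not presuppose Theorems~\ref{nonaugmented-acyclic-twisting-cochain-duality-thm} and~\ref{derived-co-contra-theorem}, and it produces along the way the explicit adjusted classes (graded-projective CDG\+contramodules, homotopy injective DG\+modules) on the contramodule side.
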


\begin{proof}
 This is~\cite[Theorem~6.5(b)]{Pkoszul}.
 The particular case corresponding to the twisting cochain from
Examples~\ref{nonaugmented-bar-twisting-cochains-examples} can be
found in~\cite[Theorem~6.3(b)]{Pkoszul}, while
the case of the twisting cochain from
Example~\ref{curved-cobar-construction-twisting-cochain-example}
(for a conilpotent CDG\+coalgebra~$C^\cu$)
is considered in~\cite[Theorem~6.4(b)]{Pkoszul}.
 For the definition of the contraderived category,
see Section~\ref{contraderived-cdg-contramodules-subsecn} above.
\end{proof}

 The triangulated equivalence of
Theorem~\ref{contra-side-conilpotent-duality-thm} takes
the left CDG\+contramodule~$k$ over $C^\cu$ (with
the $C$\+contramodule structure on~$k$ defined in terms of
the coaugmentation~$\gamma$) to the cofree left DG\+module
$\Hom_k(A^\bu,k)$ over~$A^\bu$.

 Notice that there is no natural structure of a DG\+module over $A^\bu$
on the one-dimensional vector space~$k$, as the DG\+algebra $A^\bu$ is
not augmented.
 This fact is not unrelated to the fact that there is no natural
structure of a left CDG\+contramodule over $C^\cu$ on the free graded
left $C$\+contramodule $\Hom_k(C,k)$ (because of a mismatch of
the equations for the square of the differential involving
the curvature, similar to the one explained in the end of
Section~\ref{cdg-coalgebras-subsecn}).

\begin{thm} \label{contra-side-nonconilpotent-duality-thm}
 Let $C^\cu$ be a nonzero CDG\+coalgebra, and let $\tau\:C^\cu\rarrow
\Cb^\cu_w(C^\cu)=B^\cu$ be the twisting cochain from
Example~\ref{curved-noncoaugmented-cobar-twisting-cochain-example}.
 Then the adjoint functors $P^\cu\longmapsto\Hom^\tau(C^\cu,P^\cu)$
and $Q^\cu\longmapsto\Hom^\tau(B^\cu,Q^\cu)$ induce a triangulated
equivalence between the \emph{absolute derived} category of left
CDG\+modules over $B^\cu$ and the \emph{contraderived} category of
left CDG\+contramodules over~$C^\cu$,
$$
 \sD^\abs(B^\cu\modl)\simeq\sD^\ctr(C^\cu\contra).
$$
\end{thm}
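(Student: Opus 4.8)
The plan is to obtain the asserted equivalence as the composite of two triangulated equivalences already established in this survey, and then to identify that composite with the functor $\Hom^\tau(C^\cu,-)$. Concretely, I would factor the contramodule-side Koszul duality through the comodule side, followed by the derived comodule-contramodule correspondence, rather than attempting to verify directly that the adjunction (co)unit morphisms of the pair $(\Hom^\tau(C^\cu,-),\Hom^\tau(B^\cu,-))$ become isomorphisms.

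First I would invoke Theorem~\ref{nonaugmented-nonconilpotent-duality-thm}, applied to the very same data (the nonzero CDG-coalgebra $C^\cu$, its noncoaugmented cobar-construction $B^\cu=\Cb^\cu_w(C^\cu)$, and the twisting cochain~$\tau$ of Example~\ref{curved-noncoaugmented-cobar-twisting-cochain-example}): this yields a triangulated equivalence $\sD^\abs(B^\cu\modl)\simeq\sD^\co(C^\cu\comodl)$ induced by the functor $M^\cu\longmapsto C^\cu\ot^\tau M^\cu$. Next I would invoke the derived co-contra correspondence of Theorem~\ref{derived-co-contra-theorem}, which provides a triangulated equivalence $\boR\Hom_C(C^\cu,-)\:\sD^\co(C^\cu\comodl)\simeq\sD^\ctr(C^\cu\contra)$. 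Composing these two equivalences gives $\sD^\abs(B^\cu\modl)\simeq\sD^\ctr(C^\cu\contra)$.

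It then remains to check that this composite is the functor named in the theorem. The crucial observation is that, for any CDG-module $M^\cu$ over $B^\cu$, the underlying graded left $C$-comodule of $C^\cu\ot^\tau M^\cu$ is the cofree comodule $C\ot_k M$, hence graded-injective. Consequently, by Theorem~\ref{coderived-cdg-comodules-thm} (which computes $\boR\Hom_C(C^\cu,-)$ on graded-injective representatives), the derived functor is computed on $C^\cu\ot^\tau M^\cu$ with no further resolution, so that $\boR\Hom_C(C^\cu,\,C^\cu\ot^\tau M^\cu)=\Hom_C(C^\cu,\,C^\cu\ot^\tau M^\cu)$. I would then produce a natural isomorphism of CDG-contramodules $\Hom_C(C^\cu,\,C^\cu\ot^\tau M^\cu)\cong\Hom^\tau(C^\cu,M^\cu)$, starting from the cofree-comodule adjunction $\Hom_C(C,\,C\ot_k M)\simeq\Hom_k(C,M)$ of Section~\ref{coalgebras-and-comodules-subsecn} on the underlying graded vector spaces.

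The main obstacle I anticipate is precisely this last identification: verifying that the cofree-comodule adjunction upgrades to an isomorphism of CDG-contramodules, that is, that it intertwines the contraaction maps and, above all, the two $\tau$-twisted differentials (the one of $\Hom^\tau(C^\cu,-)$ from Section~\ref{twisted-differential-on-graded-Hom-subsecn} and the one induced on $\Hom_C(C^\cu,-)$ by the twisted tensor product), with the correct sign rules of~\cite[Section~6.2]{Pkoszul}. The equality on underlying graded spaces is immediate, but tracking the twisting-cochain term through the two constructions is the substantive computation. Once the natural isomorphism of functors $\Hom^\tau(C^\cu,-)\cong\boR\Hom_C(C^\cu,-)\circ(C^\cu\ot^\tau-)$ is in hand, the functor $\Hom^\tau(C^\cu,-)$ is an equivalence $\sD^\abs(B^\cu\modl)\simeq\sD^\ctr(C^\cu\contra)$; and since $\Hom^\tau(B^\cu,-)$ is its right adjoint at the DG-level (Section~\ref{koszul-duality-contramodule-side-subsecn}) and sends contraacyclic CDG-contramodules to contractible CDG-modules (because the underlying graded $B$-module of $\Hom^\tau(B^\cu,Q^\cu)$ depends only on the underlying graded vector space of $Q^\cu$, so a short exact sequence of contramodules, being graded-split, totalizes to a contractible module), it descends to the inverse equivalence.
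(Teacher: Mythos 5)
Your proposal is correct, but it takes a genuinely different route from the one the survey (following~\cite[Theorem~6.7(b)]{Pkoszul}) has in mind. As Section~\ref{comments-on-proof-subsecn} indicates, the paper establishes the comodule-side and contramodule-side dualities by two parallel, mutually dual direct arguments: on the contramodule side one shows that $\Hom^\tau(C^\cu,{-})$ lands in graded-projective CDG\+contramodules, that $\Hom^\tau(B^\cu,{-})$ takes contraacyclic objects to contractible ones, and that the adjunction morphisms become isomorphisms (via the complete decreasing filtration machinery of Lemmas~\ref{filtrations-co-contra-acyclicity-lemma}(c) and~\ref{acyclic-filtration-lemma}(b)); the commutativity of the triangle~\eqref{nonconilpotent-triality-eqn} is then a separate verification. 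You instead obtain the lower diagonal as the composite of the upper diagonal (Theorem~\ref{nonaugmented-nonconilpotent-duality-thm}) with the vertical co-contra equivalence (Theorem~\ref{derived-co-contra-theorem}), and the identification $\Hom_C(C^\cu,\>C^\cu\ot^\tau M^\cu)\cong\Hom^\tau(C^\cu,M^\cu)$ --- legitimate because $C^\cu\ot^\tau M^\cu$ has cofree, hence graded-injective, underlying graded comodule, so $\boR\Hom_C$ needs no further resolution --- is precisely the commutativity statement, which has to be proved anyway for Theorem~\ref{nonconilpotent-triality-theorem}. What your route buys is that the dual filtration argument on the contramodule side is bypassed entirely: mutual inverseness comes for free from the upper diagonal and the co-contra correspondence, at the price of front-loading the sign-sensitive but routine check that the cofree-comodule adjunction intertwines the two $\tau$\+twisted differentials and the contraaction maps. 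Two points you should make explicit for the adjunction to descend to the quotient categories: $\Hom^\tau(B^\cu,{-})$ must send \emph{all} contraacyclic CDG\+contramodules (not just totalizations of short exact sequences) to contractible CDG\+modules, which requires additionally observing that $\Hom_k(B,{-})$ preserves infinite products and that contractible CDG\+modules are closed under products; and dually $\Hom^\tau(C^\cu,{-})$ must send absolutely acyclic CDG\+modules to contractible CDG\+contramodules, which again follows from graded-splitness exactly as in Remark~\ref{motivation-coacyclics-to-contractibles-remark}.
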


\begin{proof}
 This is~\cite[Theorem~6.7(b)]{Pkoszul}.
 The definitions of the contraderived and absolute derived categories
are explained in Sections~\ref{contraderived-cdg-contramodules-subsecn}
and~\ref{co-contra-derived-cdg-modules-subsecn}.
 The absolute derived category $\sD^\abs(B^\cu\modl)$ coincides with
the contraderived category $\sD^\ctr(B^\cu\modl)$
by~\cite[Theorem~3.6(a)]{Pkoszul}; see
Theorem~\ref{fin-homol-dim-derived-cdg-modules}(a) above.
\end{proof}

\subsection{Koszul triality}  \label{koszul-triality-subsecn}
 In this section, we summarize the results formulated above in this
survey in the form of commutative diagrams of Koszul triality.
 There are two main triality theorems in~\cite{Pkoszul}:
the conilpotent and the nonconilpotent one.

\begin{thm} \label{conilpotent-triality-theorem}
 Let $A^\bu$ be a DG\+algebra and $(C^\cu,\gamma)$ be
a conilpotent CDG\+coalgebra over~$k$.
 Let $\tau\:C^\cu\rarrow A^\bu$ be an acyclic twisting cochain;
this includes the conditions that $A^\bu\ne0$ and $\tau\circ\gamma=0$.
 Then there is a commutative diagram of triangulated category
equivalences
\begin{equation} \label{conilpotent-triality-eqn}
\begin{gathered}
 \xymatrix{
  &&& \sD^\co(C^\cu\comodl) \ar@{=}[dd] \\
  \sD(A^\bu\modl) \ar@{=}[rrru] \ar@{=}[rrrd] \\
  &&& \sD^\ctr(C^\cu\contra)
 }
\end{gathered}
\end{equation}
where the upper diagonal double line
$$
 C^\cu\ot^\tau{-}\,\:\sD(A^\bu\modl)\simeq
 \sD^\co(C^\cu\comodl)\,:\!A^\bu\ot^\tau{-}
$$
is the comodule side conilpotent Koszul duality of
Theorem~\ref{nonaugmented-acyclic-twisting-cochain-duality-thm};
the lower diagonal double line
$$
 \Hom^\tau(C^\cu,{-})\:\sD(A^\bu\modl)\simeq
 \sD^\ctr(C^\cu\contra)\,:\!\Hom^\tau(A^\bu,{-})
$$
is the contramodule side conilpotent Koszul duality of
Theorem~\ref{contra-side-conilpotent-duality-thm}; and
the vertical double line
$$
 \boR\Hom_C(C^\cu,{-})\:\sD^\co(C^\cu\comodl)\simeq
 \sD^\ctr(C^\cu\contra)\,:\!C^\cu\ocn_C^\boL{-}.
$$
is the derived comodule-contramodule correspondence of
Theorem~\ref{derived-co-contra-theorem}.
\end{thm}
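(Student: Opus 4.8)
The statement I want to prove is Theorem~\ref{conilpotent-triality-theorem}: that the three triangulated equivalences—the two diagonal Koszul dualities and the vertical co-contra correspondence—fit into a commutative triangle. Crucially, each of the three equivalences has already been established as a standalone result (Theorems~\ref{nonaugmented-acyclic-twisting-cochain-duality-thm}, \ref{contra-side-conilpotent-duality-thm}, and~\ref{derived-co-contra-theorem} respectively). So the content of the triality theorem is \emph{not} to re-prove the equivalences, but to prove that the triangle \emph{commutes}: that composing the two Koszul dualities (going from $\sD(A^\bu\modl)$ up to $\sD^\co(C^\cu\comodl)$ and then across to $\sD^\ctr(C^\cu\contra)$ via the co-contra correspondence) agrees with the direct contramodule-side Koszul duality along the lower diagonal. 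This is an isomorphism-of-functors assertion.

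**The plan.** The cleanest approach is to verify commutativity at the level of the underlying DG-functors, before passing to derived/homotopy categories, by exhibiting a natural isomorphism of the relevant composite DG-functors on suitable adjusted objects. First I would fix the acyclic twisting cochain $\tau\:C^\cu\rarrow A^\bu$, which simultaneously governs all three sides. Going around the upper route, a left DG-module $M^\bu$ over $A^\bu$ is sent by the comodule-side duality to $C^\cu\ot^\tau M^\bu$, and then the co-contra correspondence applies $\boR\Hom_C(C^\cu,{-})$, which is computed on a graded-injective representative. Going along the lower route, $M^\bu$ is sent directly to $\Hom^\tau(C^\cu,M^\bu)$. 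So the key is a natural isomorphism in the contraderived category
$$
 \boR\Hom_C(C^\cu,\>C^\cu\ot^\tau M^\bu)\;\simeq\;\Hom^\tau(C^\cu,M^\bu).
$$
The plan is to establish this by choosing, for a given $M^\bu$, a representative in each category for which the functors can be computed on the nose (without deriving), and then checking the isomorphism underlies a canonical map of graded co/contramodules. Since the property of a closed morphism of CDG-comodules or CDG-contramodules to be an isomorphism depends only on the underlying graded map (as used in the proof of Theorem~\ref{derived-co-contra-theorem}), it suffices to identify the underlying graded vector spaces and check that the natural comparison map is a graded isomorphism. Both sides are built from $\Hom_k(C,M)$-type spaces by general tensor-Hom and co-contra adjunction identities, so at the graded level the comparison reduces to the underived co-contra isomorphism $C\ocn_C\Hom_k(C,V)\simeq C\ot_k V$ and its dual, combined with the associativity of the twist by~$\tau$.

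**Carrying it out.** Concretely, I would proceed as follows. (i)~Reduce the problem to a statement about DG-functors and their derived versions, invoking Theorems~\ref{coderived-cdg-comodules-thm} and~\ref{contraderived-cdg-contramodules-thm} to replace the derived categories by homotopy categories of graded-injective comodules and graded-projective contramodules respectively, where the co-contra correspondence is realized by the strict DG-equivalence $C^\cu\comodl_\inj\simeq C^\cu\contra_\proj$. (ii)~Observe that the composite $\Psi_{C^\cu}\circ(C^\cu\ot^\tau{-})$ and the functor $\Hom^\tau(C^\cu,{-})$ both send a DG-module $M^\bu$ to a CDG-contramodule whose underlying graded $C$-contramodule is $\Hom_C(C,\>C\ot_k M)\simeq\Hom_k(M,\cdot)$-type data; I would write out the canonical comparison morphism and verify it commutes with the twisted differentials by matching the $\tau$-twist terms—this is where the bookkeeping lives. (iii)~Check that this comparison morphism is adjoint-compatible, i.e.\ that it intertwines the adjunctions $(C^\cu\ot^\tau{-}\dashv C^\cu\ot^\tau{-})$-style pairings with the co-contra adjunction $(\Phi_{C^\cu}\dashv\Psi_{C^\cu})$, so that the triangle of \emph{right} adjoints commuting is equivalent to the triangle of \emph{left} adjoints commuting. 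The latter route sends $\sD^\ctr(C^\cu\contra)\to\sD(A^\bu\modl)$ by $\Hom^\tau(A^\bu,{-})$, composed with $\Phi_{C^\cu}=C^\cu\ocn_C{-}$ upstairs, and here commutativity reduces to the associativity isomorphism $A^\bu\ot^\tau(C^\cu\ocn_C Q^\cu)\simeq \Hom^\tau(A^\bu,Q^\cu)$-type identity, which again is an underived co-contra fact twisted by~$\tau$.

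**The main obstacle.** I expect the genuine difficulty to be entirely in the sign- and twist-bookkeeping of step~(ii): verifying that the natural graded isomorphism, which exists trivially at the level of underlying graded vector spaces, actually commutes with the \emph{twisted} differentials $d^\tau$ on both sides. The three differentials—the one on $C^\cu\ot^\tau M^\bu$, the internal Hom differential from the co-contra functor, and the direct twisted Hom differential of $\Hom^\tau(C^\cu,M^\bu)$—each carry their own $\tau$-term and their own sign conventions (as deferred to \cite[Section~6.2]{Pkoszul}), and the crux is that the $\tau$-contributions assemble compatibly. Once this compatibility is checked on graded-injective (equivalently graded-projective) representatives, passage to the derived categories is automatic, since a closed DG-isomorphism descends to an isomorphism in the homotopy category and hence in the Verdier quotient. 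I would therefore present the argument as: the triangle commutes because all three functors are, up to the established co-contra strict equivalence, computed by one and the same twisted bar/cobar complex, and the only thing to verify is the identification of differentials, which I would reference to the sign rules of \cite{Pkoszul} rather than reproduce in full.
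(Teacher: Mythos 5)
Your proposal is correct and follows essentially the same route as the paper (and the underlying memoir): the triality statement is reduced to the three previously established equivalences plus the commutativity of the triangle, and the commutativity is checked exactly as you describe --- the functor $C^\cu\ot^\tau{-}$ lands in graded-injective CDG\+comodules (the underlying graded comodule $C\ot_kM$ is cofree) and $\Hom^\tau(C^\cu,{-})$ lands in graded-projective CDG\+contramodules (the underlying graded contramodule $\Hom_k(C,M)$ is free), so $\boR\Hom_C(C^\cu,{-})$ is computed without further resolution on the image of the upper diagonal, the identification $\Hom_C(C,\>C\ot_kM)\simeq\Hom_k(C,M)$ is the cofree/free case of the underived co-contra correspondence, and what remains is the verification that the twisted differentials agree. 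One caveat on your step~(iii): the identity $A^\bu\ot^\tau(C^\cu\ocn_CQ^\cu)\simeq\Hom^\tau(A^\bu,Q^\cu)$ is \emph{not} an underived, graded-level co-contra fact (the underlying graded objects $A\ot_k(C\ocn_CQ)$ and $\Hom_k(A,Q)$ are genuinely different); it holds only in the derived categories as a consequence of the commutativity already established in step~(ii), so it cannot be used as independent input --- but since step~(ii) suffices and the rest follows from uniqueness of adjoints/inverses, this does not affect the correctness of your argument.
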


\begin{proof}
 This is~\cite[Theorem~6.5]{Pkoszul}.
 The particular case corresponding to the twisting cochain from
Examples~\ref{nonaugmented-bar-twisting-cochains-examples} can be
found in~\cite[Theorem~6.3]{Pkoszul}, while
the case of the twisting cochain from
Example~\ref{curved-cobar-construction-twisting-cochain-example}
(for a conilpotent CDG\+coalgebra~$C^\cu$)
is considered in~\cite[Theorem~6.4]{Pkoszul}.
 For the definitions of the coderived and the contraderived category,
see Sections~\ref{coderived-cdg-comodules-subsecn}
and~\ref{contraderived-cdg-contramodules-subsecn}.
 For discussions of some elements of the proof of the theorem,
see Remarks~\ref{convergent-spectral-sequence-remark}
and~\ref{motivation-coacyclics-to-contractibles-remark} above and
Section~\ref{comments-on-proof-subsecn} below.
\end{proof}

 Here the comodule side of the Koszul duality takes
the left CDG\+comodule~$k$ over $C^\cu$ (with the $C$\+comodule
structure on~$k$ defined in terms of the coaugmentation~$\gamma$)
to the free left DG\+module $A^\bu$ over~$A^\bu$.
 The contramodule side of the Koszul duality takes
the left CDG\+contramodule~$k$ over $C^\cu$ (with
the $C$\+contramodule structure on~$k$ similarly defined in terms of
the coaugmentation~$\gamma$) to the cofree left DG\+module
$\Hom_k(A^\bu,k)$ over~$A^\bu$.

 Now let us assume that $(A^\bu,\alpha)$ is an augmented
DG\+algebra and $(C^\bu,\gamma)$ is a conilpotent DG\+coalgebra.
 Let $\tau\:C^\bu\rarrow A^\bu$ be an acyclic twisting cochain
satisfying the equation $\alpha\circ\tau=0$ (in addition to
the previously assumed $\tau\circ\gamma=0$).
 For example, the twisting cochains from
Examples~\ref{bar-construction-twisting-cochains-examples},
\ref{cobar-construction-twisting-cochains-examples},
and~\ref{nonhomogeneous-quadratic-twisting-cochain}, as well as
generally ``acyclic twisting cochains'' in the sense of
Section~\ref{bar-cobar-and-acyclic-twisting-cochains-subsecn},
satisfy these additional conditions.

 Then the left DG\+module~$k$ over $A^\bu$ (with the $A$\+module
structure on~$k$ defined in terms of the augmentation~$\alpha$),
the cofree left DG\+comodule $C^\bu$ over $C^\bu$, and the free
left DG\+contramodule $\Hom_k(C^\bu,k)$ over $C^\bu$ correspond
to each other under the triality~\eqref{conilpotent-triality-eqn} of
Theorem~\ref{conilpotent-triality-theorem}.

\begin{thm} \label{nonconilpotent-triality-theorem}
 Let $C^\cu\ne0$ be a CDG\+coalgebra, and let $\tau\:C^\cu\rarrow
\Cb^\cu_w(C^\cu)=B^\cu$ be the twisting cochain from
Example~\ref{curved-noncoaugmented-cobar-twisting-cochain-example}.
 Then there is a commutative diagram of triangulated category
equivalences
\begin{equation} \label{nonconilpotent-triality-eqn}
\begin{gathered}
 \xymatrix{
  &&& \sD^\co(C^\cu\comodl) \ar@{=}[dd] \\
  \sD^{\co=\abs=\ctr}(B^\cu\modl)
  \ar@{=}[rrru] \ar@{=}[rrrd] \\
  &&& \sD^\ctr(C^\cu\contra)
 }
\end{gathered}
\end{equation}
where the upper diagonal double line
$$
 C^\cu\ot^\tau{-}\,\:\sD^{\abs=\co}(B^\cu\modl)\simeq
 \sD^\co(C^\cu\comodl)\,:\!B^\cu\ot^\tau{-}
$$
is the comodule side nonconilpotent Koszul duality of
Theorem~\ref{nonaugmented-nonconilpotent-duality-thm};
the lower diagonal double line
$$
 \Hom^\tau(C^\cu,{-})\:\sD^{\abs=\ctr}(B^\cu\modl)\simeq
 \sD^\ctr(C^\cu\contra)\,:\!\Hom^\tau(B^\cu,{-})
$$
is the contramodule side nonconilpotent Koszul duality of
Theorem~\ref{contra-side-nonconilpotent-duality-thm}; and
the vertical double line
$$
 \boR\Hom_C(C^\cu,{-})\:\sD^\co(C^\cu\comodl)\simeq
 \sD^\ctr(C^\cu\contra)\,:\!C^\cu\ocn_C^\boL{-}.
$$
is the derived comodule-contramodule correspondence of
Theorem~\ref{derived-co-contra-theorem}.
\end{thm}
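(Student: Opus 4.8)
The plan is to assemble the three already-established equivalences into the triangle~\eqref{nonconilpotent-triality-eqn} and then verify that the triangle commutes; the individual edges require no new work. First I would justify the label at the left vertex: the underlying graded algebra of $B^\cu=\Cb^\cu_w(C^\cu)$ is the free (tensor) algebra $T(C_+[-1])$, which has left global dimension at most~$1$, so Theorem~\ref{fin-homol-dim-derived-cdg-modules}(a) gives $\sD^\co(B^\cu\modl)=\sD^\abs(B^\cu\modl)=\sD^\ctr(B^\cu\modl)$, as written in $\sD^{\co=\abs=\ctr}(B^\cu\modl)$. The upper diagonal is then Theorem~\ref{nonaugmented-nonconilpotent-duality-thm}, the lower diagonal is Theorem~\ref{contra-side-nonconilpotent-duality-thm}, and the vertical line is Theorem~\ref{derived-co-contra-theorem}. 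Each is a triangulated equivalence, so the only remaining point is commutativity.

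Because the co-contra correspondence functors $\Psi=\boR\Hom_C(C^\cu,{-})$ and $\Phi=C^\cu\ocn_C^\boL{-}$ are mutually inverse, commutativity of the triangle is equivalent to a single natural isomorphism, and I would prove it in the form $\Phi\circ\Hom^\tau(C^\cu,{-})\simeq C^\cu\ot^\tau{-}$. The reason for choosing this direction is computational convenience: for any left CDG-module $M^\cu$ over $B^\cu$, the underlying graded $C$-contramodule of $\Hom^\tau(C^\cu,M^\cu)$ is the free contramodule $\Hom_k(C,M)$, hence graded-projective. By the construction of the derived functor in Theorem~\ref{derived-co-contra-theorem}, $C^\cu\ocn_C^\boL{-}$ is computed on graded-projective CDG-contramodules with no resolution, so $\Phi(\Hom^\tau(C^\cu,M^\cu))=C^\cu\ocn_C\Hom^\tau(C^\cu,M^\cu)$ already at the level of the DG-category of CDG-comodules.

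It then remains to exhibit an isomorphism of CDG-comodules $C^\cu\ocn_C\Hom^\tau(C^\cu,M^\cu)\simeq C^\cu\ot^\tau M^\cu$. On underlying graded vector spaces this is the evaluation isomorphism $C\ocn_C\Hom_k(C,M)\simeq C\ot_k M$ recorded in Section~\ref{contratensor-product-subsecn} (taking $N=C$ as a right $C$-comodule and $V=M$), and the left $C$-comodule structures match since both are induced by the left comodule structure on $C$. The hard part will be the differential bookkeeping: one must check that the $\tau$-twist entering the contramodule differential through the contraaction, as in Section~\ref{twisted-differential-on-graded-Hom-subsecn}, is transported by the evaluation isomorphism precisely onto the $\tau$-twist of the comodule side built from the comultiplication of $C$, the cochain $\tau$, and the action of $B$ in $M$, as in Section~\ref{twisted-differential-on-tensor-product-subsecn} — including the sign rules of \cite[Section~6.2]{Pkoszul}. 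This is the one genuinely delicate verification.

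Finally, since the isomorphism just described is a closed morphism of degree~$0$ and is manifestly natural in $M^\cu$, it is an isomorphism of DG-functors and hence descends to an isomorphism of the induced triangulated functors on the homotopy and derived categories, yielding commutativity of~\eqref{nonconilpotent-triality-eqn}. I would close by noting that the statement is \cite[Theorem~6.7]{Pkoszul}, with the comodule and contramodule special cases being \cite[Theorem~6.7(a) and~(b)]{Pkoszul}.
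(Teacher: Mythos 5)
Your proposal is correct and follows essentially the same route as the paper: the proof there is a citation to \cite[Theorem~6.7]{Pkoszul} together with precisely your observation that the free graded algebra $B=T(C_+[-1])$ has global dimension~$1$, so that Theorem~\ref{fin-homol-dim-derived-cdg-modules}(a) justifies the label $\sD^{\co=\abs=\ctr}(B^\cu\modl)$. The commutativity mechanism you describe --- $\Hom^\tau(C^\cu,{-})$ lands in graded-projective CDG-contramodules, so $C^\cu\ocn_C^\boL{-}$ needs no resolution and everything reduces to the evaluation isomorphism $C\ocn_C\Hom_k(C,M)\simeq C\ot_kM$ plus a differential check --- is exactly what is recorded in Section~\ref{comments-on-proof-subsecn}.
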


\begin{proof}
 This is~\cite[Theorem~6.7]{Pkoszul}.
 For the definitions of the coderived, contraderived, and absolute
derived categories, see
Sections~\ref{co-contra-derived-cdg-modules-subsecn},
\ref{coderived-cdg-comodules-subsecn},
and~\ref{contraderived-cdg-contramodules-subsecn}.
 The notation $\sD^{\co=\abs=\ctr}(B^\cu\modl)$ is intended to
emphasize that the coderived, contraderived, and absolute derived
category of left CDG\+modules over $B^\cu$ coincide by
Theorem~\ref{fin-homol-dim-derived-cdg-modules}(a) (since the free
graded algebra $B$ has finite global dimension~$1$).
 For a discussion of some elements of the proof of the theorem,
see Remark~\ref{motivation-coacyclics-to-contractibles-remark}
and Section~\ref{comments-on-proof-subsecn}.
\end{proof}

\subsection{Some comments on the proof}
\label{comments-on-proof-subsecn}
 The aim of this section is to complement the proofs of the Koszul
duality theorems~\cite[Theorems~6.3, 6.4, 6.5, and~6.7]{Pkoszul}
(see Theorems~\ref{conilpotent-triality-theorem}
and~\ref{nonconilpotent-triality-theorem} above)
with some additional details.

 In the context of both the diagrams~\eqref{conilpotent-triality-eqn}
and~\eqref{nonconilpotent-triality-eqn}, there are three points to be
explained: that the diagonal (Koszul duality) functors are well-defined,
that the diagonal functors acting in the opposite directions are
mutually inverse, and that the triangular diagram is commutative.
 We have already commented upon the vertical (co-contra) equivalence in
the proof of Theorem~\ref{derived-co-contra-theorem}.

 The main thing to observe is that \emph{the Koszul duality functors are
exact and need not be derived}.
 There is \emph{no} need to replace a (C)DG\+module, CDG\+comodule, or
CDG\+contramodule with any resolution (adjusted version) of it before
applying any one of the functors $A^\bu\ot^\tau{-}$, \
$C^\cu\ot^\tau{-}$, etc.\ (defined in
Sections~\ref{koszul-duality-comodule-side-subsecn}
and~\ref{koszul-duality-contramodule-side-subsecn}).

 In fact, the Koszul duality functors have even better properties:
they \emph{take weakly trivial objects to contractible ones}.
 For example, in the context of the conilpotent duality of
Theorem~\ref{conilpotent-triality-theorem}, the functor
$A^\bu\ot^\tau{-}$ takes coacyclic CDG\+comodules to contractible
DG\+modules, while the functor $C^\cu\ot^\tau{-}$ takes acyclic
DG\+modules to contractible CDG\+comodules.
 Similarly, in the same theorem the functor $\Hom^\tau(A^\bu,{-})$
takes contraacyclic CDG\+contramodules to contractible DG\+modules,
while the functor $\Hom^\tau(C^\cu,{-})$ takes acyclic DG\+modules
to contractible CDG\+contramodules.

 By adjunction, the same property of the Koszul duality functors can be
expressed by saying that they \emph{take arbitrary objects to adjusted
ones} (meaning ``adjusted'' on the suitable side and for the respective
exotic derived category).
 For example, in the context of
Theorem~\ref{conilpotent-triality-theorem}, the functor
$A^\bu\ot^\tau{-}$ takes arbitrary CDG\+comodules to homotopy projective
DG\+modules, while the functor $C^\cu\ot^\tau{-}$ takes arbitrary
DG\+modules to graded-injective CDG\+comodules.
 Similarly, in the same theorem the functor $\Hom^\tau(A^\bu,{-})$
takes arbitrary CDG\+contramodules to homotopy injective DG\+modules,
while the functor $\Hom^\tau(C^\cu,{-})$ takes arbitrary DG\+modules
to graded-projective CDG\+contramodules.

 The fact that the Koszul duality functors take co/contraacyclic
objects to contractible ones is explained in
Remark~\ref{motivation-coacyclics-to-contractibles-remark}.
 Let us make a comment on the proofs of the claims that the functor
$C^\cu\ot^\tau{-}$ takes acyclic DG\+modules over $A^\bu$ to
contractible CDG\+comodules over $C^\cu$, while the functor
$\Hom^\tau(C^\cu,{-})$ takes acyclic DG\+modules over $A^\bu$
to contractible CDG\+contramodules over~$C^\cu$ (in the context of
the conilpotent Koszul duality of
Theorem~\ref{conilpotent-triality-theorem}).
 To complement the arguments in~\cite[Sections~6.3 and~6.4]{Pkoszul}
and Remark~\ref{convergent-spectral-sequence-remark}, let us spell out
one basic observation.

 For the sake of accessibility of the exposition, we formulate
the following lemma for CDG\+modules, while in fact parts~(a), (b)
are equally applicable to CDG\+comodules and parts~(a), (c) to
CDG\+contramodules.

\begin{lem} \label{filtrations-co-contra-acyclicity-lemma}
 Let $B^\cu=(B,d,h)$ be a CDG\+ring. \par
\textup{(a)} Let $M^\cu$ be a CDG\+module over $B^\cu$ endowed with
a finite filtration\/ $0=F_0M^\cu\subset F_1M^\cu\subset\dotsb\subset
F_nM^\cu=M^\cu$ by CDG\+submodules $F_iM^\cu\subset M^\cu$.
 Assume that the CDG\+module $F_iM^\cu/F_{i-1}M^\cu$ over $B^\cu$ is
absolutely acyclic for every\/ $1\le i\le n$.
 Then the CDG\+module $M^\cu$ over $B^\cu$ is absolutely acyclic. \par
\textup{(b)} Let $N^\cu$ be a CDG\+module over $B^\cu$ endowed with
an increasing filtration\/ $0=F_0N^\cu\subset F_1N^\cu\subset\dotsb
\subset F_iN^\cu\subset F_{i+1}N^\cu\subset\dotsb$ by CDG\+submodules
$F_iN^\cu\subset N^\cu$.
 Assume that the filtration $F$ is exhaustive, that is
$N^\cu=\bigcup_{i\ge0} F_iN^\cu$, and the CDG\+module
$F_iN^\cu/F_{i-1}N^\cu$ over $B^\cu$ is coacyclic for every $i\ge1$.
 Then the CDG\+module $N^\cu$ over $B^\cu$ is coacyclic. \par
\textup{(c)} Let $Q^\cu$ be a CDG\+module over $B^\cu$ endowed with
a decreasing filtration $Q^\cu=F^0Q^\cu\supset F^1Q^\cu\supset\dotsb
\supset F^iQ^\cu\supset F^{i+1}Q^\cu\supset\dotsb$ by CDG\+submodules
$F^iQ^\cu\subset Q^\cu$.
 Assume that the filtration $F$ is (separated and) complete, that is
$Q^\cu=\varprojlim_{i\ge1} Q^\cu/F^iQ^\cu$, and the CDG\+module
$F^iQ^\cu/F^{i+1}Q^\cu$ over $B^\cu$ is contraacyclic for every $i\ge0$.
 Then the CDG\+module $Q^\cu$ over $B^\cu$ is contraacyclic.
\end{lem}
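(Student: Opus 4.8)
The plan is to prove all three parts by the same mechanism: reducing each filtration to a sequence of short exact sequences of CDG-modules, whose totalizations are coacyclic/contraacyclic/absolutely acyclic by hypothesis, and then invoking the closure properties built into Definitions~\ref{absolute-derived-cdg-modules-definition}--\ref{contraderived-cdg-modules-definition}. The crucial structural observation is that each class of acyclic objects is a triangulated subcategory closed under the relevant (co/contra) infinite operations, and that totalizations of short exact sequences give distinguished triangles in $\Hot(B^\cu\modl)$.

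For part~(a), I would argue by induction on the length~$n$ of the filtration. The base case $n=1$ is the hypothesis. For the inductive step, consider the short exact sequence of CDG-modules
\begin{equation*}
 0\rarrow F_{n-1}M^\cu\rarrow M^\cu\rarrow M^\cu/F_{n-1}M^\cu\rarrow 0.
\end{equation*}
By the inductive hypothesis $F_{n-1}M^\cu$ is absolutely acyclic (its induced filtration has the same successive quotients $F_iM^\cu/F_{i-1}M^\cu$ for $1\le i\le n-1$), while $M^\cu/F_{n-1}M^\cu=F_nM^\cu/F_{n-1}M^\cu$ is absolutely acyclic by hypothesis. The totalization of this short exact sequence is absolutely acyclic (it is one of the generating objects of $\Ac^\abs$), and it sits in a distinguished triangle together with the two outer terms; since two of the three vertices of the triangle are absolutely acyclic and $\Ac^\abs(B^\cu\modl)$ is a thick (in particular triangulated) subcategory, the third vertex $M^\cu$ is absolutely acyclic as well.

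For parts~(b) and~(c), the finite induction is replaced by a \emph{telescope} (respectively, \emph{microscope}) construction, and this is where I expect the main obstacle to lie. In part~(b), the exhaustive increasing filtration should be assembled into a single short exact sequence
\begin{equation*}
 0\rarrow\bigoplus\nolimits_{i\ge0}F_iN^\cu
 \overset{\id-\text{shift}}{\rarrow}\bigoplus\nolimits_{i\ge0}F_iN^\cu
 \rarrow N^\cu\rarrow0,
\end{equation*}
whose totalization is coacyclic (being the totalization of a short exact sequence), and whose first two terms, being direct sums of objects built from coacyclic successive quotients, lie in $\Ac^\co$ because that class is closed under infinite direct sums. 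The content is that $\bigoplus_i F_iN^\cu$ is itself coacyclic: one filters it so that its successive quotients are the $F_iN^\cu/F_{i-1}N^\cu$, each coacyclic, and applies the direct-sum closure again. Then $N^\cu$ is the cone (up to totalization) of a morphism between coacyclic objects, hence coacyclic. Part~(c) is formally dual, replacing $\bigoplus$ with $\prod$ and the telescope with the inverse-limit sequence
\begin{equation*}
 0\rarrow Q^\cu\rarrow\prod\nolimits_{i\ge0}Q^\cu/F^iQ^\cu
 \overset{\id-\text{shift}}{\rarrow}\prod\nolimits_{i\ge0}Q^\cu/F^iQ^\cu\rarrow0,
\end{equation*}
using completeness to identify $Q^\cu$ with the inverse limit and the closure of $\Ac^\ctr$ under infinite products. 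The delicate point in both parts is checking that the relevant telescope/microscope map $\id-\text{shift}$ is genuinely a closed degree-zero morphism of CDG-modules whose kernel or cokernel is exactly $N^\cu$ or $Q^\cu$ (this uses that the filtration subobjects are CDG-submodules, so the shift maps are morphisms in the DG-category), and that the exhaustiveness (resp.\ completeness) hypothesis is precisely what makes the sequence short exact rather than merely a complex. I would handle the signs and the verification that the products/sums carry the correct CDG-module structure by citing the totalization formalism of \cite[Section~1.2]{Pkoszul}, and note that the identical arguments apply verbatim with $\modl$ replaced by $\comodl$ in parts~(a),~(b) and by $\contra$ in parts~(a),~(c), since the only properties used are the exactness of the relevant forgetful functors and the stated closure properties of the co/contraacyclic classes.
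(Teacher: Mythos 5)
Your proposal is correct and follows essentially the same route as the paper's proof: finite induction via the totalization of $0\to F_{n-1}M^\cu\to M^\cu\to M^\cu/F_{n-1}M^\cu\to0$ for part~(a), and the telescope short exact sequences (with $\bigoplus$, resp.\ $\prod$, and the $\id-\text{shift}$ map) for parts~(b) and~(c), using exhaustiveness, resp.\ completeness, for exactness. The one place to tighten is your justification that $\bigoplus_i F_iN^\cu$ is coacyclic: rather than ``filtering the direct sum'' (which would again produce an infinite filtration), one first applies the finite induction of part~(a) to conclude that each $F_iN^\cu$ is coacyclic, and only then invokes closure of $\Ac^\co$ under infinite direct sums --- and dually for $\prod_i Q^\cu/F^iQ^\cu$ in part~(c); this is exactly how the paper phrases it.
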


\begin{proof}
 This lemma is implicit in~\cite{Psemi,Pkoszul}; cf.~\cite[proof of
Lemma~2.1]{Psemi} or~\cite[proof of Theorem~3.7]{Pkoszul}.
 Part~(a): proceeding by induction on~$n$, it suffices to prove that,
for any short exact sequence of CDG\+modules $0\rarrow K^\cu\rarrow
L^\cu\rarrow N^\cu\rarrow0$ over $B^\cu$, the CDG\+module $L^\cu$ is
absolutely acyclic whenever both the CDG\+modules $K^\cu$ and $N^\cu$
are.
 Indeed, the CDG\+module $T^\cu=\Tot(K^\cu\to L^\cu\to N^\cu)$ is
absolutely acyclic by the definition, and the CDG\+module $L^\cu$ is
homotopy equivalent to a CDG\+module which can be obtained from $K^\cu$,
$T^\cu$, and $N^\cu$ using (a small finite number of) passages to
shifts and cones.

 Part~(b): similarly to part~(a), one proves by induction on~$i$ that
the CDG\+module $F_iN^\cu$ is coacyclic for every $i\ge1$.
 Having that established, consider the telescope short exact sequence
$$
 0\lrarrow\bigoplus\nolimits_{i=1}^\infty F_iN^\cu\lrarrow
 \bigoplus\nolimits_{i=1}^\infty F_iN^\cu\lrarrow N^\cu\lrarrow0
$$
of CDG\+modules over $B^\cu$, and denote by $T^\cu$ its total
CDG\+module.
 Then the CDG\+module $T^\cu$ is absolutely acyclic by the definition,
while the CDG\+module $\bigoplus_{i=1}^\infty F_iN^\cu$ is coacyclic
since the CDG\+modules $F_iN^\cu$ are.
 It follows that the CDG\+module $N^\cu$ is coacyclic.

 Part~(c): similarly to part~(a), one proves by induction on~$i$ that
the CDG\+module $Q^\cu/F^iQ^\cu$ is contraacyclic for every $i\ge1$.
 Now one can consider the telescope short exact sequence
$$
 0\lrarrow Q^\cu\lrarrow\prod\nolimits_{i=1}^\infty Q^\cu/F^iQ^\cu
 \lrarrow\prod\nolimits_{i=1}^\infty Q^\cu/F^iQ^\cu\lrarrow0
$$
of CDG\+modules over~$B^\cu$.
 This sequence is exact, because the transition maps in the projective
system $Q^\cu/F^{i+1}Q^\cu\rarrow Q^\cu/F^iQ^\cu$ are surjective and
$Q^\cu=\varprojlim_{i\ge1} Q^\cu/F^iQ^\cu$.
 Denote by $T^\cu$ its total CDG\+module.
 Then the CDG\+module $T^\cu$ is absolutely acyclic by the definition,
while the CDG\+module $\prod_{i=1}^\infty Q^\cu/F^iQ^\cu$ is
contraacyclic since the CDG\+modules $Q^\cu/F^iQ^\cu$ are.
 It follows that the CDG\+module $Q^\cu$ is contraacyclic.
\end{proof}

 Our next comment is on the proofs of the claims that the diagonal
functors in the opposite directions are mutually inverse in
Theorems~\ref{conilpotent-triality-theorem}
and~\ref{nonconilpotent-triality-theorem}.
 The following observations are intended to complement the arguments
in the proofs of~\cite[Theorems~4.4, 6.3, and~6.4]{Pkoszul}.
 As above, we formulate the assertions for CDG\+modules, though those
of them not involving contraacyclicity are also applicable to
CDG\+comodules, while those not involving coacyclicity are applicable
to CDG\+contramodules.

\begin{lem} \label{freely-generated-cdg-module-lemma}
 Let $B^\cu=(B,d,h)$ be a CDG\+ring and $M^\cu=(M,d_M)$ be a left
CDG\+module over~$B^\cu$.
 Let $L\subset M$ be a graded $B$\+submodule.
 Assume that the composition $L\rightarrowtail M\overset{d_M}\rarrow
M\twoheadrightarrow M/L$ is an isomorphism of graded abelian groups
(or of graded left $B$\+modules) $L\rarrow (M/L)[1]$.
 Then the CDG\+module $M^\cu$ over $B^\cu$ is contractible.
\end{lem}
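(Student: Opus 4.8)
The plan is to write down an explicit contracting homotopy and verify it directly. Write $i\:L\rightarrowtail M$ for the inclusion, $p\:M\twoheadrightarrow N$ for the projection onto $N=M/L$, and $\phi=p\circ d_M\circ i\:L\rarrow(M/L)[1]$ for the composition appearing in the hypothesis, which is assumed to be an isomorphism. First I would observe that $\phi$ is automatically a morphism of graded left $B$\+modules into the \emph{shifted} module $N[1]$: since $d_M$ is an odd derivation and $L$ is a $B$\+submodule, for $b\in B$ and $\ell\in L$ one has $p(d_M(b\ell))=p(d(b)\ell)+(-1)^{|b|}b\,p(d_M\ell)=(-1)^{|b|}b\,\phi(\ell)$, because $d(b)\ell\in L$ so $p(d(b)\ell)=0$, and the remaining sign $(-1)^{|b|}$ is exactly the Koszul sign built into the $B$\+action on $N[1]$. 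Hence $\phi^{-1}$ is $B$\+linear, and the candidate homotopy $\sigma:=i\circ\phi^{-1}\circ p\:M\rarrow M$ is a $B$\+linear endomorphism of degree~$-1$. Recall that $M^\cu$ is contractible precisely when $d(\sigma)=\id_M$ in the complex $\Hom_B^\bu(M^\cu,M^\cu)$, where $d(\sigma)=d_M\sigma+\sigma d_M$.

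Next I would pin down $D:=d(\sigma)=d_M\sigma+\sigma d_M$ up to an error term. Using $p\circ i=0$ one gets $\sigma\circ i=i\phi^{-1}(pi)=0$ and $p\circ\sigma=(pi)\phi^{-1}p=0$. Therefore $Di=\sigma d_M i=i\phi^{-1}(pd_Mi)=i\phi^{-1}\phi=i$, so $D$ restricts to the identity on $L$; and $pD=pd_M\sigma=(pd_Mi)\phi^{-1}p=\phi\phi^{-1}p=p$, so $D$ induces the identity on $N$. Consequently $D-\id_M$ annihilates $L$ and takes values in $\ker p=L$, so it factors uniquely as $D-\id_M=i\circ\theta\circ p$ for some $B$\+linear map $\theta\:N\rarrow L$ of degree~$0$.

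The remaining---and main---point is to show that this error term $\theta$ vanishes; this is where the curved structure enters, if only implicitly. Because $\Hom_B^\bu(M^\cu,M^\cu)$ is a genuine complex (the two curvature contributions to $d^2$ cancel, as recalled in Section~\ref{cdg-rings-subsecn}), the endomorphism $D=d(\sigma)$ is closed, and $\id_M$ is closed as well; hence $i\theta p=D-\id_M$ is a closed degree\+$0$ endomorphism, i.e.\ $d_M\circ(i\theta p)=(i\theta p)\circ d_M$. Postcomposing this identity with $p$ and using $pi=0$ on the right together with $pd_Mi=\phi$ on the left yields $\phi\circ\theta\circ p=0$. Since $\phi$ is injective and $p$ is surjective, this forces $\theta=0$, so $d(\sigma)=\id_M$ and $M^\cu$ is contractible. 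I expect the only genuine obstacle to be this last step: the naive guess $\sigma=i\phi^{-1}p$ does not visibly satisfy $d(\sigma)=\id_M$ on the nose, and it is the closedness of $d(\sigma)$ in the Hom complex---combined with the injectivity of $\phi$---that rigidifies the error away.
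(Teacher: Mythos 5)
Your proof is correct. The paper's own proof is essentially a citation: it observes that the hypothesis says precisely that $M^\cu$ is the CDG\+module \emph{freely generated} by the graded $B$\+module $L$ (the object $G^+(L)$ of \cite[proof of Theorem~3.6]{Pkoszul}) and invokes the known contractibility of such CDG\+modules; the contracting homotopy implicit in that construction is exactly the $\sigma=i\circ\phi^{-1}\circ p$ you write down. What you supply is the verification that $d(\sigma)=\id_M$, which the paper leaves to the reference, and your way of finishing differs from the direct route: instead of choosing a graded splitting $M\simeq L\oplus L'$ and using the relevant component of $d_M^2=h*({-})$ together with $hL\subset L$ to see that the cross terms cancel, you note that $d(\sigma)-\id_M$ is a closed degree\+$0$ endomorphism of the form $i\theta p$ and that closedness forces $\phi\theta p=0$, hence $\theta=0$. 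Both arguments rest on the same input --- that $\Hom_B^\bu(M^\cu,M^\cu)$ is a genuine complex, i.e.\ that the curvature contributions to $d^2$ cancel --- so the two routes are equivalent in substance; yours has the mild advantage of never choosing a splitting, at the cost of being slightly less transparent about where the curvature enters.
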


\begin{proof}
 Here the notation $L\rarrow (M/L)[1]$ means that the composition of
maps in question is a homogeneous map $L\rarrow M/L$ of degree~$1$.
 In fact, it is always a morphism of graded left $B$\+modules (with
the suitable sign rule).
 The map $L\rarrow (M/L)[1]$ being bijective means that the CDG\+module
$M^\cu$ over $B^\cu$ is \emph{freely generated} by the graded left
$B$\+module~$L$.
 In the notation of~\cite[proof of Theorem~3.6]{Pkoszul}, this is
expressed by the formula $M^\cu=G^+(L)$.
 All CDG\+modules over $B^\cu$ freely generated by graded $B$\+modules
are easily seen to be contractible.
\end{proof}

 Now let us consider a CDG\+module $M^\cu$ over $B^\cu$ endowed with
a filtration by graded $B$\+submodules
$$
\dotsb\subset F_{-i}M \subset F_{-i+1}M\subset\dotsb\subset F_0M
\subset F_1M \subset\dotsb\subset F_iM\subset F_{i+1}M\subset\dotsb
$$
 The differential $d_M\:M\rarrow M$ is \emph{not} supposed to preserve
this filtration; however, we assume that $d_M(F_iM)\subset F_{i+1}M$.
 Then for every $i\in\boZ$ we have a morphism of graded left
$B$\+modules $F_iM/F_{i-1}M\rarrow (F_{i+1}M/F_iM)[1]$ induced by~$d_M$.
 The collection of all such maps is a \emph{complex of graded left
$B$\+modules}
\begin{multline*}
 \dotsb\lrarrow(F_{i-1}M/F_{i-2}M)[i-1]\lrarrow
 (F_iM/F_{i-1}M)[i] \\ \lrarrow(F_{i+1}M/F_iM)[i+1]\lrarrow\dotsb
\end{multline*}

\begin{lem} \label{acyclic-filtration-lemma}
\textup{(a)} Let $N^\cu=(N,d_N)$ be a left CDG\+module over $B^\cu$,
and let\/ $0=F_0N\subset F_1N\subset F_2N\subset\dotsb$ be an exhaustive
increasing filtration of $N$ by graded $B$\+submodules $F_iN\subset N$.
 Assume that $d_N(F_iN)\subset F_{i+1}N$ for all $i\ge1$ and
the complex of graded left $B$\+modules
$$
 0\lrarrow F_1N[1]\lrarrow (F_2N/F_1N)[2]\lrarrow (F_3N/F_2N)[3]
 \lrarrow\dotsb
$$
with the differential induced by~$d_N$ is acyclic.
 Then the CDG\+module $N^\cu$ over $B^\cu$ is coacyclic. \par
\textup{(b)} Let $Q^\cu=(Q,d_Q)$ be a left CDG\+module over $B^\cu$,
and let $Q=F^0Q\supset F^1Q\supset F^2Q\supset\dotsb$ be a complete
decreasing filtration of $Q$ by graded $B$\+submodules $F^iQ\subset Q$.
 Assume that $d_Q(F^iQ)\subset F^{i-1}Q$ for all $i\ge2$ and
the complex of graded left $B$\+modules
$$
 \dotsb\lrarrow (F^2Q/F^3Q)[-2]\lrarrow (F^1Q/F^2Q)[-1]\lrarrow
 Q/F^1Q\lrarrow 0
$$
with the differential induced by~$d_Q$ is acyclic.
 Then the CDG\+module $Q^\cu$ over $B^\cu$ is contraacyclic.
\end{lem}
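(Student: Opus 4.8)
The plan is to reduce both parts to Lemmas~\ref{freely-generated-cdg-module-lemma} and~\ref{filtrations-co-contra-acyclicity-lemma} by manufacturing, out of the given filtration (which the differential does \emph{not} preserve), a genuine filtration by CDG\+submodules whose successive quotients are freely generated, hence contractible. The first thing to record is that the induced differential on the associated graded object squares to zero: since each $F_iN$ is a graded $B$\+submodule and $h\in B^2$, the curvature action satisfies $h\cdot F_iN\subset F_iN$, so the term $d_N^2(x)=hx$ stays at the same filtration level and the map $\bar d$ induced by $d_N$ on $\gr^FN$ satisfies $\bar d^2=0$. Thus the hypothesis that the displayed complex of graded $B$\+modules is acyclic is meaningful. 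I would then note the motivating special case: for a two\+step filtration $0\subset F_1N\subset F_2N=N$, acyclicity says precisely that the composition $F_1N\rightarrowtail N\overset{d_N}\rarrow N\twoheadrightarrow N/F_1N$ is an isomorphism $F_1N\rarrow(N/F_1N)[1]$, which is exactly the hypothesis of Lemma~\ref{freely-generated-cdg-module-lemma}, so $N$ is contractible. The general argument globalizes this observation.

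For part~(a), I would introduce the graded $B$\+submodules
$$
 \tilde Z_i=\{\,x\in F_iN \mid d_N(x)\in F_iN\,\}\subset N, \qquad i\ge0,
$$
namely the full preimages of the cycles $Z^i=\ker(\bar d)$ under $F_iN\twoheadrightarrow\gr_i^FN$. The points to verify are that each $\tilde Z_i$ is a CDG\+submodule (closed under $d_N$, because if $d_N(x)\in F_iN$ then $d_N(d_N(x))=hx\in F_iN$ as well, so $d_N(x)\in\tilde Z_i$; closure under the $B$\+action uses the Leibniz rule together with $d_N(F_iN)\subset F_{i+1}N$), that $\tilde Z_{i-1}\subset\tilde Z_i$, and that the filtration is exhaustive with $\tilde Z_0=0$ (any $x\in F_iN$ lies in $\tilde Z_{i+1}$). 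Then I would analyze the quotient $M=\tilde Z_i/\tilde Z_{i-1}$: with $L=F_{i-1}N/\tilde Z_{i-1}$ there is a short exact sequence $0\rarrow L\rarrow M\rarrow\tilde Z_i/F_{i-1}N\rarrow0$, where $d_N$ furnishes isomorphisms $L\cong B^i$ (boundaries) and $\tilde Z_i/F_{i-1}N\cong Z^i$ (cycles) of the associated graded complex. The composition $L\rightarrowtail M\overset{d_N}\rarrow M\twoheadrightarrow M/L$ is, under these identifications, the inclusion $B^i\hookrightarrow Z^i$; acyclicity is used \emph{exactly here}, to promote this inclusion to an isomorphism, whereupon Lemma~\ref{freely-generated-cdg-module-lemma} shows $M$ is contractible, in particular coacyclic. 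Applying Lemma~\ref{filtrations-co-contra-acyclicity-lemma}(b) to the exhaustive increasing filtration $\{\tilde Z_i\}$ then gives that $N^\cu$ is coacyclic.

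Part~(b) I would carry out dually, with the decreasing family of CDG\+submodules
$$
 V^i=\{\,x\in F^{i-1}Q \mid d_Q(x)\in F^{i-1}Q\,\}, \qquad i\ge1,
$$
which are again closed under $d_Q$ because $h\cdot F^{i-1}Q\subset F^{i-1}Q$ keeps $d_Q(d_Q(x))=hx$ at level $i-1$. One has $V^1=Q$ and $F^iQ\subset V^i\subset F^{i-1}Q$, so $\{V^i\}$ is cofinal with the given complete filtration and is therefore itself separated and complete; the successive quotients $V^i/V^{i+1}$ are freely generated — hence contractible — by the same cycles\+equal\+boundaries argument read in the other direction (now $F^iQ/V^{i+1}\cong B^{i-1}$ and $V^i/F^iQ\cong Z^{i-1}$). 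Feeding this into Lemma~\ref{filtrations-co-contra-acyclicity-lemma}(c) yields contraacyclicity of $Q^\cu$. The main obstacle I anticipate is bookkeeping rather than conceptual: making the degree and shift conventions in the identifications $L\cong B$, $M/L\cong Z$ precise enough to confirm that the freely\+generated map really is the honest inclusion of boundaries into cycles (so that acyclicity is what makes it bijective), and, in part~(b), checking carefully that the passage from $F^\bullet$ to the auxiliary filtration $V^\bullet$ preserves separatedness and completeness — since Lemma~\ref{filtrations-co-contra-acyclicity-lemma}(c), unlike part~(b) of that lemma, genuinely requires completeness and would fail for a merely separated filtration.
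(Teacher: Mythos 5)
Your proof is correct, and it follows the same architecture as the paper's: manufacture from $F$ an honest filtration by CDG\+submodules, use Lemma~\ref{freely-generated-cdg-module-lemma} to see that its successive quotients are freely generated (hence contractible) precisely because the associated graded complex is exact, and then conclude by Lemma~\ref{filtrations-co-contra-acyclicity-lemma}(b) or~(c). The one genuine difference is the choice of auxiliary filtration. The paper takes $G_iN=F_iN+d_N(F_iN)$, the \emph{smallest} $d_N$\+stable graded submodule containing $F_iN$ (sandwiched as $F_iN\subset G_iN\subset F_{i+1}N$), whereas you take $\tilde Z_i=F_iN\cap d_N^{-1}(F_iN)$, the \emph{largest} $d_N$\+stable one contained in $F_iN$ (sandwiched as $F_{i-1}N\subset\tilde Z_i\subset F_iN$); dually in part~(b). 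Both choices work and the identification of the subquotient with the ``boundaries included into cycles'' map is the same computation either way; your version has the small advantage that the inclusions $\tilde Z_{i-1}\subset F_{i-1}N\subset\tilde Z_i$ make the submodule $L$ of Lemma~\ref{freely-generated-cdg-module-lemma} appear directly as $F_{i-1}N/\tilde Z_{i-1}$, while the paper's version keeps the auxiliary filtration above $F$ rather than below it. All the verifications you list go through: $d_N^2=h\cdot{}$ preserves each $F_iN$ because $F_iN$ is a graded $B$\+submodule, so $\tilde Z_i$ (resp.\ $V^i$) is $d$\+stable; the only cosmetic slip is that closure of $\tilde Z_i$ under the $B$\+action needs only the Leibniz rule and the $B$\+submodule property of $F_iN$, not the hypothesis $d_N(F_iN)\subset F_{i+1}N$. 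Your cofinality remark $F^iQ\subset V^i\subset F^{i-1}Q$ correctly settles the completeness issue needed for Lemma~\ref{filtrations-co-contra-acyclicity-lemma}(c).
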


\begin{proof}
 Part~(a): define a new increasing filtration $G$ on $N$ by the rule
$G_iN=F_iN+d_N(F_iN)$.
 Then $0=G_0N^\cu\subset G_1N^\cu\subset G_2N^\cu\subset\dotsb$ is
an increasing filtration of $N^\cu$ by CDG\+submodules $G_iN^\cu
\subset N^\cu$.
 The filtration $G$ is exhaustive if and only if the filtration $F$ is.
 In view of Lemma~\ref{freely-generated-cdg-module-lemma}, the condition
of exactness of the complex of graded left $B$\+modules in part~(a)
implies that the CDG\+modules $M^\cu_i=G_iN^\cu/G_{i-1}N^\cu$ are
contractible for all $i\ge1$.
 According to Lemma~\ref{filtrations-co-contra-acyclicity-lemma}(b),
it follows that the CDG\+module $N^\cu$ is coacyclic.
 The proof of part~(b) is dual and based on
Lemmas~\ref{freely-generated-cdg-module-lemma}
and~\ref{filtrations-co-contra-acyclicity-lemma}(c): the new filtration
$G^iQ=F^{i+1}Q+d_Q(F^{i+1}Q)$ is a decreasing filtration by
CDG\+submodules of $Q^\cu$, it is complete if and only if the filtration
$F$ is, the condition of exactness of the complex implies that
the CDG\+modules $G^iQ^\cu/G^{i+1}Q^\cu$ are contractible, etc.
\end{proof}

\bigskip

\end{document}